\documentclass[letterpaper,10pt]{amsart}
\linespread{1.0}

\usepackage{amssymb}
\usepackage{amsthm}
\usepackage[utf8]{inputenc}
\usepackage[margin=1.0in]{geometry}
\usepackage{pst-node}
\usepackage{tikz-cd} 
\usepackage{thmtools}
\usepackage{thm-restate}
\usepackage{hyperref}
\usepackage{amsrefs}
\usepackage[noabbrev,capitalize]{cleveref}
\usepackage{amsmath}
\usepackage{amsfonts}
\usepackage{tikz-cd}
\usepackage{mathtools}
\usepackage{bbm}
\usepackage{hyperref}
\usepackage{blindtext, xcolor}
\usepackage{comment}
\usepackage{romannum}
\usepackage{multirow}
\usepackage{array}
\usepackage{wrapfig}
\usepackage{graphicx}
\usepackage{caption}

\DeclareMathOperator{\Covol}{Covol}

\DeclareMathOperator{\GL}{GL}
\DeclareMathOperator{\GO}{GO}

\DeclareMathOperator{\Gal}{Gal}
\DeclareMathOperator{\Hom}{Hom}

\DeclareMathOperator{\Tr}{Tr}

\DeclareMathOperator{\Sym}{Sym}
\DeclareMathOperator{\disc}{disc}

\DeclareMathOperator{\Vol}{Vol}

\DeclareMathOperator{\Len}{Len}

\DeclareMathOperator{\lv}{\lvert}
\DeclareMathOperator{\rv}{\rvert}

\DeclareMathOperator{\Disc}{Disc}
\DeclareMathOperator{\bin}{bin}
\DeclareMathOperator{\poly}{Poly}
\DeclareMathOperator{\mon}{mon}
\DeclareMathOperator{\irr}{irr}
\DeclareMathOperator{\redu}{red}
\DeclareMathOperator{\rlinec}{\ell_3}
\DeclareMathOperator{\llinec}{\ell_2}
\DeclareMathOperator{\flinec}{\ell_1}
\DeclareMathOperator{\dett}{Det}
\DeclareMathOperator{\rank}{Rank}
\DeclareMathOperator{\Cusp}{Cusp}
\DeclareMathOperator{\MT}{Main}
\DeclareMathOperator{\Ree}{Re}
\DeclareMathOperator{\Imm}{Im}

\DeclareMathOperator{\Ind}{Ind}

\newcommand{\ZZ}{\mathbb{Z}}      
\newcommand{\QQ}{\mathbb{Q}}      
\newcommand{\RR}{\mathbb{R}}      
\newcommand{\cO}{\mathcal{O}}      
\newcommand{\CC}{\mathbb{C}}      
\newcommand{\PP}{\mathbb{P}}      

\newcommand{\la}{\langle}    
\newcommand{\ra}{\rangle}

\newcommand{\cU}{\mathcal{U}}

\newcommand{\cG}{\mathcal{G}}  
             
\newcommand{\cF}{\mathcal{F}}

\newcommand{\cM}{\mathcal{M}}    
\newcommand{\cL}{\mathcal{L}}  
\newcommand{\cS}{\mathcal{S}}

\newcommand{\eps}{M}  
\newcommand{\MO}{M_0}    
    
\newcommand{\Mod}[1]{\ (\mathrm{mod}\ #1)}

\newtheorem{theorem}{Theorem}[section]

\newtheorem{lemma}[theorem]{Lemma}
\newtheorem{proposition}[theorem]{Proposition}
\newtheorem{corollary}[theorem]{Corollary}

\theoremstyle{definition}

\newtheorem{question}[theorem]{Question}

\theoremstyle{remark}

\newtheorem{definition}[theorem]{Definition}

\usepackage[english]{babel}
\usepackage[utf8]{inputenc}

\title{The Successive Minima of Lattices Arising From Orders In Low Degree Number Fields}

\author{Sameera Vemulapalli}
\address{Department of Mathematics, Princeton University}
\email{sameerav@math.harvard.edu}

\subjclass[2010]{11H50 (primary), 11H06, 11P21, 14H05 (secondary)}
\keywords{Successive minima; Scrollar Invariants; Lattices; Geometry of numbers}

\usepackage[english]{babel}
\usepackage[utf8]{inputenc}

\begin{document}

\maketitle
\pagenumbering{arabic}

\begin{abstract}
Orders in number fields provide natural examples of lattices. We ask: what can the successive minima of lattices arising from orders in number fields be? Given an order $\cO$ of absolute discriminant $\Delta$ in a degree $n$ number field, let $1=\lambda_0,\dots,\lambda_{n-1}$ denote the successive minima. For $3 \leq n \leq 5$ and many groups $G \subseteq S_n$, we compute asymptotics of the points $(\log_{ \Delta }\lambda_{1},\dots,\log_{ \Delta }\lambda_{n-1}) \in \RR^{n-1}$ as $\cO$ ranges across orders in degree $n$ fields with Galois group $G$ as $\Delta \rightarrow \infty$. In many cases, we find that the asymptotics, normalized appropriately, are given by a piecewise linear expression and are supported on a finite union of polytopes.
\end{abstract}

\section{Introduction}
Orders in number fields of degree $n$ provide natural examples of lattices via their embeddings into $\RR^n$ using their real and complex places. Explicitly, we equip an order $\cO$ in a degree $n$ number field with the following lattice structure: denote the homomorphisms of $\cO$ into $\CC$ by $\sigma_1,\dots,\sigma_{n}$, and define 
\[
	\lvert x \rvert \coloneqq \sqrt{\frac{1}{n}\sum_{i = 1}^{n} \lvert\sigma_i(x)\rvert^2}
\]
for $x \in \cO$. In this article, we study lattices via their successive minima; for $i \in \{0,\dots, n-1\}$, the \emph{$i$-th successive minimum} $\lambda_{i}(\cO)$ of $\cO$ is defined to be the smallest positive number $r$ such that $\cO$ contains at least $i + 1$ linearly independent elements of length $\leq r$. 

Minkowski's second theorem states that $\prod_{i} \lambda_i \asymp \Delta^{1/2}$, where $\Delta$ is the absolute discriminant of $\cO$. The main motivating question of this article is: given numbers $(p_1,\dots,p_{n-1})$, how often does $\lambda_i$ approximately equal $\Delta^{p_i}$ for all $1 \leq i\leq n-1$, as we range across orders in degree $n$ number fields ordered by absolute discriminant?

More concretely, we associate a point in $\RR^{n-1}$ to each order $\cO$ in a degree $n$ number field:
\[
	p_{\cO} \coloneqq (\log_{ \Delta }\lambda_{1},\dots,\log_{ \Delta }\lambda_{n-1}) \in \RR^{n-1}.
\]

\begin{question}
Fix a point $p \in \RR^{n-1}$, fix a permutation group $G\subseteq S_n$, and let $\cO$ denote an order in a degree $n$ number field. What are the asymptotics of
\begin{equation}
\label{asympmain}
\#\{\cO \; \mid \; \Delta \leq X \text{ and $\Gal(\cO \otimes \QQ) \simeq G$ and $p_{\cO}$ is ``close'' to $p$}\}
\end{equation}
as $X \rightarrow \infty$? 
\end{question}

\subsection{History}
In the number field setting, this question was first raised by Chiche-lapierre \cite{chiche}, who gave partial results when $n = 3,4,5$ and $G = S_n$. The finite field analogue was addressed by Zhao \cite{zhao}, in the case $n = 3$ and $G = S_3$. Patel \cite{anand} has addressed the geometric analogue in the case $n = 3,4,5$ and $G = S_n$.

Related distributional questions have been also addressed by Terr \cite{Terr97}, who proved the equidistribution of shapes of cubic fields; by Bhargava and H \cite{BhaHa16}, who proved the equidistribution of shapes of $S_n$-fields for $n = 4,5$; Bhargava and H's results were improved by Yifrach \cite{yuval}, who showed the equidistribution of grids of rings of integers in $S_n$-number fields for $n = 3,4,5$; and Holmes \cite{holmes} later proved the equidistribution of shapes in pure prime degree number fields. Our approach differs from that of Terr, Bhargava, H, and Holmes in the following meaningful sense; for $n = 3,4,5$, when ordered by absolute discriminant, $100\%$ of orders in $S_n$-fields have normalized successive minima vectors ``near'' the point $\frac{1}{2(n-1)}(1,\dots,1)$. Thus, equidistribution theorems only ``see'' that one point, but give very refined information at that point. Conversely, our work is focused on computing asymptotics of phenomena that occur with density $0$.

\subsection{Overview of Results}
In this article, we compute the asymptotics of \eqref{asympmain} for various Galois groups $G$ when $n = 3,4,5$ using Delone and Faddeev's parametrization of cubic rings and Bhargava's parametrization of quartic and quintic rings respectively. We show that in many cases, there is a finite union of polytopes $\poly_G \subseteq \RR^{n-1}$ depending on $G$ such that:
\begin{enumerate}
\item for $p \in \poly_G$ we have  $\eqref{asympmain} \asymp X^{d_G(p)}$ for some continuous piecewise linear function $d_G \colon \poly_G \rightarrow \RR_{\geq 0}$;
\item and for $p \notin \poly_G$, we have $\eqref{asympmain} \ll 1$.
\end{enumerate} 

Furthermore, we prove that in many cases, the vertices of $\poly_G$ often ``correspond'' to various interesting \emph{algebraic} families of orders. 

\subsubsection{Sketch of proof}
We use the geometry of numbers methods pioneered by Davenport, Heilbronn, and Bhargava; our main point of departure from these methods is in how we ``average''. In particular, even deep in the cusp, our counting is no more complicated than counting the number of lattice points in a box. In the cubic case, we show that a nondegenerate cubic ring that is ``close'' to $p$, is equipped with a ``short'' basis, and has bounded discriminant gives rise to an integral point in a certain box in $\Sym^3 \RR^2$; conversely, we show that every integral point of the box corresponds to a nondegenerate cubic ring of bounded discriminant ``close'' to $p$ equipped with a ``short'' basis. Using Davenport's lemma, we can easily count the number of lattice points in the box. We complete the proof by estimating the number of ``short'' bases such a cubic ring has, i.e. how many times a ring appears in the box. The quartic and quintic cases are completely analogous. 

\subsection{Results on cubic rings}

\subsubsection{Defining rank $n$ rings}
Define a \emph{rank $n$ ring} to be a commutative ring $R$ whose underlying group is $\ZZ^n$. A rank $n$ ring has a natural trace map $\Tr \colon R \rightarrow \ZZ$ sending an element $x$ to the trace of multiplication by $x$. Define $\Disc(R) = \det(\Tr(x_ix_j))$ for any basis $x_1,\dots,x_n$ of $R$. If $\Disc(R) \neq 0$, we say $R$ is a \emph{nondegenerate}.  Henceforth, let $R$ be a nondegenerate rank $n$ ring of absolute discriminant $\Delta$; such a ring naturally has the structure of a lattice, and therefore we may speak of the \emph{successive minima}
\[
	\frac{1}{\sqrt{n}} \leq\lambda_0(R) \leq \dots \leq \lambda_{n-1}(R)
\]
of $R$ (see \cref{notation} for details). When $R$ is implicit, we simply write $\lambda_i$. For a permutation group $G \subseteq S_n$, we say $R$ is a \emph{$G$-ring} if $\Gal(R \otimes \QQ) \cong G$ as permutation groups.  In this section, assume that $R$ is a nondegenerate rank $3$ ring.

\subsubsection{Density functions and theorem statements}
We ask: for a point $p=(p_1,p_2) \in \RR^{2}$ and a permutation group $G\subseteq S_3$, what are the asymptotics of
\[
	\#\{R \; \mid \; \Delta \leq X \text{ and $R$ is a $G$-ring and $(\log_{\Delta}\lambda_1,\log_{\Delta}\lambda_2)$ is ``close'' to $p$}\}
\]
as $X \rightarrow \infty$? To this end, we make the following definitions. 
\begin{definition}
For any positive real number $\eps$ and any $X \in \RR_{>1}$, define $\cF(p,\eps,G,X)$ to be the set of isomorphism classes of $G$-rings $R$ such that $\Delta \leq X$ and 
\[
	\max_{i = 1,2}\{\lv \log_{\Delta}\lambda_i(R) - p_i \rv \} \leq \frac{\eps}{\log X}.
\]
\end{definition}

Note that if there exists a constant $C \geq 1$ such that $C^{-1}\Delta \leq X \leq C\Delta$, then $\#\cF(p,\eps,G,X)$ is the number of cubic rings of bounded discriminant and fixed Galois group such that $\lambda_i \asymp_{M,C} X^{p_i}$ for $i = 1,2$. In this way, the number $\#\cF(p,\eps,G,X)$ captures the number of cubic rings of bounded discriminant and fixed Galois group that are ``$(\eps,X)$-close'' to $p$. One should think of $\eps$ as being a large, fixed positive real number\footnote{We encourage the reader to think of $\eps$ as a fixed quantity that is large enough to make certain estimates work, where here ``large enough'' depends only on the degree $n$. For $3 \leq n \leq 5$, taking $\eps$ to be one billion is certainly large enough for all our theorem statements.}. In many cases, $\#\cF(p,\eps,G,X) \asymp_{\eps} X^{a}(\log X)^{b}$ for some real numbers $a,b$; in this paper, we are interested in the value $a$, which we extract using the following \emph{density function}.

\begin{definition}
Let $d_{\eps,G} \colon \RR^2 \rightarrow \RR \cup \{\ast\}$ be the function which assigns to $p \in \RR^2$ the value 
\[
	\lim_{X \rightarrow \infty}\frac{\log (\#\cF(p,\eps,G,X)+1)}{\log X }
\]
if the limit exists and else assigns $\ast$. Call $d_{\eps,G}$ the density function.
\end{definition}

We add $1$ to $\#\cF(p,\eps,G,X)$ in the definition above simply to avoid taking a logarithm of zero. The main theorems of this paper consist of computing such density functions; the first such result was obtained by Chiche-lapierre. 

\begin{restatable}{theorem}{chichecubic}{\normalfont (Chiche-lapierre \cite{chiche}, Theorem 3.1.1)} \label{chichecubic} 
There exists a positive real number $\MO$ such that for every $\eps > \MO$ and every $p=(p_1,p_2) \in \RR^2$, we have:
\[
    d_{\eps,S_3}(p) = 
\begin{cases}
    1-(p_2-p_1)& \text{if $p$ is contained in the closed line segment from $(1/6,1/3)$ to $(1/4,1/4)$ } \\
    0              & \text{else.}
\end{cases}
\]
\end{restatable}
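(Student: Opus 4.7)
The plan is to apply the Delone--Faddeev parametrization, which identifies isomorphism classes of cubic rings with $\GL_2(\ZZ)$-orbits of integral binary cubic forms $f(x,y) = ax^3 + bx^2y + cxy^2 + dy^3$ in a discriminant-preserving way. A $\ZZ$-basis $1,\omega,\theta$ of $R$ gives the explicit multiplication table $\omega\theta = -ad$, $\omega^2 = -ac + b\omega - a\theta$, and $\theta^2 = -bd + d\omega - c\theta$. My first step would be to translate the assumption that the basis is \emph{short} (realizing $\lambda_1 \asymp \Delta^{p_1}$, $\lambda_2 \asymp \Delta^{p_2}$) into coefficient bounds on $(a,b,c,d)$. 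Comparing Euclidean norms of both sides of the relations yields
\[
|a| \lesssim \lambda_1^2/\lambda_2,\qquad |b| \lesssim \lambda_1,\qquad |c| \lesssim \lambda_2,\qquad |d| \lesssim \lambda_2^2/\lambda_1,
\]
and conversely every integer point in this box with nonvanishing discriminant corresponds to a cubic ring whose successive minima are bounded as prescribed. The line $p_1+p_2 = 1/2$ appears immediately as a Minkowski constraint, since the lattice covolume is $\asymp \Delta^{1/2}$ and forces $\lambda_1 \lambda_2 \asymp \Delta^{1/2}$, so $\eqref{asympmain}$ already vanishes off this line.

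Next I would count lattice points in dyadic ranges $\Delta \asymp Y$. The box has volume $Y^{2(p_1+p_2)} = Y$ on the line, and a quick calculation shows that all five monomials of $\disc(f)$ have size $\asymp Y$ on the box, so the typical form in the box has discriminant $\asymp Y$. Davenport's lemma supplies $\asymp Y$ lattice points in the box for each dyadic window, with the $\eps$-window affecting only the implied constant. To pass from forms to rings I would divide by the number of short bases of each ring: fixing the smallest nonunit $\omega$, one can replace $\theta$ by $\theta + k\omega$ for $|k| \lesssim \lambda_2/\lambda_1 = Y^{p_2-p_1}$, and up to sign changes and the finite stabilizer in $\GL_2(\ZZ)$ this exhausts the short bases. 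Hence each $Y$-slab contributes $\asymp Y^{1-(p_2-p_1)}$ rings, and summing geometrically yields $\asymp X^{1-(p_2-p_1)}$.

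The endpoints of the segment arise directly from the box: $p_1 \leq p_2$ forces $p_1 \leq 1/4$, and when $p_1 < 1/6$ the bound $|a| \leq Y^{2p_1-p_2} < 1$ forces $a=0$, which makes $f$ divisible by $y$, so the form is reducible and the associated ring is not an $S_3$-ring. I expect the main obstacle to be making the reduction step precise: proving that every $S_3$-ring with $p_\cO$ in the $\eps$-window admits a basis whose form actually lies in the claimed box (a ring-theoretic refinement of Minkowski reduction), together with a sieve argument controlling the contribution of non-$S_3$ forms (reducible forms, $C_3$-forms, and nonmaximal orders of non-$S_3$-fields) uniformly in the narrow window.
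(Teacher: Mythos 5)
Your proposal is essentially the same approach as the paper's: parametrize via Delone--Faddeev, transcribe the window around $p$ into a box of binary cubic forms whose coefficient exponents are $2p_1-p_2$, $p_1$, $p_2$, $2p_2-p_1$, count lattice points by Davenport's lemma, and divide by the $\asymp X^{p_2-p_1}$ short forms per ring coming from unipotent shears; the constraint $p_1+p_2=1/2$ and the truncation at $p_1=1/6$ (where $|a|<1$ forces $a=0$ and hence a reducible form) are derived exactly as in the paper. The only minor difference is that you derive the coefficient box from the multiplication table while the paper uses Bhargava's index-form determinant, but these give the same bounds, and the ``obstacles'' you flag at the end are precisely the reduction lemma and congruence-condition lower bound that the paper supplies.
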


We also obtain a theorem for maximal orders in $S_3$-cubic number fields, which we now state. 
\begin{definition}
Let $\cF^{\max}(p,\eps,G,X)$ be the subset of $\cF(p,\eps,G,X)$ consisting of maximal orders in $S_3$-cubic number fields. Let $d^{\max}_{\eps,G} \colon \RR^2 \rightarrow \RR \cup \{\ast\}$ be the function which assigns to $p \in \RR^2$ the value 
\[
	\lim_{X \rightarrow \infty}\frac{\log (\#\cF^{\max}(p,\eps,G,X)+1)}{\log X}
\]
if the limit exists and else assigns $\ast$.
\end{definition}

\begin{restatable}{theorem}{sthreemaxthm}
\label{s3maxthm}
There exists a positive real number $\MO$ such that for every $\eps > \MO$, we have
\[
	d_{\eps, S_3} = d^{\max}_{\eps, S_3}.
\]
\end{restatable}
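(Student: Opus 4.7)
The plan is to show that maximal rings constitute a positive proportion of all rings counted in $\cF(p,\eps,S_3,X)$; since $\cF^{\max}(p,\eps,S_3,X) \subseteq \cF(p,\eps,S_3,X)$, such a positive-proportion statement immediately forces equality of the two log-densities. The main tool is a squarefree-type sieve layered on top of the proof of \cref{chichecubic}, using the Delone--Faddeev parametrization of cubic rings by $\GL_2(\ZZ)$-orbits on $\Sym^3\ZZ^2$. Recall that the local density of forms whose associated cubic ring is nonmaximal at a prime $\ell$ equals $c_\ell = O(1/\ell^2)$, and that this nonmaximality is cut out by congruence conditions modulo $\ell^2$ on the form.

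First I would rerun the proof of \cref{chichecubic} restricted to those congruence conditions. The proof of \cref{chichecubic} counts $\Sym^3\ZZ^2$-points in a skew box $B \subseteq \Sym^3\RR^2$; the same argument, applied to each residue class mod $\ell^2$ separately and then summed over the nonmaximal classes, yields
\[
    \#\{R \in \cF(p,\eps,S_3,X) \;\mid\; R \text{ is nonmaximal at } \ell\} \;\leq\; c_\ell \cdot \#\cF(p,\eps,S_3,X) \;+\; E_\ell(X),
\]
for an explicit boundary error $E_\ell(X)$ inherited from Davenport's lemma. Summing over primes $\ell \leq M$ and applying inclusion-exclusion gives
\[
    \#\cF^{\max}(p,\eps,S_3,X) \;\geq\; \prod_{\ell\leq M}(1-c_\ell)\cdot \#\cF(p,\eps,S_3,X) \;-\; T_M(X) \;-\; \sum_{\ell\leq M}E_\ell(X),
\]
where $T_M(X)$ counts rings in $\cF(p,\eps,S_3,X)$ nonmaximal at some $\ell > M$. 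Since $\sum_\ell c_\ell<\infty$ the product $\prod_\ell (1-c_\ell)$ converges to a positive value, and the desired positive-proportion lower bound follows once $T_M$ and the total boundary error are asymptotically smaller than $\#\cF(p,\eps,S_3,X)$.

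The bulk of the work is therefore in controlling the tail $T_M(X)$ uniformly in $\ell$. For $\ell > X^{1/2}$ only finitely many squares $\ell^2$ divide $\Delta \leq X$, so this range is harmless. For $M < \ell \leq X^{1/2}$ I would adapt a uniform squarefree sieve in the spirit of Bhargava--Shankar--Tsimerman and Belabas--Bhargava--Pomerance, obtaining $\ell$-uniform bounds on the number of nonmaximal forms in $B$. The main obstacle will be establishing such uniformity deep in the cusp, where $B$ is very long and thin: Davenport's lemma gives useful estimates only so long as the mod-$\ell^2$ mesh fits inside the shortest side of $B$, and this fails once $\ell$ exceeds a certain power of $X$ that depends on the target point $p$. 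I expect to handle that regime by bounding the ``missed'' forms directly, using the defining inequalities of $B$ together with the polynomial structure of the discriminant condition, and to let $M=M(X)$ grow slowly enough that $\prod_{\ell \leq M}(1-c_\ell)$ remains bounded away from zero while $T_M(X)$ stays negligible. Balancing the Davenport regime, the direct-counting regime, and the cutoff $M$ is where the delicate combinatorics of the argument will lie.
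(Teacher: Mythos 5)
Your high-level route---a squarefree sieve on the Delone--Faddeev box $B(p,X)$, with the main work in tail estimates uniform in $\ell$---is exactly the paper's strategy, and the appeal to Bhargava--Shankar--Tsimerman is the right instinct. But the proposal is vague at precisely the point where the paper has to work, and your reading of the Davenport obstruction is off. The paper's small-prime tail estimate (\cref{tail1}) does not require the mod-$\ell^2$ mesh to fit inside the \emph{shortest} side of $B$: it fixes $(a,b,c)$ and counts $d$, whose range has length $X^{3p_2-1/2}$ (the \emph{longest} side), writing $\Disc$ as a quadratic in $d$ and running a case analysis on $\ell\mid a$ and on whether that quadratic has a double root mod $\ell$. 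This yields $O(X/\ell^2)$ for all $\ell \le \sqrt{X^{3p_2-1/2}}$, a substantially larger range than the short-side heuristic suggests.

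The more serious gap is the range $\sqrt{X^{3p_2-1/2}} < \ell$: here direct form-counting on $B$ cannot be controlled at $O(X/\ell^2)$, since the mesh is too coarse for Davenport and the worst-case count per prime is too large to sum. The paper switches from forms to \emph{isomorphism classes of rings}: \cref{tail2} (BST Prop.\ 29) gives $O(X/\ell^2)$ isomorphism classes nonmaximal at $\ell$, and \cref{reducedcubiccorol} bounds the number of forms in $B$ giving rise to a fixed class by $\ll X^{p_2-p_1}$. The lower bound $\ell > \sqrt{X^{3p_2-1/2}}$ is exactly what forces $X^{p_2-p_1}/\ell \ll \ell^{-\eta}$ for some $\eta > 0$ (since $p_2 < 1/2$), so the product $O(X^{1+p_2-p_1}/\ell^2) = O(X/\ell^{1+\eta})$ is summable. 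This box-to-isomorphism-class conversion is the key step your plan lacks; ``bounding missed forms via the polynomial structure of the discriminant'' does not substitute for it, and you should state and use \cref{reducedcubiccorol} explicitly if you want the sieve to close.
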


Finally, we also obtain a \emph{total density function}, i.e. a density function where we do not impose any Galois condition, as follows. 
\begin{definition}
Let $\cF(p,\eps,X)$ be the set of isomorphism classes of nondegenerate cubic rings $R$ with $\Delta \leq X$ and 
\[
	\max_{i = 1,2}\{\lv \log_{\Delta}\lambda_i(R) - p_i \rv \} \leq \frac{\eps}{\log X}
\]
and let $d_{\eps,3}$ denote the corresponding density function. Similarly, let $\cF_3^{\max}(p,\eps,X)$ be the subset of $\cF_3(p,\eps,X)$ consisting of \emph{maximal} cubic rings and define the density function of maximal cubic rings $d^{\max}_{\eps,3}$ as above.
\end{definition}

\begin{restatable}{theorem}{totdensitycubic}
\label{totdensitycubic}
There exists a positive real number $\MO$ such that for every $\eps > \MO$ and every $p=(p_1,p_2) \in \RR^2$, we have:
\begin{align*}
    d_{\eps,3}(p) &= 
\begin{cases}
    1-(p_2-p_1) + \max\{0,1/2-3p_1\}& \text{if $p$ is contained in the closed line segment from } \\
                   & \;\;\;\;\;\; \;\;\;\;\;\;\;\;\;\;\;\;\;\;\; \;\;\;\;\;\;\;\;\;\;\;\;\;\;\; \;\;\;\;\;\; \text{$(0,1/2)$ to $(1/4,1/4)$} \\
    0              & \text{else.}
\end{cases} \\
    d^{\max}_{\eps,3}(p) &= 
\begin{cases}
    1-(p_2-p_1)& \text{if $p$ is contained in the closed line segment from $(1/6,1/3)$ to $(1/4,1/4)$ } \\
    1 & \text{if $p = (0,1/2)$}\\
    0              & \text{else.}
\end{cases}
\end{align*}
\end{restatable}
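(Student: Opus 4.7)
The plan is to derive both formulas by decomposing cubic rings according to $G = \Gal(R \otimes \QQ)$ (one of $S_3$, $C_3$, $\ZZ/2\ZZ$, or $\{e\}$) and combining the $G$-wise densities. The disjoint decomposition
\[
\cF(p,\eps,X) = \bigsqcup_{G \subseteq S_3} \cF(p,\eps,G,X)
\]
gives, since the logarithm of a finite sum of positive quantities is the max of their logarithms,
\[
d_{\eps,3}(p) = \max_{G \subseteq S_3} d_{\eps,G}(p), \qquad d^{\max}_{\eps,3}(p) = \max_{G \subseteq S_3} d^{\max}_{\eps,G}(p).
\]

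First I invoke \cref{chichecubic} and \cref{s3maxthm}, which give the $S_3$-contribution (both total and maximal) as $1-(p_2-p_1)$ on $[(1/6,1/3),(1/4,1/4)]$ and zero elsewhere. The trivial-group contribution is $O(1)$ since $\QQ^3$ admits only finitely many orders of bounded discriminant. The $C_3$-contribution is bounded at every point by a constant strictly less than the minimum value $5/6$ of $d_{\eps, 3}$ on the segment; indeed, $\asymp X^{1/2}$ maximal cyclic cubic fields have discriminant $\leq X$ by class field theory, and even including all suborders this case is subdominant. The genuinely new contribution comes from the $\ZZ/2\ZZ$-case, i.e., orders in $\QQ \times K$ for varying quadratic fields $K$.

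For the $\ZZ/2\ZZ$-density, computed in \cref{cubic} by applying Delone--Faddeev to reducible binary cubic forms (those with a rational linear factor), the formula is $d_{\eps,\ZZ/2\ZZ}(p) = \tfrac{3}{2} - p_2 - 2p_1$ on the segment from $(0,1/2)$ to $(1/4,1/4)$. The pointwise maximum with $d_{\eps,S_3}$ gives
\[
\max\bigl\{1-(p_2-p_1),\ \tfrac{3}{2}-p_2-2p_1\bigr\} = 1-(p_2-p_1) + \max\{0,\,\tfrac{1}{2}-3p_1\},
\]
as required. For the maximal case, a maximal $\ZZ/2\ZZ$-ring is precisely $\ZZ \times \cO_K$ for $K$ quadratic, of discriminant $|D_K|$; direct embedding computation gives $\lambda_0 = 1/\sqrt{3}$, $\lambda_1 = \sqrt{2/3}$, $\lambda_2 \asymp \sqrt{|D_K|}$, hence $p_\cO \to (0,1/2)$. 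Since there are $\asymp X$ quadratic fields with $|D_K|\leq X$, this gives density $1$ at the isolated point $(0,1/2)$; combined with \cref{s3maxthm} this yields the claimed $d^{\max}_{\eps,3}$.

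The principal obstacle is the $\ZZ/2\ZZ$-density computation itself: reducibility cuts the Delone--Faddeev box down to a lower-dimensional subvariety of $\Sym^3 \RR^2$ and demands its own cusp analysis (treated in \cref{cubic}). Once that is in hand, the assembly via maxima above is essentially formal.
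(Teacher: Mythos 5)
Your approach is correct in broad outline but takes a genuinely different route from the paper's for the first formula, and the detour opens a gap. The paper obtains $d_{\eps,3}$ directly from \cref{upperboundcubic} and \cref{lowerboundcubic}(1), which give matching upper and lower bounds for $\#\cF_3(p,\eps,X)$ (the count of \emph{all} nondegenerate cubic rings) straight from the lattice-point count in $B(p,X)$ divided by the number of ``short'' representing bases; no Galois decomposition is needed, and the $C_3$ case never enters. You instead write $d_{\eps,3} = \max_G d_{\eps,G}$ and then need to know that the $C_3$-contribution is dominated, at every point of the segment, by $\max\{d_{\eps,S_3},d_{\eps,\ZZ/2\ZZ}\}$. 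The paper explicitly declines to compute $d_{\eps,C_3}$, and your justification — ``$\asymp X^{1/2}$ maximal cyclic cubic fields \dots and even including all suborders this case is subdominant'' — is asserted rather than proved. The class-field-theory count gives $\asymp X^{1/2}$ \emph{maximal} $C_3$-rings, but to control all $C_3$-orders of $|\Disc| \leq X$ you need a bound on the number of subrings of bounded index inside a fixed cubic ring (e.g.\ that the number of index-$m$ subrings is $\ll m^{1+o(1)}$, so the total is $\ll X^{1/2+o(1)}$), and this lemma has to be supplied and cited. Since $d_{\eps,\ZZ/2\ZZ}(p) = 1 - p_1 \geq 3/4 > 1/2$ everywhere on $\flinec$, the bound you want ($<1/2+o(1)$ versus $\geq 3/4$) is true and the argument then assembles correctly; but as written it is a hole. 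The point the paper's direct route buys you is precisely that this hole never opens.

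For $d^{\max}_{\eps,3}$ your argument matches the paper's commented intent (decompose; use \cref{s3maxthm} for $S_3$; observe maximal $\ZZ/2\ZZ$-rings are $\ZZ\oplus\cO_K$ with quadratic $\cO_K$, giving a density-$1$ spike at $(0,1/2)$; bound maximal $C_3$-rings by the $X^{1/2}$ field count, which is $< 5/6 = \min_{\rlinec} d^{\max}_{\eps,S_3}$), and here the $C_3$-bound really is the easy $X^{1/2}$ one, so no gap. One small side note: your explicit computation for $\ZZ\oplus\cO_K$ ($\lambda_0 = 1/\sqrt{3}$, $\lambda_1 = \sqrt{2/3}$) is correct; the paper's proof of \cref{reduciblethm} abbreviates this as ``$\lambda_1 = 1/\sqrt{3}$,'' which is a slip, though harmless since both $\lambda_0,\lambda_1$ are $O(1)$ and only $\lambda_2 \asymp \sqrt{|D_K|}$ matters for the limit point.
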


We do not compute the density function of $\ZZ/3\ZZ$ cubic rings but we have the following result, which follows from \cref{zerocubic}.
\begin{corollary}
\label{a3thm}
The set of limit points of the multiset $\{(\log_{\Delta}\lambda_1(R), \log_{\Delta}\lambda_2(R))\}$ as $R$ ranges across orders in cyclic cubic fields is the closed line segment from $(1/6,1/3)$ to $(1/4,1/4)$.
\end{corollary}

\subsubsection{Observations}

First, observe that the total density function $d_{\eps,3}$ has a very particular structure; it is $1$ minus piecewise differences of the coordinates (the term $p_2-p_1$), plus correction terms (the term $\max\{0,1/2-3p_1\}$). We observe this phenomenon in the quartic and quintic total density functions as well. 

Next, we can show that the rings lying near the boundary of these line segments tend to have to various interesting \emph{algebraic} properties. In particular:
\begin{enumerate}
\item the point $(1/6,1/3)$ ``corresponds'' to cubic rings generated by one element as an algebra (see \cref{monogeniccubictheorem});
\item orders in cubic number fields ``tend'' to lie ''near'' to $(1/4,1/4)$ (see \cref{genericcubiccorollary});
\item orders in reducible cubic \'etale algebras ``tend'' to lie ``near'' to $(0,1/2)$ (see \cref{genericcubiccorollary}).
\end{enumerate} 
$(2)$ is also implied by Bhargava and H's work on the equidistribution of orders in $S_3$-cubic fields \cite{BhaHa16}, although we give an independent proof.

\subsection{Results on binary $n$-ic forms}
Birch and Merriman \cite{birch} gave a construction of rank $n$ ring $R_f$ from an integral binary $n$-ic form $f$. We now give partial results on successive minima of rings arising from binary $n$--ic forms. 

\begin{definition}
For an integer $n \geq 3$, a point $p=(p_1,\dots,p_{n-1}) \in \RR^{n-1}$, any positive real number $\eps$, and any $X \in \RR_{>1}$, define $\cF_n^{\bin}(p,\eps, X)$ to be the set of $\GL_2(\ZZ)$-orbits of $\Sym^n \ZZ^2$  of absolute discriminant $\Delta$ such that $0 < \Delta \leq X$ and
\[
	\max_{1 \leq i < n}\lv \log_{\Delta} \lambda_i(R_f) - p_i \rv \leq \frac{\eps}{\log X}.
\]
\end{definition}

Let
\[
	k \coloneqq \frac{1}{2(1 + \dots + (n-1))}
\]
and define $\ell$ to be the closed line segment from $(0,k)$ to $(k/2,k/2)$. Let $L \colon \ell \rightarrow \RR^{n-1}$ be the linear map sending the point $(0,k)$ to $(k,2k,\dots,(n-1)k)$ and sending the point $(k/2,k/2)$ to $\frac{1}{2(n-1)}(1,\dots,1)$.

\begin{theorem}
\label{binthm}
There exists a positive real number $\MO$ such that for all $\eps > \MO$ and for $r = (r_1,r_2)\in \ell$, we have
\[
	\#\cF_n^{\bin}(L(r),\eps, X) \gg_{r} X^{(n+1)/(2n-2) - (r_2-r_1)}.
\]
\end{theorem}

We expect the bound in \cref{binthm} to be sharp; for $n = 3,4,5$, the proofs of \cref{totdensitycubic}, \cref{fulldensitythmquartic}, and \cref{quintictotaldensity} respectively implies that
\[
\#\cF_n^{\bin}(L(r),\eps, X) \ll_r X^{(n+1)/(2n-2) - (r_2-r_1)}.
\]
for $\eps$ sufficiently large.

\subsection{Results on quartic rings}
\subsubsection{Setup}
Consider pairs $(R,C)$, where $R$ is a nondegenerate quartic ring equipped with a cubic resolvent ring $C$. Let $\Delta$ denote the absolute discriminant of $R$. We first define certain regions that will be the support of our density functions; these are the analogues of the line segments from the cubic case. 

\begin{definition}
For a permutation group $G \subseteq S_4$, let $\poly_4(G)$ be the limit points of the multiset 
\[
	\{(\log_{\Delta}\lambda_1(R),\log_{\Delta}\lambda_2(R),\log_{\Delta}\lambda_3(R),\log_{\Delta}\lambda_1(C),\log_{\Delta}\lambda_2(C))\} \subseteq \RR^{5}
\]
as we range over isomorphism classes of pairs $(R,C)$ where $R$ is a $G$-ring. Define $\poly_4$ to be $\cup_G \poly_4(G)$ as we range over all permutation groups $G \subseteq S_4$.     
\end{definition}

As the notation implies, $\poly_4(G)$ is a finite union of polytopes (\cref{containmentlemma}). The inequalities defining $\poly_4(S_4)$, $\poly_4(D_4)$, and $\poly_4$ are given in \cref{s4polytopethm}, \cref{d4polytopethm}, and \cref{c1polytopethm}. 

\subsubsection{Defining the density functions}

\begin{definition}
For any $p = (p_1,p_2,p_3,q_1,q_2) \in \RR^5$, any positive real number $\eps$, any $X \in 
\RR_{>1}$, and any permutation group $G \subseteq S_4$, let $\cF(p,\eps,G,X)$ be the set of isomorphism classes of pairs $(R,C)$ such that $R$ is a $G$-ring, $\Delta \leq X$, and
\begin{align}
\label{condquartic}
\max_{i = 1,2,3}\{\lv \log_{\Delta}\lambda_i(R) - p_i \rv\} &\leq \frac{\eps}{\log X} \\
\label{condquartic2}
\max_{i = 1,2}\{\lv \log_{\Delta}\lambda_i(C) - q_i \rv\} &\leq \frac{\eps}{\log X}.
\end{align}
\end{definition}

\begin{definition}
Let $d_{\eps,G} \colon \RR^5 \rightarrow \RR \cup \{\ast\}$ be the function which assigns to $p \in \RR^5$ the value 
\[
	\lim_{X \rightarrow \infty}\frac{\log (\#\cF(p,\eps,G,X)+1)}{\log X}
\]
if it exists and else assigns $\ast$. Call $d_{\eps,G}$ the density function. Let $d_{M,4}$ be the density function where we don't impose a Galois condition.
\end{definition}

\begin{figure}[!htb]
\minipage{0.32\textwidth}
 \includegraphics[width=\linewidth]{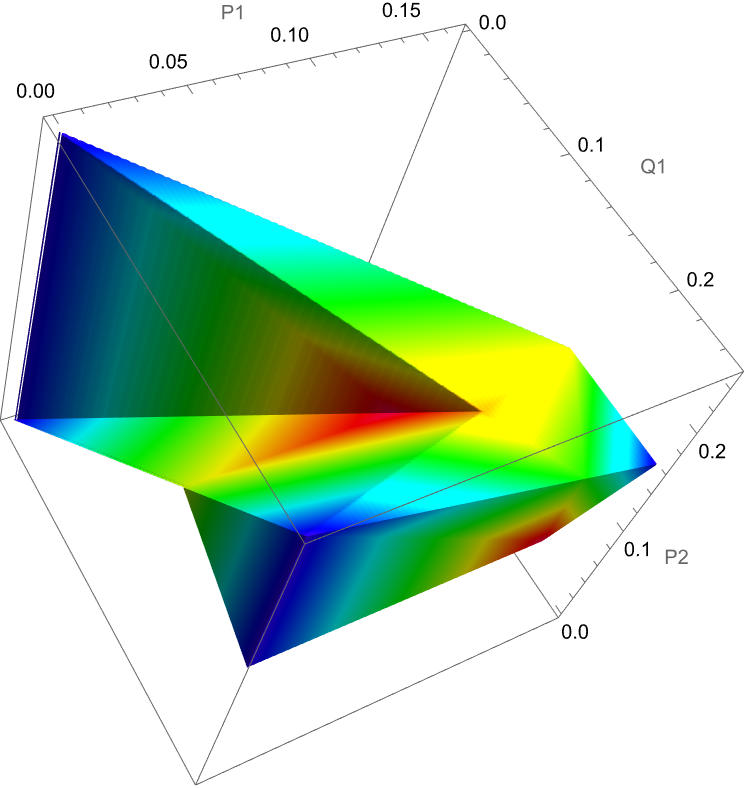}
 \caption{$\poly_4$ projected onto $p_1$, $p_2$, and $q_1$. The color indicates the value of the density function $d_{\eps,4}$; blue is the highest value.}\label{fig:awesome_image1}
\endminipage\hfill
\minipage{0.32\textwidth}
 \includegraphics[width=\linewidth]{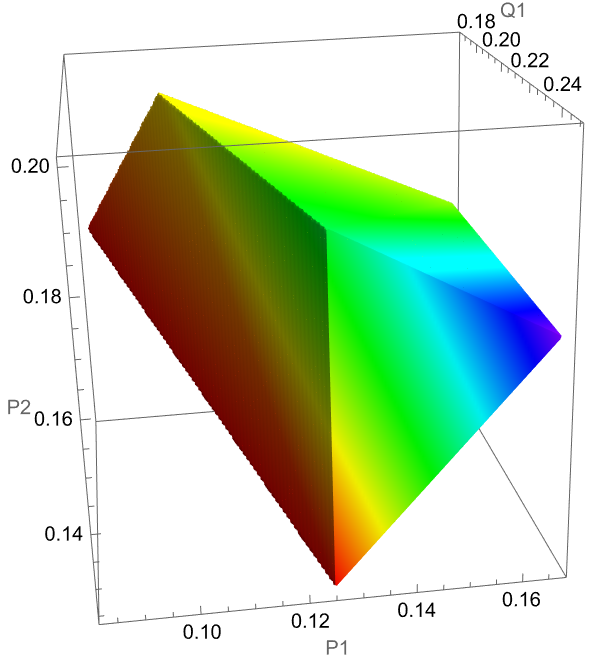}
 \caption{$\poly_4(S_4)$ projected onto $p_1$, $p_2$, and $q_1$. The color indicates the values of the density function $d_{\eps,S_4}$; blue is the highest value.}\label{fig:awesome_image2}
\endminipage\hfill
\minipage{0.32\textwidth}%
 \includegraphics[width=\linewidth]{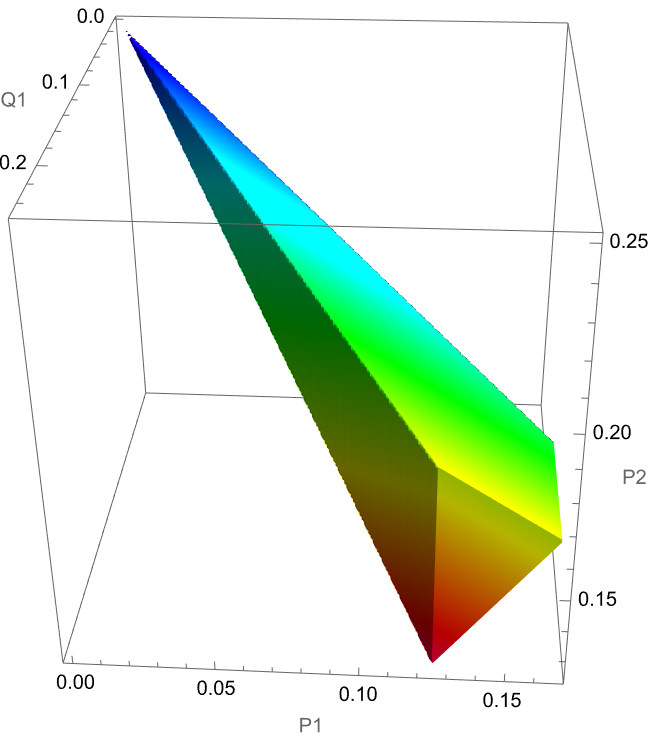}
 \caption{$\poly_4(D_4)$ projected onto $p_1$, $p_2$, and $q_1$. The color indicates the value of the lower bound on the density function $d_{\eps,D_4}$ given in \cref{d4densitythm}; blue is the highest value.}\label{fig:awesome_image3}
\endminipage
\end{figure}

\subsubsection{Theorem statements}

\begin{restatable}{theorem}{fulldensitythmquartic}
\label{fulldensitythmquartic} There exists a positive real number $\MO$ such that for every $\eps > \MO$ and every $p \in \RR^5$,
\[
    d_{\eps,4}(p)= 
\begin{cases}
    1 - (p_3 - p_2) - (p_3 - p_1) - (p_2 - p_1) - (q_2 - q_1) & \text{if $p \in \poly_4$ } \\
    \;\; \; +\sum_{i \leq j,k}\max\{0, q_k-p_i-p_j\} &  \\
    0              & \text{else.}
\end{cases}
\]
\end{restatable}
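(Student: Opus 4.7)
The plan is to adapt the geometry-of-numbers strategy described in the introduction for the cubic case. By Bhargava's parametrization, pairs $(R,C)$ equipped with a basis $(\alpha_1,\alpha_2,\alpha_3)$ of $R/\ZZ$ and a basis $(\beta_1,\beta_2)$ of $C/\ZZ$ are in bijection with nondegenerate $v=(A,B)\in V_\ZZ:=\ZZ^2\otimes\Sym^2\ZZ^3$ (pairs of integral ternary quadratic forms). Under this bijection, the entry $v_{ijk}$ (with $i\leq j\in\{1,2,3\}$ and $k\in\{1,2\}$) records the coefficient of $\beta_k$ in the reduction of $\alpha_i\alpha_j$ modulo $\ZZ+\ZZ\alpha_1+\ZZ\alpha_2+\ZZ\alpha_3$. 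If the basis is chosen ``short''---meaning $|\alpha_i|\asymp\Delta^{p_i}$ and $|\beta_k|\asymp\Delta^{q_k}$---the triangle inequality yields the size bound $|v_{ijk}|\lesssim\Delta^{p_i+p_j-q_k}$.

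The first step is to establish a correspondence (up to bounded multiplicity) between nondegenerate tuples $(R,C,\text{short basis})$ of discriminant $\leq X$ with successive minima in the $\eps/\log X$-neighborhood of $(p,q)$ and integral points of the box
\[
B_{p,q,X}\;:=\;\prod_{\substack{i\leq j\\k\in\{1,2\}}}\bigl[-c\,X^{p_i+p_j-q_k},\;c\,X^{p_i+p_j-q_k}\bigr]
\]
satisfying $|\Delta(v)|\leq X$. The forward direction is the size bound above. The reverse direction demands that a generic $v\in V_\ZZ\cap B_{p,q,X}$ actually yield a pair $(R,C)$ with $|\Delta|\asymp X$ and successive minima near $(p,q)$; pathological $v$'s (degenerate, with $|\Delta(v)|\ll X$, or $\GL_2(\ZZ)\times\SL_3(\ZZ)$-equivalent to an even more skew representative) must be shown to contribute negligibly. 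The polytope $\poly_4$ will be characterized exactly as the locus of $(p,q)$ for which $B_{p,q,X}$ supports many nondegenerate $v$ with the right discriminant; outside $\poly_4$ the box forces degeneracy, yielding the $0$ branch.

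Given the correspondence, counting integral points of $B_{p,q,X}$ is elementary (or via Davenport's lemma):
\[
\prod_{\substack{i\leq j\\k}}\max\{1,\,X^{p_i+p_j-q_k}\}\;=\;X^{\sum_{i\leq j,k}\max\{0,\,p_i+p_j-q_k\}}.
\]
Using the Minkowski-forced identities $p_1+p_2+p_3=1/2$ and $q_1+q_2=1/2$, a short calculation yields $\sum_{i\leq j,k}(p_i+p_j-q_k)=1$, and the exponent rearranges as $1+\sum_{i\leq j,k}\max\{0,\,q_k-p_i-p_j\}$. The last step is to compute the over-counting factor---the number of short bases a single $(R,C)$ admits. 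In the cusp this is governed by the count of lower-unitriangular elements of $\SL_3(\ZZ)$ and $\GL_2(\ZZ)$ whose $(i,j)$-th entry (resp.\ $(k,l)$-th entry) is an integer bounded in absolute value by $X^{p_i-p_j}$ for $i>j$ (resp.\ $X^{q_k-q_l}$ for $k>l$); this yields $X^{(p_2-p_1)+(p_3-p_1)+(p_3-p_2)+(q_2-q_1)}$, exactly the subtracted terms in the theorem.

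The principal obstacle is the reverse direction of the correspondence: bounding the contribution of $v\in V_\ZZ\cap B_{p,q,X}$ that are degenerate, have $|\Delta(v)|$ much smaller than $X$, or whose corresponding pair $(R,C)$ has successive minima strictly deeper in the cusp than $(p,q)$. Handling these requires inclusion-exclusion over sub-boxes that force vanishing subminors of $(A,B)$ together with sharp bounds on low-discriminant loci, and it is precisely in this step that the explicit piecewise-linear structure of $d_{\eps,4}$ and the boundary of $\poly_4$ must be verified. Combining the lattice-point count with the over-counting factor---and summing trivially over $|\Delta(v)|\leq X$, which contributes only an inconsequential $\log$ factor---then produces the asserted density.
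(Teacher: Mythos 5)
Your overall strategy matches the paper's: box the parametrizing pair $(A,B) \in V_\ZZ$ according to the predicted sizes $X^{p_i+p_j-q_k}$, count lattice points by Davenport's lemma, and divide by an over-counting factor $X^{(p_2-p_1)+(p_3-p_1)+(p_3-p_2)+(q_2-q_1)}$ measuring the number of admissible $G(\ZZ)$-translates. Your exponent bookkeeping (using $\sum_{i\leq j}(p_i+p_j)=2$ and $q_1+q_2=1/2$ to pass from $\sum\max\{0,p_i+p_j-q_k\}$ to $1+\sum\max\{0,q_k-p_i-p_j\}$) is also correct. However, there are two substantive problems.

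First, your stated model of the bijection is wrong: the entries $a_{ij},b_{ij}$ of $(A,B)$ do \emph{not} record the coefficient of $\beta_k$ in the product $\alpha_i\alpha_j$ inside $R$ (which in any case lands in $R$, not $C$). They record the cubic resolvent map $\varphi\colon R/\ZZ\to C/\ZZ$ expressed in the chosen bases, so $a_{ij}$ is the $\beta_1$-coefficient of $\varphi(\alpha_i\otimes\alpha_j)$ and $b_{ij}$ the $\beta_2$-coefficient. The size bound you want then follows from bilinearity of $\varphi$ together with the dual-coordinate estimate for a Minkowski basis; this is the content of the paper's Lemma~\ref{matrixbound}. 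Your ``triangle inequality'' sketch lands on the right estimate, but via an incorrect description of the parametrization, and the incorrect description would mislead you in the companion lemma (Lemma~\ref{quarticboundlemma}) where one genuinely \emph{does} use the multiplication table --- but the structure constants of $R$ are quadratic in $a_{ij},b_{ij}$, not equal to them.

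Second and more importantly, you misidentify the ``principal obstacle.'' No inclusion--exclusion over degeneracy loci or low-discriminant strata is required, and the boundary of $\poly_4$ is not pinned down this way. The mechanism the paper uses is Lemma~\ref{lambdaboundlemmaquartic} (fed by Lemma~\ref{quarticboundlemma} and the appendix's Lemma~\ref{generalboundlemma}): \emph{every} $(A,B)\in B(p,X)\cap V_\ZZ$ with $\Delta\geq X/2$ automatically satisfies $\lambda_i(R)\asymp\Delta^{p_i}$ and $\lambda_i(C)\asymp\Delta^{q_i}$, with no exceptional set at all. The lower bound then simply restricts to the subset with $X/2\leq\Delta\leq X$, which is a positive proportion of the box. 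Separately, the over-counting factor requires a \emph{two-sided} estimate: a lower bound on the number of $G(\ZZ)$-translates inside a dilated box (your lower-unitriangular count, Lemma~\ref{ublemma2}) to get the upper bound on $\#\cF_4$, and an independent upper bound (Lemma~\ref{reducedquarticcorol}, which compares the change-of-basis coefficients between the explicit bases coming from $(A,B)$ and $(A',B')$) to get the lower bound on $\#\cF_4$. You describe only the lower-bound side of the over-counting, and without the other half the argument does not close. The description of $\poly_4$ is handled separately (Theorem~\ref{quarticstructuretheorem}) by a flag construction; it is an input, not something that emerges from the lattice-point count.
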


\begin{restatable}{theorem}{sfourdensitythm}
\label{s4densitythm} There exists a positive real number $\MO$ such that for every $\eps > \MO$ and every $p \in \RR^5$ we have
\[
    d_{\eps,S_4}(p) = 
\begin{cases}
    d_{\eps,4}(p) & \text{if $p \in \poly_4(S_4)$ } \\
    0              & \text{else.}
\end{cases}
\]
\end{restatable}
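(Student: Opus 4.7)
The plan is to establish the stated equality by matching a pair of bounds, exploiting that \cref{fulldensitythmquartic} already furnishes the total count over all Galois groups. One direction, $d_{\eps,S_4}(p) \leq d_{\eps,4}(p)$, is immediate from the set-theoretic inclusion $\cF(p,\eps,S_4,X) \subseteq \cF(p,\eps,4,X)$. For the reverse inequality I would decompose the total count as a finite disjoint union
\[
    \cF(p,\eps,4,X) = \bigsqcup_{G \subseteq S_4} \cF(p,\eps,G,X)
\]
over conjugacy classes of permutation subgroups, and show that every summand with $G \neq S_4$ contributes a strictly smaller exponent in $X$ on $\poly_4(S_4)$.

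For points $p \notin \poly_4(S_4)$ the conclusion $d_{\eps,S_4}(p) = 0$ is essentially tautological: by definition $p$ is not a limit point of the multiset that cuts out $\poly_4(S_4)$, so there is a Euclidean neighborhood of $p$ meeting that multiset in only finitely many points. Once $X$ is large enough that the $O(1/\log X)$-slab defined by \eqref{condquartic}--\eqref{condquartic2} lies inside this neighborhood, the set $\cF(p,\eps,S_4,X)$ stabilises, and its log-count divided by $\log X$ tends to zero.

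The substantive content is the matching lower bound on $\poly_4(S_4)$. For each proper conjugacy class $G \subsetneq S_4$ I would exhibit $\delta_G > 0$ with
\[
    \#\cF(p,\eps,G,X) \ll_{p,\eps} X^{d_{\eps,4}(p)-\delta_G}
\]
uniformly for $p \in \poly_4(S_4)$. The cases split along algebraic lines: intransitive $G$ force $R$ to be reducible as an étale $\QQ$-algebra, which in Bhargava's parametrisation means the pair of ternary quadratic forms lies on a proper Zariski-closed subvariety of $\ZZ^2 \otimes \Sym^2 \ZZ^4$; the transitive proper subgroups $V_4$, $C_4$, and $D_4$ all force the resolvent $C$ to have reducible generic fibre, which is another algebraic constraint of positive codimension; and the remaining case $G = A_4$ forces $\Delta$ to be a perfect square, paid for by a standard squarefree-sieve argument. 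In each case, rerunning the box-plus-Davenport-lemma count of \cref{fulldensitythmquartic} with the extra constraint drops the exponent by a positive amount.

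The main obstacle is the uniformity of these bounds as $p$ varies through the cusp region $\poly_4(S_4)$. Global counts of non-$S_4$ orders of bounded discriminant are often of the same order as $S_4$-orders (for instance, the $D_4$-order count of Cohen--Diaz y Diaz--Olivier matches the $S_4$-order count up to constants), so one cannot conclude from totals alone that non-$S_4$ contributions are lower order inside a cusp slab of width $O(1/\log X)$. The cleanest route is to repeat the lattice-point count from the proof of \cref{fulldensitythmquartic} with each additional algebraic cut imposed, verify Davenport's lemma still applies, and check that the codimension gap is bounded below by a positive constant uniformly over $p \in \poly_4(S_4)$.
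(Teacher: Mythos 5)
Your proposal and the paper diverge on the crucial lower bound, and your route runs into a real obstruction that the paper's approach sidesteps entirely.

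The upper bound and the $p \notin \poly_4(S_4)$ case are fine. The problem is your plan to prove $d_{\eps,S_4}(p) \geq d_{\eps,4}(p)$ by showing every proper $G \subsetneq S_4$ satisfies $\#\cF(p,\eps,G,X) \ll X^{d_{\eps,4}(p) - \delta_G}$ uniformly on $\poly_4(S_4)$. That estimate is \emph{false} on part of $\poly_4(S_4)$. Compare with \cref{d4densitythm}(3): the $D_4$ lower bound is $1-g(p)+(q_2-2p_1)+(q_2-p_1-p_2)+(q_2-p_1-p_3)$, while on $\poly_4(S_4)$ (where $q_1\le 2p_1$, $q_2\le 2p_2$, $q_2\le p_1+p_3$) the total density simplifies to $1-g(p)+\max\{0,q_2-2p_1\}+\max\{0,q_2-p_1-p_2\}$. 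On the facet $q_2=p_1+p_3$ with $q_2\ge 2p_1$ and $q_2\ge p_1+p_2$, these two expressions coincide, so $d_{\eps,D_4}(p)=d_{\eps,4}(p)$ there. There is no $\delta_{D_4}>0$, and the decomposition-and-subtraction argument cannot conclude that $S_4$ dominates; it could equally well be $D_4$ producing the main term. So the approach has a genuine gap, and no amount of ``codimension bookkeeping'' patches it, because the $D_4$ stratum is genuinely not of lower order on that facet.

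The paper instead proves the lower bound directly (\cref{lowerboundquartic}(2)): inside the box $DB(p,X)$ it imposes finitely many congruence conditions (discriminant $\equiv 5 \pmod{25}$, $R$ irreducible mod $7$, $C$ irreducible mod $11$) that force $\Gal = S_4$, checks that a single lattice point of the required shape exists so that the congruence conditions cut out a positive proportion of the box, and then runs the same Davenport count. This constructs enough $S_4$-rings without ever needing to understand the non-$S_4$ strata, which is exactly what makes it robust on the facets where your subtraction approach would stall. If you want to salvage your approach you would need to compute (not just lower-bound) $d_{\eps,G}$ for every proper $G$ on all of $\poly_4(S_4)$ and then separately prove $d_{\eps,S_4}(p) \geq d_{\eps,G}(p)$ by some other means on the tie facets; the direct construction is far simpler.
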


\begin{restatable}{theorem}{dfourdensitythm}
\label{d4densitythm} There exists a positive real number $\MO$ such that for every $\eps > \MO$ and every $p \in \RR^5$, the following statements hold. 
\begin{enumerate}
\item If $p \in \poly_4(D_4)\setminus \poly_4(S_4)$, then
\begin{align*}
d_{\eps,D_4}(p) = d_{\eps,4}(p) =& \; 1 - (p_3 - p_2) - (p_3 - p_1) - (p_2 - p_1) - (q_2 - q_1) \\
& + (q_2-2p_1) + (q_2-p_1-p_2) + (q_2-p_1-p_3). 
\end{align*}
\item If $p \notin \poly_4(D_4)$, then $d_{\eps,D_4}(p) = 0$. 
\item If $p \in \poly_4(D_4)$, then
\begin{align*}
\liminf_{X\rightarrow \infty} \frac{\log(\#\cF(p,\eps,D_4,X)+1)}{\log X} &\geq 1 - (p_3 - p_2) - (p_3 - p_1) - (p_2 - p_1) - (q_2 - q_1)\\
 &\;\;\;\;\; \;\;\;\; + (q_2-2p_1) + (q_2-p_1-p_2) + (q_2-p_1-p_3)
\end{align*}
\end{enumerate}
\end{restatable}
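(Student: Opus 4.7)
The overall strategy adapts the lattice-point counting from the proof of \cref{fulldensitythmquartic} to the $D_4$ case, using the algebraic fact that a $D_4$-quartic field $L$ contains a unique quadratic subfield $K$ and that the cubic resolvent algebra of $L$ is $\QQ\times K$. Thus for any order $R$ in a $D_4$-field, the cubic resolvent ring $C$ satisfies $C\otimes\QQ\cong\QQ\times K$; correspondingly $C$ is reducible and lies near the cusp point $(0,1/2)$ in the Delone--Faddeev picture, which matches the inequalities that cut out $\poly_4(D_4)$. Under Bhargava's parametrization of pairs $(R,C)$ by $\GL_2(\ZZ)\times\SL_3(\ZZ)$-orbits in $\ZZ^2\otimes\Sym^2\ZZ^3$, reducibility of $C$ corresponds to the pair of integral ternary quadratic forms $(A,B)$ sharing a common rational isotropic vector; after an $\SL_3(\ZZ)$ change of basis one may take this vector to be $e_1$, so that both $A$ and $B$ vanish on $e_1$. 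This cuts out the linear subspace $V_{D_4}\subset\ZZ^2\otimes\Sym^2\ZZ^3$ that will be the counting space.

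Part (2) is essentially definitional: if $p\notin\poly_4(D_4)$, there is a Euclidean ball $B(p,\delta)$ meeting only finitely many successive-minima vectors of $D_4$-ring pairs $(R,C)$, and for $X$ large enough that $\eps/\log X<\delta$ the set $\cF(p,\eps,D_4,X)$ is bounded, so its density is $0$.

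Part (3) is the main work. I would (i) adapt the short-basis construction from the proof of \cref{fulldensitythmquartic}, choosing bases of $R$ and $C$ that single out the $\ZZ$-summand of $C\cong\ZZ\times D$, and encode the successive-minima conditions together with $\Delta\leq X$ as a box $\Omega\subset V_{D_4}\otimes\RR$; (ii) apply Davenport's lemma to count integral lattice points in $\Omega\cap V_{D_4}$; (iii) divide by the (uniformly bounded) number of short bases per isomorphism class. The inequalities defining $\poly_4(D_4)$ (cf.\ \cref{d4polytopethm}), in particular the reducibility constraints on $C$, ensure that the relevant side-lengths of $\Omega$ are nontrivial, and the three terms $(q_2-2p_1)+(q_2-p_1-p_2)+(q_2-p_1-p_3)$ arise precisely as the surviving $\max\{0,q_k-p_i-p_j\}$ contributions in this region.

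For part (1), the upper bound $d_{\eps,D_4}(p)\leq d_{\eps,4}(p)$ is immediate from the inclusion of counting sets combined with \cref{fulldensitythmquartic}; combined with the liminf from part (3), it yields $d_{\eps,D_4}(p)=d_{\eps,4}(p)$. One still needs to verify that on $\poly_4(D_4)\setminus\poly_4(S_4)$ the formula in \cref{fulldensitythmquartic} collapses to these three explicit correction terms, which amounts to reading off the inequalities $q_k-p_i-p_j\leq 0$ for the remaining triples from the defining inequalities of $\poly_4(D_4)$. \emph{The main obstacle} is separating $D_4$-rings from other Galois classes with reducible cubic resolvent, primarily $C_4$-rings and pairs $(R,C)$ with $R\otimes\QQ$ itself reducible. $C_4$-rings are distinguished from $D_4$-rings by an additional rational square condition and are thus parametrized by a subvariety of strictly smaller dimension, contributing only a lower-order term; pairs with $R\otimes\QQ$ reducible are cut out analogously by lower-dimensional conditions inside $V_{D_4}$. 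Hence the generic lattice point in $\Omega$ corresponds to a genuine $D_4$-ring, and these thin strata do not affect the exponent in part (3).
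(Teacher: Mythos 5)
Your broad strategy (restrict to a linear subspace of $V_\ZZ$ tailored to $D_4$, count lattice points in a box, correct for repeated short bases, appeal to \cref{d4polytopethm} for part (2), and combine with \cref{upperboundquartic} for part (1)) mirrors the paper's route via \cref{lowerboundquartic}(3). However, there is a genuine error in the key step: your normal form for the $D_4$-locus is wrong.

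You claim that reducibility of the cubic resolvent $C$ corresponds to $A$ and $B$ ``sharing a common rational isotropic vector'', and after $\SL_3(\ZZ)$ you normalize this vector to $e_1$ so that ``both $A$ and $B$ vanish on $e_1$'', i.e.\ $a_{11}=b_{11}=0$. But $a_{11}=b_{11}=0$ says that $[1:0:0]$ lies on both conics $Q_A$ and $Q_B$, which is precisely the condition that the \emph{quartic} algebra $R\otimes\QQ$ (not $C\otimes\QQ$) is reducible — see the proof of \cref{d4polytopethm}. These are reducible quartic rings, not $D_4$-rings. Reducibility of the cubic resolvent instead corresponds to the binary cubic form $4\det(Bx-Ay)$ having a rational root; after a $\GL_2(\ZZ)$ transformation one may put that root at $(1,0)$, i.e.\ $\det B=0$, and after $\SL_3(\ZZ)$ one may put the radical vector of $B$ at $e_1$, giving $b_{11}=b_{12}=b_{13}=0$. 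This is the normal form used in \cref{lowerboundquartic}(3), and it differs from yours (three linear conditions on $B$ alone versus one condition each on $A$ and $B$). Running your plan with $a_{11}=b_{11}=0$ yields the asymptotics for reducible quartic pairs, not for $D_4$-rings; the exponent you would obtain would not match the claimed formula.

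Two smaller points. First, the number of ``short'' integral representatives $(A',B')$ per isomorphism class is not uniformly bounded: it is $\asymp X^{g(p)}$ with $g(p)=(p_3-p_2)+(p_3-p_1)+(p_2-p_1)+(q_2-q_1)$ (\cref{ublemma2} and \cref{reducedquarticcorol}), and dividing by this growing quantity is exactly what produces the subtractive terms in the density; as written, your step (iii) discards them. Second, to see that a positive proportion of lattice points in the corrected subspace genuinely have Galois group $D_4$ (as opposed to $C_4$, $V_4$, or giving reducible quartics) one needs more than a dimension count: $C_4$ versus $D_4$ is a rational-square (thin-set) condition rather than a proper algebraic subvariety, and the paper handles this via quantitative Hilbert irreducibility (Theorem~2.1 of \cite{cohen}). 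You will need some such quantitative input; the heuristic ``lower-dimensional subvariety'' argument does not literally apply.
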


\begin{figure}[!htb]
\minipage{0.48\textwidth}
 \includegraphics[width=\linewidth]{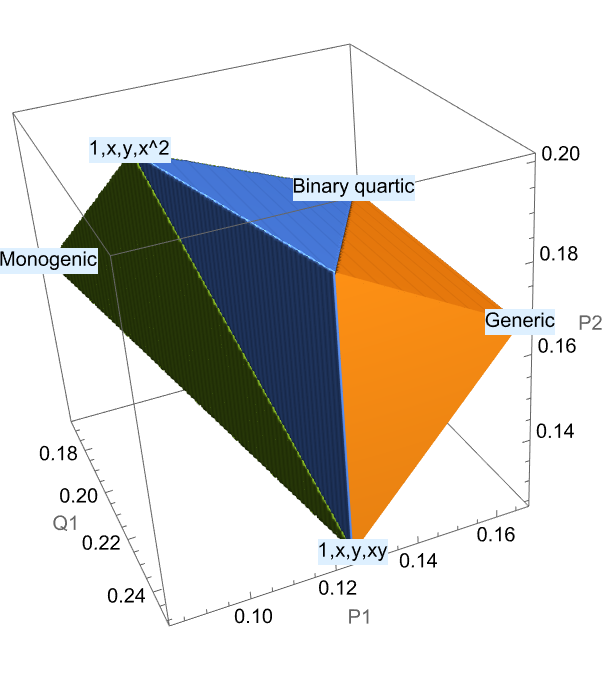}
 \caption{$\poly_4(S_4)$ projected onto $p_1$, $p_2$, and $q_1$. The colored regions are the regions on which $d_{\eps,S_4}$ is linear. The vertices are labelled by the family of rings ``corresponding'' to them.}\label{fig:awesome_image1}
\endminipage\hfill
\minipage{0.48\textwidth}
 \includegraphics[width=\linewidth]{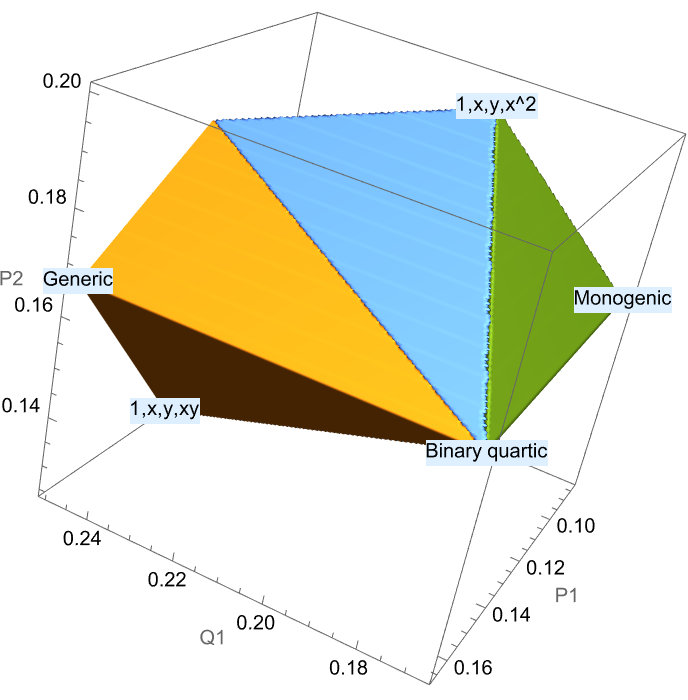}
 \caption{The other side of $\poly_4(S_4)$ projected onto $p_1$, $p_2$, and $q_1$.}\label{fig:awesome_image2}
\endminipage
\end{figure}

\subsubsection{Observations}
First, as in the cubic case and quintic case, $d_{\eps,4}$ has a very particular structure; it is $1$ minus piecewise differences of the coordinates (the terms $p_j-p_i$ and $q_j-q_i$) plus correction terms. 

Next, as in the cubic case, we observe \emph{algebraic} phenomena at the vertices of these convex bodies. In particular, $\poly_4(S_4)$ is a polytope with $6$ vertices. We give natural interpretations for $5$ of these $6$ vertices:
\begin{enumerate}
\item $S_4$-rings ``correspond'' to $(1/6,1/6,1/6,1/4,1/4)$ by \cref{genericgaloisgroupscorollary};
\item monogenic quartic rings  ``correspond'' to $(1/12,1/6,1/4,1/6,1/3)$ by \cref{monogenicquartictheorem};
\item quartic rings arising from binary quartic forms ``correspond'' to
\[
	(1/6,1/6,1/6,1/6,1/3)
\] 
by \cref{binaryquartictheorem};
\item quartic rings with a basis of the form $1,x,y,xy$ with a pairing condition ``correspond'' to $(1/8,1/8,1/4,1/4,1/4)$ by \cref{1xyxythm};
\item and quartic rings with a basis of the form $1,x,y,x^2$ ``correspond'' to $(1/10,1/5,1/5,1/5,3/10)$ by \cref{1xyx2thm}.
\end{enumerate}

The vertex missing from this list is $(1/8,3/16,3/16,1/4,1/4)$; it would be interesting to have a family of quartic rings naturally corresponding to this vertex! Moreover, one can show that $D_4$-rings ``correspond'' to $(0,1/4,1/4,0,1/2)$ by \cref{genericgaloisgroupscorollary}, and this point is the unique vertex of $\poly_4(D_4)$ that is not a vertex of $\poly_4(S_4)$.

\subsection{Results on quintic rings}
\subsubsection{Setup}
Consider pairs $(R,S)$ where $R$ is a nondegenerate quintic ring equipped with a sextic resolvent ring $S$; see \cref{quinticbackground} for background. Let $\Delta$ be the absolute discriminant of $R$. We proceed as in the quartic case; we first define certain regions that will be the support of our density functions, then compute our density functions, and then discuss special families of quintic rings. 

\begin{definition}
For a permutation group $G \subseteq S_5$, define $\poly_5(G)$ to be the limit points of the multiset 
\[
	\{(\log_{\Delta}\lambda_1(R),\dots,\log_{\Delta}\lambda_4(R),\log_{\Delta}\lambda_1(S),\dots,\log_{\Delta}\lambda_5(S))\} \subseteq \RR^{9}
\]
in the Euclidean metric as we range over all pairs $(R,S)$ where $R$ is a $G$-ring.    
\end{definition}

The region $\poly_5(S_5)$ is a polytope (\cref{s5polytopetheorem}). For a positive number $\eps$ and a permutation group $G \subseteq S_5$, define the density functions $d_{\eps,G}$,  and $d_{\eps,5}$ analogously to the quartic case; in this case they are functions from $\RR^9$ to $\RR \cup \{\ast\}$; see \cref{quintic} for more information. Let $p = (p_1,\dots,p_4,q_1,\dots,q_5) \in \RR^9$ denote a point.

\begin{figure}[!htb]
\captionsetup{width=.3\linewidth}
\minipage{0.32\textwidth}
 \includegraphics[width=\linewidth]{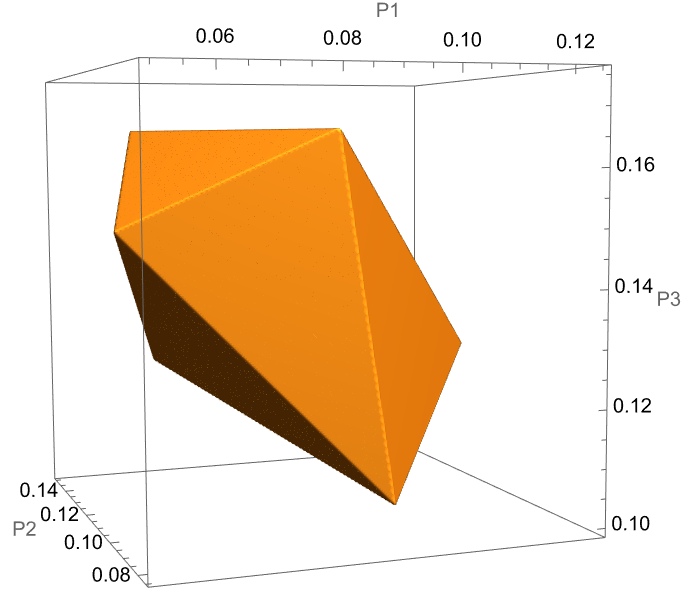}
 \caption{$\poly_5(S_5)$ projected onto $p_1$, $p_2$, and $p_3$.}\label{fig:awesome_image1}
\endminipage\hfill
\minipage{0.32\textwidth}
 \includegraphics[width=\linewidth]{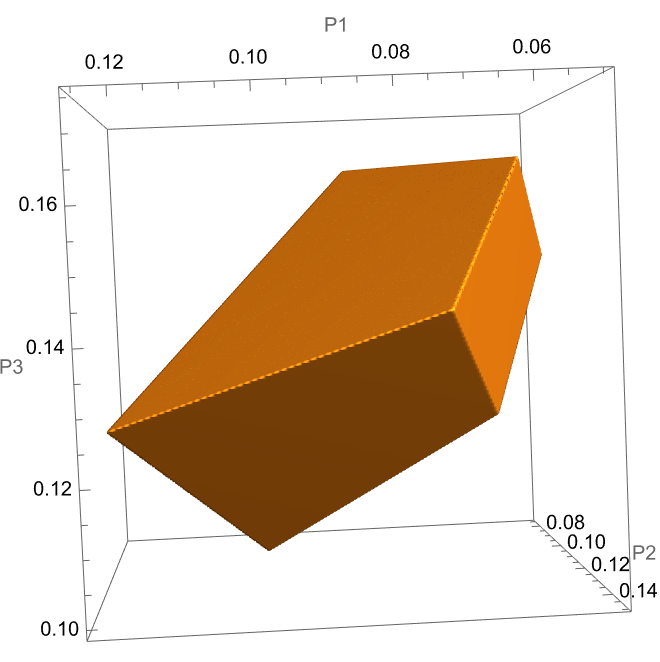}
 \caption{The other side of $\poly_5(S_5)$ projected onto $p_1$, $p_2$, and $p_3$.}\label{fig:awesome_image2}
\endminipage \hfill
\minipage{0.32\textwidth}
\includegraphics[width=\linewidth]{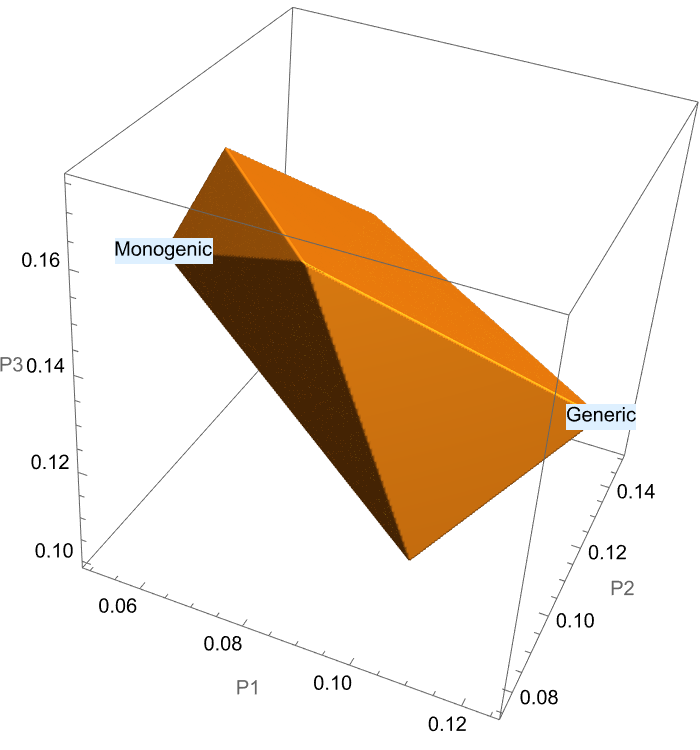}
 \caption{$\poly_5(S_5)$ projected to $p_1$, $p_2$, and $p_3$. The image of the projection of the vertex corresponding to $S_5$-rings is marked ``generic''. The projection of the vertex corresponding to monogenic rings is marked ``monogenic''.}
\endminipage

\end{figure}
\subsubsection{Theorem statements}
\begin{restatable}{theorem}{quintictotaldensity}
\label{quintictotaldensity}
There exists a positive real number $\MO$ such that for every $\eps > \MO$ and every $p \in \RR^9$,
\[
    d_{\eps,5}(p) =
\begin{cases}
    1 - \sum_{1 \leq i<j\leq 4}(p_j - p_i) - \sum_{1 \leq i<j\leq 5}(q_j - q_i) & \text{if $p \in \poly_5$ } \\
    \;\; \; +\sum_{i<j,k}\max\{0, -1/2+q_i+q_j-p_k\}&  \\
    0              & \text{else.}
\end{cases}
\]
\end{restatable}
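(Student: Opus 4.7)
The plan is to adapt the geometry-of-numbers approach of \cref{totdensitycubic} and \cref{fulldensitythmquartic}, now working through Bhargava's parametrization of pairs $(R,S)$, where $R$ is a nondegenerate quintic ring and $S$ a sextic resolvent, as $(\GL_4(\ZZ)\times\GL_5(\ZZ))$-orbits on the $40$-dimensional integral representation $\ZZ^4\otimes\wedge^2\ZZ^5$. First I would establish a correspondence, up to a bounded multiplicative error, between integer tensors $v\in\ZZ^4\otimes\wedge^2\ZZ^5$ and quadruples (pair $(R,S)$, Minkowski-reduced basis of $R/\ZZ$, Minkowski-reduced basis of $S/(\Tr\cdot\ZZ)$) for which the successive minima of $R$ and $S$ are $\eps/\log X$-close to $p$ and $\disc(R)\leq X$. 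Second, count the integer points in the resulting explicit box via Davenport's lemma. Third, divide by the number of reduced basis pairs per ring.

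For the box: fixing reduced bases $1,e_1,\dots,e_4$ of $R$ and $1,f_1,\dots,f_5$ of $S$ with $|e_i|\asymp\lambda_i(R)$ and $|f_j|\asymp\lambda_j(S)$, the coordinate $c_{k,ij}$ of $v$ (indexed by $k\in\{1,\dots,4\}$ and $1\leq i<j\leq 5$) represents $v$ viewed as a map $\wedge^2(S/(\Tr\cdot\ZZ))\to R/\ZZ$, and satisfies $|c_{k,ij}|\ll\lambda_i(S)\lambda_j(S)/(\lambda_k(R)\sqrt{\Delta})$. The $\sqrt{\Delta}$ normalization appears because both $R$ and $S$ have covolume $\asymp\sqrt{\Delta}$ and the natural map has archimedean operator norm controlled by the reciprocal covolume. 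In log-$\Delta$ units this box has side lengths $\max\{1,\Delta^{q_i+q_j-p_k-1/2}\}$, producing exactly the $40$ correction terms $\max\{0,-1/2+q_i+q_j-p_k\}$ of the theorem. By Davenport's lemma, for fixed $\Delta$ the box contains $\asymp\Delta^{\sum_{k,i<j}\max\{0,q_i+q_j-p_k-1/2\}}$ integer points, and summing over $\disc\leq X$ contributes the leading ``$1$'' factor in the exponent.

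Each pair $(R,S)$ appears as $\asymp\Delta^{\sum_{1\leq i<j\leq 4}(p_j-p_i)+\sum_{1\leq i<j\leq 5}(q_j-q_i)}$ integer tensors in the box, coming from the lower-triangular unipotent changes of basis respecting the successive-minima filtrations on $R/\ZZ$ and $S/(\Tr\cdot\ZZ)$. Dividing the integer-point count by this basis count yields the claimed exponent. The vanishing for $p\notin\poly_5$ is immediate from the definition of $\poly_5$ as the Euclidean closure of realized successive-minima vectors: for $X$ sufficiently large the $\eps/\log X$-neighborhood of such a $p$ contains no realized vector, so the counting set is empty.

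The main technical challenge is promoting the heuristic coordinate bound $|c_{k,ij}|\asymp\lambda_i(S)\lambda_j(S)/(\lambda_k(R)\sqrt\Delta)$ to a rigorous two-sided estimate that is sharp uniformly as $p$ varies over $\poly_5$, including deep in the cusp; this requires reduction theory for Bhargava's quintic representation analogous to the quartic case. One must simultaneously rule out degenerate $v$'s, so that essentially every integer point in the box comes from a genuine nondegenerate ring pair; this is handled by the standard Bhargava-type sieve on discriminants separating reducible, non-maximal, or degenerate loci. Once these are in place, the exponent follows mechanically from the combinatorics of the $40$ piecewise-linear side-length functions.
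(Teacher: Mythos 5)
Your overall architecture matches the paper's proof of \cref{quintictotaldensity}: parametrize pairs $(R,S)$ by $(\GL_4(\ZZ)\times\GL_5(\ZZ))$-orbits on $\ZZ^4\otimes\wedge^2\ZZ^5$, box the reduced representatives, count lattice points by Davenport's lemma, and divide by the multiplicity $\exp[X, g(p)]$. That said, there are two substantive errors in the middle that the paper's \cref{succminlemma}, \cref{matrixboundquintic}, \cref{davenportquintic} handle correctly and which you would need to fix.

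First, the coordinate bound $|c_{k,ij}|\ll\lambda_i(S)\lambda_j(S)/(\lambda_k(R)\sqrt\Delta)$ has the wrong sign in every exponent; the correct bound (the paper's \cref{matrixboundquintic}) is $|a^k_{ij}|\ll \Delta^{1/2}\lambda_{5-k}(R)/(\lambda_{6-i}(S)\lambda_{6-j}(S))$, which after relabeling is the \emph{reciprocal} of what you wrote. The resolvent map is $\varphi\colon\widetilde S\otimes\widetilde S\to\widetilde R$, not a map between $S$-quotients and $R$-quotients: the coordinates of the tensor are computed in the bases $\{w_i^*\}$, $\{v_k^*\}$ of $\widetilde S,\widetilde R$, and \cref{succminlemma} shows these dual basis elements have lengths \emph{reciprocal} to $\lambda_i(S),\lambda_k(R)$. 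Moreover the archimedean operator norm of $\varphi$ is $\asymp\sqrt{\Disc(S)}/\Disc(R)\asymp\Delta^{+1/2}$ (since $\Disc(S)\asymp\Delta^3$), not $1/\sqrt\Delta$ as you asserted; your claim that ``both $R$ and $S$ have covolume $\asymp\sqrt\Delta$'' is false for $S$. These two mistakes compound into an overall sign flip. A concrete sanity check that reveals the error: the exponents $q_i+q_j-p_k-1/2$ you wrote sum to $-1$ over the $40$ coordinates, so your box would have volume $\asymp X^{-1}$, too small to contain any lattice points at all; the paper's exponents $1/2+p_{5-k}-q_{6-i}-q_{6-j}$ sum to $+1$, which is exactly what makes the leading term $X^1$ come out of Davenport's lemma.

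Second, the step ``summing over $\disc\leq X$ contributes the leading $1$'' is not a legitimate application of Davenport's lemma: the discriminant is a polynomial in the coordinates, so you cannot treat the box for scale $\Delta$ and the summation over $\Delta$ as independent. This heuristic happens to cancel your sign error numerically (both replace a factor of $X^{-1}$ by $X^{+1}$), but it does not constitute a proof. The paper instead defines \emph{one} box $B(p,X)$ whose exponents already sum to $1$, shows (\cref{lambdaboundlemmaquintic}) that lattice points with $\Delta\in[X/2,X]$ form a positive proportion of the box and correspond to pairs $(\eps,X)$-close to $p$, then applies \cref{davenportquintic} once. No dyadic summation over discriminants and no Bhargava-style maximality sieve are needed for the total density $d_{\eps,5}$; the sieve only appears in the paper for results on \emph{maximal} orders. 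The multiplicity count $\exp[X,g(p)]$ via unipotent changes of basis (\cref{ublemma3}, \cref{quinticlblemma}) and the vanishing off $\poly_5$ you state correctly, and your final exponent is the right one, but the derivation of the box volume needs to be rebuilt on the duality given by \cref{succminlemma}.
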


\begin{restatable}{theorem}{sfivedensitythm}
\label{s5densitythm}
There exists a positive real number $\MO$ such that for every $\eps > \MO$ and every $p \in \RR^9$,
\[
    d_{\eps,S_5}(p) =
\begin{cases}
    d_{\eps,5}(p) & \text{if $p \in \poly_5(S_5)$ } \\
    0              & \text{else.}
\end{cases}
\]
\end{restatable}

\subsubsection{Observations}
First, observe that the structure of $d_{\eps,5}$ is similar to that of $d_{\eps,3}$ and $d_{\eps,4}$; from $1$ we subtract piecewise differences ($\sum_{1 \leq i<j\leq 4}(p_j - p_i)$ and $\sum_{1 \leq i<j\leq 5}(q_j - q_i)$); and in the ``cusp'' we add in correction terms ($\sum_{i<j,k}\max\{0, -1/2+q_i+q_j-p_k\}$).

Next, the definitions of the density functions and $\poly_5(G)$ are purely Euclidean, but we observe \emph{algebraic} phenomena along these convex bodies. In particular:
\begin{enumerate}
\item $S_5$-rings ``correspond'' to the vertex
\[
	(1/8,1/8,1/8,1/8,3/10,3/10,3/10,3/10,3/10)
\]
of $\poly_5(S_5)$ by \cref{generics5thm};
\item monogenic quintic rings ``correspond'' to the vertex 
\[
	(1/20,2/20,3/20,4/20,4/20,5/20,6/20,7/20,8/20)
\] 
of $\poly_5(S_5)$ by \cref{monogenicquintictheorem};
\item and quintic rings arising from binary quintic forms ``correspond'' to the vertex 
\[
	(1/8, 1/8, 1/8, 1/8, 1/4, 1/4, 1/4, 3/8, 3/8)
\] 
of $\poly_5(S_5)$  by \cref{binaryquintictheorem}.
\end{enumerate}

There are $18$ vertices of $\poly_5(S_5)$; it would be interesting to give interpretations of the remaining $15$ vertices!

\subsection*{Acknowledgments}
\noindent The author is very grateful to Manjul Bhargava and Ravi Vakil for countless helpful discussions throughout this project, Jordan Ellenberg for posing the question that inspired this project, Arul Shankar for helpful discussions about sieving for maximality, Akshay Venkatesh for suggesting directions for future work, and Robert Lemke-Oliver for helpful discussions about the $D_4$-case. Thank you as well to Ravi Vakil, Hannah Larson, Anand Patel, and a very helpful referee for some comments on an earlier draft of this manuscript. The author was supported by the NSF, under the Graduate Research Fellowship and an NSF Mathematical Sciences Postdoctoral Fellowship.

\section{Setup}
\label{notation}
In this section, we introduce the setup and notation used throughout this paper. Let $n \geq 2$ be any integer. A \emph{rank $n$ ring} is a commutative unital ring that is isomorphic to $\ZZ^n$ as a $\ZZ$-module. We attach to any such ring $R$ the trace function $\Tr \colon R \rightarrow \ZZ$ which assigns to any element $\alpha \in R$ the trace of the endomorphism $R \xrightarrow{\times \alpha} R$. The discriminant $\Disc(R)$ of any such ring $R$ is defined to be the determinant $\det(\Tr(\alpha_i\alpha_j)) \in \ZZ$, where $\{\alpha_i\}_{i = 1}^n$ is any $\ZZ$-basis of $R$. We say a ring of rank $n$ is \emph{nondegenerate} if its discriminant is nonzero.  Let $R$ be a nondegenerate rank $n$ ring.

We now define the successive minima of $R$. The ring $R$ has $n$ homomorphisms into $\CC$. Denote these by $\sigma_1,\dots,\sigma_{n}$, and define a quadratic form $q \colon R \rightarrow \ZZ$ by 
\[
	q(x) \coloneqq \frac{1}{n}\sum_{i = 1}^{n} \lvert \sigma_i(x)\rvert^2
\]
Observe that $q$ is positive definite and thus makes $R$ a lattice. Thus, we obtain successive minima $\lambda_0(R),\dots,\lambda_{n-1}(R)$. What constraints exist on the successive minima of nondegenerate rank $n$ rings? By definition, we have that $0 \leq \lambda_0 \leq \lambda_1 \leq \dots \leq \lambda_{n-1}$. There are two additional constraints that are not too difficult to show.

\begin{lemma}
\label{lambdazerolemma}
For any nondegenerate rank $n$ ring $R$, we have $1/\sqrt{n} \leq \lambda_{0} \leq 1$. If $R$ is an order in a number field, then $\lambda_0 = 1$.
\end{lemma}
\begin{proof}
Note that $\lvert 1 \rvert = 1$ so $\lambda_0 \leq 1$. Suppose first that $R$ is an order in a number field. Let $\sigma_1,\dots,\sigma_{r_1}$ be the nonzero homomorphisms of $R$ into the real numbers and let $\sigma_{r_1 + 1},\overline{\sigma_{r_1 + 1}},\dots,\sigma_{r_1 + r_2},\overline{\sigma_{r_1 + r_2}}$ be the nonzero homomorphisms of $R$ into the complex numbers whose image does not lie in the real numbers. Then for any nonzero $x \in R$:
\begin{align*}
\lvert x \rvert^2 &= \frac{1}{n}\bigg(\sum_{i = 1}^{r_1} \sigma_i(x)^2 + \sum_{i = r_1 + 1}^{r_1 + r_2} 2\lvert\sigma_i(x)\rvert^2\bigg) \\
&\geq \sqrt[n]{\prod_{i = 1}^{r_1} \sigma_i(x)^2\prod_{i = r_1 + 1}^{r_1 + r_2} \lvert\sigma_i(x)\rvert^4} && \text{ by the AM-GM inequality} \\
&= \sqrt[n]{\prod_{i = 1}^{r_1} \sigma_i(x)^2\prod_{i = r_1 + 1}^{r_1 + r_2} \sigma_i(x)^2\overline{\sigma_i}(x)^2} \\
&= \lvert N_{R\otimes\QQ/\QQ}(x)\rvert^{2/n}  \\
& \geq 1,
\end{align*}
Thus, $\lambda_0 = 1$ in this case.

Now suppose $R$ is any nondegenerate rank $n$ ring. Write $R \otimes \QQ$ as $\prod_{i = 1}^t K_i$ where $K_i$ are all fields. For any $x \in R$, write $x = (x_i) \in \prod_i K_i$. If $x$ is nonzero then:
\[
	n\lv x \rv^2 = \sum_{i = 1}^t \deg(K_i)\lv x_i \rv^2 \geq \sum_{i = 1}^t \lv x_i \rv^2 \geq 1.
\]
\end{proof}

There is an additional serious constraint on successive minima of lattices, coming from Minkowski's second theorem (see \cite{siegel}, Lecture 10, \S 6), which states that
\[
\prod_{i = 0}^{n-1} \lambda_i \asymp_n \lv \Disc(R)\rv.
\]

\section{Successive Minima of cubic rings}
\label{cubic}

This section is organized as follows. \cref{subsec:special-family-additional-results} states some additional results on cubic rings. In \cref{cubicbackground} to \cref{subsec:step-5-cubic}, we prove all of our theorems about cubic rings. More precisely: 
\begin{itemize}
    \item in \cref{cubicbackground}, we go over Delone and Faddeev's parametrization of cubic rings using binary cubic forms;
    \item in \cref{cubicreductiontheory}, we discuss how binary cubic forms with ``small'' coefficients correspond to cubic rings with ``short'' bases;
    \item in \Cref{subsec:density-functions-cubic-prelim}, we combine the previous two sections to get upper and lower bounds on the number of cubic rings with almost prescribed successive minima;
    \item in \cref{subsec:density-functions-cubic}, we begin to compute density functions and prove \cref{chichecubic};
    \item in \cref{subsec:correspondences-cubic}, we discuss special families of cubic rings and prove \cref{genericcubiccorollary} and \cref{monogeniccubictheorem};
    \item and in \cref{subsec:step-5-cubic}, we sieve for maximal orders in cubic fields, and prove  \cref{s3maxthm} and \cref{totdensitycubic}.
\end{itemize}

\subsection{Additional results on special families of cubic rings}
\label{subsec:special-family-additional-results}

Remarkably, there is a relationship between rings that have extremal successive minima and rings that are extremal in an \emph{algebraic} sense, as we will see in the following theorems. Via this relationship, the endpoints of our line segments ``correspond'' to various interesting algebraic families of rings.

\subsubsection{Definition of a family of orders corresponding to a point}

We'll now formalize what ``correspond'' means. Let $\cF$ be an infinite set equipped with a map $H \colon \cF \rightarrow \RR_{> 0}$ such that for all $X$, the set $\{f \in \cF \mid H(f) \leq X \}$ is finite. We call $H$ the \emph{height function} of $\cF$. Further suppose that $\cF$ is also equipped with a map $\psi$ from $\cF$ to the set of isomorphism classes of nondegenerate cubic rings\footnote{As a motivating example, consider the case when $\cF$ is the set of integral binary cubic forms of nonzero discriminant and $H$ is the naive height, i.e. the maximum of the absolute value of the coefficients.}.

\begin{definition}
For $\eps \in \RR_{>0}$ and $X \in \RR_{>1}$, we say a cubic ring $R$ and a point $p \in \RR^2$ are \emph{$(\eps,X)$-close} if
\[
	\max_{i = 1,2}\{\lv \log_{\Delta}\lambda_i - p_i \rv \} \leq \frac{\eps}{\log X}.
\] 
\end{definition}

\begin{definition}
For a point $p$ contained in the closed line segment from $(0,1/2)$ to $(1/4,1/4)$, we say the triple \emph{$(\cF,H,\psi)$ corresponds to $p$} if 
\begin{equation}
\label{cond1cubicspecialfamily}
\lim_{\eps \rightarrow \infty}\liminf_{X \rightarrow \infty}\frac{\#\{f \in \cF \; \mid \; H(f) \leq X \text{ and $\psi(f)$ is $(\eps,X)$-close to $p$}\}}{\#\{f \in \cF \; \mid \; H(f) \leq X \}} = 1
\end{equation}
and there exists a constant $\MO$ such that for all $\eps > \MO$, 
\begin{equation}
\label{cond2cubicspecialfamily}
\liminf_{X \rightarrow \infty}\frac{\#\{R \in \cF(p,\eps, X) \; \mid \; \text{$R \in \psi(\cF)$}\}}{\#\cF(p,\eps, X)} > 0.
\end{equation}
\end{definition}

When $H$ or $\psi$ are implicit, we simply write that $\cF$ corresponds to $p$. Heuristically, \eqref{cond1cubicspecialfamily} says that ``most'' elements of $\cF$, when ordered by height, are ``near'' $p$, and \eqref{cond2cubicspecialfamily} says that when ordered by absolute discriminant, a positive proportion of rings ``near'' $p$ come from $\cF$. 

\subsubsection{Theorem statements}
Let $\cF_n^{\mon}$ be the set of monic degree $n$ polynomials with integral coefficients. It has a natural height function: given a polynomial
\[
	f(x) = x^n + a_{1}x^{n-1} + \dots + a_n \in \ZZ[x],
\]
the \emph{root height of $f$} is denoted $H_r(f)$ and is defined to be $\max_i\{\lv a_i\rv^{1/i}\}$. Each $f \in \cF^n_{\mon}$ naturally gives rise to the ring $R_f \coloneqq \ZZ[x]/(f(x))$. As the following corollary shows, these rings tend to be quite skew in the cubic case.

\begin{corollary}
\label{monogeniccubictheorem}
$(\cF_3^{\mon}, H_r)$ corresponds to $(1/6,1/3)$.
\end{corollary}

Our next result shows that a general irreducible cubic ring is not very skew at all, and a general reducible ring is as skew as possible.

\begin{corollary}
\label{genericcubiccorollary}
Let $\cF^{\irr}$ be the set of isomorphism classes of nondegenerate irreducible cubic rings, and let $H$ be the absolute discriminant. Then $(\cF^{\irr},H)$ corresponds to $(1/4,1/4)$. Let $\cF^{\redu}$ be the set of isomorphism classes of nondegenerate reducible cubic rings, and again let $H$ be the absolute discriminant. Then $(\cF^{\redu},H)$ corresponds to $(0,1/2)$.
\end{corollary}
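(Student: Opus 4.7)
The plan is to verify conditions \eqref{cond1cubicspecialfamily} and \eqref{cond2cubicspecialfamily} for each of the two triples. In each case, condition \eqref{cond2cubicspecialfamily} follows from the density function computations in \cref{chichecubic}, \cref{totdensitycubic}, \cref{reduciblethm}, and \cref{totreduciblethm}, combined with the precise lattice-point counts extracted from their proofs, while condition \eqref{cond1cubicspecialfamily} requires concentration results stronger than what the log-scale density functions provide.

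For \eqref{cond2cubicspecialfamily}: at $p=(1/4,1/4)$ we have $d_{\eps,S_3}(p) = 1 = d_{\eps,3}(p)$, while $d_{\eps,\ZZ/2\ZZ}(p) = 3/4$, $d_{\eps,E}(p) = 1/2$, and $|\cF(p,\eps,\ZZ/3\ZZ,X)| = O(X^{1/2+\eta})$ by genus theory bounds on cyclic cubic extensions; hence the reducible and cyclic subsets are of lower order in $\cF(p,\eps,X)$, so the proportion of irreducible rings among $\cF(p,\eps,X)$ tends to $1$, which is in particular positive. At $p=(0,1/2)$ we have $d_{\eps,\ZZ/2\ZZ}(p) = 1 = d_{\eps,3}(p)$ as the unique maximum, so reducible rings analogously dominate $\cF(p,\eps,X)$.

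For \eqref{cond1cubicspecialfamily} on the irreducible side: since cyclic cubic rings contribute only $O(X^{1/2+\eta})$, it suffices to treat $S_3$-rings. Minkowski's second theorem forces $p_1(R) + p_2(R) = 1/2 + O(1/\log\Delta)$, so for all but $o(X)$ rings (namely those with $\Delta \geq X^{1/2}$), being $(\eps,X)$-far from $(1/4,1/4)$ requires $\lambda_2/\lambda_1 \geq e^{c\eps}$ for some absolute $c > 0$. Terr's equidistribution of shapes of cubic fields \cite{Terr97}, extended from maximal orders to general orders via the sieving argument used in \cref{s3maxthm} (see also \cite{BhaHa16}), shows that $\lambda_2/\lambda_1$ has a limiting probability distribution with compact support, so the proportion of $S_3$-rings with $\lambda_2/\lambda_1 \geq e^{c\eps}$ vanishes as $\eps \to \infty$, giving \eqref{cond1cubicspecialfamily}.

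For \eqref{cond1cubicspecialfamily} on the reducible side: the $E$-contribution is $O(X^{1/2})$, so we reduce to $\ZZ/2\ZZ$-rings. Any such $R$ has normalization $\widetilde{R} = \ZZ \times \cO_K$; letting $f = [\widetilde{R}:R]$, we have $\Delta(R) = f^2\Disc(\cO_K)$ and $f\widetilde{R} \subseteq R$, so $(f,0) \in R$ has length $f/\sqrt{3}$ and $\lambda_1(R) \leq f/\sqrt{3}$. A standard conductor sieve gives that the number of $\ZZ/2\ZZ$-rings of discriminant $\leq X$ with $f \geq F$ is $O(X/F^{1-\eta})$; choosing $F = e^{\eps/6}$, almost all such rings have $\lambda_1 \leq e^{\eps/6}$ and $\Delta \geq X^{1/2}$, which forces $p_1 \leq \eps/\log X$, and combined with Minkowski's bound on $p_1+p_2$ gives $(\eps,X)$-closeness to $(0,1/2)$ up to a constant factor in $\eps$ (harmless in the limit $\eps \to \infty$). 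The main obstacle throughout is that the log-scale density functions are too coarse to establish concentration directly; one must extract precise constants from the lattice-point counts in the earlier proofs or invoke external equidistribution results, as indicated.
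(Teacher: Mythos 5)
Your proposal takes a genuinely different route from the paper for the key part of the argument, condition \eqref{cond1cubicspecialfamily}. The paper's proof is self-contained: it decomposes the ``far from $p$'' count over a grid of points $p_{i,\eps}$ spaced $\Theta(\eps/\log X)$ apart along $\flinec$, applies \cref{upperboundcubic} at each grid point (whose explicit $e^{12\eps}$-dependence is essential), and sums the resulting geometric series, which vanishes as $\eps\to\infty$; the reducible case additionally splits the grid between $\llinec$ and $\rlinec$ and uses \cref{upperboundreduciblept1} on the latter. You instead outsource the concentration step to external results: Terr's equidistribution of shapes for the irreducible side, and a conductor sieve for the reducible side. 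The paper explicitly acknowledges after \cref{genericcubiccorollary} that Terr's theorem gives an alternative proof of the irreducible statement, so your route there is legitimate — but two corrections are in order. First, Terr's result \cite{Terr97} already covers all cubic orders (the paper itself refers to ``Terr's work on the equidistribution of shapes of cubic orders''), so no extension ``from maximal orders to general orders via the sieving argument in \cref{s3maxthm}'' is needed — and that sieve in any case goes the wrong direction (from all rings down to maximal ones). Second, the limiting shape measure does not have compact support on $\GL_2(\ZZ)\backslash\HH$; what you actually need is that it has finite total mass, so the tail $\{\lambda_2/\lambda_1\geq e^{c\eps}\}$ has measure tending to $0$ — the conclusion you draw is right but the stated reason is not.

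On the reducible side, your conductor-sieve argument is a reasonable alternative in spirit, but the bound $O(X/F^{1-\eta})$ on the number of $\ZZ/2\ZZ$-rings of discriminant $\leq X$ with conductor $\geq F$ is asserted without proof, and it is the heart of that half of the argument. It is true that the index $f=[\widetilde R:R]$ satisfies $f\widetilde R\subseteq R$, so $(f,0)\in R$ and $\lambda_1(R)\leq f/\sqrt 3$, and from there the bookkeeping with $\Delta\geq X^{1/2}$ and Minkowski's second theorem does deliver $(c\eps,X)$-closeness for a harmless constant $c$. But a subring of $\ZZ\times\cO_K$ of index $f$ is not simply $\ZZ\oplus f\cO_K$, so counting rings by conductor requires some care; you should either cite a precise reference or prove the sieve bound. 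By contrast, the paper's grid argument relies only on its own \cref{upperboundcubic} and \cref{upperboundreduciblept1}, keeping the proof internal. Your final remark — that the log-scale density functions alone are too coarse to give concentration and one must extract explicit constants from the lattice-point counts or invoke equidistribution — is exactly the right diagnosis, and is why the paper carries the $e^{12\eps}$ factor through \cref{upperboundcubic} rather than working only at the level of $d_{\eps,G}$.
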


The statement above about $\cF^{\irr}$ is in fact a corollary of Terr's work \cite{Terr97} on the equidistribution of shapes of cubic orders, but we give an independent proof.

\subsection{The parametrization of cubic rings}
\label{cubicbackground}
An element $\gamma \in \GL_2(\ZZ)$ acts on a binary cubic form $f(x,y)$ by 
\[
	(\gamma f)(x,y) = \frac{1}{\det \gamma}f((x,y)\gamma).
\]
A \emph{based rank $n$ ring} is a rank $n$ ring $R$ equipped with an ordered $\ZZ$-basis of $R/\ZZ$. The group $\GL_2(\ZZ)$ naturally acts on based cubic rings by change of basis.

\begin{theorem}[Delone, Faddeev \cite{delone}]
There is a canonical discriminant--preserving bijection between the set of based cubic rings and integral binary cubic forms. Under the action of $\GL_2(\ZZ)$, this bijection descends to a bijection between the set of isomorphism classes of cubic rings and the set of $\GL_2(\ZZ)$-orbits of integral binary cubic forms.
\end{theorem}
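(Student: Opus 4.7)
The plan is to make explicit the bijection in both directions and then check that the $\GL_2(\ZZ)$-actions are intertwined.

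\emph{From rings to forms.} Let $R$ be a based cubic ring with basis $(\omega_1,\omega_2)$ of $R/\ZZ$. Lift to elements $\omega_1,\omega_2 \in R$; the lifts are unique up to adding integers. The key normalization is that we can uniquely translate the lifts by integers so that $\omega_1 \omega_2 \in \ZZ$: writing $\omega_1\omega_2 = n + s\omega_1 + t\omega_2$ in the original lift, replacing $\omega_1 \leadsto \omega_1 - t$ and $\omega_2 \leadsto \omega_2 - s$ kills the $\omega_1$ and $\omega_2$ terms in the product while leaving the basis of $R/\ZZ$ unchanged. Once this normalization is in place, the multiplication table of $R$ is encoded by the six integers appearing in expressions
\[
\omega_1\omega_2 = -ad,\qquad \omega_1^2 = -ac + b\omega_1 - a\omega_2,\qquad \omega_2^2 = -bd + d\omega_1 - c\omega_2,
\]
and we define the associated binary cubic form to be $f(x,y)=ax^3+bx^2y+cxy^2+dy^3$. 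Showing that the structure constants actually take this shape is the content of associativity: expanding $\omega_1(\omega_1\omega_2) = (\omega_1^2)\omega_2$ and $\omega_2(\omega_1\omega_2) = \omega_1(\omega_2^2)$ yields a system of linear equations in the six unknown structure constants whose general solution is precisely the four-parameter family above.

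\emph{From forms to rings.} Conversely, given $f=ax^3+bx^2y+cxy^2+dy^3 \in \Sym^3\ZZ^2$, declare $R_f \coloneqq \ZZ\oplus\ZZ\omega_1\oplus\ZZ\omega_2$ with multiplication given by the boxed formulas above (and $1$ acting as the identity). The nontrivial check is associativity on triples drawn from $\{\omega_1,\omega_2\}$; by symmetry this reduces to verifying $\omega_1(\omega_1\omega_2)=(\omega_1^2)\omega_2$ and $\omega_2(\omega_1\omega_2)=(\omega_1\omega_2)\omega_2$, each of which unwinds to a polynomial identity in $a,b,c,d$ that one checks directly. Commutativity is built in. Thus we obtain inverse maps between based cubic rings and integral binary cubic forms. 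The discriminant identity $\Disc(R_f) = \Disc(f)$ is then a one-shot calculation of the Gram determinant $\det(\Tr(\omega_i\omega_j))$ using the multiplication table.

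\emph{Equivariance and the main obstacle.} For the descent to $\GL_2(\ZZ)$-orbits, one checks that if $\gamma \in \GL_2(\ZZ)$ transforms the basis $(\omega_1,\omega_2)$ of $R/\ZZ$, then the normalization procedure applied to the new basis produces the form $(\gamma f)(x,y)=\frac{1}{\det\gamma}f((x,y)\gamma)$; the factor $1/\det\gamma$ is exactly what is needed to cancel a Jacobian appearing when one re-normalizes $\omega_1'\omega_2' \in \ZZ$ after a basis change that is not in $\SL_2(\ZZ)$, and reflects that replacing $(\omega_1,\omega_2)$ by $(\omega_2,\omega_1)$ should send $f(x,y)$ to $f(y,x)$ (not $-f(y,x)$). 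The main obstacle is the bookkeeping in steps one and two: one must show both that the normalization $\omega_1\omega_2 \in \ZZ$ is achievable and unique, and that associativity forces the six structure constants to collapse into the four coefficients $(a,b,c,d)$ with the stated signs. All three main assertions of the theorem — bijection, discriminant preservation, and $\GL_2(\ZZ)$-equivariance — then follow from direct computation once the normalization is fixed.
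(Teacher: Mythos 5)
Your proposal supplies a proof of a result that the paper only cites; the paper does not prove the Delone--Faddeev theorem but merely records the bijection. Your route is the classical one via structure constants (normalize the lifts so that $\omega_1\omega_2 \in \ZZ$, read off the multiplication table, enforce associativity), whereas the paper describes the ring-to-form direction instead through the index form $\alpha \mapsto \det M_\alpha$ on $R/\ZZ$. These are equivalent presentations of the same map (the index form in the basis $\{\overline\omega, \overline\theta\}$ is, up to sign, the cubic form $f$), and your choice has the advantage of making the inverse direction manifest. That said, there are two inaccuracies worth flagging, both in places you highlight as the crux.

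First, the associativity constraints you invoke are not linear in the structure constants. In the normalized basis, writing $\omega_1\omega_2 = n$, $\omega_1^2 = m_1 + b_1\omega_1 + c_1\omega_2$, $\omega_2^2 = m_2 + b_2\omega_1 + c_2\omega_2$, the relation $(\omega_1^2)\omega_2 = \omega_1(\omega_1\omega_2)$ gives $b_1n + c_1m_2 = 0$, $c_1b_2 = n$, and $m_1 + c_1c_2 = 0$ --- a system of \emph{quadratic} equations. The solution set is not a linear space; it is a rational four-parameter family, parametrized by $(a,b,c,d)$ via $b_1 = b$, $c_1 = -a$, $b_2 = d$, $c_2 = -c$, with $n, m_1, m_2$ then determined. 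Calling it a linear system misdescribes the computation.

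Second, the heuristic you offer for the $\tfrac{1}{\det\gamma}$ twist is backwards under the paper's sign conventions. Working out the basis swap $(\omega,\theta) \mapsto (\theta,\omega)$ from the multiplication table \eqref{cubiccoeff}, one finds $(a,b,c,d) \mapsto (-d,-c,-b,-a)$, i.e.\ $f(x,y) \mapsto -f(y,x)$ --- not $f(y,x)$ as you assert. This is precisely what the twisted action produces, since $(\gamma f)(x,y) = \tfrac{1}{-1}f(y,x)$ for the swap matrix; so your parenthetical claim and your conclusion are in tension with each other. Likewise, the re-normalization step (translating the lifts by integers) is an affine shift that contributes no determinantal Jacobian, so that part of the explanation does not hold. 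The genuine reason for the twist is discriminant-preservation: under the untwisted action $f \mapsto f\circ\gamma$ one has $\Disc \mapsto (\det\gamma)^6\Disc$, while the degree-$4$ homogeneity of $\Disc$ in the coefficients gives $\Disc(\lambda f) = \lambda^4\Disc(f)$, so the $\tfrac{1}{\det\gamma}$ factor brings the weight down to $(\det\gamma)^2 = 1$ for $\gamma \in \GL_2(\ZZ)$, matching the invariance of $\Disc(R)$ under change of basis. The equivariance claim itself is correct --- one can verify it by direct computation --- but the justification you give would not survive scrutiny.
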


We now describe the Delone--Faddeev bijection. Given a cubic ring $R$ with basis $\{1,\omega,\theta\}$ and an element $\alpha \in R$, let $L_{\alpha}$ be a $3\times 3$ matrix such that
\[
\begin{bmatrix}
1 & \alpha & \alpha^2\\
\end{bmatrix} = L_{\alpha}\begin{bmatrix}
1 & \omega & \theta\\
\end{bmatrix}^t
\] 
One may define a map $\varphi \colon R \rightarrow \ZZ$ given by sending $\alpha$ to $\det L_{\alpha}$. This descends to a map $\varphi \colon R/\ZZ \rightarrow \ZZ$. Upon choice of basis of $R/\ZZ$, the map $\varphi$ is represented by an integral binary cubic form.

Conversely, from an integral binary cubic form $f(x,y) = ax^3 + bx^2y + cxy^2 + dy^3$, we obtain a cubic ring $R_f$ with basis $\{1,\omega,\theta\}$ defined by the following multiplication table:
\begin{equation}
 \label{cubiccoeff}
\begin{split}
\omega\theta &= -ad \\
\omega^2 &= -ac + b\omega - a\theta\\
\theta^2 &= -bd + d\omega - c\theta
\end{split}
\end{equation}

\subsection{Reduction theory of cubic rings}
\label{cubicreductiontheory}
In this section, we develop the relationship between ``small'' bases of cubic rings and integral binary cubic forms with ``small'' coefficients. Let $V_{\RR}$ denote the set of real binary cubic forms and let $V_{\ZZ} \subseteq V_{\RR}$ denote the lattice of integral binary cubic forms.

\begin{definition}
For a point $p=(p_1,p_2) \in \RR^2$ and a positive real number $X$, define the box $B(p,X)$ to be the set of real binary cubic forms $ax^3 + bx^2y + cxy^2 + dy^3$ such that:
\begin{align*}
\lv a \rv &\leq X^{3p_1 - 1/2}\\
\lv b \rv &\leq X^{2p_1 + p_2 - 1/2}\\
\lv c \rv &\leq X^{p_1 + 2p_2 - 1/2}\\
\lv d \rv &\leq X^{3p_2 - 1/2}
\end{align*}
\end{definition}

We now fix some notation that we use throughout this section. Let
\[
	f(x,y) = ax^3 + bx^2y + cxy^2 + dy^3 \in V_{\ZZ}
\]
be a form of nonzero discriminant, let $R_f$ denote the cubic ring associated to $f$ under the Delone-Faddeev correspondence, let $\Delta = \lv \Disc(f) \rv = \lv \Disc(R_f) \rv$, let $p = (p_1,p_2)$ denote a point in $\RR^2$, let $\eps$ denote a positive real number, let $C,D$ denote real numbers $\geq 1$, and let $X \in \RR_{>1}$.

\begin{definition}
We say $f$ \emph{comes from a Minkowski basis} if there exists a Minkowski basis of $R_f$ giving rise to $f$ under the Delone--Faddeev correspondence.    
\end{definition}

See \cite{siegel}, Lecture 10, \S 6 for a definition of Minkowski basis. Recall the following definition:

\begin{definition}
For $\eps \in \RR_{>0}$ and $X \in \RR_{>1}$, we say a cubic ring $R$ and a point $p \in \RR^2$ are \emph{$(\eps,X)$-close} if
\[
	\max_{i = 1,2}\{\lv \log_{\Delta}\lambda_i - p_i \rv \} \leq \frac{\eps}{\log X}.
\] 
\end{definition}

We say a pair $(f,p)$ is \emph{$(\eps,X)$-close} if $R_f$ and $p$ are $(\eps,X)$-close.

\subsubsection{Binary cubic forms with small coefficients correspond to short bases of cubic rings}
In this subsection, we prove three lemmas which illustrate the relationship between binary cubic forms with ``small'' coefficients and ``short'' bases of cubic rings. Our first lemma shows that Minkowski bases of cubic rings give rise to binary cubic forms with ``small'' coefficients. 

\begin{lemma}
\label{minkbasisshortcubic}
There exists $C$ such that for all $\eps$, $f$, and $p$, if $(f,p)$ is $(\eps,\Delta)$-close and $f$ comes from a Minkowski basis, then $f \in Ce^{3\eps} B(p,\Delta)$.
\end{lemma}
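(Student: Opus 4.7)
The plan is to read off bounds on $a,b,c,d$ directly from the multiplication table \eqref{cubiccoeff} evaluated in a Minkowski basis of $R_f$. Let $\{1,\omega,\theta\}$ be a Minkowski basis giving rise to $f$ under the Delone--Faddeev correspondence; by the defining property of Minkowski reduction, $\lvert\omega\rvert \asymp_3 \lambda_1$ and $\lvert\theta\rvert \asymp_3 \lambda_2$ with constants depending only on $n=3$. Since $(f,p)$ is $(\eps,\Delta)$-close, $e^{-\eps}\Delta^{p_i} \leq \lambda_i \leq e^{\eps}\Delta^{p_i}$ for $i=1,2$, and Minkowski's second theorem forces $\lambda_1 \lambda_2 \asymp \sqrt{\Delta}$, so whenever the conclusion is nonvacuous we have $p_1 + p_2 = 1/2 + O(1/\log \Delta)$.

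I would then combine two classical inputs. Cauchy--Schwarz on the archimedean quadratic form gives $\lvert xy \rvert \ll \lvert x \rvert \lvert y \rvert$, so the products appearing in \eqref{cubiccoeff} satisfy $\lvert \omega^2 \rvert \ll \lambda_1^2$, $\lvert \theta^2 \rvert \ll \lambda_2^2$, and $\lvert \omega\theta \rvert \ll \lambda_1 \lambda_2$. A standard property of Minkowski-reduced bases says that for any $w = c_0 + c_1 \omega + c_2 \theta \in R_f$ the coefficients satisfy $\lvert c_i \rvert \lambda_i \ll_3 \lvert w \rvert$ (cf.\ \cite{siegel}, Lecture 10). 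Putting these together in \eqref{cubiccoeff} (which we may assume is in normalized form, since the integer shifts of $\omega,\theta$ required to make $\omega\theta \in \ZZ$ are themselves of size $\ll \lambda_1, \lambda_2$ and hence preserve the Minkowski property up to constants) yields $\lvert a \rvert \ll \lambda_1^2/\lambda_2$, $\lvert b \rvert \ll \lambda_1$, $\lvert c \rvert \ll \lambda_2$, and $\lvert d \rvert \ll \lambda_2^2/\lambda_1$.

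Finally, substituting the inequalities on $\lambda_1,\lambda_2$ and using $p_1 + p_2 \approx 1/2$ rewrites each of these four bounds in the form $Ce^{3\eps}\Delta^{\text{target}}$, precisely matching the defining inequalities of $B(p,\Delta)$. The worst factor of $e^{3\eps}$ comes from $\lambda_1^2/\lambda_2$ and $\lambda_2^2/\lambda_1$, where one upper-bounds two copies of a $\lambda$ and lower-bounds one; for $\lvert b \rvert$ and $\lvert c \rvert$ only a factor of $e^{\eps}$ appears, well within $Ce^{3\eps}$. The main work is bookkeeping rather than any genuine obstacle, since the two classical estimates above essentially do all of the heavy lifting; the only care needed is in verifying that the normalization shifts, together with the Minkowski constants, can be absorbed into a single $C$ independent of $\eps$, $f$, and $p$.
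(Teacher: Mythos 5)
Your proof is correct and reaches the same numerical bounds ($|a| \ll \lambda_1^2/\lambda_2$, $|b| \ll \lambda_1$, $|c| \ll \lambda_2$, $|d| \ll \lambda_2^2/\lambda_1$), but by a genuinely different route than the paper. The paper sidesteps the multiplication table entirely and instead uses the index-form interpretation of the Delone--Faddeev correspondence: $|f(x,y)|$ equals the index $[R_f : \ZZ[x\omega + y\theta]]$, so $|f(x,y)| = |\Disc(\ZZ[x\omega + y\theta])|^{1/2}\Delta^{-1/2}$, and one bounds the coefficients of $x^iy^{3-i}$ in the Vandermonde determinant expressing $|\Disc(\ZZ[x\omega + y\theta])|^{1/2}$ by $|\omega|^i|\theta|^{3-i}$, using only $|\sigma_j(\omega)| \ll |\omega|$. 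Your route, reading off the coefficients from the explicit multiplication relations and invoking the almost-orthogonality of a Minkowski-reduced basis (that $|c_i|\lambda_i \ll_n |w|$ when $w = \sum c_i b_i$), is more elementary in its ingredients but requires the extra normalization step you flag: the multiplication table \eqref{cubiccoeff} is written with respect to the \emph{unique} lifts $\omega,\theta$ with $\omega\theta \in \ZZ$, which need not coincide with the Minkowski lifts. Your argument that the shift integers have size $\ll \lambda_1, \lambda_2$ is correct, but it itself relies on the same almost-orthogonality estimate, so it is worth noting as a genuine step rather than a parenthetical. The paper's approach avoids this entirely, because $\ZZ[x\omega + y\theta]$ is insensitive to integer translation of $\omega,\theta$; this is the main advantage of the index-form route and is also what makes it scale cleanly to the quartic and quintic analogues (\cref{ublemma1}, \cref{ublemmaquintic}), where explicit multiplication tables are far less pleasant to work with. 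Both proofs yield a single constant absorbing the Minkowski constants and $\Delta^{1/2 - (p_1+p_2)} \asymp e^{O(\eps)}$, with the worst exponent $e^{3\eps}$ arising from the $a$ and $d$ coefficients.
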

\begin{proof}
Suppose $(f,p)$ be an $(\eps,\Delta)$-close pair coming from a Minkowski basis $\{1,\omega,\theta\}$ of $R_f$. Minkowski's second theorem implies that $\lv \omega \rv \asymp \lambda_1(R_f)$ (see \cite{siegel}, Lecture 10, \S 6 for a reference), and the definition of  $(\eps,\Delta)$-closeness implies that $\lambda_1(R_f) \leq e^{\eps} \Delta^{p_1}$. Therefore,
\[
	\lv \omega \rv \asymp \lambda_1(R_f) \leq e^{\eps} \Delta^{p_1}.
\]
Similarly, we have:
\[
	\lv \theta \rv \asymp \lambda_2(R_f) \leq e^{\eps} \Delta^{p_2}.
\]
Recall that $\lvert \omega\rvert$ and $\lvert \theta\rvert$ refer to the lengths of $\omega$ and $\theta$ respectively in the Minkowski embedding; see \Cref{notation} for details.

Let $\sigma_1,\sigma_2,\sigma_3$ be the $3$ homomorphisms of $R_f$ into the complex numbers. Recall that for any $\alpha \in R$, we have that $M_{\alpha}$ is the matrix sending the basis ${1,\omega,\theta}$ to ${1,\alpha,\alpha^2}$. By the Delone-Faddeev correspondence, we have:
\[
    f(x,y) =   \det L_{x\omega + y\theta} = \Delta^{-1/2}\begin{array}{|c c c|}
1 & \dots & 1\\
x\sigma_1(\omega) + y\sigma_1(\theta) & \dots & x\sigma_3(\omega) + y\sigma_3(\theta) \\
(x\sigma_1(\omega) + y\sigma_1(\theta))^2 & \dots & (x\sigma_3(\omega) + y\sigma_3(\theta))^2
\end{array} 
\]
Expand out and collect monomials, now treating $x$ and $y$ as variables. Because $\lv \sigma_i(\omega) \rv \ll \lv \omega \rv$ and $\lv \sigma_i(\theta) \rv \ll \lv \theta \rv$, the absolute value of the coefficient of $x^iy^{3-i}$ in the expression above is $\ll \lv \omega\rv^i\lv \theta\rv^{3-i} \Delta^{-1/2} $.  Therefore:
\begin{align*}
\lv a \rv &\ll \lv \omega \rv^3\Delta^{-1/2} \ll e^{3\eps}\Delta^{3p_1 - 1/2}\\
\lv b \rv &\ll \lv \omega \rv^2 \lv \theta \rv \Delta^{-1/2}  \ll e^{3\eps} \Delta^{2p_1 + p_2 - 1/2}\\
\lv c \rv &\ll\lv \omega \rv \lv \theta \rv^2\Delta^{-1/2} \ll e^{3\eps} \Delta^{p_1 + 2p_2 - 1/2}\\
\lv d \rv &\ll \lv \theta \rv^3\Delta^{-1/2} \ll e^{3\eps} \Delta^{3p_2 - 1/2}
\end{align*}
\end{proof}

For a form $f$, let $\{1,\omega,\theta\}$ denote the basis of $R_f$ specified by the explicit multiplication table \eqref{cubiccoeff}. The following lemma shows if $f$ has ``small'' coefficients, then $\{1,\omega,\theta\}$ is a ``small'' basis. Let $\flinec$ be the closed line segment from $(0,1/2)$ to $(1/4,1/4)$.
\begin{lemma}
\label{reducedcubicbasis}
For every $p \in \flinec$, every $C, X$, and every $f$, if $f \in CB(p,X)$ then $\lv \omega \rv \ll_C X^{p_1}$ and $\lv \theta \rv \ll_C X^{p_2}$.
\end{lemma}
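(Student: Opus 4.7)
The plan is to derive monic cubic polynomials satisfied by $\omega$ and $\theta$, then apply a standard bound on the moduli of complex roots of a monic polynomial, and finally check that the exponents in the definition of $B(p,X)$ are engineered so that everything telescopes on the line $p_1 + p_2 = 1/2$ that defines $\flinec$.

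First I would work directly inside $R_f$ using the multiplication table \eqref{cubiccoeff}. A short substitution computes $\omega^3 = \omega \cdot \omega^2 = \omega(-ac + b\omega - a\theta) = -ac\omega + b\omega^2 + a^2 d$, so
\[
\omega^3 - b\omega^2 + ac\omega - a^2 d = 0.
\]
The analogous computation for $\theta$, using $\omega\theta = -ad$ and eliminating $\omega$ via $d\omega = \theta^2 + bd + c\theta$, yields $\theta^3 + c\theta^2 + bd\theta + ad^2 = 0$. Applying any embedding $\sigma_i \colon R_f \to \CC$ shows that each $\sigma_i(\omega)$ is a complex root of $t^3 - bt^2 + ac\, t - a^2 d$ and each $\sigma_i(\theta)$ is a complex root of $t^3 + ct^2 + bd\, t + ad^2$.

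Next, I would invoke the elementary bound: for a monic polynomial $t^n + c_{n-1}t^{n-1} + \dots + c_0$, writing $|z|^n \leq \sum_k |c_k||z|^k \leq n \max_k |c_k| |z|^k$ and solving gives $|z| \leq n\max_k |c_k|^{1/(n-k)}$. In our case this reads
\[
\lv\sigma_i(\omega)\rv \leq 3\max\bigl\{\lv b \rv,\; \lv ac\rv^{1/2},\; \lv a^2 d\rv^{1/3}\bigr\}, \qquad \lv\sigma_i(\theta)\rv \leq 3\max\bigl\{\lv c\rv,\; \lv bd\rv^{1/2},\; \lv ad^2\rv^{1/3}\bigr\}.
\]
Then I plug in $f \in CB(p,X)$ and use $p_1 + p_2 = 1/2$ (the defining condition of $\flinec$). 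A direct exponent computation shows that each of the three quantities bounding $\lv \sigma_i(\omega)\rv$ is $\ll_C X^{p_1}$: for instance $\lv ac\rv^{1/2} \leq C X^{(3p_1 - 1/2 + p_1 + 2p_2 - 1/2)/2} = CX^{(4p_1 + 2p_2 - 1)/2} = CX^{p_1}$ on $\flinec$, and similarly for the other two. The same reasoning gives $\ll_C X^{p_2}$ for the three quantities bounding $\lv \sigma_i(\theta)\rv$. Combining, $\lv\omega\rv^2 = \tfrac{1}{3}\sum_i \lv \sigma_i(\omega)\rv^2 \ll_C X^{2p_1}$ and $\lv\theta\rv^2 \ll_C X^{2p_2}$, which is the claim.

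There is no real obstacle: the weights defining the box $B(p,X)$ were chosen exactly so that the Fujiwara-style root bound $\max\{\lv b\rv, \lv ac\rv^{1/2}, \lv a^2 d\rv^{1/3}\}$ collapses to a single exponent on $\flinec$. The only thing to remark on is that the derivation of the degree-$3$ relations is a ring identity inside $R_f$, so no case analysis on whether $a = 0$ is needed (in the degenerate case the polynomial factors, but $\sigma_i(\omega)$ is still a root of the same polynomial, so the bound applies verbatim).
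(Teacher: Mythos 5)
Your proof is correct, and it is genuinely different from the paper's. The paper does not derive minimal polynomials for $\omega$ and $\theta$; instead it uses the multiplication table \eqref{cubiccoeff} to bound $\lv\omega^2\rv$ and $\lv\theta^2\rv$ in terms of $\lv\omega\rv$ and $\lv\theta\rv$, producing a pair of mutually coupled inequalities
\[
\lv\omega\rv^2 \ll_C \max\{X^{2p_1},\, X^{p_1}\lv\omega\rv,\, X^{2p_1-p_2}\lv\theta\rv\}, \qquad
\lv\theta\rv^2 \ll_C \max\{X^{2p_2},\, X^{2p_2-p_1}\lv\omega\rv,\, X^{p_2}\lv\theta\rv\},
\]
and then invokes the general-purpose \cref{generalboundlemma} (a graph-theoretic argument on systems of the form $a_i^2 \leq C\max_j X^{2p_i-p_j}a_j$) to untangle the coupling. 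Your approach sidesteps the coupling altogether: by using $\omega\theta = -ad$ to eliminate $\theta$ from $\omega\cdot\omega^2$ (and symmetrically for $\theta$), you get the two \emph{separate} monic relations $\omega^3 - b\omega^2 + ac\,\omega - a^2 d = 0$ and $\theta^3 + c\theta^2 + bd\,\theta + ad^2 = 0$, which a Fujiwara-type root bound converts directly to archimedean bounds on each embedding, and the exponent bookkeeping closes on the line $p_1 + p_2 = 1/2$. (I verified both characteristic polynomials and the exponent arithmetic; they check out, and your remark that the relation is a ring identity — so no division by $a$ or $d$ is needed — is the right observation.) Each route has a virtue: yours is self-contained and avoids the abstract \cref{generalboundlemma} entirely, but it is special to the cubic case where the minimal polynomials are short and explicit. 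The paper's route is what actually scales: the same system-of-inequalities argument is reused verbatim in \cref{quarticboundlemma}, \cref{boxlemmabin}, and \cref{quinticboundlemma}, where writing down explicit degree-$n$ relations for every basis element from the structure constants would be considerably messier.
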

\begin{proof}
Writing out the explicit multiplication coefficients, we obtain:
\begin{align*}
\lv \omega\rv^2 &\ll \lv \omega^2 \rv \\
&\ll_C \max\{\lv ac \rv, \lv b\rv\lv \omega\rv, \lv a\rv\lv \theta\rv\}  \\
&\ll_C \max\{X^{2p_1}, X^{p_1}\lv \omega\rv, X^{2p_1 - p_2}\lv \theta\rv\} 
\end{align*}
\begin{align*}
\lv \theta\rv^2 &\ll \lv \theta^2 \rv \\
&\ll_C \max\{\lv bd \rv, \lv d\rv\lv \omega\rv, \lv c\rv\lv \theta\rv\} \\
&\ll_C \max\{X^{2p_2}, X^{2p_2 - p_1}\lv \omega\rv, X^{p_2}\lv \theta\rv\}
\end{align*}
Applying \cref{generalboundlemma} completes the proof.
\end{proof}

The following lemma explicitly bounds the successive minima of $R_f$ for $f$ with ``small'' coefficients.
\begin{lemma}
\label{lambdaboundlemma}
For every $C,X$, every $p \in \flinec$, and every $f$, if $f \in CB(p,X)$ and $\frac{X}{2} \leq \Delta$ then 
\[
	 \lambda_i(R_f) \asymp_C \Delta^{p_i}
\]
for $i = 1,2$.
\end{lemma}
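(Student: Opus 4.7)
The plan is to combine the upper bound on the Delone--Faddeev basis coming from \cref{reducedcubicbasis} with Minkowski's second theorem in order to trap both successive minima in a two-sided bound of the desired shape. Throughout, note that $\flinec$ is defined so that $p_1, p_2 \geq 0$ and $p_1 + p_2 = 1/2$, and the hypothesis $\Delta \geq X/2$ together with $p_i \leq 1/2$ gives $X^{p_i} \ll \Delta^{p_i}$ (with an absolute constant).

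First I would obtain the upper bounds. Since $\{1, \omega, \theta\}$ is a $\ZZ$-basis of $R_f$, these three elements are linearly independent, and by \cref{reducedcubicbasis} we have $\lv \omega\rv \ll_C X^{p_1} \ll \Delta^{p_1}$ and $\lv \theta\rv \ll_C X^{p_2} \ll \Delta^{p_2}$; moreover $\lv 1 \rv = 1 \leq \Delta^{p_1}$ (for $\Delta \geq 1$, since $p_1 \geq 0$). Thus there are two linearly independent elements of length $\ll_C \Delta^{p_1}$, and three linearly independent elements of length $\ll_C \Delta^{p_2}$, giving
\[
	\lambda_1(R_f) \ll_C \Delta^{p_1}, \qquad \lambda_2(R_f) \ll_C \Delta^{p_2}.
\]

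Next I would invoke Minkowski's second theorem, which (as quoted at the end of \cref{notation}) gives $\lambda_0 \lambda_1 \lambda_2 \asymp \lv \Disc(R_f)\rv^{1/2} = \Delta^{1/2}$. Since $f$ has nonzero discriminant, $R_f$ is nondegenerate, so \cref{lambdazerolemma} yields $1/\sqrt{3} \leq \lambda_0 \leq 1$, i.e.\ $\lambda_0 \asymp 1$. Consequently
\[
	\lambda_1(R_f)\,\lambda_2(R_f) \asymp \Delta^{1/2} = \Delta^{p_1 + p_2},
\]
where the last equality uses $p \in \flinec$.

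Finally, combining the upper bound $\lambda_2(R_f) \ll_C \Delta^{p_2}$ with the product estimate $\lambda_1 \lambda_2 \gg \Delta^{1/2}$ gives $\lambda_1(R_f) \gg_C \Delta^{1/2 - p_2} = \Delta^{p_1}$, and symmetrically $\lambda_2(R_f) \gg_C \Delta^{p_2}$. Together with the upper bounds this yields $\lambda_i(R_f) \asymp_C \Delta^{p_i}$ for $i = 1, 2$, as required. The only subtle point is the bookkeeping of constants through Minkowski, but since the rank is fixed at $3$ and $\lambda_0$ is pinned by \cref{lambdazerolemma}, all implicit constants depend only on $C$, which is the claim.
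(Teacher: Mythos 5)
Your proof is correct and takes essentially the same route as the paper: bound the basis $\{1,\omega,\theta\}$ via \cref{reducedcubicbasis}, invoke Minkowski's second theorem together with $\lambda_0 \asymp 1$ from \cref{lambdazerolemma} to pin down the product $\lambda_1\lambda_2 \asymp_C \Delta^{1/2}$, and then divide out the one-sided bounds to get the other side. The paper's own proof writes only the chain terminating in $\lambda_1(R)\lambda_2(R) \ll_C \Delta^{1/2} \asymp \lambda_1(R)\lambda_2(R)$ and then says ``Thus''; your version makes explicit the intermediate step (that $\lambda_1 \ll_C \Delta^{p_1}$ individually, which is what converts the product estimate into the two individual two-sided bounds), which the paper leaves implicit.
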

\begin{proof}
We have:
\begin{align*}
\lambda_1(R)\lambda_2(R) &\ll \lv \omega\rv\lv \theta\rv && \text{by definition of successive minima} \\
&\ll_C X^{p_1}X^{p_2} && \text{by \cref{reducedcubicbasis}}\\
&\ll \Delta^{p_1}\Delta^{p_2} && \text{because $X/2 \leq \Delta$}\\
&\ll \Delta^{1/2} && \text{because $p_1 + p_2 =1/2$} \\
&\asymp \lambda_1(R)\lambda_2(R) && \text{by Minkowski's second theorem}
\end{align*}
Thus, 
\[
	 \lambda_i(R_f) \asymp_C \Delta^{p_i}
\]
for $i = 1,2$.
\end{proof}

\subsubsection{The number of binary cubics with small coefficients giving rise to isomorphic cubic rings}
In this subsection, we calculate how many binary cubic forms with ``small'' coefficients give rise to isomorphic cubic rings. We begin with a lower bound on the number of distinct binary cubic forms with ``small'' coefficients giving rise to isomorphic cubic rings.
\begin{lemma}
\label{helplemma}
There exists $D$ such that for every $p \in \flinec$, every $f$, and every $C,X$, if $f \in CB(p,X)$ then $\#\{f' \in DCB(p,X) \cap V_{\ZZ} \mid R_{f'} \simeq R_{f}\} \geq  X^{p_2-p_1}$.
\end{lemma}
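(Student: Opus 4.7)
The plan is to exhibit the required forms as $\gamma_k \cdot f$ where $\gamma_k$ ranges over a large unipotent piece of $\GL_2(\ZZ)$. Since the Delone--Faddeev correspondence identifies $R_{\gamma f} \simeq R_f$ for every $\gamma \in \GL_2(\ZZ)$, the task reduces to choosing enough $\gamma_k$'s so that (i) $\gamma_k f \in DCB(p,X)$ and (ii) distinct $k$'s produce distinct forms.

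The key move is to restrict to the unipotent subgroup generated by
\[
    \gamma_k = \begin{pmatrix} 1 & 0 \\ k & 1 \end{pmatrix}, \qquad k \in \ZZ,
\]
acting via $(\gamma_k f)(x,y) = f(x+ky, y)$. A direct expansion leaves $a$ fixed and replaces $b$, $c$, $d$ by
\[
    b' = b + 3ak, \qquad c' = c + 2bk + 3ak^2, \qquad d' = d + ck + bk^2 + ak^3.
\]
This is the ``correct'' unipotent direction: the transformation pushes the small coefficient $a$ into the slots governed by larger exponents, rather than the other way around. For $p \in \flinec$ we have $p_2 \geq p_1$, and I would take the integer parameter $k$ in the range $\lv k \rv \leq X^{p_2 - p_1}$. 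Using the bounds defining $f \in CB(p,X)$ together with the trivial identities
\[
    (3p_1 - 1/2) + j(p_2 - p_1) = (3-j)p_1 + j p_2 - 1/2 \qquad (j = 1,2,3)
\]
and their analogues for $b$ and $c$, each mixed term appearing in $b', c', d'$ is bounded by a small absolute multiple of the corresponding $CX^{\ast}$ bound in the definition of $B(p,X)$. Summing via the triangle inequality yields $\gamma_k f \in DCB(p,X)$ for a uniform absolute constant $D$ (explicit bookkeeping gives $D = 6$), independent of $p \in \flinec$.

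Finally I would verify that $k \mapsto \gamma_k f$ is injective. If $\gamma_m f = f$ for some nonzero $m \in \ZZ$, then successively equating coefficients of $x^2 y$, $xy^2$, and $y^3$ forces $am = 0$, then $3am + 2b = 0$, then $am^2 + bm + c = 0$, hence $a = b = c = 0$; but then $f = dy^3$ has discriminant $0$, contradicting our hypothesis. Thus the $2\lfloor X^{p_2 - p_1} \rfloor + 1 \geq X^{p_2 - p_1}$ admissible integers $k$ give that many distinct elements of $DCB(p,X) \cap V_{\ZZ}$ in the $\GL_2(\ZZ)$-orbit of $f$, proving the lemma. The argument is essentially a direct calculation; the only delicate point is ensuring that $D$ can be chosen uniformly in $p \in \flinec$, which is immediate from the homogeneous form of the bounds defining $B(p,X)$.
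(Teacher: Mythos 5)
Your proposal is correct and is the same argument as the paper's: the paper's entire proof is the one-line instruction to act on $f$ by $\begin{bmatrix} 1 & 0 \\ u & 1 \end{bmatrix}$ with $\lvert u \rvert \leq X^{p_2-p_1}$, and you supply exactly the expansion, the coefficient bounds giving a uniform $D$, and the injectivity check that the paper leaves to the reader. One microscopic remark: the lemma statement as written does not say ``nondegenerate,'' but the paper's standing convention in that subsection (``let $f$ \dots be a form of nonzero discriminant'') is in force, so your use of nonzero discriminant in the injectivity step is legitimate.
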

\begin{proof}
Act on $f$ by elements of $\GL_2(\ZZ)$ of the form
\[
	\begin{bmatrix}
    1 & 0  \\
    u & 1 
  \end{bmatrix} 
\]
where $\lv u \rv \leq X^{p_2-p_1}$.
\end{proof}

Our next lemma shows an upper bound on the number of distinct binary cubic forms with ``small'' coefficients giving rise to isomorphic cubic rings. 
\begin{lemma}
\label{reducedcubiccorol}
For every $C,X$, every $f$, and every $p \in \flinec$, if $f \in CB(p,X)$ and $X/2 \leq \Delta \leq X$, there are $\ll_C X^{p_2-p_1}$ distinct binary cubic forms $f' \in CB(p,X)$ such that $R_{f'} \simeq R_f$.
\end{lemma}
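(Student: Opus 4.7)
The plan is to reduce the count to a count of short ordered bases of a two-dimensional lattice and then invoke standard geometry-of-numbers estimates. By the Delone--Faddeev correspondence, forms $f' \in CB(p,X)$ with $R_{f'} \simeq R_f$ are in bijection with $\Aut(R_f)$-orbits of ordered $\ZZ$-bases of $R_f/\ZZ$ whose associated form lies in $CB(p,X)$; since $|\Aut(R_f)|$ is bounded by an absolute constant, and since \cref{reducedcubicbasis} tells us that every such basis $\{1, \omega', \theta'\}$ of $R_f$ satisfies $|\omega'| \ll_C X^{p_1}$ and $|\theta'| \ll_C X^{p_2}$, it is enough to show that there are $\ll_C X^{p_2 - p_1}$ ordered pairs of elements in $R_f/\ZZ$ admitting lifts to $R_f$ with these bounds.

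To carry out this count, I would first pin down the successive minima of $R_f$. Combining \cref{reducedcubicbasis} (which yields $\lambda_1(R_f) \ll_C X^{p_1}$ and $\lambda_2(R_f) \ll_C X^{p_2}$), \cref{lambdazerolemma} (which forces $\lambda_0(R_f) \asymp 1$), Minkowski's second theorem $\lambda_0\lambda_1\lambda_2 \asymp \Delta^{1/2} \asymp X^{1/2}$, and the identity $p_1 + p_2 = 1/2$ on $\flinec$, one reads off $\lambda_i(R_f) \asymp X^{p_i}$ for $i = 1, 2$. Next, project $R_f$ onto $W = 1^\perp \subseteq R_f \otimes \RR$. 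This identifies $R_f/\ZZ$ with a rank-two lattice $\Lambda \subseteq W$, and every class $\bar x \in R_f/\ZZ$ has a lift $x \in R_f$ with $|x| \leq |\pi_W(x)| + 1/2$, so the length constraints transfer (up to absolute constants) into constraints on elements of $\Lambda$. The successive minima of $\Lambda$ are then $\mu_1 \asymp X^{p_1}$ and $\mu_2 \asymp X^{p_2}$.

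The problem is now a routine two-dimensional lattice-point count. The standard estimate $\#\{v \in \Lambda : |v| \leq R\} \ll 1 + R/\mu_1 + R^2/(\mu_1\mu_2)$ gives, for $R = O_C(X^{p_1})$, a bound of $\ll_C 1$ (each summand being $O_C(1)$ using $p_1 \leq p_2$ and $\mu_1\mu_2 \asymp X^{p_1+p_2}$); and for $R = O_C(X^{p_2})$, a bound of $\ll_C X^{p_2-p_1}$ (with the quadratic term dominating). Multiplying these two counts finishes the proof.

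The main subtlety I anticipate is the uniform treatment of the endpoint $p = (1/4,1/4)$ of $\flinec$, where the two successive minima of $\Lambda$ coincide in size and the three regimes of the lattice-point estimate coalesce; even there, however, all three contributions are controlled by an absolute constant depending only on $C$. Aside from that corner case, the proof is a direct application of \cref{reducedcubicbasis} together with classical geometry-of-numbers.
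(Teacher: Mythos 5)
Your proof is correct and reaches the same estimate by essentially the same mechanism as the paper: deduce from \cref{reducedcubicbasis} that any basis producing a form in $CB(p,X)$ has lengths $\ll_C X^{p_1}$ and $\ll_C X^{p_2}$, pin down the successive minima via Minkowski, and count. The only difference is presentational: you count lattice points intrinsically in the rank-two lattice $R_f/\ZZ$ via the standard estimate $\#\{v : |v|\le R\} \ll 1 + R/\mu_1 + R^2/(\mu_1\mu_2)$, whereas the paper fixes the basis $\{1,\omega,\theta\}$, expands $\omega',\theta'$ in coordinates, and bounds the coefficients $b,c,e,f$ one at a time -- two renderings of the same count.
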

\begin{proof}
Let $\{1,\omega,\theta\}$ be the basis of $R_f$ specified by \eqref{cubiccoeff}. We have:
\begin{align*}
X^{1/2} &\asymp \Delta^{1/2} && \text{because $X/2 \leq \Delta \leq X$} \\
&\ll \lv\omega\rv\lv\theta\rv && \\
&\ll_C X^{p_1 + p_2} && \text{by \cref{reducedcubicbasis}}\\
&= X^{1/2}
\end{align*}
Therefore, $\lv \omega \rv\lv \theta \rv \asymp_C \Delta^{1/2}$.

Suppose $f'\in CB(p,X)$ is an integral binary cubic form such that $R_{f'} \simeq R_f$. Let $\{1,\omega',\theta'\}$ denote the basis corresponding to $f'$. Then \cref{reducedcubicbasis} implies that $\lv \omega' \rv \ll_C X^{p_1}$ and $\lv \theta' \rv \ll_C X^{p_2}$. Express $\omega'$ and $\theta'$ in terms of the basis $\{1,\omega,\theta\}$, i.e. write 
\begin{align*}
\omega' &= a + b\omega + c\theta \\
\theta' &= d + e\omega + f\theta
\end{align*} 
where $a,b,c,d,e,f \in \ZZ$.

Because $\lv \omega \rv \lv \theta \rv \asymp_C \Delta^{1/2}$, we have:
\begin{align*}
\lv \omega' \rv &\asymp_C \max\{\lv a \rv, \lv b \rv\lv\omega\rv, \lv c \rv\lv \theta \rv\} \\
\lv \theta' \rv &\asymp_C \max\{\lv d \rv, \lv e \rv \lv\omega\rv, \lv f \rv\lv \theta \rv\}
\end{align*}
The integral binary cubic form $f'$ is determined by the choice of $b$, $c$, $e$, and $f$. The number of choices of $b$, $c$, and $f$ is $\ll_C 1$. The number of choices of $e$ is $\ll_C X^{p_2-p_1}$. Therefore there are $\ll_C X^{p_2-p_1}$ distinct integral binary cubic forms $f' \in CB(p,X)$ such that $R_{f'} \simeq R_f$.
\end{proof}

\subsection{Upper and lower bounds}
\label{subsec:density-functions-cubic-prelim}

The main idea of this section is as follows. We proved in the previous two subsections that, roughly speaking, the integral points of $B(p,X)$ should correspond to binary cubic forms $f$ whose corresponding rings $R_f$ have normalized successive minima vectors ``near'' $p$ and absolute discriminant $\leq X$. Each ring $R$ arising this way has approximately $X^{p_2-p_1}$ forms $f \in B(p,X)$ for which $R_f \simeq R$. In this subsection, we combine these two facts together.

\subsubsection{Using Davenport's lemma to count lattice points of $B(p,X)$}
The main tool we'll use to count integral points of $B(p,X)$ is Davenport's lemma, stated below. For positive real numbers $C$ and $D$, let $\exp[C,D]$ denote $C^D$.
\begin{lemma}[Davenport's Lemma \cite{davenport}]
\label{davenport}
Let $B$ be a bounded, semi-algebraic set in $\RR^n$ which is defined by at most $k$ polynomial inequalities each having degree at most $\ell$. Then the number of integer lattice points contained in the region $B$ is 
\[
	\Vol(B) + O(\max\{\Vol(\overline{B}), 1\}),
\]
where $\Vol(\overline{B})$ denotes the greatest $d$-dimensional volume of any projection of $B$ onto a coordinate subspace obtained by equating $n-d$ coordinates to zero, where $d$ takes all values from $1$ to $n-1$. The implied constant in the second summand depends only on $n$, $k$, and $\ell$.
\end{lemma}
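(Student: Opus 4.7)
The plan is to induct on $n$. The base case $n=1$ is straightforward: any bounded semi-algebraic subset of $\RR$ defined by $k$ polynomial inequalities of degree at most $\ell$ is a union of $O_{k,\ell}(1)$ intervals, and the integers in an interval of length $L$ number $L + O(1)$, so summing gives $\Vol(B) + O(1)$.

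For the inductive step, I would use the unit cube decomposition of $\RR^n$. Each $\mathbf{m} \in \ZZ^n$ indexes a unit cube $C_{\mathbf{m}} := \mathbf{m} + [0,1)^n$; call $C_{\mathbf{m}}$ \emph{interior} if $C_{\mathbf{m}} \subseteq B$, \emph{exterior} if $C_{\mathbf{m}} \cap B = \emptyset$, and \emph{boundary} otherwise. Both $\#(B \cap \ZZ^n)$ and $\Vol(B)$ lie between the number of interior cubes and the total number of interior or boundary cubes, so
\[
\bigl|\#(B \cap \ZZ^n) - \Vol(B)\bigr| \leq \#\{\text{boundary cubes}\}.
\]
To bound the right-hand side, for each direction $i \in \{1,\ldots,n\}$ let $\pi_i \colon \RR^n \to \RR^{n-1}$ be the projection dropping the $i$-th coordinate, and ``charge'' each boundary cube to some $i$ for which $\partial B \cap C_{\mathbf{m}}$ contains a point transverse to the $i$-axis. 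A Bezout-type bound, using that $\partial B$ lies in the zero set of a product of polynomials of degree at most $\ell$, shows that in any column $\pi_i^{-1}(K)$ over a unit cube $K \subset \RR^{n-1}$, at most $O_{k,\ell}(1)$ integer intervals $[m_i, m_i+1)$ contain a transverse boundary point. Hence the boundary cubes charged to direction $i$ number $\ll_{k,\ell} \#(\ZZ^{n-1} \cap \pi_i(B))$, and summing gives
\[
\#\{\text{boundary cubes}\} \ll \sum_{i=1}^n \#(\ZZ^{n-1} \cap \pi_i(B)).
\]
Applying the inductive hypothesis to each $\pi_i(B)$---itself semi-algebraic of complexity bounded in $n,k,\ell$ by Tarski--Seidenberg---yields $\#(\ZZ^{n-1} \cap \pi_i(B)) = \Vol(\pi_i(B)) + O(\max\{\Vol(\overline{\pi_i(B)}),1\})$. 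Since $\pi_i(B)$ is itself one of the projections appearing in the definition of $\Vol(\bar B)$, we have $\Vol(\pi_i(B)) \leq \Vol(\bar B)$; and every lower-dimensional coordinate projection of $\pi_i(B)$ is also a coordinate projection of $B$, so $\Vol(\overline{\pi_i(B)}) \leq \Vol(\bar B)$ as well. Combining these bounds yields $\#\{\text{boundary cubes}\} = O(\max\{\Vol(\bar B),1\})$, completing the induction.

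The main technical obstacle is bookkeeping the complexity constants so they depend only on $n, k, \ell$. One must verify that coordinate projections of semi-algebraic sets of complexity $(k,\ell)$ remain semi-algebraic with complexity bounded in $n, k, \ell$ (an effective form of Tarski--Seidenberg suffices), and that the ``transverse direction'' assignment is well-defined for every boundary cube: this holds because at every smooth point of $\partial B$ some coordinate axis is transverse, while the measure-zero singular locus of $\partial B$ contributes only $O_{k,\ell}(1)$ additional cubes per column, which are absorbed into the error term.
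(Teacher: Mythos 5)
The paper does not prove this lemma; it is stated as a citation to Davenport's 1951 paper ``On a principle of Lipschitz.'' So there is no proof in the paper against which to compare, and I am evaluating your argument on its own merits.

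Your overall plan (cube decomposition, bound the error by the number of boundary cubes, then bound boundary cubes via projections) is a reasonable way to approach a Lipschitz-type estimate, but the crucial step contains a false claim. You assert that ``in any column $\pi_i^{-1}(K)$ over a unit cube $K \subset \RR^{n-1}$, at most $O_{k,\ell}(1)$ integer intervals $[m_i, m_i+1)$ contain a transverse boundary point.'' This is not true. Consider the thin diagonal strip
\[
B = \{(x,y) \in \RR^2 : 0 \leq x \leq 1,\ |y - Mx| \leq \delta\}
\]
with $M$ large and $\delta$ small. At every smooth boundary point the normal is proportional to $(M,-1)$, so the boundary is transverse to both coordinate axes everywhere. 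Yet the column $[0,1) \times \RR$ (the case $i=2$, projecting out the $y$-coordinate) meets the boundary in roughly $M$ distinct unit intervals, each containing transverse points. Bezout controls the intersections of $\partial B$ with a \emph{single} line parallel to the $i$-axis; as you move that line across the base $K$, the intersection points can sweep through arbitrarily many integer intervals, and your argument offers nothing to prevent this. Your charging rule (``some $i$ for which a transverse point exists'') permits charging every boundary cube in this example to $i=2$, and then the claimed bound $\#\{\text{boundary cubes}\} \ll \sum_i \#(\ZZ^{n-1} \cap \pi_i(B))$ does not follow from anything you have proved.

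The argument could plausibly be repaired by charging each boundary cube to the direction $i$ maximizing $|\nu_i|$ at some smooth boundary point in the cube, so that $\partial B$ is locally a Lipschitz graph over the $\hat{i}$-hyperplane with slope $\leq 1$; then, combined with a Bezout bound on the number of smooth sheets, one can control the sweep within a column by $O_{n,k,\ell}(1)$. But this is exactly the delicate part and it is not in your write-up. Davenport's own argument is different in structure: he slices along one coordinate direction, splits $N(B) - \Vol(B)$ into a term controlled by the $1$-dimensional error on each slice (which is $O_{k,\ell}(1)$ per nonempty integer slice, hence $\ll N(\pi_n(B))$) plus a Riemann-sum error term $\sum_{\mathbf{y}\in\ZZ^{n-1}} V_1(B_{\mathbf{y}}) - \Vol(B)$, which he then handles recursively. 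That second error term is precisely the subtlety your cube-counting framework pushes under the rug.
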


Recall that $\flinec$ denotes the closed line segment from $(0,1/2)$ to $(1/4,1/4)$. We will also need the following two line segments. Let $\llinec$ denote the closed line segment from $(0,1/2)$ to $(1/6,1/3)$ and let $\rlinec$ denote the closed line segment from $(1/6,1/3)$ to $(1/4,1/4)$. Now, we apply Davenport's lemma to count lattice points:
\begin{lemma}
\label{davenportcubic}
For any $C,X$ and $p \in \flinec$, we have:
\[
	\exp[X,1+\max\{0,1/2-3p_1\}] \ll_p \#\{V_{\ZZ} \cap CB(p,X)\} \ll C^4\exp[X,1+\max\{0,1/2-3p_1\}].
\]
In particular, the first implicit constant is independent of $C$ and $X$, and the second implicit constant is independent of $C$, $X$, and $p$.
\end{lemma}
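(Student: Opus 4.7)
The plan is to bypass Davenport's lemma and compute $\#(V_\ZZ \cap CB(p,X))$ as an elementary product, which works cleanly once we exploit the defining equation $p_1 + p_2 = 1/2$ of the line segment $\flinec$.

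First, I would rewrite the four side-length exponents of the box using $p_1 + p_2 = 1/2$: they become $3p_1 - 1/2$, $2p_1 + p_2 - 1/2 = p_1$, $p_1 + 2p_2 - 1/2 = p_2$, and $3p_2 - 1/2 = 2p_2 - p_1$. On $\flinec$ we have $p_1 \in [0,1/4]$ and $p_2 \in [1/4,1/2]$, so the last three exponents are all nonnegative, and since $C \geq 1$ the corresponding side lengths $L_2 = CX^{p_1}$, $L_3 = CX^{p_2}$, $L_4 = CX^{2p_2 - p_1}$ are each $\geq 1$. Only the first side length $L_1 = CX^{3p_1 - 1/2}$ can drop below $1$, and this happens precisely when $p_1 < 1/6$ (for $X$ large relative to $C$).

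Next, since $CB(p,X)$ is a product of four symmetric real intervals in the coordinates $a,b,c,d$ of $V_\RR$, the lattice point count factors exactly as
\[
    \#(V_\ZZ \cap CB(p,X)) = \prod_{i=1}^{4}\bigl(2\lfloor L_i \rfloor + 1\bigr) \asymp \prod_{i=1}^{4}\max\{1,\,L_i\}.
\]
Using $L_2 L_3 L_4 = C^3 X^{p_1 + p_2 + (2p_2 - p_1)} = C^3 X^{3p_2} = C^3 X^{3/2 - 3p_1}$ and the fact that $L_2,L_3,L_4\geq 1$, this simplifies to
\[
    \#(V_\ZZ \cap CB(p,X)) \asymp C^3 X^{3/2 - 3p_1} \cdot \max\bigl\{1,\, CX^{3p_1 - 1/2}\bigr\},
\]
with absolute implied constants. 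The implicit constant absorbed by the $\asymp$ at this step comes only from comparing $2\lfloor L_i\rfloor + 1$ with $L_i$ when $L_i \geq 1$, so it is independent of $C$, $X$, and $p$.

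For the upper bound: in either branch of the $\max$, the right-hand side is at most $\max\{C^3 X^{3/2-3p_1},\, C^4 X\}$, and since $X \leq X^{3/2-3p_1}$ whenever $1/2 - 3p_1 \geq 0$, this is $\ll C^4 X^{1 + \max\{0,\,1/2 - 3p_1\}}$ uniformly. For the lower bound: taking $C \geq 1$ and $X$ sufficiently large in terms of $p$ so that either $p_1 \geq 1/6$ (in which case $CX^{3p_1 - 1/2}\geq 1$ and the count is $\gg X$) or $p_1 < 1/6$ (in which case the $\max$ is $1$ and the count is $\gg X^{3/2 - 3p_1}$), we obtain $\gg_p X^{1+\max\{0,1/2-3p_1\}}$. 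No real obstacle arises — the only care needed is tracking the piecewise transition at $p_1 = 1/6$, which is exactly what produces the $\max\{0,\,1/2 - 3p_1\}$ term.
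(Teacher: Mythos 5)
Your proposal is correct and takes a genuinely cleaner route than the paper. The paper invokes Davenport's lemma and splits into cases depending on whether $p$ lies on $\rlinec$ or $\llinec$; in the cusp case it slices the box along integer values of $a$ and applies Davenport separately to each slice of $\RR^3$. You instead exploit that $CB(p,X)$ is literally a coordinate hyperrectangle, so the lattice-point count is the exact product $\prod_{i=1}^4 (2\lfloor L_i\rfloor + 1)$, which is $\asymp \prod_i\max\{1,L_i\}$ with absolute implied constants (e.g.\ $1 \leq (2\lfloor L\rfloor+1)/\max\{1,L\} \leq 3$). Combined with the observation that on $\flinec$ the exponents $p_1$, $p_2$, $1-3p_1$ are all nonnegative — so $L_2,L_3,L_4 \geq 1$ and only $L_1$ can drop below $1$ — the piecewise formula falls out in one line with no case-splitting and no appeal to Davenport. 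Your approach also handles all of $\flinec$ uniformly and in fact yields a lower bound with implied constant independent of $p$ (stronger than the lemma's $\gg_p$): taking $C=1$, one has $2\lfloor X^{e_i}\rfloor + 1 \geq \max\{1, X^{e_i}\}$ outright, so the phrase ``$X$ sufficiently large in terms of $p$'' in your last step is unnecessary and can be dropped. The one trade-off is that your argument relies crucially on the region being a coordinate box, whereas Davenport's lemma covers general bounded semi-algebraic sets; for this particular lemma that generality is never used.
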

\begin{proof}
First suppose that $p \in \rlinec$ and $p \neq (1/6,1/3)$. Then $\Vol(CB(p,X)) = 2^4C^4X$. In this case, one sees that the projection of $CB(p,X)$ onto $a = 0$ has volume $2^3C^3\exp[X,1-(3p_1-1/2)]$, and the volume of any other coordinate projection is bounded above by this quantity. \cref{davenport} now shows that there exists a constant $D \geq 1$, independent of $C$, $X$, and $p$, such that:
\begin{align*}
\#\{V_{\ZZ} \cap CB(p,X)\} &\leq 2^4C^4X + 2^3C^3D\exp[X,1-(3p_1-1/2)] \\
& \leq 2^5DC^4X \\
& \ll C^4X
\end{align*}
We obtain the lower bound as follows:
\begin{align*}
\#\{V_{\ZZ} \cap CB(p,X)\} &\geq \#\{V_{\ZZ} \cap B(p,X)\} \\
&\geq 2^4X - D2^3\exp[X,1-(3p_1-1/2)] \\
&\gg_p X
\end{align*}

Now suppose $p \in \llinec$ and $p \neq (0,1/2)$. Then every lattice point in $CB(p,X)$ satisfies $\lv a \rv \leq C$. For an integer $-C \leq i \leq C$, let $B_i \subseteq \RR^3$ denote the subset of $CB(p,X)$ with $a = i$, projected onto $a = 0$. We will count the number of lattice points in each region $B_i$. Applying \cref{davenport} to the projection $b = 0$ shows that there exists a constant $D \geq 1$, independent of $C$, $X$ and $p$, such that:
\begin{align*}
\#\{V_{\ZZ} \cap CB(p,X)\} &= \sum_{i = -C}^C \#\{V_{\ZZ} \cap B_i(p,X)\} \\
&\leq (2C + 1)(2^3C^3\exp[X,1+(1/2-3p_1)] + 2^2C^2D\exp[X,1+(1/2-3p_1)-p_1]) \\
& \ll C^4\exp[X,1+(1/2-3p_1)]
\end{align*}
Again, the lower bound follows from the fact that:
\begin{align*}
\#\{V_{\ZZ} \cap CB(p,X)\} &\geq \#\{V_{\ZZ} \cap B(p,X)\} \\
&\geq 2^3\exp[X,1+(1/2-3p_1)] - 2^2D\exp[X,1+(1/2-3p_1)-p_1]  \\
&\gg_p \exp[X,1+(1/2-3p_1)]
\end{align*}

An analogous argument proves the lemma when $p = (0,1/2)$. 
\end{proof}

\subsubsection{Upper and lower bounds on $\#\cF_3(p,\eps,X)$}
\begin{proposition}
\label{upperboundcubic}
We have $\#\cF_3(p,\eps,X) \ll e^{12\eps} \exp[X,1-(p_2-p_1)+\max\{0,1/2-3p_1\}]$ where the implicit constants are independent of $\eps$, $X$, and $p$.
\end{proposition}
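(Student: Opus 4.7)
The plan is to parametrize rings in $\cF_3(p,\eps,X)$ by $\GL_2(\ZZ)$-orbits of integral binary cubic forms via Delone--Faddeev, and then to upper bound the number of such orbits by counting integral points in an explicit box and dividing by a lower bound on the orbit size. The box will be a dilate of $B(p,X)$ comparable to the one produced by \cref{minkbasisshortcubic}, and the orbit size lower bound will come from \cref{helplemma}.

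First I would dyadically decompose by discriminant: for each integer $k \geq 0$, let $\cF_{3,k} \subseteq \cF_3(p,\eps,X)$ consist of rings with $\Delta \in (X/2^{k+1},\, X/2^k]$. There are only $O(1)$ rings with $\Delta \leq 2$, and if $p$ lies outside any fixed neighborhood of $\flinec$ then, by Minkowski's second theorem (which forces $p_R$ to lie within $O(1/\log \Delta)$ of $\flinec$), the set $\cF_3(p,\eps,X)$ is empty once $X$ is large, so we may restrict to $p$ close enough to $\flinec$ that the exponent $\alpha := 1-(p_2-p_1)+\max\{0,1/2-3p_1\}$ is bounded below by a universal positive constant. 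For any $R \in \cF_{3,k}$, I would choose a Minkowski basis of $R$ and let $f_R \in V_\ZZ$ be its Delone--Faddeev image. Since $\Delta \leq X$, the $(\eps,X)$-closeness condition defining $\cF_3(p,\eps,X)$ upgrades automatically to $(\eps,\Delta)$-closeness, so \cref{minkbasisshortcubic} gives $f_R \in Ce^{3\eps}B(p,\Delta)$ for a universal constant $C$. Because $\Delta \asymp X/2^k$ and the exponents $3p_1-1/2$, $2p_1+p_2-1/2$, etc., defining $B(p,\cdot)$ are uniformly bounded on the compact region of admissible $p$, this translates into $f_R \in C'e^{3\eps}B(p,X/2^k)$ for a universal $C'$.

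Applying \cref{helplemma} to $f_R$ in this enlarged box produces at least $(X/2^k)^{p_2-p_1}$ distinct $\GL_2(\ZZ)$-translates of $f_R$ inside $DC'e^{3\eps}B(p,X/2^k)$ for some universal $D$. Since Delone--Faddeev is a bijection between isomorphism classes of cubic rings and $\GL_2(\ZZ)$-orbits of forms, the orbits associated to distinct $R \in \cF_{3,k}$ are disjoint, so
\[
|\cF_{3,k}|\cdot (X/2^k)^{p_2-p_1} \;\leq\; \#\bigl(V_\ZZ \cap DC'e^{3\eps}B(p,X/2^k)\bigr).
\]
Finally, \cref{davenportcubic} bounds the right-hand side by $(DC'e^{3\eps})^4 \exp[X/2^k,\, 1+\max\{0,1/2-3p_1\}]$ with implicit constant independent of the dilation and of $p$. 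The four factors of $e^{3\eps}$ coming from the fourth power combine into the advertised $e^{12\eps}$. Dividing by $(X/2^k)^{p_2-p_1}$ and summing the geometric series $\sum_{k\geq 0}(X/2^k)^\alpha \ll X^\alpha$ (convergent because $\alpha$ is uniformly bounded below) yields the claimed bound.

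The main obstacle is purely bookkeeping: verifying that every comparison between $B(p,\Delta)$ and $B(p,X/2^k)$ can be absorbed into a universal constant depending neither on $p$ nor on the discriminant scale, and confirming that the $\eps$-dependence collects cleanly into exactly the stated fourth-power factor $e^{12\eps}$ rather than a larger power.
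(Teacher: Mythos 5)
Your proof is correct and follows the same essential strategy as the paper's: map each ring to a Delone--Faddeev form coming from a Minkowski basis, put that form in an explicit box, count lattice points in the box via \cref{davenportcubic}, and divide by the orbit-size lower bound from \cref{helplemma}. The genuine difference is your dyadic decomposition by discriminant, and it is not mere bookkeeping --- it appears to repair a gap. The paper's proof invokes \cref{minkbasisshortcubic}, which gives $f_R \in Ce^{3\eps}B(p,\Delta_R)$, and then asserts $f_R \in Ce^{3\eps}B(p,X)$ for all $R \in \cF_3(p,\eps,X)$. But on $\flinec$ the exponent $3p_1 - 1/2$ governing the $a$-coordinate of $B(p,\cdot)$ is nonpositive, so for $\Delta_R \leq X$ one has $\Delta_R^{3p_1-1/2} \geq X^{3p_1-1/2}$, with ratio $(X/\Delta_R)^{1/2-3p_1}$ that is \emph{unbounded} as $\Delta_R/X \to 0$. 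Hence $B(p,\Delta_R)$ is not contained in any fixed dilate of $B(p,X)$ once $\Delta_R \ll X$, and forms arising from small-discriminant rings can escape the box the paper counts in. Your decomposition works at the correct scale $X/2^k$ on each dyadic shell, where the comparison $B(p,\Delta) \subseteq C'B(p,X/2^k)$ \emph{does} hold with a universal $C'$ because $\Delta/(X/2^k) \in (1/2,1]$ and the exponents are uniformly bounded; the geometric series in $k$ then converges because $\alpha \geq 5/6$ on $\flinec$. One point worth stating more explicitly in your write-up: the summation constant $1/(1-2^{-\alpha})$ depends on a uniform lower bound for $\alpha$, so your bound is uniform over a fixed neighborhood of $\flinec$ rather than literally over all $p \in \RR^2$; this matches the actual scope of the proposition (and of \cref{davenportcubic} and \cref{helplemma}, both stated only for $p \in \flinec$), but it should be said, since the paper's wording ``independent of $p$'' overclaims slightly.
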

\begin{proof}
Let $S$ be the set of binary cubic forms arising from Minkowski bases of cubic rings in $\cF_3(p,\eps,X)$. \cref{minkbasisshortcubic} implies that there exists a global constant $C$ such that $f \in Ce^{3\eps}B(p,X)$ for all $f \in S$. By \cref{helplemma}, there exists a global constant $D$ such that $\#\{f' \in DCe^{3\eps}B(p,X) \mid R_{f'} \simeq R_{f}\} \geq X^{p_2-p_1}$ for all $f \in S$. Without loss of generality, we may suppose $D \geq 1$. Therefore, for all $f \in S$, we have $f \in DCe^{3\eps}B(p,X)$ and the ring $R_f$ is obtained by at least $X^{p_2-p_1}$ distinct forms in $DCe^{3\eps}B(p,X)$. Therefore,
\begin{align*}
\#\cF_3(p,\eps,X) &\leq \frac{\#\{DCe^{3\eps}B(p,X) \cap V_{\ZZ}\}}{\exp[X,p_2-p_1]} && \text{} \\
&\ll e^{12\eps}\exp[X,1+\max\{0,1/2-3p_1\}-(p_2-p_1)]. && \text{by \cref{davenportcubic}}
\end{align*}
\end{proof}

\begin{proposition}
\label{lowerboundcubic}
There exists a constant $C$ such that for all $\eps > C$, we have:
\begin{enumerate}
\item if $p \in \flinec$, then $\#\cF_3(p,\eps,X) \gg_{p} \exp[X,1+\max\{0,1/2-3p_1\}-(p_2-p_1)]$;
\item and if $p \in \rlinec$, then $\#\cF(p,\eps,S_3,X) \gg_{p} \exp[X,1-(p_2-p_1)]$.
\end{enumerate}
\end{proposition}
\begin{proof}
We first prove $(1)$. Fix $p \in \flinec$ and let $f$ denote any integral binary cubic form. \cref{lambdaboundlemma} shows that there exists a constant $C$ such that for all $\eps > C$, if $f \in B(p,X)$ and $\frac{X}{2} \leq \Delta$, then $f$ is $(\eps,X)$-close to $p$. Let $S = \{f \in B(p,X) \cap V_{\ZZ} \mid \frac{X}{2} \leq \Delta \leq X\}$.
Therefore for all $\eps > C$, we have
\begin{align*}
\#\cF_3(p,\eps,X) &\geq \frac{\#S}{\max_{f\in S}\#\{f' \in B(p,X) \cap V_{\ZZ} \mid R_{f'}\simeq R_f\}} && \text{} \\
&\asymp_p \frac{\#\{f \in B(p,X) \cap V_{\ZZ} \}}{\max_{f\in S}\#\{f' \in B(p,X) \cap V_{\ZZ} \mid R_{f'}\simeq R_f\}} && \text{by the definition of $B(p,X)$} \\
&\gg \frac{\#\{f \in B(p,X) \cap V_{\ZZ} \}}{\exp[X,p_2-p_1]} && \text{by \cref{reducedcubiccorol}} \\
&\gg_p \frac{\exp[X,1+\max\{0,1/2-3p_1\}]}{\exp[X,p_2-p_1]} && \text{by \cref{davenportcubic}} \\
&= \exp[X,1+\max\{0,3p_1-1/2\}-(p_2-p_1)]. && 
\end{align*}

If an integral binary cubic form is irreducible, then its Galois group must be $S_3$ or $A_3$. If the Galois group is $A_3$, then the discriminant is a square; so therefore if the discriminant is not a square, the Galois group must be $S_3$. If $\Delta \equiv 2 \Mod{5}$, then $\Delta$ is not a square. If $f$ is irreducible modulo $7$, then $f$ is irreducible. As this is just a congruence condition at $2$ primes, a simple computation shows that if $p \in \ell_3$, then
\begin{equation}
\label{eqn:asymp-eqn}
  \liminf_{X \rightarrow \infty}\frac{\#(S \cap \{f \in V_{\ZZ} \mid \Delta \equiv 2 \Mod{5}, \; f\text{ is irreducible modulo } 7\})}{\#S} > 0.  
\end{equation}

Therefore, if $p \in \ell_3$, then
\begin{align*}
\cF(p,\eps,X) &\geq \frac{\#(S \cap \{f \in V_{\ZZ} \mid \Delta \equiv 2 \Mod{5}, \; f\text{ is irreducible modulo } 7\})}{\max_{f\in S}\#\{f' \in B(p,X) \cap V_{\ZZ} \mid R_{f'}\simeq R_f\}} && \text{} \\
&\asymp  \frac{\#S}{\max_{f\in S}\#\{f' \in B(p,X) \cap V_{\ZZ} \mid R_{f'}\simeq R_f\}}. && \text{by \Cref{eqn:asymp-eqn}}
\end{align*}
Applying the same analysis as for $(1)$, we obtain:
\[
    \cF(p,\eps,X) \gg \exp[X,1+\max\{0,3p_1-1/2\}-(p_2-p_1)] = \exp[X,1-(p_2-p_1)].
\]
\end{proof}

\begin{lemma} 
\label{upperboundreduciblept1}
For all $p \in \flinec$ and $\nu \in \RR_{>0}$, the number of reducible integral binary cubic forms in $B(p,X)$ is $\ll_{\nu}X^{3/2-3p_1 + \nu}$.
\end{lemma}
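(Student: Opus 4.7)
The strategy is to parametrize a reducible integral binary cubic form via its factorization as (linear) $\times$ (quadratic) and then apply \cref{davenport} to count lattice points in each resulting slice of $B(p,X)$. Specifically, any reducible $f \in V_\ZZ$ admits a factorization $f = g \cdot h$ with $g(x,y) = rx + sy$ a primitive integral linear form and $h = \alpha x^2 + \beta xy + \gamma y^2$ an integral binary quadratic form; normalizing $(r,s)$ so that $r \geq 0$ (and $(r,s) = (0,1)$ if $r = 0$), and noting that each reducible $f$ has at most $3$ rational roots, we obtain
\[
\#\{\text{reducible } f \in B(p,X) \cap V_\ZZ\} \;\leq\; 3\sum_{(r,s)} N_{r,s}, \qquad N_{r,s} := \#\{h \in \ZZ^3 : g \cdot h \in B(p,X)\}.
\]
The coefficients of $g \cdot h$ are $(r\alpha,\; r\beta+s\alpha,\; r\gamma+s\beta,\; s\gamma)$, so the membership in $B(p,X)$ defines a convex region $P_{r,s} \subseteq \RR^3$ via four linear inequalities.

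In the distinguished case $g = y$, the cubic has $a = 0$ and
\[
N_{0,1} \;\ll\; (1 + X^{p_1})(1 + X^{p_2})(1 + X^{3p_2 - 1/2}) \;\ll\; X^{3p_2} \;=\; X^{3/2 - 3p_1},
\]
using $p_1 + p_2 = 1/2$ on $\flinec$; this already matches the target bound. In the case $g = x$, a parallel calculation gives $N_{1,0} \ll X^{3p_1}$, which is $\leq X^{3/2 - 3p_1}$ for $p_1 \leq 1/4$. For the generic case $r \geq 1$ and $s \neq 0$, I estimate $\mathrm{Vol}(P_{r,s})$ by solving for $(\alpha,\beta,\gamma)$ in two orders: using inequalities $(1),(2),(3)$ produces the bound $\mathrm{Vol}(P_{r,s}) \ll X^{3p_1}/r^3$, while using inequalities $(4),(3),(2)$ produces $\mathrm{Vol}(P_{r,s}) \ll X^{3/2 - 3p_1}/|s|^3$. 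The key inequality $\min(A,B) \leq \sqrt{AB}$ then gives
\[
\mathrm{Vol}(P_{r,s}) \;\ll\; \frac{X^{3/4}}{(r|s|)^{3/2}}.
\]
Summing over primitive pairs, $\sum_{r,|s| \geq 1} (r|s|)^{-3/2} = \zeta(3/2)^2$ is finite, yielding a contribution of $O(X^{3/4})$, which is bounded above by $X^{3/2-3p_1}$ on $\flinec$.

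\textbf{Main obstacle.} The crucial difficulty is the sum over $(r,s)$: using only one of the two volume bounds gives a divergent series, so both must be used simultaneously. A secondary issue is the Davenport error term. For each $(r,s)$, \cref{davenport} gives $N_{r,s} \leq \mathrm{Vol}(P_{r,s}) + O(\text{max 2D projection of } P_{r,s})$. The $2$-dimensional projections have area $\ll \min(X^{1/2}/r^2, X^{1-2p_1}/(r|s|))$ and similar; the range of relevant $s$ is controlled by observing that for $P_{r,s}$ to contain a nonzero lattice point one needs $|s| \ll X^{3p_2 - 1/2} = X^{1 - 3p_1}$ (otherwise the last constraint forces $\gamma = 0$, then the third forces $\beta = 0$, etc.). Summing these error contributions over $(r,s)$ produces at worst a factor of $(\log X)^2$ and a power of $X$ strictly less than $3/2 - 3p_1$; the $X^\nu$ in the statement absorbs all such logarithmic losses. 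Combining the four contributions yields
\[
\#\{\text{reducible } f \in B(p,X) \cap V_\ZZ\} \;\ll_\nu\; X^{3/2 - 3p_1 + \nu},
\]
as claimed.
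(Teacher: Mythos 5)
Your proof is a genuinely different argument from the paper's. The paper fixes $(a,b,d)$ and, after noting that a primitive linear factor $rx+sy$ of a reducible $f$ with $ad\ne 0$ must have $r\mid a$ and $s\mid d$, bounds the number of admissible $c$ by $O_\nu(X^\nu)$ via the divisor function; multiplying by the $O(X^{1-p_2})=O(X^{1/2+p_1})$ choices of $(a,b,d)$ then gives the stated exponent on $\flinec$. You instead fix the primitive linear factor and count the slice $P_{r,s}$ of quadratic cofactors, controlling the sum over $(r,s)$ by the $\min(A,B)\le\sqrt{AB}$ device applied to the two triangular volume bounds $X^{3p_1}/r^3$ and $X^{3/2-3p_1}/|s|^3$, which produces the convergent series $\sum(r|s|)^{-3/2}$. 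Both reach the same exponent; yours trades the divisor bound for a geometry-of-numbers count, and the $\sqrt{AB}$ step is a nice way to obtain simultaneous summability in $r$ and $|s|$.

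The one place the write-up is not tight is the Davenport error term. The lemma gives $\#(\ZZ^3\cap P_{r,s})=\Vol(P_{r,s})+O\bigl(\max\{\Vol(\overline{P_{r,s}}),1\}\bigr)$; you record only the $2$-dimensional projection term and drop the floor of $1$, but that floor is exactly what makes the sum over $(r,s)$ delicate, since it contributes $\#\{(r,s)\}$ in total. You give the cap $|s|\ll X^{1-3p_1}$ but not the companion cap $r\ll X^{3p_1-1/2}$; the latter holds once one restricts to $\alpha\gamma\ne 0$ (the cases $\alpha=0$ and $\gamma=0$ force $a=0$ or $d=0$, already absorbed into $N_{0,1}$ and $N_{1,0}$), and then $\#\{(r,s)\}\ll X^{1/2}\le X^{3/2-3p_1}$, while the projection contributions sum to $\ll X^{3/2-3p_1}(\log X)^2$. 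Cleaner still: bypass Davenport and count directly, $N_{r,s}\ll \frac{X^{3p_1-1/2}}{r}\cdot\frac{X^{1-3p_1}}{|s|}\cdot\bigl(\min\bigl(\tfrac{X^{p_1}}{r},\tfrac{X^{p_2}}{|s|}\bigr)+1\bigr)$ on the set $\alpha\gamma\ne 0$, and apply $\sqrt{AB}$ to the last factor; this gives the main term $O(X^{3/4})$ and a $+1$-term of $O(X^{1/2}(\log X)^2)$ without any projection bookkeeping. With either patch the argument is rigorous, though the paper's divisor-bound route remains shorter.
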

\begin{proof}
By Davenport's lemma, the number of forms $f \in B(p,X)$ with $a = 0$ is $\ll X^{3/2-3p_1}$. Similarly, the number of forms $f \in B(p,X)$ with $d = 0$ is $\ll X^{3/2-3p_2} \leq X^{3/2-3p_1}$.

Let's now count the number of reducible integral binary cubic forms $f(x,y) \in B(p,X)$ with $ad \neq 0$. The total number of possibilities for the triple $(a,b,d)$ is $O(X^{1-p_2})$. Suppose the values $a,b,d$ are now fixed, and consider the possible number of values $c$ such that the resulting form $f(x,y)$ is reducible. For $f(x,y)$ to be reducible, it must have some linear factor $rx + sy$ where $r,s \in \ZZ$ are relatively prime. Then $r$ must be a factor of $a$, while $s$ must be a factor of $d$.

Now, the classical divisor bound implies that the number of factors of an integer $d$ is bounded above by $O_{\nu}(X^{\nu})$. Therefore, $r$ and $s$ are both determined up to $O_{\nu}(X^{\nu})$ possibilities. Once $r$ and $s$ are determined, computing $f(-s,r)$ and setting it equal to zero then uniquely determines $c$ (if it is an integer at all) in terms of $a,b,d,r,s$. Thus the total number of reducible forms $f$ with $ad \neq 0$ is $\ll_{\nu} X^{1-p_2+\nu} \ll_{\nu}X^{3/2-3p_1 + \nu}$.
\end{proof}

\subsection{Computing density functions}
\label{subsec:density-functions-cubic}
We now easily compute the density function of $S_3$-rings.

\begin{corollary}
\label{zerocubic}
For any permutation group $G \subseteq S_3$, the function $d_{\eps,G}$ has support contained in $\flinec$. If $G$ is transitive, then $d_{\eps,G}$ has support contained in $\rlinec$.
\end{corollary}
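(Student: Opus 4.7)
The plan is to argue that when $p$ lies outside the stated line segment, only finitely many nondegenerate cubic rings can be $(\eps,X)$-close to $p$, which forces the limit defining $d_{\eps,G}(p)$ to be zero. No parametrization or reduction theory from the rest of the section is needed; this corollary is purely a consequence of Minkowski's second theorem plus, in the transitive case, the classical discriminant bound for $\ZZ[\alpha]$.

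First I would record the universal constraints on $(\log_\Delta \lambda_1, \log_\Delta \lambda_2)$ coming from Minkowski's second theorem and \cref{lambdazerolemma}: every nondegenerate cubic ring $R$ of absolute discriminant $\Delta$ satisfies
\[
\lambda_0 \lambda_1 \lambda_2 \asymp \Delta^{1/2}, \qquad \frac{1}{\sqrt{3}} \leq \lambda_0 \leq 1, \qquad \lambda_1 \leq \lambda_2, \qquad \lambda_1 \geq \lambda_0.
\]
Taking $\log_\Delta$ and absorbing $\log_\Delta \lambda_0$ into an $O(1/\log \Delta)$ error, these translate to $\log_\Delta \lambda_1 + \log_\Delta \lambda_2 = 1/2 + O(1/\log \Delta)$, $\log_\Delta \lambda_1 \leq \log_\Delta \lambda_2$, and $\log_\Delta \lambda_1 \geq -O(1/\log \Delta)$. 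If $p \notin \flinec$, then $p$ violates one of $p_1 + p_2 = 1/2$, $p_1 \leq p_2$, or $p_1 \geq 0$ by some fixed $\delta > 0$; the closeness hypothesis $\max_i|\log_\Delta \lambda_i - p_i| \leq \eps/\log X$ then produces a contradiction once $\Delta$ (and hence $X$) is large enough in terms of $p$ and $\eps$. Thus $\cF(p,\eps,G,X)$ consists of rings with $\Delta$ bounded by a constant depending only on $p,\eps$, of which there are only finitely many, so $d_{\eps,G}(p) = 0$.

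For the transitive case, I would add the lower bound $\log_\Delta \lambda_1 \geq 1/6 - O(1/\log \Delta)$, which (combined with the constraints from the first step) carves $\flinec$ down to $\rlinec$. If $G$ is transitive, then $R \otimes \QQ$ is a cubic field and $\ZZ$ is the unique rank-$1$ subring of $R$. Pick $\alpha \in R$ with $|\alpha| = \lambda_1$ and linearly independent from $1$; then $\alpha \notin \ZZ$, so $\ZZ[\alpha]$ is a rank-$3$ subring and $|\Disc(\ZZ[\alpha])| \geq \Delta$. On the other hand, $|\sigma_i(\alpha)| \leq \sqrt{3}\,\lambda_1$ for each embedding, so the identity $\Disc(\ZZ[\alpha]) = \prod_{i<j}(\sigma_i(\alpha) - \sigma_j(\alpha))^2$ gives $|\Disc(\ZZ[\alpha])| \ll \lambda_1^6$. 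Combining the two inequalities yields $\lambda_1 \gg \Delta^{1/6}$, the desired bound. The same finiteness argument as before then rules out $p \notin \rlinec$.

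The only real work is bookkeeping the error terms $O(1/\log \Delta)$ and $O(\eps/\log X)$; both vanish as $\Delta, X \to \infty$, so for fixed $p$ outside the relevant segment one may choose $X$ large enough to eliminate all rings except finitely many of bounded discriminant. I do not foresee any genuine obstacle in this argument.
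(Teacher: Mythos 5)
Your proposal is correct, but it takes a genuinely different route from the paper, which simply cites \cite[Theorem 1.1.2]{me} — a structural theorem (proved in a companion work, and generalized here as \cref{quarticstructuretheorem} and \cref{quinticstructuretheorem}) that characterizes the possible limit points of $(\log_\Delta \lambda_1,\dots,\log_\Delta\lambda_{n-1})$ via flags of the underlying \'etale algebra. Your argument is more elementary and self-contained: the inclusion of the support in $\flinec$ follows from Minkowski's second theorem plus \cref{lambdazerolemma}, and the further restriction to $\rlinec$ in the transitive case follows from the classical discriminant bound $\lv\Disc(\ZZ[\alpha])\rv \ll \lambda_1^{2\binom{n}{2}}$ for a shortest nonrational $\alpha$, combined with $\lv\Disc(\ZZ[\alpha])\rv \geq \Delta$ since $\ZZ[\alpha]$ has full rank when $R\otimes\QQ$ is a field. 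The trade-off is that the flag-theoretic approach of \cite{me} is set up to scale cleanly to the quartic and quintic resolvent pairs (where the constraints involve the cubic/sextic resolvent map, not just a primitive element), whereas your argument is tailored to the rank-$3$ case but requires no external input. One small imprecision worth flagging: you should take $\alpha$ with $\lv\alpha\rv \leq \lambda_1$ and $\alpha\notin\ZZ\cdot 1$ (which exists since two linearly independent vectors of length $\leq \lambda_1$ cannot both lie in $\ZZ\cdot 1$) rather than insisting $\lv\alpha\rv = \lambda_1$; the argument is otherwise unaffected.
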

\begin{proof}
This is a corollary of \cite[Theorem 1.1.2]{me}.
\end{proof}

\begin{proof}[Proof of \cref{chichecubic}]
\cref{zerocubic} shows that the support of $d_{\eps,S_3}$ is contained in $\rlinec$. If $p \in \rlinec$, then combining \cref{upperboundcubic} and \cref{lowerboundcubic} shows that there exists a constant $C$ such that for all $\eps > C$, we have $\#\cF(p,\eps,S_3,X) \asymp_{p,\eps} X^{1-(p_2-p_1)}$. Thus for all $p \in \rlinec$, we have
\[
	d_{\eps,S_3}(p) = \lim_{X \rightarrow \infty}\frac{\log(\#\cF(p,\eps,S_3,X)+1)}{\log X} = \lim_{X \rightarrow \infty}\frac{\log(X^{1-(p_2-p_1)})}{\log X} = 1-(p_2-p_1). 
\]
\end{proof}

\subsection{Computing correspondences of special families of cubic rings}
\label{subsec:correspondences-cubic}
In this section we will prove \cref{genericcubiccorollary} and \cref{monogeniccubictheorem}.

\begin{proof}[Proof of \cref{genericcubiccorollary}]
Let $\cF^{\irr}$ be the set of isomorphism classes of irreducible nondegenerate cubic rings, let $H \colon \cF^{\irr} \rightarrow \RR_{>0}$ be the absolute discriminant, and let $p = (1/4,1/4)$. We will prove that $(\cF^{\irr},H)$ corresponds to $p$.

We first prove \eqref{cond1cubicspecialfamily} by demonstrating that:
\begin{equation}
\label{origcond1}
	\lim_{\eps \rightarrow \infty}\limsup_{X \rightarrow \infty}\frac{\#\{R \in \cF^{\irr} \; \mid \; \Delta \leq X \text{ and $R$ is not $(\eps,X)$-close to $p$}\}}{\#\{R \in \cF^{\irr} \; \mid \; \Delta \leq X \}} = 0.
\end{equation}
We bound the denominator using \cref{lowerboundcubic} and obtain:
\begin{equation}
\label{denombound}
\#\{R \in \cF^{\irr} \; \mid \; \Delta \leq X \} \gg X.
\end{equation}
We will now upper bound the numerator; our proof strategy is as follows. We fix a positive integer $k$ and count the number of rings $R\in \cF^{\irr}$ of bounded discriminant that are $(\eps/k,X)$-close to some \emph{other} point $p'$ on the line segment $\flinec$ with the property that that $p'$ has distance at least $\eps/\log X$ from $p$. We conclude by summing up across an appropriate set of points $p'$. 

Let $k=6$ and let 
\[
	p_{i,\eps} \coloneqq \bigg (p_1 - \frac{\eps(ik^{-1}+1)}{\log X}, p_2 + \frac{\eps(ik^{-1}+1)}{\log X}\bigg ) \in \RR^2
\]
Let $j_{X,\eps}$ be the smallest integer such that $p_1 - \frac{\eps(j_{X,\eps}k^{-1}+1)}{\log X} \leq 1/6$. The set $\{p_{i,\eps}\}_{i = 1}^{j_{X,\eps}}$ will be our appropriate set of points. By Minkowski's second theorem, there exists a global constant $C$ such that for all $\eps > C$, every element of $R \in \cF^{\irr}$ is $(\eps,\Delta)$-close to some point on the closed line segment from $(1/6,1/3)$ to $(1/4,1/4)$. Thus, for any $\eps > C$, we have:
\begin{align}
&\#\{R \in \cF^{\irr} \; \mid \; \Delta \leq X \text{ and $R$ is not $(\eps,X)$-close to $(1/4,1/4)$} \} \\
\label{line2}
&\leq \sum_{i = 1}^{j_{X,\eps}}\#\{R \in \cF^{\irr} \; \mid \; \Delta \leq X \text{ and $R$ is $(\eps/k,X)$-close to $p_{i,\eps}$} \}
\end{align}

\cref{upperboundcubic} proves that each of the terms in the sum in \eqref{line2} is 
\[
	\ll e^{-2\eps(ik^{-1}+1)}e^{12\eps/k}X,
\]
where the implied constant is independent of $i$ and $\eps$. Thus, we obtain that \eqref{line2} is
\begin{align}
&\ll Xe^{12\eps/k}\sum_{i=1}^{j_{X,\eps}} e^{-2\eps(ik^{-1}+1)}\\
\label{finalboundline}
&\leq X e^{12\eps/k}\frac{e^{-2\eps(k^{-1}+1)}}{1-e^{-2\eps k^{-1}}}
\end{align}
Plugging \eqref{finalboundline} into the numerator of \eqref{origcond1}, and plugging \eqref{denombound} into the denominator of \eqref{origcond1}, we obtain: 
\begin{equation}
\label{epsbound}
\limsup_{X \rightarrow \infty}\frac{\#\{R \in \cF^{\irr} \; \mid \; \Delta \leq X \text{ and $R$ is not $(\eps,X)$-close to $p$}\}}{\#\{R \in \cF^{\irr} \; \mid \; \Delta \leq X \}} \ll 
e^{12\eps/k}\frac{e^{-2\eps(k^{-1}+1)}}{1-e^{-2\eps k^{-1}}}
\end{equation}
where the implicit constant is independent of $\eps$. Now observe that:
\[
	\lim_{\eps \rightarrow \infty}e^{12\eps/k}\frac{e^{-2\eps(k^{-1}+1)}}{1-e^{-2\eps k^{-1}}} = \lim_{\eps \rightarrow \infty}\frac{e^{-2\eps k^{-1}}}{1-e^{-2\eps k^{-1}}} = 0.
\]
This proves \eqref{origcond1}, and \eqref{cond2cubicspecialfamily} is a corollary of \cref{upperboundcubic} and \cref{lowerboundcubic}.

Now let $\cF^{\redu}$ be the set of isomorphism classes of reducible nondegenerate cubic rings, let $H \colon \cF^{\redu} \rightarrow \RR_{>0}$ be the absolute discriminant, and let $p = (0,1/2)$.  We will now prove that $(\cF^{\redu},H)$ corresponds to $p$. The second condition is easy to see; observe that for any fixed $\eps$ and $\Delta$ large enough with respect to $\eps$, \cite[Theorem 1.1.2]{me} implies that any ring $(\eps,\Delta)$-close to $p$ is reducible. Hence, \eqref{cond2cubicspecialfamily} follows immediately from \cref{upperboundcubic} and \cref{lowerboundcubic}. We now prove that \eqref{cond1cubicspecialfamily} holds by demonstrating:
\begin{equation}
\label{redorig}
	\lim_{\eps \rightarrow \infty}\limsup_{X \rightarrow \infty}\frac{\#\{R \in \cF^{\redu} \; \mid \; \Delta \leq X \text{ and $R$ is not $(\eps,X)$-close to $p$}\}}{\#\{R \in \cF^{\redu} \; \mid \; \Delta \leq X \}} = 0
\end{equation}
By considering maximal reducible orders, we obtain: $\{R \in \cF^{\redu} \; \mid \; \Delta \leq X\} \gg X$. Again let $k$ be some integer, to fixed later, and let 
\[
	p_{i,\eps} \coloneqq \bigg(p_1 + \frac{\eps(ik^{-1}+1)}{\log X}, p_2 - \frac{\eps(ik^{-1}+1)}{\log X}\bigg) \in \RR^2.
\]
Let $j_{X,\eps}$ be the largest integer such that $p_1 + \eps(j_{X,\eps}k^{-1}+1)\log X < 1/6$ and let $k_{X,\eps}$ be the smallest integer such that $p_1 + \eps(j_{X,\eps}k^{-1}+1)\log X \geq 1/4$. There exists a constant $C$ such that for all $\eps > C$, we have: 
\begin{align*}
&\#\{R \in \cF^{\redu}\; \mid \; \Delta \leq X \text{ and $R$ is not $(\eps,X)$-close to $p$}\} \\
&\leq  \sum_{i = 1}^{j_{X,\eps}}\#\{R \in \cF^{\redu} \; \mid \; \Delta \leq X \text{ and $R$ is $(\eps/k,X)$-close to $p_{i,\eps}$} \} \\
&\;\;\;\;\;\;+ \sum_{i = j_{X,\eps}+1}^{k_{X,\eps}}\#\{R \in \cF^{\redu} \; \mid \; \Delta \leq X \text{ and $R$ is $(\eps/k,X)$-close to $p_{i,\eps}$} \}
\end{align*}
Using \cref{upperboundreduciblept1}, we see the second term is
\[
	\ll_{\nu,\eps} \sum_{i = j_{X,\eps}+1}^{k_{X,\eps}} X^{5/6 + \nu} \ll X^{5/6 + \nu}\log X = o(X).
\]
As before, for $1 \leq i \leq j_{X,\eps}$, \cref{upperboundcubic} implies:
\begin{align*}
\#\{R \in \cF^{\redu} \; \mid \; \Delta \leq X \text{ and $R$ is $(\eps/k,X)$-close to $p_{i,\eps}$} \} \ll e^{12\eps/k}X e^{-2\eps(ik^{-1}+1)}
\end{align*}

Fixing $k$ to be large enough and summing the first term proves \eqref{redorig}.

\end{proof}

\begin{proof}[Proof of \cref{monogeniccubictheorem}]
Let $\cF_3^{\mon}$ be the set of monic cubic polynomials with integer coefficients and nonzero discriminant and let $p = (1/6,1/3)$. If $f \in \cF_3^{\mon}$, then a computation shows that $H_r(f) \leq X^{1/6}$ if and only if $f \in B(p,X)$. Let $C$ be any positive real number. By \cref{lambdaboundlemma}, if $\Delta \geq X^6/C$ and $H_r(f) \leq X^{1/6}$, then
\[
	\lambda_i(R_f) \asymp_C \Delta^{p_i}.
\]
To prove \eqref{cond1cubicspecialfamily}, it suffices to show:
\[
\lim_{C \rightarrow \infty} \liminf_{X \rightarrow \infty} \frac{\#\{f \in \cF_3^{\mon} \; \mid \; H_r(f) \leq X \text{ and } \Delta > X^6/C\}}{\#\{f \in \cF_3^{\mon} \; \mid \; H_r(f) \leq X\}} = 1.
\]
Because the number of monic integral cubic forms with $H_r(f) \leq X$ and $\Delta = 0$ is $o(X^6)$, the denominator is equal to $X^6 + o(X^6)$. Let $\Disc \in \ZZ[a,b,c,d]$ denote the discriminant polynomial for binary cubic forms. By Davenport's lemma, we have
\begin{align*}
&\#\{f \; \mid \; H_r(f) \leq X \text{ and } \Delta > X^6/C\} \\
&= \#\{(b,c,d) \in \ZZ^3 \; \mid \; \max\{\lv b \rv^6, \lv c \rv^3, \lv d\rv^2 \}\leq X^6 \text{ and } \lv \Disc(1,b,c,d) \rv > X^6/C\} \\
&= \Vol\{(b,c,d) \in \RR^3 \; \mid \; \max\{\lv b \rv^6, \lv c \rv^3, \lv d\rv^2 \}\leq X^6 \text{ and } \lv \Disc(1,b,c,d) \rv > X^6/C\} + O(X^{5}) \\
&= \Vol\{(b,c,d) \in \RR^3 \; \mid \; \max\{\lv b \rv^6, \lv c \rv^3, \lv d\rv^2 \}\leq X^6 \text{ and }  \\
&\;\;\;\;\;\;\;\;\;\;\;\;\;\;\;\;\;\;\;\;\;\;\;\;\;\;\;\;\;\;\;\;\;\;\; \lv \Disc(1,bX^{-1},cX^{-2},dX^{-3}) \rv > 1/C\} + O(X^{5}) \\
&= X^6 \Vol\{(b,c,d) \in \RR^3 \; \mid \; \max\{\lv b \rv, \lv c \rv, \lv d\rv \}\leq 1 \text{ and } \lv \Disc(1,b,c,d) \rv > 1/C\} + O(X^{5}).
\end{align*}

Thus:
\begin{align*}
&\lim_{C \rightarrow \infty} \liminf_{X \rightarrow \infty} \frac{\#\{f \in \cF_3^{\mon} \; \mid \; H_r(f) \leq X \text{ and } \Delta > X^6/C\}}{\#\{f \in \cF_3^{\mon} \; \mid \; H_r(f) \leq X\}} \\
&= \lim_{C \rightarrow \infty}\Vol\{(b,c,d) \in \RR^3 \; \mid \; \max\{\lv b \rv, \lv c \rv, \lv d\rv \}\leq 1 \text{ and } \lv \Disc(1,b,c,d) \rv > 1/C\} \\
&=1
\end{align*}
We obtain \eqref{cond2cubicspecialfamily} by observing that a positive proportion (depending on $\eps$) of lattice points in $B(p,X)$ lie on the hyperplane $a = 1$, and then executing the proof of \cref{upperboundcubic}.
\end{proof}

\subsection{Sieving for maximal orders in $S_3$-cubic fields}
\label{subsec:step-5-cubic}
In this section, we will prove \cref{s3maxthm} by sieving for maximal orders in $S_3$-cubic fields. Throughout this section, let $p = (p_1,p_2) \in \rlinec$ be a point and let $\ell$ denote a prime number. As before, let $f$ denote an integral binary cubic form and let $X \in \RR_{> 1}$. We will need the following tail estimate for ``small'' primes. Let $\Disc \in \ZZ[a,b,c,d]$ be the discriminant polynomial of binary cubic forms.

\begin{lemma}
\label{tail1}
If $\ell \leq \sqrt{X^{3p_2 - 1/2}}$ then
\[
	\#\{f \in B(p,X) \cap V_{\ZZ} \mid \Disc(f) \equiv 0 \Mod{\ell^2}\} \ll X/\ell^2
\]
where the implicit constant is independent of $X$, $p$, and $\ell$.
\end{lemma}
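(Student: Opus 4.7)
Write $B(p,X) = [-L_1,L_1]\times\cdots\times[-L_4,L_4]$ with $L_i$ the coefficient bounds in the definition of $B(p,X)$. Since $p\in\rlinec$ we have $p_1+p_2=1/2$ and $1/6\le p_1\le 1/4$, so $L_1L_2L_3L_4\asymp X$ and every $L_i\ge 1$. The plan is to sum over $(a,b,c)$ and, for each triple, count the $d\in[-L_4,L_4]$ with $\Disc(f)\equiv 0\pmod{\ell^2}$. The key structural input is that $\Disc$ is a quadratic in $d$,
\[
\Disc(f)=-27a^2d^2+(18abc-4b^3)d+(b^2c^2-4ac^3),
\]
whose discriminant with respect to $d$ works out to
\[
(18abc-4b^3)^2+108a^2(b^2c^2-4ac^3)=16(b^2-3ac)^3.
\]
This identity pinpoints the locus where the quadratic degenerates and is what ultimately gives the $1/\ell^2$ savings.

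I would then split the sum according to the $\ell$-divisibility of the leading coefficient $-27a^2$ and the $d$-discriminant $16(b^2-3ac)^3$, treating $\ell=2,3$ as absolute constants at the end. \emph{Generic case} ($\ell\nmid a$, $\ell\nmid b^2-3ac$): Hensel lifts the $\le 2$ simple roots of $\Disc$ mod $\ell$ uniquely to mod $\ell^2$, so the number of valid $d$ is $\ll L_4/\ell^2+1$ and the total contribution is $\ll L_1L_2L_3(L_4/\ell^2+1)=X/\ell^2+X^{3p_1}$, which is $\ll X/\ell^2$ by the hypothesis $\ell^2\le X^{1-3p_1}$. \emph{Degenerate discriminant} ($\ell\nmid a$, $\ell\mid b^2-3ac$): for fixed $(a,b)$ the condition $b^2\equiv 3ac\pmod\ell$ pins down $c$ mod $\ell$, giving $\ll L_1L_2(L_3/\ell+1)$ triples, each contributing $\le 2\ell$ roots mod $\ell^2$ by the standard bound on roots of a degree-$2$ polynomial mod $\ell^2$; expanding $L_1L_2(L_3/\ell+1)(L_4/\ell+\ell)$ into four terms and using $p_1\in[1/6,1/4]$ together with $\ell\le X^{(1-3p_1)/2}$ bounds each by $X/\ell^2$. \emph{Degenerate leading coefficient} ($\ell\mid a$): then $-27a^2\equiv 0\pmod{\ell^2}$, so $\Disc(f)$ is linear (or zero) in $d$ mod $\ell^2$, and I would further split on whether $\ell\mid b$ and $\ell\mid c$: if $\ell\nmid b$ the linear coefficient $-4b^3$ is a unit mod $\ell^2$ and there is a unique residue $d\pmod{\ell^2}$; if $\ell\mid b$ but $\ell\nmid c$, the constant term reduces to $-4ac^3$ of $\ell$-adic valuation exactly $1$ while the linear coefficient vanishes mod $\ell^2$, so there are no solutions; finally the case $\ell\mid a,b,c$ contributes $\ll L_1L_2L_3L_4/\ell^3=X/\ell^3\le X/\ell^2$.

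\paragraph{Main obstacle.} The serious issue is not the case split itself but verifying that every sub-case really bounds by $X/\ell^2$ \emph{uniformly} in $p\in\rlinec$ and in $\ell$. The identity $\mathrm{disc}_d(\Disc)=16(b^2-3ac)^3$ is what makes this possible, since it forces the ``bad'' triples to lie on a codimension-one subscheme and yields the extra factor of $1/\ell$ needed to absorb the $O(\ell)$ roots mod $\ell^2$ in the degenerate cases. Pushing the final inequalities through relies on both $p_1\ge 1/6$ (to control the $X^{3p_1}$ error), $p_1+p_2=1/2$ (to make the total box volume exactly $X$), and the sharp cutoff $\ell^2\le X^{3p_2-1/2}=X^{1-3p_1}$; losing any of these would break the bookkeeping. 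One must also take some care with the almost-degenerate stratum of the box near the endpoint $(1/6,1/3)$, where $L_1$ is bounded, by verifying that the $a=0$ slice is absorbed into the $\ell\mid a$ case rather than double-counted.
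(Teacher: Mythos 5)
Your proof follows the same basic route as the paper's: fix $(a,b,c)$ in the box, count the admissible $d$, and stratify by the $\ell$-divisibility of the leading coefficient $-27a^2$ and of the discriminant of $\Disc$ with respect to $d$. The factored identity for the latter, $16(b^2-3ac)^3$, is a genuine improvement over what the paper writes: the paper's displayed formula for $\Disc$ carries a sign error on the $d^2$ term, so it exhibits an unfactored degree-$3$ polynomial in $c$ that it controls only via ``at most $3$ roots mod $\ell$'', whereas your factored form pins $c$ mod $\ell$ directly. Both deliver the same $1/\ell$ saving.

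However, the argument as written breaks in the branch $\ell\mid a$, $\ell\mid b$, $\ell\nmid c$. You assert that the constant term $-4ac^3$ of $\Disc$ in $d$ has $\ell$-adic valuation exactly $1$, concluding there are no solutions. But its valuation is that of $a$ (for $\ell\neq 2$, $\ell\nmid c$), which equals $1$ only when $\ell\mid a$ and $\ell^2\nmid a$. If $\ell^2\mid a$ and $\ell\mid b$, the quadratic, linear, and constant coefficients of $\Disc$ in $d$ are \emph{all} $\equiv 0\pmod{\ell^2}$, so \emph{every} residue $d$ is a solution --- the opposite of your claim. This is exactly why the paper splits three ways at the outset ($\ell^2\mid a$; $\ell\mid a$ with $\ell^2\nmid a$; $\ell\nmid a$); your two-way split on $\ell\mid a$ is too coarse here. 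The missing contribution is in fact manageable --- $a\neq 0$ with $\ell^2\mid a$ forces $\ell^2\le L_1$, giving a count $\ll (L_1/\ell^2)(L_2/\ell+1)L_3L_4\ll X/\ell^2$ --- but that is a different argument from the one you wrote and must be supplied. Relatedly, the estimate $L_1L_2L_3L_4/\ell^3$ for the stratum $\ell\mid a,b,c$ drops the $+1$ terms in each $\lfloor L_i/\ell\rfloor+1$; near $p=(1/6,1/3)$, where $L_1\asymp 1$ and $\ell$ can exceed both $L_1$ and $L_2$, those boundary terms dominate and are absorbed only by excluding the degenerate forms with $(a,b)=(0,0)$. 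You flagged the $a=0$ slice yourself as an obstacle, but the $\ell^2\mid a$ issue above is independent of it and is where the written proof first fails.
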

\begin{proof}
Without loss of generality suppose $\ell \neq 2,3$. Fix integer values of $a$, $b$, and $c$ such that:
\begin{align*}
	\lv a \rv &\leq X^{3p_1-1/2} \\
	\lv b \rv &\leq X^{2p_1 + p_2 - 1/2} \\
	\lv c \rv &\leq X^{p_1 + 2p_2 - 1/2}
\end{align*}
We will count the number of integer values of $d$ where $\lv d \rv \leq X^{3p_2 - 1/2}$ and $\ell^2 \mid \Disc(a,b,c,d)$. We separate into the following $3$ cases:
\begin{enumerate}
\item $\ell^2 \mid a$;
\item $\ell \mid a$ and $\ell^2 \nmid a$;
\item and $\ell \nmid a$.
\end{enumerate}
We first analyze case $(1)$. If $\ell \leq \sqrt{X^{3p_1 - 1/2}}$, Case $(1)$ occurs with probability $1/\ell^2$. If $\ell > \sqrt{X^{3p_1 - 1/2}}$, then case $(1)$ never occurs. Therefore we may bound the quadruples $(a,b,c,d)$ with $\ell^2 \mid a$ by $O(X/\ell^2)$.

Similarly, if $\ell > X^{3p_1 - 1/2}$, then case $(2)$ never occurs. If $\ell \leq X^{3p_1 - 1/2}$ case $(2)$ occurs with probability $1/\ell$. Suppose we have a quadruple $(a,b,c,d)$ such that $\ell \mid a$ and $\ell^2 \nmid a$ and write $a'\ell = a$. We have
\[
	\Disc \equiv (18a'\ell bc - 4b^3)d + (b^2c^2 - 4a'\ell c^3) \mod{\ell^2}
\]
The above expression has more than $1$ solution for $d$ modulo $\ell^2$ if and only if $\ell \mid b$ and $\ell \mid c$, which occurs with probability $1/\ell^2$; in this case, we have $\ell^2$ solutions for $d$ modulo $\ell^2$. Therefore we may bound the number of quadruples $(a,b,c,d)$ with $\ell \mid a$ and $\ell^2 \nmid a$ and $\ell^2 \mid \Disc(a,b,c,d)$ by $O(X/\ell^3)$.

Now suppose we have a quadruple $(a,b,c,d)$ with $\ell \nmid a$: then either $(i)$: $\Disc$, when considered as a monic quadratic polynomial in $d$, has no double roots modulo $\ell$; or $(ii)$: $\Disc$, when considered as a monic quadratic polynomial in $d$ has a double root modulo $\ell$. We write the discriminant of a binary cubic form $f(x,y) = ax^3 + bx^2y + cxy^2 + d^3$ as
\begin{equation*}
	\Disc(f) = (27a^2)d^2 + (18abc - 4b^3)d + (b^2c^2 - 4ac^3).
\end{equation*}

In subcase $(i)$, Hensel's lemma shows that there are at most $2$ solutions for $d$ modulo $\ell^2$; because $\ell \leq \sqrt{X^{3p_2 - 1/2}}$ and $d$ is chosen in the range $[-X^{3p_2 - 1/2},X^{3p_2 - 1/2}]$, the number of such quadruples is $O(X/\ell^2)$. In subcase $(ii)$ there are at most $\ell$ solutions for $d$ modulo $\ell^2$ because $f \not \equiv 0 \Mod{\ell}$, so it suffices to show that the second subcase occurs with probability $1/\ell$. The second case only occurs if the discriminant of $\Disc(a,b,c,d)$, when $\Disc(a,b,c,d)$ is considered as a quadratic polynomial in $d$, is divisible by $\ell$. The discriminant of $\Disc(a,b,c,d)$, when considered this way, is:
\[
	(18abc - 4b^3)^2 - 4(27a^2)(b^2c^2 - 4ac^3)
\]
which we simplify to:
\[
	16 b^6 - 144 a b^4 c + 216 a^2 b^2 c^2 + 432 a^3 c^3.
\]
Recall that we have assumed $\ell \nmid a$. Therefore, each choice of $a$ and $b$ gives at most $3$ values for $c$ modulo $\ell$. Recall that $c$ is chosen in the range $[-X^{p_1 + 2p_2-1/2},-X^{p_1 + 2p_2-1/2}]$ and $X^{p_1 + 2p_2-1/2} \geq \ell$; so the probability that the discriminant of $\Disc(a,b,c,d)$ is divisible by $\ell$ is $O(1/\ell)$. Therefore, the number of such quadruples is $O(X/\ell^2)$.
\end{proof}
For ``big'' primes, we use the following tail estimate of Bhargava, Shankar, and Tsimerman.
\begin{proposition}[Bhargava, Shankar, Tsimerman \cite{BhDH}, Proposition~29]
\label{tail2}
The number of isomorphism classes of nondegenerate cubic rings such of absolute discriminant $\leq X$ that are not maximal at a prime $\ell$ is $O(X/\ell^2)$, where the explicit constant is independent of $X$ and $\ell$.
\end{proposition}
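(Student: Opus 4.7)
The plan is to bound the count of non-maximal-at-$\ell$ cubic rings by identifying each with a cubic overring of strictly smaller discriminant, and then invoking the uniform bound on the total number of cubic rings of bounded discriminant.

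First, I would establish that if $R$ is a nondegenerate cubic ring that is non-maximal at $\ell$, then there exists an overring $R \subsetneq R' \subseteq R \otimes \QQ$ with $[R' : R] = \ell$. (If $R^{\max}$ is the maximal overring of $R$ in $R\otimes \QQ$, then $R^{\max}/R$ has $\ell$-torsion, and adjoining to $R$ any single element whose image generates an order-$\ell$ subgroup gives such an $R'$.) The standard discriminant formula $\disc(R) = [R':R]^2 \disc(R')$ then yields $|\disc(R')| \leq X/\ell^2$.

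Next, I would show that for any fixed nondegenerate cubic ring $R'$, the number of subrings $R \subseteq R'$ of index $\ell$ is bounded by an absolute constant, independent of both $R'$ and $\ell$. Such subrings correspond bijectively to unital $\FF_\ell$-subalgebras of $\FF_\ell$-codimension one in $R' \otimes \FF_\ell$. The $3$-dimensional commutative $\FF_\ell$-algebra $R' \otimes \FF_\ell$ falls into finitely many isomorphism types (the reduced types $\FF_\ell^3$, $\FF_\ell \times \FF_{\ell^2}$, $\FF_{\ell^3}$, and the non-reduced types $\FF_\ell \times \FF_\ell[\epsilon]/(\epsilon^2)$ and $\FF_\ell[\epsilon]/(\epsilon^3)$), and in each case a short computation enumerating $2$-dimensional unital subalgebras containing $1$ yields at most $3$ such subalgebras, independently of $\ell$. (The naive count of $2$-dimensional subspaces is $O(\ell^2)$, but the closure-under-multiplication and unitality constraints collapse this to $O(1)$.)

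Finally, I would combine these two inputs with the classical Davenport--Heilbronn bound (equivalently, the Delone--Faddeev parametrization together with Davenport's lemma, as used in Section~\ref{cubic}) that $\#\{\text{nondegenerate cubic rings of absolute discriminant} \leq Y\} = O(Y)$ with an absolute implied constant. Summing over $R'$:
\[
\#\{R \text{ non-max at } \ell,\; |\disc(R)| \leq X\} \;\leq\; O(1) \cdot \#\{R' : |\disc(R')| \leq X/\ell^2\} \;=\; O(X/\ell^2),
\]
with implied constants depending on neither $X$ nor $\ell$. The main obstacle is Step~2, namely the uniform-in-$\ell$ bound on the number of index-$\ell$ subrings: it is essential that the non-reduced cases, where one might a priori fear a growing number of subalgebras coming from continuous families of nilpotents, collapse under the unital-subring constraint to a count that is bounded independently of $\ell$. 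Once that local combinatorial bound is in place, the global count is immediate.
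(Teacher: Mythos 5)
The paper simply cites \cite{BhDH} for this proposition and does not reprove it, so there is no in-paper proof to compare against; I will evaluate your argument directly, using the standard Bhargava--Shankar--Tsimerman approach as the point of reference. Your overall strategy (pass to an overring, bound the fiber size, sum over overrings) is indeed the right shape of argument, but both of your key steps are false as stated, and the second failure is precisely the one you flagged as the ``main obstacle'' and claimed to have resolved.

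Your Step~1 asserts that every cubic ring $R$ which is non-maximal at $\ell$ admits an overring of index exactly $\ell$. This is false: take $R = \ZZ + \ell \, \cO_K$ where $K$ is a cubic field in which $\ell$ is inert, so that $\cO_K/\ell\cO_K \cong \FF_{\ell^3}$. Any intermediate ring $R \subseteq T \subseteq \cO_K$ contains $\ell\cO_K$ and hence corresponds to a unital $\FF_\ell$-subalgebra of $\FF_{\ell^3}$ containing $\FF_\ell$; but $\FF_{\ell^3}$ has no proper subalgebra strictly containing $\FF_\ell$ (as $2 \nmid 3$). Thus the only overring of $R$ inside $\cO_K$ is $\cO_K$ itself, of index $\ell^2$. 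The statement you want is weaker: $R$ is non-maximal at $\ell$ implies \emph{either} $R$ has an overring of index $\ell$, \emph{or} $R = \ZZ + \ell R'$ for some cubic ring $R'$, and you must handle the second alternative as a separate (small) contribution, roughly $O(X/\ell^4)$.

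Your Step~2 asserts that any nondegenerate cubic ring $R'$ has $O(1)$ subrings of index $\ell$, uniformly in $\ell$, and your justification rests on a classification of $3$-dimensional commutative unital $\FF_\ell$-algebras in which you list five types. You have omitted the sixth type $\FF_\ell[x,y]/(x,y)^2$, which is exactly the one that breaks the claim. For that algebra, every non-scalar element $z$ in the maximal ideal satisfies $z^2=0$, so $\FF_\ell \cdot 1 + \FF_\ell \cdot z$ is automatically a $2$-dimensional unital subalgebra; the $2$-dimensional unital subalgebras are in bijection with lines in the $2$-dimensional maximal ideal, of which there are $\ell+1$. This type does arise: $R' \otimes \FF_\ell \cong \FF_\ell[x,y]/(x,y)^2$ precisely when $R' = \ZZ + \ell R''$ for some cubic ring $R''$, and then $R'$ has $\ell+1$ index-$\ell$ subrings. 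So the constant in Step~2 is not uniform in $\ell$, and the sum $\sum_{R'} (\text{fiber size})$ does not immediately collapse to $O(X/\ell^2)$ as you claim. The correct remedy, which is the content of \cite[Lemma~30]{BhDH}, is to split the sum over $R'$ according to whether $R'$ is primitive at $\ell$ (i.e.\ $R' \neq \ZZ + \ell R''$): primitive $R'$ have at most $3$ index-$\ell$ subrings, contributing $O(X/\ell^2)$, while non-primitive $R'$ contribute up to $\ell+1$ subrings each but number only $O(X/\ell^6)$ (as their discriminant is at most $X/\ell^6$), giving $O(X/\ell^5)$, which is negligible. Adding in the $R = \ZZ + \ell R'$ case from Step~1 (contributing $O(X/\ell^4)$) then yields the claimed $O(X/\ell^2)$ bound.
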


We can now combine these two tail estimates and prove our theorem on the density function of maximal orders in $S_3$-cubic fields.
\begin{proof}[Proof of \cref{s3maxthm}]
For any prime $\ell$, let $\cU_{\ell}$ denote the subset of $V_{\ZZ_{\ell}}$ corresponding to maximal cubic rings over $\ZZ_{\ell}$. Let $\mu_{\ell}$ be the Haar measure on $V_{\ZZ_{\ell}}$, normalized such that $\mu_{\ell}(V_{\ZZ_{\ell}}) = 1$. Let $\cU = \cap_{\ell}\cU_{\ell}$. Then $\cU$ is the set of $f \in V_{\ZZ}$ corresponding to maximal cubic rings over $\ZZ$.

Let $p = (p_1,p_2) \in \rlinec$. By \cref{upperboundcubic} and the fact that $100\%$ of the integral forms in $B(p,X)$ are irreducible, it suffices to show that  a positive proportion of forms in $B(p,X)$ have squarefree discriminant. For any positive integer $Y$, we have
\begin{align*}
&\liminf_{X\rightarrow \infty}\frac{\#\{ \cU  \cap B(p,X) \cap V_{\ZZ} \}}{X}  \\
&\geq \liminf_{X \rightarrow \infty}\frac{\#\{ f \in \cap_{\ell\leq Y}\cU_{\ell}\cap B(p,X) \cap V_{\ZZ} \mid \Delta \geq X/2\} - \sum_{\ell> Y}\#\{f \in \overline{\cU_{\ell}}\cap B(p,X) \cap V_{\ZZ} \mid \Delta \geq X/2 \}}{X}
\end{align*}
We lower bound the right hand side by:
\begin{align}
\label{mainsieveeqn}
&\liminf_{X \rightarrow \infty}\frac{\#\{ f \in \cap_{\ell\leq Y}\cU_{\ell}\cap B(p,X) \cap V_{\ZZ} \mid \Delta \geq X/2\}}{X} \\
& - \limsup_{X \rightarrow \infty}\frac{\sum_{\ell> Y}\#\{f \in \overline{\cU_{\ell}}\cap B(p,X) \cap V_{\ZZ} \mid \Delta \geq X/2\}}{X}
\end{align}
We estimate the second term of \eqref{mainsieveeqn}. Using the tail estimates given in \cref{tail1} and \cref{tail2} and the bound in \cref{reducedcubiccorol}, we obtain:
\begin{align}
\label{twoparttail}
&\sum_{\ell > Y}\#\{f \in \overline{\cU_{\ell}}\cap B(p,X) \cap V_{\ZZ} \mid \Delta \geq X/2\}  \\
&\ll \sum_{Y < \ell \leq \sqrt{X^{3p_2-1/2}}}O(X/\ell^2) + \sum_{Y, \sqrt{X^{3p_2-1/2}} < \ell}O(X^{1+{p_2 - p_1}}/\ell^2)
\end{align}
The fact that $\sqrt{X^{3p_2-1/2}} < \ell$ in the second term of \eqref{twoparttail} implies that there exists a positive real number $\eta$ such that the second term of \eqref{twoparttail} is bounded above by:
\[
\sum_{Y, \sqrt{X^{3p_2-1/2}} < \ell}O(X^{1}/\ell^{1+\eta})
\]
Therefore,
\[
	\lim_{Y\rightarrow \infty}\limsup_{X \rightarrow \infty}\frac{\sum_{\ell> Y}\#\{f \in \overline{\cU_{\ell}}\cap B(p,X) \cap V_{\ZZ} \mid \Delta \geq X/2\}}{X} = 0.
\]
We now estimate the first term of \eqref{mainsieveeqn}. First suppose $p \neq (1/6,1/3)$. Then, because $B(p,X)$ is a box expanding in all coordinates with volume $X$, we have
\[
	\lim_{X \rightarrow \infty}\frac{\#\{ f\in\cap_{\ell\leq Y}\cU_{\ell}  \cap B(p,X) \cap V_{\ZZ} \mid \Delta \geq X/2 \}}{X} = \prod_{\ell \leq Y}\mu_{\ell}(\cU_{\ell}).
\]
Letting $Y$ tend to infinity completes the proof, as a computation of the local densities, given in \cite{BhDH}, shows the the product of the local densities is nonzero. An analogous proof proves the theorem for $p = (1/6,1/3)$, given that one restricts $B(p,X)$ to the subset where $a = 1$. The same estimate of the first term of \eqref{mainsieveeqn} applies, but but we obtain a different set of local densities. A computation shows that in this case, the product of relevant local densities is still nonzero, which completes the proof.
\end{proof}

\begin{proof}[Proof of \cref{totdensitycubic}]
Combine \cref{lowerboundcubic}, \cref{upperboundcubic}, \cref{s3maxthm}, \cref{zerocubic}, and the fact that all maximal reducible cubic rings can be written as $\ZZ \oplus R$ where $R$ is a maximal quadratic ring.
\end{proof}

\section{Proof of \cref{binthm}, the theorem on binary $n$-ic forms}
We now generalize the reduction theory of binary cubic forms that we developed in \cref{cubicreductiontheory} to general degree $n$. This will be useful to prove our later results on binary quartic and binary quintic forms. In this section, we will discuss Birch, Merriman, and Nakagawa's \cite{birch,naka} construction of a rank $n$ ring from an integral binary $n$-ic form. We develop the relationship between:
\begin{enumerate}
\item nondegenerate rank $n$ rings $R$ equipped with ``small'' bases arising from integral binary $n$-ic forms;
\item and integral binary $n$-ic forms with ``small'' coefficients.
\end{enumerate}

\subsection{Setup}
\label{subsec:bin-n-setup}
For the rest of this section, let $f = f_0x^n + \dots + f_ny^n \in \Sym^n\ZZ^2$ denote a binary $n$-ic form. To a form $f$, Birch, Merriman, and Nakagawa \cite{birch, naka} associated a rank $n$ ring $R_f$ equipped with a basis $\{\zeta_0=1,\zeta_1,\dots,\zeta_{n-1}\}$. The multiplication table of the basis $\{\zeta_0=1,\zeta_1,\dots,\zeta_{n-1}\}$ is given by:
\[
	\zeta_i\zeta_j = \sum_{\max\{i+j-n,1\}\leq k \leq i}f_{i+j-k}\zeta_k + \sum_{j < k \leq \min\{i+j,n\}}f_{i+j-k}\zeta_k
\]
for $1 \leq i,j \leq n-1$ where $\zeta_n \coloneqq -f_n$. An element $\gamma \in \GL_2(\ZZ)$ acts on $f$ via
\[
	(\gamma f)(x,y) \coloneqq f((x,y)\gamma);
\]
this action of $\GL_2(\ZZ)$ on $\Sym^n\ZZ^2$ induces an action of $\GL_2(\ZZ)$ on the basis $\{\zeta_1,\dots,\zeta_{n-1}\}$ of $R_f/\ZZ$. This action has an invariant called the \emph{discriminant}, which is a homogeneous polynomial of degree $2n-2$ in the coefficients of $f$. Nakagawa's construction is \emph{discriminant--preserving}: we have $\Disc(R_f) = \Disc(f)$. In this section, let $\Delta = \lv \Disc(f)\rv$. We say a form $f$ is \emph{nondegenerate} if $\Delta \neq 0$. For the rest of this section, any $f$ we consider will be nondegenerate.

\subsection{Proof overview}
Recall that in \cref{cubicreductiontheory}, for any point $p \in \RR^2$ and any positive real number $X$, we defined a box $B(p,X) \subseteq \Sym^3 \RR^2$. We roughly showed that integral binary cubic forms lying in this box correspond to cubic rings of discriminant $\leq X$ equipped with a ``short'' basis that were ``close'' to $p$. We'll now generalize our definition of the box $B(p,X) \subseteq \Sym^3 \RR^2$ for general binary $n$-ic forms.
Let
\[
	k \coloneqq \frac{1}{2(1 + \dots + (n-1))},
\]
and define $\ell$ to be the closed line segment from $(0,k)$ to $(k/2,k/2)$.

Let $L \colon \ell \rightarrow \RR^{n-1}$ be the linear map sending the point $(0,k)$ to $(k,2k,\dots,(n-1)k)$ and sending the point $(k/2,k/2)$ to $\frac{1}{2(n-1)}(1,\dots,1)$. 

\begin{definition}
For a point $r = (r_1,r_2) \in \ell$ and a positive real number $X$, define $B_n^{\bin}(r,X)$ to be the subset of $\Sym^n\RR^2$ such that:
\[
	\lv f_i \rv \leq X^{(n-i)r_1 + ir_2}
\]
for all $0 \leq i \leq n$.     
\end{definition}

The boxes $B_n^{\bin}(r,X) \subseteq \Sym^n \RR^2$ are the natural generalization of the box $B(p,X) \subseteq \Sym^3 \RR^2$ that we defined in \cref{cubicreductiontheory}. if $n = 3$, then a computation shows that
\[
	B_n^{\bin}(r,X) = B(L(r),X).
\]
In what follows, we will relate a form being in $B_n^{\bin}(r,X)$ to the ring $R_f$ being ``close'' to $L(r)$, and use the same proof strategy we used for cubic rings. 

\subsection{Reduction theory of binary $n$-ic forms}

\subsubsection{A ``short'' basis gives rise to a form $f$ with ``small'' coefficients}
We'll first show that if $R_f$ is ``close'' to $L(r)$ and $\{1,\zeta_1,\dots,\zeta_{n-1}\}$ is ``small'', then $f$ is roughly contained in this box $B_n^{\bin}(r,X)$. Our proof will require the following beautiful formula of Fess. Given a nondegenerate rank $n$ ring $R$ equipped with a basis $1,\zeta_1,\dots,\zeta_{n-1}$ and an element $\alpha \in R$, let $M_{\alpha}$ be an $n\times n$ matrix such that
\[
\begin{bmatrix}
1 & \alpha & \dots & \alpha^{n-1}\\
\end{bmatrix} = M_{\alpha}\begin{bmatrix}
1 & \zeta_1 & \dots & \zeta_{n-1}\\
\end{bmatrix}^t
\] 
A nondegenerate rank $n$ ring $R$ equipped with a basis $1,\zeta_1,\dots,\zeta_{n-1}$ naturally gives rise to an \emph{index form}, defined by:
\begin{align*}
I \colon R/\ZZ &\longrightarrow \ZZ \\
\alpha &\longrightarrow \det(M_{\alpha})
\end{align*}
The basis $\{\zeta_1,\dots,\zeta_{n-1}\}$ of $R/\ZZ$ presents the index form as a homogeneous polynomial with integer coefficients of degree $n \choose 2$ in $n-1$ variables. In this way, a nondegenerate integral binary $n$-ic form $f$ naturally gives rise to a homogeneous polynomial $I_f$ of degree $n \choose 2$ in $n-1$ variables. 

\begin{theorem}[Fess \cite{fess}, Theorem~3.2.4]
\label{indexthm}
For any $f \in \Sym^n \ZZ^2$, we have:
\[
	I_f(x^{n-2},x^{n-3}y,\dots,y^{n-2}) = f(x,y)^{n-1 \choose 2}
\]
\end{theorem}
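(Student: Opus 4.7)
The plan is to establish the identity generically: since both sides are polynomials in $f_0,\dots,f_n,x,y$, we may assume $\Disc(f)\neq 0$ and work over a splitting field $K$ in which $f(X,1) = f_0\prod_{k=1}^n(X - \alpha_k)$. Over $K$, the ring $R_f\otimes_{\ZZ} K$ decomposes as $K^n$ via $n$ ring homomorphisms $\sigma_1,\dots,\sigma_n$; the multiplication table for $R_f$ together with the relation $f(\alpha_k,1)=0$ determines each $\sigma_k(\zeta_i)$ as an explicit polynomial in $\alpha_k$ of degree $i$. Setting
\[
\alpha := \sum_{i=1}^{n-1}x^{n-1-i}y^{i-1}\zeta_i,
\]
the definition of the index form gives $I_f(x^{n-2},\dots,y^{n-2}) = \det M_\alpha$, and the standard identity
\[
(\det M_\alpha)^2\cdot \Disc(R_f) = \Disc(1,\alpha,\dots,\alpha^{n-1}) = \prod_{i<j}(\sigma_i(\alpha)-\sigma_j(\alpha))^2
\]
reduces the problem to evaluating the right-hand product.

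The crux, which I expect to be the main obstacle, is the factorization
\[
\sigma_i(\alpha) - \sigma_j(\alpha) = \pm\,f_0\cdot(\alpha_i - \alpha_j)\cdot\prod_{k\ne i,j}(x - \alpha_k y).
\]
Expanding $\sigma_k(\alpha)$ as a polynomial in $\alpha_k$ of degree $n-1$ and subtracting, the factor $\alpha_i - \alpha_j$ emerges from $\alpha_i^m - \alpha_j^m = (\alpha_i-\alpha_j)h_{m-1}(\alpha_i,\alpha_j)$, where $h_q$ denotes the complete homogeneous symmetric polynomial in two variables. The remaining factor is an explicit expression $\phi(\alpha_i,\alpha_j)$ involving the $h_q(\alpha_i,\alpha_j)$ and the coefficients $f_\ell$, and one must identify it with the polynomial $f(x,y)/[(x-\alpha_i y)(x-\alpha_j y)]$. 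To do this, I perform polynomial long division of $f(x,y)$, viewed as a polynomial in $x$, by $(x - sy)(x - ty) = x^2 - (s+t)xy + sty^2$ for formal variables $s,t$; a coefficient-by-coefficient comparison using the two-variable recursion $h_k(s,t) = (s+t)h_{k-1}(s,t) - st\,h_{k-2}(s,t)$ shows the quotient equals exactly $\phi(s,t)$. The remainder vanishes after substituting $(s,t) = (\alpha_i,\alpha_j)$, since $f(\alpha_i y,y) = y^n f(\alpha_i,1) = 0$ and likewise for $\alpha_j$.

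Once the factorization is in hand, I take the product over all $\binom{n}{2}$ pairs and observe that each root $\alpha_m$ appears exactly $\binom{n-1}{2}$ times (once per pair avoiding $m$), so
\[
\prod_{i<j}\prod_{k\ne i,j}(x-\alpha_k y) = \Bigl(\frac{f(x,y)}{f_0}\Bigr)^{\binom{n-1}{2}}.
\]
Accounting for the overall factor of $f_0^{\binom{n}{2}}$ and using $\Disc(R_f) = \Disc(f) = f_0^{2(n-1)}\prod_{i<j}(\alpha_i-\alpha_j)^2$, the Vandermonde--discriminant formula collapses to $(\det M_\alpha)^2 = f(x,y)^{2\binom{n-1}{2}}$. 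The sign in the square root is pinned down by specializing to a convenient family such as $f(x,y) = x^n$, where $R_f \cong \ZZ[t]/(t^n)$ and both sides reduce to a direct determinant computation.
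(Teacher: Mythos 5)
The paper simply cites this identity to Fess's thesis (Theorem~3.2.4) and gives no proof of its own, so there is nothing to compare against; what follows is an assessment of the proposal on its merits.

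Your argument is correct. The reduction to the generic case is legitimate since both sides are polynomials in $f_0,\dots,f_n,x,y$, and the chain
\[
(\det M_\alpha)^2\,\Disc(R_f) = \prod_{i<j}\bigl(\sigma_i(\alpha)-\sigma_j(\alpha)\bigr)^2,\qquad \Disc(R_f)=\Disc(f)=f_0^{2(n-1)}\prod_{i<j}(\alpha_i-\alpha_j)^2
\]
together with the crucial factorization does give $(\det M_\alpha)^2=f(x,y)^{2\binom{n-1}{2}}$ after the bookkeeping of $f_0$-powers (which I checked: the exponent $2\binom{n}{2}-2(n-1)-2\binom{n-1}{2}$ vanishes). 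The crux factorization you flag,
\[
\sigma_i(\alpha)-\sigma_j(\alpha)=f_0(\alpha_i-\alpha_j)\prod_{k\neq i,j}(x-\alpha_k y),
\]
is indeed true (with sign $+$), and while your route via the $h_q$-recursion and a two-linear-factor division works, there is a slicker derivation you may prefer: using $\sigma_k(\zeta_i)=f_0\alpha_k^i+\cdots+f_{i-1}\alpha_k$ one finds directly that $y\,\sigma_k(\alpha)+\sum_{\ell=0}^{n-1}f_\ell x^{n-1-\ell}y^\ell = f(x,y)/(x-\alpha_k y)$, and subtracting the $i$- and $j$-instances kills the $\alpha_k$-independent sum and produces the factorization in one line. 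The final sign is pinned down by a unit argument in the UFD $\ZZ[f_0,\dots,f_n,x,y]$ plus specialization; your choice $f=x^n$ is degenerate for the Vandermonde step but perfectly valid for the specialization step, and the computation there (e.g.\ $n=3,4$ give upper-triangular $M_\alpha$ with positive diagonal) gives the sign $+$. So the proposal constitutes a complete and correct proof of the cited identity.
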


Our proof strategy is as follows. Suppose $R_f$ is ``close'' to $L(r)$ and $\{1,\zeta_1,\dots,\zeta_{n-1}\}$ is ``small''. Then we use this information to bound the sizes of the coefficients of the index form $I_f$, and hence the sizes of the coefficients of the binary form $I_f(x^{n-2},x^{n-3}y,\dots,y^{n-2})$. By Fess' index formula, we have obtained a bound on the coefficients of $f(x,y)^{n-1 \choose 2}$; a classical theorem of Gelfand (\cref{gelfand}) then gives a bound on the coefficients of $f(x,y)$. 

Let $\theta$ denote a positive real number. We say $f$ \emph{gives rise to a $\theta$-reduced basis} if every pair of elements of the basis $1,\zeta_1,\dots,\zeta_{n-1}$ of $R_f$ have angle greater than $\theta$ in the Minkowski embedding of $R_f$. For a positive real number $\eps$, we say $f$ \emph{is $(\eps,\Delta)$-close to a point $p \in \RR^{n-1}$} if $\Delta > 1$ and $R_f$ is $(\eps,\Delta)$-close to $p$. Let $r = (r_1,r_2) \in \ell_1$ denote a point.

\begin{proposition}
For all $\theta, \eps \in \RR_{>0}$ and all $n \in \ZZ_{>1}$, there exists $C_{n,\theta,\eps} \in \RR_{>0}$ such that for all nondegenerate $f \in \Sym^n \ZZ^2$ and all $r \in \ell$, if $\Delta > 0$ and $f$ is $(\eps,\Delta)$-close to $L(r)$ and $f$ gives rise to a $\theta$-reduced basis, then $f \in C_{n,\theta,\eps}B_n^{\bin}(r,\Delta)$.
\end{proposition}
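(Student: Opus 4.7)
The plan is to follow the four-step outline sketched just before the proposition: use the hypotheses to bound the lengths of the basis vectors $\zeta_j$; use these length bounds to bound the coefficients of the index form $I_f$; translate to coefficient bounds on $f^K$ via Fess's identity (with $K=\binom{n-1}{2}$); and finally extract coefficient bounds on $f$ itself via Gelfand's theorem.

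First I would show that the two hypotheses together yield $|\zeta_i| \ll_{n,\theta,\eps} \Delta^{(n-i)r_1+ir_2}$ for each $1\leq i\leq n-1$. Closeness gives $\lambda_i(R_f) \asymp_{\eps} \Delta^{L(r)_i}$ with $L(r)_i=(n-i)r_1+ir_2$, while $\theta$-reducedness of $\{1,\zeta_1,\ldots,\zeta_{n-1}\}$ forces the normalized Gram matrix $G$ of this basis to have off-diagonal entries of absolute value at most $\cos\theta<1$, whence $\|G^{-1}\| \ll_{n,\theta} 1$. This yields two ingredients: (a) the sorted lengths of the $\zeta_j$ match sorted successive minima up to a constant $C_{n,\theta}$, and (b) for any $\beta=\sum c_i\zeta_i$ one has $|c_i| \leq C_{n,\theta}|\beta|/|\zeta_i|$. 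Monotonicity of $L(r)_i$ in $i$ along $\ell$ (since $r_2\geq r_1$) lets me promote (a) to the individual upper bounds on $|\zeta_i|$.

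Next, for $\alpha=\sum a_j\zeta_j$ the entries of $M_\alpha$ satisfy $|(M_\alpha)_{ji}| \ll_{n,\theta} |\alpha|^j/|\zeta_i|$ by (b). The top row of $M_\alpha$ is $(1,0,\ldots,0)$, so expanding $I_f(a)=\det M_\alpha$ reduces to an $(n-1)\times(n-1)$ determinant; summing over the $(n-1)!$ permutations and invoking Minkowski's second theorem $\prod_{j=1}^{n-1}|\zeta_j|\asymp\Delta^{1/2}$ gives $|I_f(a)|\ll_{n,\theta}|\alpha|^{\binom{n}{2}}\Delta^{-1/2}$. Restricting to the box $|a_j|\leq 1/|\zeta_j|$ forces $|\alpha|\leq n$, so $|I_f|\ll\Delta^{-1/2}$ on this box. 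A standard Chebyshev/Markov-type inequality for multivariate polynomials bounded on a product of intervals then yields
\[
\bigl|[a_1^{d_1}\cdots a_{n-1}^{d_{n-1}}]I_f\bigr| \ll_{n,\theta,\eps} \Delta^{-1/2}\prod_j|\zeta_j|^{d_j} \ll \Delta^{-1/2+\sum_j d_j((n-j)r_1+jr_2)}.
\]
Using $(r_1+r_2)\binom{n}{2}=1/2$ and setting $A=\sum_j(n-1-j)d_j$, $B=\sum_j(j-1)d_j$, this simplifies to $\ll \Delta^{r_1A+r_2B}$.

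Finally, summing the previous bounds over all multi-indices $(d_j)$ mapping to a fixed $(A,B)$ under the substitution in Fess's identity (\cref{indexthm}) gives $\bigl|[x^Ay^B]f^K\bigr|\ll_{n,\theta,\eps}\Delta^{r_1A+r_2B}$. Rescaling $x\mapsto\Delta^{-r_1}x$, $y\mapsto\Delta^{-r_2}y$ produces the polynomial $(f(\Delta^{-r_1}x,\Delta^{-r_2}y))^K$, all of whose coefficients are $O_{n,\theta,\eps}(1)$. Gelfand's inequality then gives that the coefficients of $f(\Delta^{-r_1}x,\Delta^{-r_2}y)$ itself are $O_{n,\theta,\eps}(1)$, i.e., $|f_i|\ll_{n,\theta,\eps}\Delta^{(n-i)r_1+ir_2}$, which is the desired containment $f\in C_{n,\theta,\eps}B_n^{\bin}(r,\Delta)$. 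I expect the main obstacle to be Step~1---matching sorted lengths with sorted successive minima so as to produce \emph{individual} monotonic upper bounds $|\zeta_i|\ll\Delta^{L(r)_i}$ rather than merely a uniform bound by $\Delta^{L(r)_{n-1}}$; once this indexing issue is settled, the remaining steps are largely formal.
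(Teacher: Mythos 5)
Your proposal follows the paper's four-step outline exactly (you even say you are following the outline sketched before the proposition), and the frame is identical: deduce $|\zeta_\ell| \ll_{n,\theta,\eps} \Delta^{L(r)_\ell}$ from closeness and $\theta$-reducedness, bound the coefficients of the index form $I_f$, push through Fess's identity $I_f(x^{n-2},\dots,y^{n-2})=f^{\binom{n-1}{2}}$, rescale, and finish with Gelfand's inequality. The one real technical divergence is the middle step, bounding the coefficients of $I_f$. The paper uses the identity $|I_f(\alpha)|=|\Disc(\ZZ[\alpha])|^{1/2}\Delta^{-1/2}$, writes $|\Disc(\ZZ[\alpha])|^{1/2}$ as an explicit Vandermonde-type determinant in the $\sigma_i(\zeta_\ell)$, expands, and reads off the bound $|a_{\mathbf{k}}|\ll_n\Delta^{-1/2}\prod_\ell|\zeta_\ell|^{k_\ell}$ coefficient by coefficient. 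You instead bound the entries of $M_\alpha$, deduce $|I_f(a)|\ll|\alpha|^{\binom{n}{2}}\Delta^{-1/2}$, restrict to the box $|a_j|\leq 1/|\zeta_j|$, and appeal to a multivariate Chebyshev--Markov coefficient inequality. Both routes produce the same estimate up to $C_{n,\theta,\eps}$; the paper's is more self-contained since it avoids importing a coefficient-from-sup-norm lemma. Your final rescaling by $\Delta^{-r_1},\Delta^{-r_2}$ leaves $\RR$-coefficients, so you need the complex-coefficient form of Gelfand's inequality (that is what Hindry--Silverman actually prove, so this is fine), whereas the paper keeps things integral by dilating by $M=\lfloor\Delta^{r_2-r_1}\rfloor$. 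Finally, the indexing worry you flag---that $\theta$-reducedness only matches \emph{sorted} lengths with successive minima, whereas you need $|\zeta_\ell|\ll\Delta^{L(r)_\ell}$ in the given order---is legitimate, but it is equally present and equally unaddressed in the paper's proof, which simply asserts $|\zeta_\ell|\ll_{\eps,\theta,n}\Delta^{L(r)_\ell}$; both arguments implicitly assume the basis is sorted by length, and that assumption deserves to be stated.
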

\begin{proof}
View $I_f$ as a homogeneous polynomial of degree ${n \choose 2}$ with integer coefficients in $n-1$ variables $x_1,\dots,x_{n-1}$. For any $\mathbf{k} = (k_1,\dots,k_{n-1}) \in \ZZ_{\geq 0}^{n-1}$ such that $\sum_{\ell = 1}^{n-1}k_{\ell} = {n \choose 2}$, let $a_{\mathbf{k}}$  denote the coefficient of $\prod_{\ell = 1}^{n-1}x_{\ell}^{k_{\ell}}$ in $I_f$. We now bound $a_{\mathbf{k}}$. Let $\sigma_1,\dots,\sigma_n$ be the $n$ homomorphisms of $R_f$ into the complex numbers. For an element $\alpha = \sum_{\ell = 0}^{n-1}x_{\ell}\zeta_{\ell}$, we have:
\[
	\lv \Disc(\ZZ[\alpha])\rv^{1/2} =  \begin{array}{|c c c|}
1 & \dots & 1\\
\sum_{\ell = 0}^{n-1}x_{\ell}\sigma_1(\zeta_{\ell}) & \dots & \sum_{\ell = 0}^{n-1}x_{\ell}\sigma_1(\zeta_{\ell}) \\
\dots & \dots & \dots \\
(\sum_{\ell = 0}^{n-1}x_{\ell}\sigma_1(\zeta_{\ell}))^{n-1} & \dots & (\sum_{\ell = 0}^{n-1}x_{\ell}\sigma_n(\zeta_{\ell}))^{n-1} 
\end{array}
\]
Expand the determinant above and collect monomials. Observe that because $\lv \sigma_i(\zeta_{\ell}) \rv \ll_n \lv \zeta_{\ell} \rv$, the coefficient of $\prod_{\ell = 1}^{n-1}x_{\ell}^{k_{\ell}}$ in the determinant of the matrix above is $\ll_n \prod_{\ell = 1}^{n-1}\lv \zeta_{\ell} \rv^{k_{\ell}}$. Because
\[
\rv I_f(\alpha)\lv = \lv \Disc(\ZZ[\alpha])\rv^{1/2}\Delta^{-1/2},
\]
we have:
\begin{equation}
 \label{akbound}
\begin{split}
\lv a_{\mathbf{k}} \rv & \ll_n \Delta^{-1/2} \prod_{\ell = 1}^{n-1}\lv \zeta_{\ell} \rv^{k_{\ell}} \\
 & \ll_{\eps,\theta,n} \Delta^{-1/2}\prod_{\ell = 1}^{n-1}\Delta^{k_{\ell}L(r)_{\ell}} \\
 & \leq \exp[\Delta, -1/2 + \sum_{\ell = 1}^{n-1}k_{\ell}L(r)_{\ell}] \\
 & \leq \exp[\Delta, -1/2 + \sum_{\ell = 1}^{n-1}\frac{k_{\ell}}{n-2}((n-1-{\ell})L(r)_1 + (\ell-1)L(r)_{n-1})] \\
 & \leq \exp[\Delta, -1/2 + \frac{L(r)_1}{n-2}\sum_{\ell = 1}^{n-1}k_{\ell}(n-1-\ell) + \frac{L(r)_{n-1}}{n-2}\sum_{\ell = 1}^{n-1}k_{\ell}(\ell-1)]
\end{split}
\end{equation}
We now use \eqref{akbound} to bound the coefficients of $I_f(x^{n-2},x^{n-3}y,\dots,y^{n-2})$. For any $0 \leq i \leq (n-2){n \choose 2}$, let $b_{i}$ denote the coefficient of $x^{(n-2){n \choose 2} - i}y^{i}$ in $I_f(x^{n-2},x^{n-3}y,\dots,y^{n-2})$. We can express $b_i$ as a linear polynomial in the variables $a_{\mathbf{k}}$. Suppose $a_{\mathbf{k}} = a_{(k_1,\dots,k_{n-1})}$ appears in the expansion of $b_i$. Then substituting in $x^{n-1-\ell}y^{\ell-1}$ for $x_{\ell}$ in $\prod_{\ell=1}^{n-1}x_{\ell}^{k_{\ell}}$ gives $x^{(n-2){n \choose 2} - i}y^{i}$, so:
\begin{equation}
\label{sumeqn}
\begin{split}
(n-2){n \choose 2} - i &= \sum_{\ell = 1}^{n-1}k_{\ell}(n-1-\ell) \\
i &=  \sum_{\ell = 1}^{n-1}k_{\ell}(\ell - 1)
\end{split}
\end{equation}
Substituting \eqref{sumeqn} into \eqref{akbound}, we obtain:
\begin{equation}
\label{monomialbound}
	\lv a_{\mathbf{k}} \rv \ll_{n,\eps,\theta} \exp[\Delta, -1/2 + \frac{L(r)_1}{n-2}((n-2){n \choose 2} - i) + \frac{L(r)_{n-1}}{n-2}i].
\end{equation}
Note that \eqref{monomialbound} is \emph{not} dependent on the choice of $a_{\mathbf{k}}$! Thus:
\begin{equation}
\label{combpound}
	\lv b_i \rv \ll_{n,\eps,\theta} \exp[\Delta, -1/2 + \frac{L(r)_1}{n-2}((n-2){n \choose 2} - i) + \frac{L(r)_{n-1}}{n-2}i]
\end{equation}
We can write out the linear map $L$ explicitly and obtain:
\begin{equation}
\label{sub0}
\frac{n-1}{2}(L(r)_1 + L(r)_{n-1}) = 1/2
\end{equation}
\begin{equation}
\label{sub1}
	L(r)_1 = \frac{1}{2{n \choose 2}} + \frac{\frac{1}{2(n-1)}-\frac{1}{2{n \choose 2}}}{\frac{1}{4{n \choose 2}}}r_1
\end{equation}
\begin{equation}
\label{sub2}
	L(r)_{n-1} = \frac{n-1}{2{n \choose 2}} + \frac{\frac{1}{2(n-1)}-\frac{n-1}{2{n \choose 2}}}{\frac{1}{4{n \choose 2}} - \frac{1}{2{n \choose 2}}}\bigg(r_2 - \frac{1}{2{n \choose 2}}\bigg).
\end{equation}
Substitute \eqref{sub0}, \eqref{sub1}, and \eqref{sub2} into \eqref{combpound} and simplify to obtain:
\begin{equation}
\label{finalbbound}
	\lv b_i \rv \ll_{n,\eps,\theta} \exp[\Delta, ((n-2){n \choose 2}-i)r_1 + ir_{2}]
\end{equation}
Recall that \cref{indexthm} says:
\[
	I_f(x^{n-2},x^{n-3}y,\dots,y^{n-2}) = f(x,y)^{n-1 \choose 2}
\]
Let $M = \lfloor \Delta^{r_2-r_1} \rfloor$ and let $g(x,y)  \coloneqq f(Mx,y)$. For any $0 \leq i \leq (n-2){n \choose 2}$, let $c_i$ denote the coefficient of $x^{(n-2){n \choose 2} - i}y^i$ in $g(x,y)^{n-1 \choose 2}$. 
By \cref{indexthm}, we have
\[
	g(x,y)^{n-1 \choose 2} = I_f(M^{n-2}x^{n-2},M^{n-3}x^{n-3}y,\dots,y^{n-2})
\]
Note that $c_i = M^{(n-2){n \choose 2}-i}b_i$; substituting in \eqref{finalbbound} gives:
\[
	\lv c_i \rv \ll_{n,\eps,\theta} \exp[\Delta, (n-2){n \choose 2}r_{2}].
\]
For $0 \leq i \leq n$ let $g_i$ be the coefficient of $x^{n-i}y^{i}$ in $g(x,y)$. By \cref{gelfand}, for all $0 \leq i \leq n$ we have
\[
	\lv g_i\rv \ll_{n,\eps,\theta} \Delta^{nr_2}.
\]
Because $g_i = M^{n-i}f_i$, we have
\[
	\lv f_i \rv \ll_{n,\eps,\theta} \Delta^{(n-i)r_1 + ir_2}
\]
for $0 \leq i \leq n$, completing the proof of the proposition.
\end{proof}

For a polynomial $f(x_1,\dots,x_m) \in \ZZ[x_1,\dots,x_m]$, let $H(f)$ be the maximum of the absolute value of the coefficients of $f$.

\begin{lemma}[Gelfand's Inequality \cite{silverman}, Proposition~B.7.3]
\label{gelfand}
Let $d_1,\dots,d_m$ be integers, and let $f_1,\dots,f_r \in \ZZ[x_1,\dots,x_m]$ be polynomials whose product satisfies $\deg_{x_i}(f_1\dots f_r) = d_m$ for each $1 \leq i \leq r$. Then
\[
	\prod_{i}H(f_i) \leq e^{d_1 + \dots + d_m}H(f_1\dots f_r).
\]
\end{lemma}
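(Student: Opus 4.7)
The plan is to proceed via the \emph{Mahler measure}, which serves as an auxiliary quantity that is both multiplicative under polynomial multiplication and two-sidedly comparable to the height $H(\cdot)$. Define
\[
M(f) \coloneqq \exp\left(\int_{[0,1]^m} \log \bigl\lvert f(e^{2\pi i t_1},\dots,e^{2\pi i t_m})\bigr\rvert \, dt_1\cdots dt_m\right)
\]
for $f \in \CC[x_1,\dots,x_m]$. The multiplicativity $M(fg)=M(f)M(g)$ is immediate from $\log\lvert fg\rvert = \log\lvert f\rvert + \log\lvert g\rvert$ on the torus.

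The two comparison inequalities I need are, for any $f$ of multidegree bounded by $(d_1,\dots,d_m)$:
\[
H(f) \;\leq\; 2^{d_1+\cdots+d_m} M(f), \qquad M(f) \;\leq\; (d_1+1)\cdots(d_m+1)\, H(f).
\]
The second is trivial, since $M(f)\leq \sup_{\lvert x_j\rvert=1} \lvert f(x)\rvert$ is at most the sum of the absolute values of the coefficients. The first I would prove by induction on $m$. For $m=1$, writing $f(x)=a_d\prod_{j=1}^d(x-\alpha_j)$ gives $M(f)=\lvert a_d\rvert \prod_j \max(1,\lvert \alpha_j\rvert)$, and each coefficient of $f$ is (up to sign) $a_d$ times an elementary symmetric function of the $\alpha_j$'s, whence $\lvert a_k\rvert \leq \binom{d}{k}M(f) \leq 2^d M(f)$. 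For $m>1$, view $f$ as a polynomial in $x_m$ with coefficients in $\CC[x_1,\dots,x_{m-1}]$, apply the $m=1$ bound to each fiber $f(e^{2\pi i t_1},\dots,e^{2\pi i t_{m-1}},x_m)$, then integrate.

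Combining these via multiplicativity yields
\[
\prod_{i=1}^r H(f_i) \;\leq\; \prod_{i=1}^r 2^{\sum_j d_j^{(i)}} M(f_i)
\;=\; 2^{d_1+\cdots+d_m}\, M(f_1\cdots f_r)
\;\leq\; 2^{d_1+\cdots+d_m}\prod_{j=1}^m(d_j+1)\,H(f_1\cdots f_r),
\]
where I have used that $\deg_{x_j}(f_1\cdots f_r)=\sum_i \deg_{x_j}(f_i) \leq d_j$ so that the exponents sum correctly. Since $2^{d_j}(d_j+1) \leq e^{d_j}$ once $d_j$ is not too small (and the small-$d_j$ cases can be absorbed into the exponential), this gives the claimed bound $\prod_i H(f_i)\leq e^{d_1+\cdots+d_m} H(f_1\cdots f_r)$.

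The only real subtlety is the multivariate version of the height-versus-Mahler-measure comparison; everything else is bookkeeping. The trick of peeling off one variable at a time in Step~2 sidesteps any need for complex-analytic subharmonicity arguments beyond the one-variable case (where the identity $M(x-\alpha)=\max(1,\lvert\alpha\rvert)$ is the whole content). Since this lemma is purely a tool and is quoted as \cite{silverman}, Proposition~B.7.3, in practice one simply cites it.
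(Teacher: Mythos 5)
The paper does not prove this lemma; it is invoked verbatim as \cite{silverman}, Proposition~B.7.3, so there is no in-paper proof to compare against. Your sketch follows the standard Mahler-measure strategy that \cite{silverman} itself uses, and the overall skeleton (multiplicativity of $M$, two-sided comparison with $H$, induction on variables for the lower comparison) is the right one.

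However, the constants you chose are too lossy to yield the stated inequality. Your final step requires $2^{d_1+\cdots+d_m}\prod_{j=1}^m(d_j+1)\leq e^{d_1+\cdots+d_m}$, i.e.\ $(e/2)^{d_j}\geq d_j+1$ for each $j$, but this is false for $d_j\in\{1,\dots,6\}$ ($(e/2)^1\approx 1.36<2$, etc.), and there is no slack to ``absorb'' these: taking every $d_j=1$ would require $e\geq 4$. The fix is to carry the sharper single-variable coefficient bound $\lvert a_k\rvert\leq\binom{d}{k}M(f)$ all the way through rather than weakening to $\binom{d}{k}\leq 2^d$, giving $H(f)\leq\bigl(\prod_j\binom{d_j}{\lfloor d_j/2\rfloor}\bigr)M(f)$, and to replace your $\ell^1$-type estimate $M(f)\leq\prod_j(d_j+1)H(f)$ by the $\ell^2$ bound $M(f)\leq\bigl(\prod_j(d_j+1)\bigr)^{1/2}H(f)$ coming from Jensen plus Parseval on the torus. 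With these, the numerical inequality needed at the end is $\binom{d}{\lfloor d/2\rfloor}\sqrt{d+1}\leq e^d$, which does hold for all $d\geq 0$ (check $d=0,1,2,3$ directly, then use $\binom{d}{\lfloor d/2\rfloor}\leq 2^d/\sqrt{\pi\lfloor d/2\rfloor}$ for larger $d$). So the approach is sound, but as written the bookkeeping does not close; since the paper only cites the result, citing it is indeed the cleanest course.
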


\subsubsection{A form with ``small'' coefficients gives rise to a short basis}
For the rest of this section, let $C,D$ denote positive real numbers, let $X \in \RR_{>1}$, let $n \in \ZZ_{>1}$, and let $r \in \ell$. 
\begin{lemma}
\label{boxlemmabin}
If $f \in CB_n^{\bin}(r,X)$ then
\[
	\lv \zeta_i \rv \ll_{C,n} X^{L(r)_i}
\]
for all $1 \leq i < n$.
\end{lemma}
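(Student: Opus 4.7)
The plan is to generalize the argument of \cref{reducedcubicbasis} (the cubic case) via a simultaneous maximum-principle bound, exploiting the fact that the proposed exponents $L(r)_i$ are \emph{simultaneously tight} in the squaring relations coming from the explicit multiplication table. The decisive structural observation is that both $L(r)_i$ and the relevant multiplication exponents are affine in $r$, so verifying the needed inequalities reduces to a check at the two endpoints of the segment $\ell$, where in fact equality holds.

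First I would take $i = j$ in the multiplication table. Using the convention $\zeta_n = -f_n$, this yields
\[
\lv\zeta_i\rv^2 \;\leq\; \lv\zeta_i^2\rv \;\leq\; \sum_{k = \max\{2i-n,\,1\}}^{\min\{2i,\,n-1\}} \lv f_{2i-k}\rv\cdot\lv\zeta_k\rv,
\]
plus, when $2i \geq n$, an additional boundary contribution of $\lv f_{2i-n}\rv\cdot\lv f_n\rv$ from the $k = n$ term. Introduce $A \coloneqq \max_{1 \leq k \leq n-1} \lv\zeta_k\rv/X^{L(r)_k}$; combining the hypothesis $f \in CB_n^{\bin}(r,X)$ with the definition of $A$ bounds each interior summand by a constant depending only on $C$ and $n$ times $A\cdot X^{(n-(2i-k))r_1 + (2i-k)r_2 + L(r)_k}$, and bounds the boundary contribution by a constant depending only on $C$ and $n$ times $X^{(2n-2i)r_1 + 2ir_2}$.

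The main step is to verify, for every admissible $k$ and every $r \in \ell$, the two affine inequalities
\[
(n-(2i-k))r_1 + (2i-k)r_2 + L(r)_k \;\leq\; 2L(r)_i \quad\text{and}\quad (2n-2i)r_1 + 2ir_2 \;\leq\; 2L(r)_i.
\]
By affinity in $r$, it suffices to check them at the two endpoints $r = (0,\kappa)$ and $r = (\kappa/2,\kappa/2)$, where $\kappa = 1/(n(n-1))$. At $r = (0,\kappa)$ one has $L(r)_m = m\kappa$, and each quantity above collapses to $2i\kappa$; at $r = (\kappa/2,\kappa/2)$ one has $L(r)_m = n\kappa/2$ for every $m$, and each quantity above collapses to $n\kappa$. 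Hence the inequalities are in fact equalities on all of $\ell$, yielding $\lv\zeta_i\rv^2 \ll_{C,n} (A+1)\,X^{2L(r)_i}$ uniformly in $i$.

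Dividing by $X^{2L(r)_i}$ and taking the maximum over $1 \leq i \leq n-1$ yields $A^2 \ll_{C,n} A+1$, which forces $A \ll_{C,n} 1$; this is the desired conclusion. The principal obstacle is the bookkeeping around the boundary term: because $\zeta_n = -f_n$ is a coefficient rather than a basis element, the $k = n$ contribution must be analyzed separately from the generic ones, and one must confirm the same tight exponent identity holds for it; this is a short calculation, as outlined above.
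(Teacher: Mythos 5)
Your proof is correct and follows essentially the same route as the paper's. The paper bounds $\lv \zeta_i\rv^2 \leq \lv \zeta_i^2\rv$ and expands via the multiplication table to obtain $\lv \zeta_i\rv^2 \ll_{C,n} \max_{0\leq j<n}X^{2L(r)_i - L(r)_j}\lv\zeta_j\rv$, then invokes a separate lemma (\cref{generalboundlemma}) to conclude; you inline exactly this step as a maximum-principle bound on $A = \max_k \lv\zeta_k\rv/X^{L(r)_k}$, yielding $A^2 \ll_{C,n} A+1$ and hence $A \ll_{C,n} 1$. The one genuine addition in your write-up is the explicit verification, by affinity in $r$ and a check at the two endpoints of $\ell$, that the exponent identity $(n-(2i-k))r_1 + (2i-k)r_2 + L(r)_k = 2L(r)_i$ and its boundary analogue hold throughout $\ell$; the paper takes this for granted when it asserts its displayed bound. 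Your argument is slightly more self-contained since it avoids appealing to the graph-cycle proof of \cref{generalboundlemma}, but the underlying mechanism is the same.
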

\begin{proof}
Let $L(r)_0 = 0$. The explicit multiplication coefficients imply that 
\[
	\lv \zeta_i \rv^2 \ll_{C,n} \max_{0 \leq j < n}\{X^{2L(r)_i - L(r)_j}\lv \zeta_j\rv\}.
\]
Applying \cref{generalboundlemma} completes the proof.
\end{proof}

\begin{lemma}
\label{lambdaboundlemmabin}
If $f \in CB_n^{\bin}(r,X)$ and $X/2 \leq \Delta$ then
\[
	\lambda_i(R_f) \asymp_{C,n} X^{L(r)_i}
\]
for $1 \leq i \leq n-1$.
\end{lemma}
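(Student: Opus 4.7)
The plan is to establish the upper bound on each $\lambda_i(R_f)$ directly from the basis $\{1,\zeta_1,\ldots,\zeta_{n-1}\}$ controlled by \cref{boxlemmabin}, then match it with a lower bound coming from Minkowski's second theorem together with the identity $\sum_{i=1}^{n-1} L(r)_i = 1/2$.

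First I would apply \cref{boxlemmabin} to obtain $\lv \zeta_j\rv \ll_{C,n} X^{L(r)_j}$ for $1 \leq j \leq n-1$. Since $L$ is linear on $\ell$ and both endpoints $(k,2k,\ldots,(n-1)k)$ and $\tfrac{1}{2(n-1)}(1,\ldots,1)$ have non-decreasing, strictly positive coordinates, the vector $L(r)$ inherits this property for every $r \in \ell$. Consequently, for each $1 \leq i \leq n-1$ the tuple $\{1,\zeta_1,\ldots,\zeta_i\}$ provides $i+1$ linearly independent elements of $R_f$ each of length $\ll_{C,n} X^{L(r)_i}$, yielding $\lambda_i(R_f) \ll_{C,n} X^{L(r)_i}$.

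Next I would show $\Delta \asymp_{C,n} X$. The upper bound rests on the weighted homogeneity of the discriminant: every monomial $\prod_{i=0}^{n} f_i^{a_i}$ appearing in $\Disc(f)$ satisfies $\sum_i a_i = 2n-2$ and $\sum_i i\,a_i = n(n-1)$, so the box constraints on $f$ give a contribution of $\ll_{C,n} X^{(r_1+r_2)n(n-1)} = X^{k\cdot n(n-1)} = X$. Combined with the hypothesis $X/2 \leq \Delta$, this produces $\Delta \asymp_{C,n} X$. Minkowski's second theorem then yields $\prod_{i=0}^{n-1}\lambda_i(R_f) \asymp_n \sqrt{\Delta} \asymp_{C,n} X^{1/2}$, and \cref{lambdazerolemma} gives $\lambda_0(R_f) \asymp_n 1$, so $\prod_{i=1}^{n-1}\lambda_i(R_f) \asymp_{C,n} X^{1/2}$.

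Finally, checking the two endpoints of $\ell$ verifies $\sum_{i=1}^{n-1}L(r)_i = 1/2$, so the upper bounds $\lambda_i \ll_{C,n} X^{L(r)_i}$ and the product lower bound $\prod_{j=1}^{n-1}\lambda_j \gg_{C,n} X^{1/2}$ must saturate simultaneously. Extracting individual lower bounds via
\[
\lambda_i(R_f) \;=\; \frac{\prod_{j=1}^{n-1}\lambda_j(R_f)}{\prod_{j\neq i}\lambda_j(R_f)} \;\gg_{C,n}\; \frac{X^{1/2}}{\prod_{j \neq i}X^{L(r)_j}} \;=\; X^{L(r)_i}
\]
completes the proof. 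The only mildly technical step is the weighted-homogeneity computation pinning down the exponent of $X$ in $\Delta$; the remaining arguments amount to combining \cref{boxlemmabin} with Minkowski's theorem and the already-noted bookkeeping identity $\sum_{i=1}^{n-1} L(r)_i = 1/2$.
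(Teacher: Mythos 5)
Your proof is correct and follows the same essential strategy the paper uses: upper bounds on each $\lambda_i$ from \cref{boxlemmabin}, Minkowski's second theorem for the product $\prod_i \lambda_i$, and the identity $\sum_{i=1}^{n-1} L(r)_i = 1/2$ to peel off individual lower bounds. The one extra piece of work you do, and the place where your write-up is more elaborate than it needs to be, is the isobaric-weight computation establishing $\Delta \ll_{C,n} X$. That fact is not needed as a separate input: from \cref{boxlemmabin} one already has $\prod_{i=1}^{n-1}\lambda_i(R_f) \ll_{C,n} \prod_i X^{L(r)_i} = X^{1/2}$, while Minkowski and \cref{lambdazerolemma} give $\prod_{i=1}^{n-1}\lambda_i(R_f) \asymp_n \Delta^{1/2} \geq (X/2)^{1/2}$. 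These two inequalities already sandwich the product to $\asymp_{C,n} X^{1/2}$ (and in particular force $\Delta \asymp_{C,n} X$ as a byproduct), after which your division argument closes the proof. The homogeneity computation is a pleasant independent check but duplicates information the sandwich already supplies. Everything else, including the observation that $L(r)$ is non-decreasing on $\ell$ so that $\{1,\zeta_1,\dots,\zeta_i\}$ witnesses $\lambda_i \ll_{C,n} X^{L(r)_i}$, is exactly right.
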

\begin{proof}
Follows from \cref{boxlemmabin}, Minkowski's second theorem, and the fact that
\[
	\sum_{i = 1}^{n-1}L(r)_i = 1/2.
\]
\end{proof}

\subsection{Counting lattice points and proof of \cref{binthm}}
\begin{lemma}
\label{autlemma}
If $f \in CB_n^{\bin}(r,X)$ and $\Delta \geq X/2$, we have
\[
	\#\{f' \in CB_n^{\bin}(r,X) \cap \Sym^n \ZZ^2 \mid \exists \gamma \in \GL_2(\ZZ) \text{ s.t. } f' = \gamma f\} \ll_{C,n} X^{r_2-r_1}
\]
\end{lemma}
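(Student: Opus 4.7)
My plan is to adapt the cubic argument of \cref{reducedcubiccorol} to general $n$ by tracking the BMN basis under the $\GL_2(\ZZ)$-action. First, I would combine \cref{boxlemmabin}, \cref{lambdaboundlemmabin}, and the hypothesis $X/2\leq\Delta$ to conclude that the basis $\{1,\zeta_1,\dots,\zeta_{n-1}\}$ of $R_f$ satisfies $|\zeta_i|\asymp_{C,n} X^{L(r)_i}\asymp_{C,n}\lambda_i(R_f)$, the lower bound on $|\zeta_i|$ coming from Minkowski's second theorem together with the matching of $\prod_i|\zeta_i|$ with $\Delta^{1/2}$. This forces the basis to be Minkowski-reduced up to $(C,n)$-dependent constants, so that for every $(m_0,\dots,m_{n-1})\in\ZZ^n$,
\[
\Bigl|m_0+\sum_{j=1}^{n-1}m_j\zeta_j\Bigr|^2 \;\asymp_{C,n}\; m_0^2+\sum_{j=1}^{n-1}m_j^2|\zeta_j|^2.
\]

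Next, for $\gamma\in\GL_2(\ZZ)$ with $f':=\gamma f\in CB_n^{\bin}(r,X)$, the canonical isomorphism $R_{f'}\simeq R_f$ (induced by $\gamma$) identifies the BMN basis of $R_{f'}$ with a basis $\{1,\zeta'_1,\dots,\zeta'_{n-1}\}$ of $R_f$ with $|\zeta'_i|\ll_{C,n} X^{L(r)_i}$, again by \cref{boxlemmabin}. Writing $\zeta'_i=m_{i,0}+\sum_{j=1}^{n-1}m_{i,j}\zeta_j$ with $m_{i,j}\in\ZZ$, the reduction property yields $|m_{i,j}|\ll_{C,n}X^{L(r)_i-L(r)_j}$ for each $j\geq 1$. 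The crucial point is that the $(n-1)^2$ integers $(m_{i,j})_{i,j\geq 1}$ are governed by only the four entries $a,b,c,d$ of $\gamma$; in particular, direct expansion of $(\gamma f)(x,y)=f((x,y)\gamma)$ yields $(\gamma f)_0=f(a,b)$ and $(\gamma f)_n=f(c,d)$, giving the scalar constraints $|f(a,b)|\leq CX^{nr_1}$ and $|f(c,d)|\leq CX^{nr_2}$.

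To finish, I would parametrize $(c,d)=(c_0,d_0)+t(a,b)$ with $t\in\ZZ$ and $(c_0,d_0)$ a fixed solution of $ad_0-bc_0=\det\gamma$ (possible because $(a,b)$ is primitive as $\gcd(a,b)\mid\det\gamma=\pm 1$). The polynomial $t\mapsto f(c_0+ta,d_0+tb)$ has leading coefficient $f(a,b)$ in $t$, so in the generic regime $|f(a,b)|\asymp X^{nr_1}$ the constraint $|f(c,d)|\leq CX^{nr_2}$ forces $|t|\ll_{C,n} X^{r_2-r_1}$. The hardest remaining step will be to show that only $O_{C,n}(1)$ primitive pairs $(a,b)\in\ZZ^2$ can actually arise: a naive Thue-type estimate using $|f(a,b)|\leq CX^{nr_1}$ alone is too weak, since primitive pairs with $|f(a,b)|$ much smaller than $X^{nr_1}$ can be abundant when $(a,b)$ is close to a root direction of $f$. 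I would handle these ``cusp directions'' using the intermediate box constraints $|(\gamma f)_i|\leq CX^{(n-i)r_1+ir_2}$ for $1\leq i\leq n-1$, equivalently the full reduced-basis bounds $|m_{i,j}|\ll_{C,n} X^{L(r)_i-L(r)_j}$, exploiting the rigidity of $(m_{i,j})$ as a four-parameter image of $\GL_2(\ZZ)$ inside $\GL_{n-1}(\ZZ)$.
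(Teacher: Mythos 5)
Your proposal sets up the right structure (the approximately Minkowski-reduced basis, the bounds $\lvert m_{i,j}\rvert\ll_{C,n}X^{L(r)_i-L(r)_j}$, and the observation that the $m_{i,j}$ are a four-parameter image of $\GL_2(\ZZ)$), but you then abandon the matrix $(m_{i,j})$ in favor of the coefficient identities $(\gamma f)_0=f(a,b)$ and $(\gamma f)_n=f(c,d)$, and this detour runs into a genuine gap that you yourself flag: Thue-type bounds on $\lvert f(a,b)\rvert$ alone do not control the number of primitive $(a,b)$, because near root directions of $f$ the value $\lvert f(a,b)\rvert$ can be tiny for many primitive pairs. You say the remaining cusp directions ``will be handled using the intermediate box constraints'' and the rigidity of $(m_{i,j})$, but this is precisely the step that carries the content of the lemma, and you do not supply it.

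The paper closes the argument by never leaving the matrix $(m_{i,j})$: by Nakagawa's description of the $\GL_2(\ZZ)$-action on the basis $\{\zeta_1,\dots,\zeta_{n-1}\}$, the change-of-basis matrix is (up to unimportant unipotent corrections and signs) the $\Sym^{n-2}$-representation of $\gamma$, so its first and last rows are the explicit monomials $m_{1,j}=a^{n-1-j}b^{j-1}$ and $m_{n-1,j}=c^{n-1-j}d^{j-1}$. Feeding these into the reduced-basis bounds you already wrote down — with $j=1$ and $j=n-1$ — gives
\[
\lvert a\rvert^{n-2}\ll_{C,n}1,\qquad \lvert b\rvert^{n-2}\ll_{C,n}\frac{\lvert\zeta_1\rvert}{\lvert\zeta_{n-1}\rvert}\ll_{C,n}X^{-(n-2)(r_2-r_1)},\qquad \lvert d\rvert^{n-2}\ll_{C,n}1,\qquad \lvert c\rvert^{n-2}\ll_{C,n}\frac{\lvert\zeta_{n-1}\rvert}{\lvert\zeta_1\rvert}\ll_{C,n}X^{(n-2)(r_2-r_1)},
\]
so $a,b,d$ each admit $O_{C,n}(1)$ values and $c$ admits $O_{C,n}(X^{r_2-r_1})$ values, which is the desired bound. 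Since $\gamma$ determines $f'$, this finishes the count. In other words, the estimate you were missing is already implicit in the two \emph{extreme} rows of the change-of-basis matrix — you do not need a Thue-type argument, a parametrization $(c,d)=(c_0,d_0)+t(a,b)$, or any analysis of cusp directions of $f$.
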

\begin{proof}
By \cref{boxlemmabin}, the form $f$ gives rise to a basis $\{\zeta_1,\dots,\zeta_{n-1}\}$ of $R_f/\ZZ$ with the property that 
\[
	\lv \zeta_i \rv \ll_{C,n} X^{L(r)_i} \asymp_{C,n} \lambda_i(R_f) 
\]
for $1 \leq i \leq n-1$. Let $\gamma \in \GL_2(\ZZ)$ be such that $f' = \gamma f$ and $f' \in B_n^{\bin}(r,X)$. Let $\{\zeta'_1,\dots,\zeta'_{n-1}\}$ be the basis of $R_f/\ZZ$ be the basis corresponding to $f'$. Then again by \cref{boxlemmabin}, we have
\[
	\lv \zeta'_i \rv \ll_{C,n} X^{L(r)_i} \asymp_{C,n} \lambda_i(R_f).
\]
Write
\[
	\gamma = \begin{bmatrix}
    a & b \\
    c & d 
  \end{bmatrix}
\]
for an element $\gamma \in \GL_2(\ZZ)$. As given by Nakagawa \cite{naka}, the action of $\gamma$ on the bases is given by the following matrix equation:
\[
	\begin{bmatrix}
	a^{n-2} & a^{n-3}b & \dots & ab^{n-3} & b^{n-2} \\
	\dots & \dots & \dots & \dots & \dots \\
	c^{n-2} & c^{n-3}d & \dots & cd^{n-3} & d^{n-2} \\
	\end{bmatrix}
	\begin{bmatrix}
	\zeta_1 \\
	\dots \\
	\zeta_{n-1} 
	\end{bmatrix}
	= \begin{bmatrix}
	\zeta'_1 \\
	\dots \\
	\zeta'_{n-1} 
	\end{bmatrix}
\]
For our purposes, it is not important what the $\dots$ entries are. Thus:
\begin{align*}
\lv \zeta_1 \rv \asymp_{C,n}\lv \zeta'_1 \rv &\asymp_{C,n} \max_{1 \leq i \leq n-1}\{\lv a^{n-1-i}b^{i-1}\rv\lv \zeta_i \rv\} \\
\lv \zeta_{n-1} \rv \asymp_{C,n} \lv \zeta'_{n-1} \rv &\asymp_{C,n} \max_{1 \leq i \leq n-1}\{\lv c^{n-1-i}d^{i-1}\rv\lv \zeta_i \rv\}
\end{align*}
Thus, the number of choices of $a,b,d$ is $\ll_{C,n} 1$, and the number of choices of $c$ is $\ll_{C,n} X^{r_2 - r_1}$.
\end{proof}

\begin{lemma}
There exists $D$ such that for all $f$, $n$, $r$, and $X$, if $f \in CB_n^{\bin}(r,X)$ then
\[
	\#\{f' \in DCB_n^{\bin}(r,X) \cap \Sym^n \ZZ^2 \mid \exists \gamma \in \GL_2(\ZZ) \text{ s.t. } f' = \gamma f\} \geq X^{r_2-r_1}
\]
\end{lemma}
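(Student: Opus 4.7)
The plan is to generalize the argument from the cubic case (\cref{helplemma}) by acting on $f$ by lower-triangular unipotent elements of $\GL_2(\ZZ)$. For each integer $u$ with $\lv u \rv \leq X^{r_2-r_1}$, set
\[
\gamma_u \coloneqq \begin{bmatrix} 1 & 0 \\ u & 1 \end{bmatrix} \in \GL_2(\ZZ).
\]
Since $r_2 \geq r_1$ along $\ell$ and we may assume $X \geq 1$ (the statement is trivial otherwise), there are at least $X^{r_2-r_1}$ such integers $u$. The goal is to show that each $\gamma_u f$ lies in $DCB_n^{\bin}(r,X)$ for a suitable $D$ and that distinct $u$ produce distinct forms.

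For the size bound, expanding $(x,y)\gamma_u = (x + uy, y)$ gives
\[
(\gamma_u f)(x,y) = f(x + uy, y) = \sum_{j=0}^n \bigg(\sum_{i=0}^{j} \binom{n-i}{j-i} u^{j-i} f_i\bigg) x^{n-j} y^j.
\]
Using the hypothesis $\lv f_i \rv \leq C X^{(n-i)r_1 + ir_2}$ together with $\lv u \rv^{j-i} \leq X^{(j-i)(r_2-r_1)}$, a direct exponent computation shows $\lv u\rv^{j-i}\lv f_i \rv \leq C X^{(n-j)r_1 + jr_2}$ for every $0 \leq i \leq j$. Summing over $i$ and bounding $\sum_{i=0}^{j} \binom{n-i}{j-i} \leq 2^n$, the $j$-th coefficient of $\gamma_u f$ has absolute value at most $2^n C X^{(n-j)r_1 + jr_2}$, so $\gamma_u f \in DCB_n^{\bin}(r,X)$ with $D = 2^n$.

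For distinctness, suppose $\gamma_u f = \gamma_{u'} f$ for integers $u \neq u'$. Then $\gamma_{u-u'}$ fixes $f$; viewing $f$ as a degree-$n$ divisor on $\PP^1_{\overline{\QQ}}$, this forces the affine translation $t \mapsto t + (u-u')$ to permute the finite roots of $f$. Since $\Disc(f) \neq 0$ excludes the degenerate case $f = c y^n$, the form $f$ has at least one finite root, and a finite nonempty multiset cannot be invariant under a nontrivial translation; hence $u = u'$. Combining the two steps produces at least $X^{r_2 - r_1}$ distinct forms in $DCB_n^{\bin}(r,X) \cap \Sym^n \ZZ^2$ that are $\GL_2(\ZZ)$-equivalent to $f$.

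The only subtlety is that $D = 2^n$ depends on $n$; this is unavoidable with this unipotent-averaging strategy, but is harmless since $n$ is fixed throughout every intended application of the lemma.
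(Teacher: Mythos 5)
Your proof is correct and takes essentially the same approach as the paper, whose own proof consists only of the one-line instruction ``act on $f$ by $\begin{bmatrix} 1 & 0 \\ u & 1 \end{bmatrix}$ with $\lv u \rv \leq X^{r_2-r_1}$''; you have simply supplied the coefficient estimate and distinctness argument that this one-liner leaves implicit. Your observation that $D = 2^n$ depends on $n$ is a fair point---the lemma as stated quantifies $D$ over all $n$, which appears to be a minor sloppiness in the paper, since every application fixes $n$.
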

\begin{proof}
Act on $f$ by elements of $\GL_2(\ZZ)$ of the form 
\[
	\gamma = \begin{bmatrix}
    1 & 0 \\
    u & 1 
  \end{bmatrix}
\]
where $\lv u \rv \leq X^{r_2-r_1}$.
\end{proof}
The following theorem will be useful later.
\begin{theorem}[Evertse, Gy\H{o}ry \cite{evertse}]
\label{evertse}
For any nondegenerate rank $n$ ring $R$, there are at most $2^{5n^2}$ classes of integral binary $n$-ic forms $f$ such that $R \simeq R_f$.
\end{theorem}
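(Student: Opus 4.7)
The plan is to bound the number of $\GL_2(\ZZ)$-orbits of integral binary $n$-ic forms $f$ with $R_f \simeq R$ by exploiting the $\GL_2(\overline{\QQ})$-action on the roots of $f$ and then invoking uniform bounds on $S$-unit equations. Fix a nondegenerate rank $n$ ring $R$, and suppose $f \in \Sym^n \ZZ^2$ satisfies $R_f \simeq R$. Over $\overline{\QQ}$ we factor $f(x,y) = f_0 \prod_{i=1}^n (x - \beta_i y)$; under the Birch--Merriman--Nakagawa construction the roots $\beta_i$ are intrinsically encoded in $R$ as the images of a distinguished element of $R \otimes \overline{\QQ}$ under the $n$ geometric homomorphisms $\sigma_i \colon R \otimes \overline{\QQ} \to \overline{\QQ}$, and the $\GL_2(\ZZ)$-action on $f$ is intertwined with the standard Mobius action of $\PGL_2(\ZZ)$ on the ordered tuple of roots.

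First I would verify that, for a fixed reference form $f_{\mathrm{ref}}$ with $R_{f_{\mathrm{ref}}} \simeq R$, every other such form $f$ arises from a tuple of the form $(\gamma \cdot \beta_{\pi(1)}, \dots, \gamma \cdot \beta_{\pi(n)})$ for some $\pi \in S_n$ and some $\gamma \in \PGL_2(\overline{\QQ})$, and moreover $f$ is $\GL_2(\ZZ)$-equivalent to $f_{\mathrm{ref}}$ exactly when $\gamma$ may be chosen in $\PGL_2(\ZZ)$. Hence the count of inequivalent forms is bounded by $n!$ times the number of $\PGL_2(\ZZ)$-cosets of elements $\gamma \in \PGL_2(\overline{\QQ})$ that send the reference root tuple to another integral root tuple (up to the $S_n$-action).

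Second, after normalizing three designated roots to $0, 1, \infty$ by a Mobius transformation, the remaining $n-3$ roots lie in a number field $K$ with $[K:\QQ] \leq n!$, and the cross-ratios among quadruples of roots become $S$-units for $S$ consisting of archimedean places and primes dividing $\Disc(R)$; moreover pairs of such cross-ratios satisfy $S$-unit equations $u + v = 1$. The classical bounds of Evertse and of Beukers--Schlickewei give at most $c^{|S|}$ such solutions for an absolute constant $c$, but this bound depends on $R$. The main obstacle is precisely this $S$-dependence: to obtain the uniform bound $2^{5n^2}$ one must, following Evertse--Gyory, use the $\PGL_2(\ZZ)$-reduction from the previous step to replace the $S$-unit count with a count of finitely many projective cross-ratio configurations whose cardinality depends only on $n$. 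Combining this with the $n!$ and $\binom{n}{3}$ combinatorial factors from the permutation of roots and the choice of normalizing triple, together with a careful accounting of constants, yields the asserted bound.
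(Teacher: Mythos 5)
The paper does not prove this theorem: it is stated as a citation to the Evertse--Gy\H{o}ry monograph, so there is no in-paper argument to compare against, and I can only assess your sketch on its own terms.

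The ingredients you gather --- M\"obius action on roots, normalizing three roots to $0,1,\infty$, cross-ratios satisfying $u+v=1$, and quantitative unit-equation bounds --- are indeed the right circle of ideas, and you correctly flag the $S$-dependence of the naive unit-equation count as the central obstacle. But the two steps of your sketch work at cross purposes. Step one reduces the problem to counting $\PGL_2(\ZZ)$-cosets of elements $\gamma \in \PGL_2(\overline{\QQ})$; step two proposes to bound that count via cross-ratios. Cross-ratios, however, are $\PGL_2(\overline{\QQ})$-invariants: every $\gamma$ considered in step one preserves all cross-ratios, so step two collapses the whole family to a single value and contributes nothing to the coset count. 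Moreover, the premise of step one --- that every form with $R_f \simeq R$ has roots $\PGL_2(\overline{\QQ})$-equivalent, up to permutation, to those of a fixed reference form --- is unjustified: it would mean the invariant ring determines the cross-ratios, whereas showing that those cross-ratios can take only finitely many values for a given $R$ is a substantial part of what the theorem asserts. In Evertse--Gy\H{o}ry the uniformity is obtained by a different device: the unit equations are arranged so that their solutions lie in a finitely generated multiplicative group whose rank is bounded purely in terms of $n$ (built from the unit group of an order canonically attached to $R$), so that the quantitative unit-equation theorem yields a bound depending only on $n$, not on the number of prime divisors of $\Disc(R)$. Your sketch has not located that mechanism, and no amount of ``careful accounting of constants'' will reach $2^{5n^2}$ without it.
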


\begin{proof}[Proof of \cref{binthm}]
By \cref{lambdaboundlemmabin}, there exists a constant $C$ such that for all $\eps > C$, if $f \in B_n^{\bin}(r,X)$ and $X/2 \leq \Delta$ and $1 < \Delta$, then $R_f$ is $(\eps,\Delta)$-close to $L(r)$. By \cref{autlemma}, for a form $f \in B_n^{\bin}(r,X)$ with $X/2 \leq \Delta$, there are  $\ll_n X^{r_2-r_1}$ $\GL_2(\ZZ)$ conjugates of $f$ contained in $B_n^{\bin}(r,X)$. Thus, 
\begin{equation}
\begin{split}
\#\cF_n^{\bin}(L(r),\eps, X) &\gg_{n} \frac{\#\{B_n^{\bin}(r,X) \cap \Sym^n \ZZ^2 \mid \; X/2 \leq \Delta \leq X\}}{X^{r_2-r_1}} \\
&\gg_n \frac{\#\{B_n^{\bin}(r,X) \cap \Sym^n \ZZ^2\}}{X^{r_2-r_1}} \\
\label{final}
&\gg_{r,n} \frac{\exp[X, \sum_{i = 0}^n(n-i)r_1 + ir_2]}{\exp[X, r_2-r_1]}
\end{split}
\end{equation}
Now observe that:
\begin{equation}
\label{eval}
	\sum_{i = 0}^n(n-i)r_1 + ir_2 = {n + 1 \choose 2}(r_1 + r_2) = \frac{n+1}{2n-2}.
\end{equation}
Substituting \eqref{eval} into \eqref{final} proves the theorem.
\end{proof}

\section{Successive minima of quartic rings}
\label{quartic}
In this section let $R$ denote a nondegenerate quartic ring and let $C$ denote a cubic resolvent ring of $R$. Let $\Delta = \lv \Disc(R) \rv = \lv \Disc(C) \rv$.

\subsection{Additional results on quartic rings}
Our results are threefold: we explicitly compute $\poly_4(G)$ for various $G$, we compute various density functions, and then we discuss how various interesting families of quartic rings correspond to various vertices of $\poly_4(G)$.

\subsubsection{Computing $\poly_4(G)$}

\begin{theorem}
\label{containmentlemma}
For every permutation group $G \subseteq S_4$, the set $\poly_4(G)$ is a finite union of polytopes of dimension $3$. 
\end{theorem}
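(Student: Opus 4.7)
The plan is to parametrize pairs $(R,C)$ using Bhargava's theorem, which identifies isomorphism classes of quartic rings equipped with a cubic resolvent with $\GL_2(\ZZ)\times\GL_3(\ZZ)$-orbits on the space $\ZZ^2\otimes\Sym^2\ZZ^3$ of pairs $(A,B)$ of integral ternary quadratic forms. Under this correspondence, the cubic resolvent $C$ is the ring attached via Delone--Faddeev to the binary cubic form $\det(xA-yB)$, and the discriminant of $R$ is a homogeneous polynomial of degree $12$ in the $12$ entries of $(A,B)$.

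First, I would develop reduction theory for pairs $(R,C)$ in parallel with \cref{cubicreductiontheory}. Given Minkowski-reduced bases of $R/\ZZ$ and $C/\ZZ$, the explicit multiplication formulas for Bhargava's parametrization force each entry of $(A,B)$ to satisfy a size bound which is a monomial of the form $\lambda_i(R)^{\alpha_i}\lambda_j(C)^{\beta_j}\Delta^{-1/2}$ in the successive minima, exactly analogous to \cref{minkbasisshortcubic}. Taking $\log_\Delta$, each such monomial bound becomes a linear inequality $\log_\Delta|a_{ij}|\leq L_{ij}(p,q)$ or $\log_\Delta|b_{ij}|\leq M_{ij}(p,q)$ in the coordinates $(p,q)=(p_1,p_2,p_3,q_1,q_2)$.

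Second, each entry of $(A,B)$ is an integer, so each nonzero entry satisfies $\log_\Delta|a_{ij}|\geq 0$, and the condition $|\Disc(A,B)|\asymp\Delta$ forces the sum of $\log_\Delta$-sizes of the entries appearing in some monomial of $\Disc$ to be $\geq 1$. Combined with the coefficient upper bounds, this yields a finite list of linear inequalities of the form $\sum L_{ij}\geq 1$, one for each choice of monomial in the discriminant polynomial. Together with Minkowski's second theorem, which gives $p_1+p_2+p_3=1/2$ and $q_1+q_2=1/2$ in the limit, and the ordering constraints $0\leq p_1\leq p_2\leq p_3$ and $0\leq q_1\leq q_2$, the set $\poly_4(G)$ lies in the intersection of a $3$-dimensional affine subspace of $\RR^5$ with finitely many closed half-spaces, hence in a bounded polyhedron, i.e.\ a polytope.

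The ``finite union'' phenomenon arises from the Galois constraint: fixing $G\subseteq S_4$ restricts the étale algebras $R\otimes\QQ$ and $C\otimes\QQ$, which in turn forces specific entries of any reduced $(A,B)$ to vanish or to factor in a particular way (for instance, for $G\subsetneq S_4$ the form $\det(xA-yB)$ becomes reducible in a specific way). Each such vanishing pattern yields a different system of active linear inequalities, and since there are only finitely many subsets of the twelve coefficients to vanish, we obtain a finite union of polytopes. To show each component has dimension exactly $3$, I would exhibit explicit three-parameter families of $G$-rings realizing interior points of each polytope, using the algebraic constructions associated to the vertices identified in the introduction. The main obstacle is the simultaneous reduction theory for the pair $(R,C)$: unlike in the cubic case, where only a single binary form appears, here one must align a Minkowski basis of $R/\ZZ$ with one of $C/\ZZ$ under the joint $\GL_2(\ZZ)\times\GL_3(\ZZ)$-action, and verify that the resulting coefficient size bounds are tight enough to cut out exactly $\poly_4(G)$ rather than a strictly larger region.
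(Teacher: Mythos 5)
Your high-level framework — Bhargava's parametrization, Minkowski reduction giving monomial size bounds on $(A,B)$, $\log_\Delta$ turning those into linear inequalities in $(p,q)$, and Minkowski's second theorem supplying $\sum p_i = \sum q_i = 1/2$ — matches the paper. But the mechanism you propose for getting the defining inequalities of $\poly_4(G)$ is not the right one. You derive half-space constraints from the requirement that some monomial of $\Disc(A,B)$ have size $\asymp\Delta$. This would only detect when $\Disc$ is forced to vanish, which is too weak: for example $p=(0.1,0.2,0.2,0.1,0.4)$ forces $b_{11}=b_{12}=b_{13}=0$ in any reduced $(A,B)$, which makes $\det(Bx-Ay)$ reducible but does \emph{not} make the discriminant vanish; so this point passes your discriminant test but is outside $\poly_4(S_4)$ since the resolvent cubic of an $S_4$-field is irreducible. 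The constraints cutting out $\poly_4(G)$ — such as $q_2\leq p_1+p_3$ for $G=S_4$ — come from the requirement that the cubic resolvent pairing $\varphi\colon R/\ZZ\otimes R/\ZZ\to C/\ZZ$ be \emph{nonvanishing} on specified pairs of flag subspaces of the étale algebras $R\otimes\QQ$ and $C\otimes\QQ$, a condition governed by the Galois group. The paper records this via the combinatorial invariant $T_{\cF,\cG}$ of a flag $(\cF,\cG)$ of $(K,L)$, and the finite union is indexed by the finitely many such invariants, not by a case split on monomials of $\Disc$. You acknowledge that ``fixing $G$ forces specific entries to vanish or factor,'' but you have no systematic device to enumerate the legal vanishing patterns or to see that the resulting region is exactly a union of polytopes; $T_{\cF,\cG}$ is precisely that device.

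The second genuine gap is the reverse inclusion. Proving $\poly_4(G)$ is a finite union of polytopes (not merely contained in one) requires showing that every rational point of each $P_{\cF,\cG}$ is a limit point of the multiset, and your proposal to ``exhibit explicit three-parameter families at the vertices'' only addresses the dimension count. The paper instead fixes one pair $(A,B)\in V_\QQ$ arising from a flag and rescales each entry $a_{ij}\mapsto a_{ij}M^{p_i+p_j-q_1}$, $b_{ij}\mapsto b_{ij}M^{p_i+p_j-q_2}$ along a suitable infinite arithmetic set $\cM\subseteq\ZZ_{>0}$ of integers $M$ making the result integral; the successive minima of the resulting rings $(R_M,C_M)$ then converge to $p$ as $M\to\infty$. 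This works uniformly for any rational interior point of $P_{\cF,\cG}$, and is the step your outline would need to supply.
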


\cref{containmentlemma} is a natural generalization of \cite[Theorem 1.1.2]{me} when $n = 4$. See \cref{quarticstructuretheorem} for an even more explicit description of these polytopes. For many applications it is useful to provide explicit linear inequalities defining $\poly_4(G)$; in the following two theorems, we do this when $G = S_4$ or $G=D_4$. Let $p=(p_1,p_2,p_3,q_1,q_2)$ be the coordinates of $\RR^5$. By the definition of successive minima and Minkowski's second theorem, the set $\poly_4(G)$ satisfies the following constraints: 
\begin{equation}
\label{basicineq}
\begin{tabular}{c c c c}
$0 \leq p_1 \leq p_2 \leq p_3$; & $1/2 = p_1 + p_2 + p_3$; & $0 \leq q_1 \leq q_2$; & and $1/2 = q_1 + q_2$.
\end{tabular}
\end{equation}

\begin{theorem}
\label{s4polytopethm}
The set $\poly_4(S_4)$ consists of the points $p \in \RR^5$ satisfying \eqref{basicineq} and 
\begin{center}
\begin{tabular}{c c c}
 $q_2 \leq p_1 + p_3$; & $q_1 \leq 2p_1 $; & and $q_2 \leq 2p_2$.
\end{tabular}
\end{center}
\end{theorem}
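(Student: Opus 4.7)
The plan is to establish the two inclusions $\poly_4(S_4) \subseteq \{p : \text{inequalities hold}\}$ and vice versa. The inequalities in \eqref{basicineq} follow directly: monotonicity is by the definition of successive minima, $\lambda_0(R), \lambda_0(C) \ll 1$ by \cref{lambdazerolemma} so their logarithms base $\Delta$ vanish in the limit, and the sum conditions $p_1+p_2+p_3=1/2$, $q_1+q_2=1/2$ follow from Minkowski's second theorem applied to $R$ and $C$ (both of which have covolume $\asymp \sqrt{\Delta}$). To prove the three remaining inequalities, I would use Bhargava's parametrization of pairs $(R,C)$ by $\GL_3(\ZZ)\times\GL_2(\ZZ)$-orbits of pairs of integer ternary quadratic forms $(A,B)$. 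Given a Minkowski basis $1,\alpha_1,\alpha_2,\alpha_3$ of $R$ with $|\alpha_i|\asymp\lambda_i(R)$ and the corresponding Bhargava basis $1,\omega,\theta$ of $C$, the structure constants of $C$ are polynomials in the coefficients of $(A,B)$, which in turn are bounded by the $|\alpha_i\alpha_j|$. Reading off the explicit formulas for $\omega,\theta$, one then constructs elements of $C\setminus\ZZ$ whose lengths are bounded above by $\lambda_1(R)^2$ (yielding $q_1\leq 2p_1$), and two linearly independent elements of $C/\ZZ$ of lengths bounded by $\min\{\lambda_2(R)^2, \lambda_1(R)\lambda_3(R)\}$ (yielding both $q_2 \leq 2p_2$ and $q_2\leq p_1+p_3$).

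For the reverse inclusion, I would show that every $p$ in the interior of the proposed polytope is a limit point of the defining multiset. Using Bhargava's parametrization in the other direction, I would define a box $B_4(p,X)\subseteq V_\RR=(\RR^2\otimes\Sym^2\RR^3)$ whose coordinate bounds are powers of $X$ prescribed by $p$. The box would be designed so that every integral $(A,B)\in B_4(p,X)$ produces a pair $(R_{(A,B)},C_{(A,B)})$ of discriminant $\leq X$ whose normalized successive minima lie close to $p$; this is the quartic analogue of the box $B(p,X)$ used in \cref{cubicreductiontheory}. Davenport's lemma would supply the expected number of integer points in $B_4(p,X)$, and a congruence sieve modeled on the proof of \cref{lowerboundcubic}, imposing local conditions at a few primes that force nondegeneracy, irreducibility, and Galois group $S_4$, would guarantee that a positive proportion of these points yield $S_4$-rings. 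Boundary points of the polytope are then handled as limits of interior points.

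The main obstacle will be establishing the quartic analogues of the reduction-theoretic lemmas of \cref{cubicreductiontheory}, namely two-sided bounds on the successive minima of $R_{(A,B)}$ and $C_{(A,B)}$ in terms of the coefficients of $(A,B)$, together with a bound on the number of $\GL_3(\ZZ)\times\GL_2(\ZZ)$-translates of a given $(A,B)$ remaining in the box. Calibrating the shape of $B_4(p,X)$ so that these bounds align with the asserted facet structure of $\poly_4(S_4)$ (in particular, so that the specific asymmetric bound $q_2 \leq p_1 + p_3$, rather than the naive $q_2 \leq p_1 + p_2$, emerges from the construction) is the technical heart of the argument.
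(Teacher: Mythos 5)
Your broad strategy---passing through Bhargava's parametrization and establishing two inclusions---matches the paper's, but the forward inclusion has a genuine gap that you yourself flag without resolving. The inequalities $q_k \leq p_i + p_j$ come from the resolvent map sending each pair $(v_i, v_j)$ to an element of $C/\ZZ$ of length $\ll \lambda_i(R)\lambda_j(R)$ with coordinates $(a_{ij}, b_{ij})$, and the bound $\lambda_k(C) \ll \lambda_i \lambda_j$ is only extracted when the relevant component is \emph{nonzero}. You write that one ``constructs elements of $C\setminus\ZZ$'' of the required lengths, but never say why those elements avoid $\ZZ$: that nonvanishing is exactly the content of the proof. Concretely, the paper first proves (in \cref{d4polytopethm}) that for any $(A,B) \in V_\QQ$ giving rise to an irreducible $K$, one cannot have $a_{11}=b_{11}=0$ (yielding $q_1\leq 2p_1$) nor $b_{11}=b_{12}=b_{22}=0$ (yielding $q_2\leq 2p_2$), by a short argument about the pencil of conics cut out by $A,B$ in $\PP^2$. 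Then the extra $S_4$-specific fact, giving $q_2 \leq p_1+p_3$, is: if $b_{11}=b_{12}=b_{13}=0$, then $(1,0)$ is a root of $\dett(By - Ax)$, so the cubic resolvent algebra is reducible, hence $\Gal(K)\not\simeq S_4$. This single reducibility observation, not a calibration of box shapes, is what produces the asymmetric facet $q_2\leq p_1+p_3$ rather than the naive $q_2\leq p_1+p_2$. You identify this facet as the ``technical heart'' but stop short of the argument that delivers it.

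For the reverse inclusion, your route (build boxes $B_4(p,X)$, count integer points with Davenport's lemma, sieve for Galois group $S_4$) differs from the paper and is heavier than necessary. The paper instead proves a structure theorem (\cref{quarticstructuretheorem}) showing $\poly_4(G)$ is a union of explicit polytopes $P_{\cF,\cG}$ indexed by flags, whose proof contains a one-parameter scaling construction: given a rational pair $(A,B)$ of the right shape generating an $S_4$-quartic field, one rescales coordinates by powers $M^{p_i+p_j-q_k}$ to produce an infinite family $(A_M,B_M)\in V_\ZZ$ whose normalized successive minima converge to $p$. One well-chosen rational example plus this scaling does all the work; no Davenport or sieve is needed just to establish containment of the polytope (those tools enter later, for the density function). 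Your counting approach, if carried out, would also establish the inclusion (and would prove more, namely the lower bound of \cref{lowerboundquartic}), but you would still need the same nonvanishing/reducibility lemmas to produce any valid seed $(A,B)$ and to justify that a positive proportion of the sieved lattice points actually have Galois group $S_4$.
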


\begin{theorem}
\label{d4polytopethm}
The set $\poly_4(D_4)$ consists of the points $p \in \RR^5$ satisfying \eqref{basicineq} and 
\begin{center}
\begin{tabular}{c c}
 $q_1 \leq 2p_1 $ & and $q_2 \leq 2p_2$.
\end{tabular}
\end{center}
\end{theorem}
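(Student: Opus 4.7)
The plan is to prove the two inclusions separately, relying on Bhargava's parametrization of pairs $(R, C)$ by $\GL_2(\ZZ) \times \GL_3(\ZZ)$-orbits on $\ZZ^2 \otimes \Sym^2 \ZZ^3$, i.e., pairs of integral ternary quadratic forms. Under this parametrization, one may choose compatible bases $\{1, \omega_1, \omega_2, \omega_3\}$ of $R$ and $\{1, \theta_1, \theta_2\}$ of $C$ such that the pair $(A, B)$ encodes both the multiplication table of $R$ and the ``resolvent'' map sending products $\omega_i \omega_j \in R$ to explicit $\ZZ$-linear combinations of $1, \theta_1, \theta_2$ whose coefficients are polynomial in the entries of $A$ and $B$. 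This is the quartic analogue of the cubic reduction theory developed in \cref{cubicreductiontheory}.

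For the forward inclusion, I would argue via reduction theory: starting from a $D_4$-pair $(R, C)$, pass to Minkowski-reduced bases so that $|\omega_i| \asymp \lambda_i(R)$ and $|\theta_j| \asymp \lambda_j(C)$. The inequalities \eqref{basicineq} follow from Minkowski's second theorem applied to both $R$ and $C$, just as in \cref{lambdazerolemma} and the subsequent discussion. For the constraints $q_1 \leq 2p_1$ and $q_2 \leq 2p_2$, I would analyze the image in $C/\ZZ$ of the quadratic expressions $\omega_1^2$ and $\omega_2^2$: by the explicit resolvent pairing, these have lengths $\ll |\omega_i|^2 \asymp \Delta^{2p_i}$. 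One checks that $\omega_1^2$ is generically nonzero modulo $\ZZ$ in $C$, forcing $\lambda_1(C) \ll \Delta^{2p_1}$, and that the span of the images of $\omega_1^2$ and $\omega_2^2$ in $C/\ZZ$ is generically two-dimensional, forcing $\lambda_2(C) \ll \Delta^{2p_2}$. Degenerate configurations where these images fail to span can be handled by replacing $\omega_i$ with a suitable combination from the span of $\{\omega_1, \omega_2, \omega_3\}$ of comparable length.

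For the reverse inclusion, I would construct for each point in the candidate region a family of $D_4$-rings whose successive minima approach that point. The key algebraic fact is that a $D_4$-quartic field contains a unique quadratic subfield, so any $D_4$-order sits in a natural tower $\ZZ \subseteq \cO' \subseteq R$, where $\cO'$ is an order in the quadratic subfield $K$, and the cubic resolvent algebra is the reducible $\QQ \oplus K$. This explains why the $S_4$-constraint $q_2 \leq p_1 + p_3$ disappears: for an $S_4$-ring the pairing $(\omega_1, \omega_3) \mapsto \omega_1 \omega_3$ contributes a linearly independent direction to $C/\ZZ$, bounding $\lambda_2(C)$; but in the $D_4$ setting the second direction of $C$ is forced to lie in the $K$-component, which is controlled independently by the choice of $\cO'$. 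Concretely, I would first choose $\cO'$ realizing the desired $q_1, q_2$, then parameterize $D_4$-extensions by a generator over $\cO'$ of prescribed length, and verify via Bhargava's parametrization that the resulting $(R, C)$ has successive minima at the target point.

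The main obstacle will be the reverse inclusion: producing $D_4$-rings with all five successive minima simultaneously prescribed requires designing tower constructions with independently controllable length scales at each level, together with verification that the Galois group is indeed $D_4$ rather than one of the other transitive subgroups of $S_4$ (notably $\ZZ/2 \oplus \ZZ/2$ or $\ZZ/4$). This last verification can be handled by imposing local congruence conditions at an auxiliary prime to ensure the minimal polynomial of a chosen generator is irreducible with the right splitting type. Once these constructions are established, the existence of $D_4$-rings near every boundary face gives the reverse inclusion and completes the identification of $\poly_4(D_4)$ with the explicit polytope.
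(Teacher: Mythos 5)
The core difficulty in your proposal is the reverse inclusion, and your plan there diverges from the paper's machinery in a way that introduces a genuine gap. The paper's proof of this theorem is short precisely because it leans on \cref{quarticstructuretheorem}, which already does the hard work: to show a polytope $P_{\cF,\cG}$ is contained in $\poly_4(D_4)$, it suffices to exhibit a \emph{single} pair $(A,B) \in V_{\QQ}$ of the right vanishing shape (here, $b_{11}=b_{12}=b_{13}=0$ with all other entries free) that generates a $D_4$-field; the structure theorem then produces, via the explicit scaling $(A_M,B_M)$, an infinite family of integral pairs whose $(\log_\Delta\lambda_i)$ data converges to any prescribed rational interior point of $P_{\cF,\cG}$. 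Your plan instead attempts to build these families by hand via the tower $\ZZ \subseteq \cO' \subseteq R$ with a generator of prescribed length. This is underspecified on two counts. First, there is a conflation between the quadratic suborder $\cO'$ sitting inside $R$ and the cubic resolvent ring $C$: the successive minima $(q_1,q_2)$ you want to prescribe are those of $C$, a cubic ring whose discriminant equals $\Disc(R)$; in general $C$ is far larger than $\ZZ \oplus \cO'$, so ``choosing $\cO'$ realizing the desired $q_1, q_2$'' does not control the lattice $C$. Second, even setting that aside, you need to realize a $3$-parameter family of limit points (five minima constrained by two Minkowski relations) by a construction that has at most two obvious degrees of freedom (a scale for $\cO'$ and a scale for the extension generator), and you give no mechanism for the independent control of all coordinates. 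The scaling lemma handles this uniformly; without it, you have to re-derive something equivalent, which you acknowledge as ``the main obstacle'' but do not resolve.

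Your forward direction is closer in spirit, but there is still a soft spot: you bound $q_1 \le 2p_1$ by appealing to $\varphi(\omega_1,\omega_1)$ being ``generically'' nonzero modulo $\ZZ$, with degenerate configurations ``handled by replacing $\omega_i$ with a suitable combination.'' The paper's argument is not probabilistic and does not use a replacement trick; the crucial observation is that the degenerate configuration is \emph{impossible} for an irreducible quartic étale algebra. Concretely, $q_1 > 2p_1$ would force $a_{11}=b_{11}=0$, meaning $[1{:}0{:}0]$ lies on both conics $Q_A$ and $Q_B$, which forces $K$ to be reducible; similarly $q_2 > 2p_2$ forces $b_{11}=b_{12}=b_{22}=0$, whence $Q_B$ contains a coordinate line and again $K$ is reducible. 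You should replace the ``generic'' language with this clean geometric dichotomy. Note also that the approach you sketched — bounding $\lambda_2(C)$ by requiring the span of the images of $\omega_1^2$ and $\omega_2^2$ to be two-dimensional — is more than is needed; for $q_2 \le 2p_2$ it suffices that one of $\varphi(\omega_1,\omega_1),\varphi(\omega_1,\omega_2),\varphi(\omega_2,\omega_2)$ has nonzero $w_2$-component, which is exactly the negation of $b_{11}=b_{12}=b_{22}=0$.
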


\begin{theorem}
\label{c1polytopethm}
The set $\poly_4$ is the union of three polytopes. The first polytope consists of the points $p \in \RR^5$ satisfying \eqref{basicineq} and 
\begin{center}
\begin{tabular}{c c}
 $q_2 \leq p_2 + p_3$ and $q_1 \leq 2p_1$.
\end{tabular}
\end{center}
The second polytope consists of the points $p \in \RR^5$ satisfying \eqref{basicineq} and 
\begin{center}
\begin{tabular}{c c}
 $q_2 \leq p_1 + p_3$ and $q_1 \leq p_1 + p_2$.
\end{tabular}
\end{center}
The third polytope consists of the points $p \in \RR^5$ satisfying \eqref{basicineq} and 
\begin{center}
\begin{tabular}{c c}
 $q_2 \leq p_1 + p_3$ and $q_2 \leq 2p_2$.
\end{tabular}
\end{center}
\end{theorem}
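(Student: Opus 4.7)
The plan is to prove both containments $\poly_4 \subseteq P_1 \cup P_2 \cup P_3$ and $P_1 \cup P_2 \cup P_3 \subseteq \poly_4$, where $P_1, P_2, P_3$ denote the three polytopes in the statement. Both proceed through Bhargava's discriminant-preserving parametrization of pairs $(R,C)$ by $\GL_3(\ZZ) \times \GL_2(\ZZ)$-orbits on the lattice $\ZZ^2 \otimes \Sym^2 \ZZ^3$ of pairs of integral ternary quadratic forms $(A,B)$.

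For the inclusion $\poly_4 \subseteq P_1 \cup P_2 \cup P_3$, I would fix Minkowski bases $\{1,\alpha_1,\alpha_2,\alpha_3\}$ of $R$ and $\{1,\beta_1,\beta_2\}$ of $C$ realizing the successive minima and read off the corresponding pair $(A,B) \in \ZZ^2 \otimes \Sym^2 \ZZ^3$. Bhargava's explicit multiplication formulae (the quartic analog of the Delone--Faddeev relations \eqref{cubiccoeff}) yield entrywise bounds of the shape $\lv a_{ij} \rv \ll \Delta^{q_2 + p_i + p_j - 1/2}$ and $\lv b_{ij} \rv \ll \Delta^{q_1 + p_i + p_j - 1/2}$, with the convention $p_0 = q_0 = 0$. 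Since $(A,B)$ has nonzero discriminant, at least one monomial of the discriminant polynomial $\Disc(A,B) \in \ZZ[a_{ij}, b_{ij}]$ must contribute an integer of size $\gg \Delta$; each such monomial translates into a linear inequality on $(p,q)$. Enumerating the relevant monomials, combined with the Minkowski identities $p_1 + p_2 + p_3 = q_1 + q_2 = 1/2$ and with further $\GL_3(\ZZ) \times \GL_2(\ZZ)$-reductions to eliminate redundant configurations, should produce precisely the three defining inequality sets. In particular, $\poly_4(S_4) \subseteq P_3$ (from \cref{s4polytopethm}) corresponds to the generic configuration where no $\alpha_i$ or $\beta_j$ is anomalously short, while $P_1$ and $P_2$ absorb the degenerate configurations allowed when some coordinate of $(p,q)$ approaches $0$.

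For the inclusion $P_1 \cup P_2 \cup P_3 \subseteq \poly_4$, I would construct, for each polytope $P_k$, a family of boxes $B_k(p,X) \subseteq \RR^2 \otimes \Sym^2 \RR^3$ whose sidelengths along $a_{ij}$ (respectively $b_{ij}$) equal $X^{q_2 + p_i + p_j - 1/2}$ (respectively $X^{q_1 + p_i + p_j - 1/2}$), truncated to respect the defining inequalities of $P_k$. Davenport's lemma (\cref{davenport}) gives $\#(B_k(p,X) \cap \ZZ^2 \otimes \Sym^2 \ZZ^3) \gg X^c$ for some $c > 0$ whenever $p$ lies in the interior of $P_k$, and bounds analogous to \cref{lambdaboundlemma} and \cref{lambdaboundlemmabin} ensure that a positive proportion of integral pairs in the box yield a $(R,C)$ with $\Disc(R) \asymp X$ and successive minima $\asymp \Delta^{p_i}$ and $\asymp \Delta^{q_j}$. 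Boundary points of each $P_k$ then belong to $\poly_4$ as limits.

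The main obstacle is the case analysis in the upper bound: bookkeeping the $6 + 6 = 12$ entries of $(A,B)$ against the monomials of the degree-$12$ polynomial $\Disc(A,B)$, and verifying that every non-redundant configuration falls into one of the three listed polytopes. I expect to follow the strategy of \cite[Theorem 1.1.2]{me} used in the cubic setting: for each reduction pattern of $(A,B)$, isolate a minimal collection of nonvanishing coefficients consistent with $\Disc(A,B) \neq 0$, and then verify by a finite combinatorial check that these patterns partition into exactly three maximal classes realizing $P_1$, $P_2$, and $P_3$.
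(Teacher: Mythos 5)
Your plan correctly identifies Bhargava's parametrization and Minkowski bases as the right entry point, and the direction $P_1\cup P_2\cup P_3\subseteq\poly_4$ via boxes and Davenport is workable (though heavier than the paper's route, which in \cref{quarticstructuretheorem} simply scales a single $(A,B)\in V_\QQ$ with the prescribed vanishing pattern by a $1$-parameter torus, avoiding any lattice-point counting). The trouble is in the forward inclusion $\poly_4\subseteq P_1\cup P_2\cup P_3$, and it is a genuine conceptual gap, not merely omitted bookkeeping.

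The claim ``at least one monomial of $\Disc(A,B)$ must contribute an integer of size $\gg\Delta$; each such monomial translates into a linear inequality on $(p,q)$'' does not actually produce inequalities. Because $\Disc(A,B)$ is a relative invariant for the torus in $\GL_2\times\GL_3$, every monomial of $\Disc$ is bihomogeneous of degree $6$ in the $a_{ij}$, degree $6$ in the $b_{ij}$, and isobaric of weight $8$ in each index $i\in\{1,2,3\}$. Consequently, under the idealized substitution $a_{ij}\leadsto\Delta^{p_i+p_j-q_1}$, $b_{ij}\leadsto\Delta^{p_i+p_j-q_2}$, \emph{every} monomial evaluates to exactly $\Delta^{-6q_1-6q_2+8(p_1+p_2+p_3)}=\Delta^{1}$. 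Thus ``size of a monomial'' gives an equality, not an inequality, and the monomial-by-monomial scheme is uninformative as stated. The only real content of the entrywise bounds is that when $p_i+p_j-q_k<0$ the corresponding entry is a too-small integer, hence \emph{forced to vanish}; the constraint on $p$ is then that the linear subspace cut out by the forced-zero entries is not contained in $\{\Disc=0\}$. This is a union-of-polytopes condition indexed by \emph{vanishing patterns} (equivalently, the flags $(\cF,\cG)$ of \cref{quarticstructuretheorem}), not a list of linear inequalities extracted from monomials.

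Once the problem is reformulated this way, the remaining work is to determine which vanishing patterns are compatible with $\Disc(A,B)\neq 0$. This is precisely where the case analysis lives, and your proposal leaves it entirely to a ``finite combinatorial check'' on the degree-$12$ polynomial in $12$ variables. The paper replaces this with a short geometric argument about pencils of conics in $\PP^2$: nonvanishing of $\Disc$ means $Q_A\cap Q_B$ is four points, no three collinear; from $b_{11}=b_{12}=b_{13}=0$ plus an additional forced zero such as $a_{11}=0$ one deduces three of those points lie on a line, a contradiction, and similarly $\rank(B)\le 1$ gives a repeated factor in the resolvent cubic. Without either \cref{quarticstructuretheorem} or this conic-geometry reduction, the proposal does not yet constitute a proof of the three-polytope decomposition.
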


\subsubsection{Density functions of quartic rings}
In this section we will prove \cref{fulldensitythmquartic} and \cref{s4densitythm} as well as some other results discussed below. We compute the density function for $D_4$ on the complement of $\poly_4(S_4)$; this is not an empty statement because $\poly_4(D_4)$ strictly contains $\poly_4(S_4)$.

\dfourdensitythm*

\subsection{Additional results on special families of quartic rings}
Unsurprisingly, rings with extremal successive minima tend to also be extremal in an \emph{algebraic} sense; in other words the vertices of our polytopes ``correspond'' to various interesting families of quartic rings. As in the cubic case, we first formalize what the word ``correspond'' means for families of quartic rings. We say a pair $(R,C)$ is $(\eps,X)$-close to $p$ if:
\begin{align*}
\max_{1\leq i \leq 3}\{\lv \log_{\Delta}\lambda_i(R) - p_i \rv\} &\leq \frac{\eps}{\log X} \\
\max_{1\leq i \leq 2}\{\lv \log_{\Delta}\lambda_i(C) - q_i \rv\} &\leq \frac{\eps}{\log X}
\end{align*}

Let $\cF$ be an infinite set equipped with a map $H \colon \cF \rightarrow \RR_{> 0}$ such that for all $X$, we have $\#\{f \in \cF \mid H(f) \leq X \} < \infty$. We call $H$ the \emph{height function} of $\cF$. Further suppose that $\cF$ is equipped with a map $\psi$ from $\cF$ to isomorphism classes of nondegenerate pairs of quartic rings equipped with a cubic resolvent ring. For a point $p \in \poly_4$, we say the triple \emph{$(\cF,H,\psi)$ corresponds to $p$} if 
\begin{equation}
\label{correspondquartic1}
\lim_{\eps \rightarrow \infty} \liminf_{X \rightarrow \infty}\frac{\#\{f \in \cF \; \mid \; H(f) \leq X \text{ and $\psi(f)$ is $(\eps,X)$-close to $p$}\}}{\#\{f \; \mid \; H(f) \leq X \}} = 1
\end{equation}
and there exists a constant $M$ such that for all $\eps > M$, we have
\begin{equation}
\label{correspondquartic2}
\liminf_{X \rightarrow \infty}\frac{\#\{(R,C) \in \cF(p,\eps,X) \; \mid \; (R,C) \in \psi(\cF)\}}{\#\cF(p,\eps,X)} > 0.
\end{equation}
When $H$ or $\psi$ are implicit, we simply write that $\cF$ corresponds to $p$.

\begin{restatable}{theorem}{genericgaloisgroupscorollary}
\label{genericgaloisgroupscorollary}
The set of isomorphism classes of $S_4$-rings (where $H$ is the absolute discriminant) corresponds to
\[
	(1/6,1/6,1/6,1/4,1/4).
\]
The set of isomorphism classes of $D_4$-rings (where $H$ is the absolute discriminant) corresponds to
\[
	(0,1/4,1/4,0,1/2).
\]
\end{restatable}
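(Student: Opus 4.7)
The proof parallels that of \cref{genericcubiccorollary} in both cases. For each vertex $p_0$ and Galois group $G \in \{S_4, D_4\}$, condition \eqref{correspondquartic2} follows from direct density-function computations: substituting $p_0^{S_4} := (1/6,1/6,1/6,1/4,1/4)$ into \cref{s4densitythm} yields $d_{\eps,S_4}(p_0^{S_4}) = 1$, and substituting $p_0^{D_4} := (0,1/4,1/4,0,1/2)$ into the lower bound of \cref{d4densitythm}(3) yields $1$ as well. Together with the matching upper bound $\#\cF(p_0,\eps,X) \ll X$ from \cref{fulldensitythmquartic}, this gives $\#\cF(p_0,\eps,G,X) \asymp_{\eps} \#\cF(p_0,\eps,X) \asymp X$ and hence a positive ratio, verifying \eqref{correspondquartic2}.

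For condition \eqref{correspondquartic1}, we use the covering-and-decay strategy of \cref{genericcubiccorollary}. The key algebraic input is that $p_0$ is the unique point on $\poly_4(G)$ where the density function attains its maximum value $1$, with Lipschitz decay away from $p_0$; this is a finite computation on the (refined) vertex set of the piecewise linear density function on $\poly_4(G)$. We then cover the complement of a ball $B_{\eps/\log X}(p_0)$ in $\poly_4(G)$ by $\ll (\log X)^5$ smaller boxes around grid points $p_i = p_0 + (\eps/(k\log X)) m$ for $m \in \ZZ^5 \setminus \{0\}$, apply the available upper bound $\#\cF(p_i,\eps/k,G,X) \ll X^{d_G(p_i) + o(1)}$ at each grid point, and sum using the Lipschitz decay $d_G(p_i) \leq 1 - c|p_i - p_0|$ for a constant $c>0$. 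The resulting tail bound $X \sum_{m \neq 0} e^{-c|m|\eps/k}$ tends to $0$ after dividing by the denominator $\asymp X$ as $\eps \to \infty$. This completes the proof for the $S_4$ case and for the $D_4$ case on the subregion $\poly_4(D_4) \setminus \poly_4(S_4)$, where the exact upper bounds are provided by \cref{s4densitythm} and \cref{d4densitythm}(1) respectively.

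The main obstacle is the $D_4$ case on $\poly_4(D_4) \cap \poly_4(S_4) = \poly_4(S_4)$, where only the lower bound from \cref{d4densitythm}(3) is available and the crude monotonicity $d_{\eps,D_4}(p) \leq d_{\eps,4}(p)$ can attain $1$ at $p_0^{S_4}$, precluding direct application of the decay argument. To handle this, we exploit the structural observation that the cubic resolvent $C$ of any $D_4$-ring is a $\ZZ/2\ZZ$-cubic ring: by \cref{reduciblethm}, the number of $\ZZ/2\ZZ$-cubic rings of absolute discriminant $\leq X$ with $(\log_\Delta \lambda_1(C), \log_\Delta \lambda_2(C))$ near a point $(q_1, q_2)$ with $q_1$ bounded below is $\ll X^{1 - q_1 + o(1)}$. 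Combined with control on the fibers of the Bhargava parametrization (the number of quartic rings pairing with a fixed cubic resolvent), this yields $\#\cF(p,\eps,D_4,X) \ll X^{1 - q_1 + o(1)}$ for $p \in \poly_4(S_4)$ with $q_1$ bounded below, which suffices to bound the covering sum over $\poly_4(S_4) \setminus B_{\eps/\log X}(p_0^{D_4})$ by $o(X)$ and complete the argument.
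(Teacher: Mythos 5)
Your architecture is right — you split by condition, handle \eqref{correspondquartic2} exactly as the paper does via the density theorems, and for $D_4$ you correctly locate the obstruction on $\poly_4(S_4)$ (where the available upper bound via $d_{\eps,4}$ is useless near $(1/6,1/6,1/6,1/4,1/4)$) and the right structural fact to exploit (the cubic resolvent of a $D_4$-ring is reducible). But there is a genuine gap in your $D_4$ cusp estimate, and your $S_4$ argument diverges substantially from the paper's.

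For $S_4$, the paper does not run a covering-and-decay argument at all. It instead invokes the equidistribution theorem of H and Vincent (\cref{equidistribofquartic}) on the shapes $(\cL_R,\cL_C)\in\cS_3\times\cS_2$: since $\cS_{3,\eps}\times\cS_{2,\eps}$ exhausts $\cS_3\times\cS_2$ as $\eps\to\infty$, condition \eqref{correspondquartic1} is immediate. Your covering argument is plausible — $d_{\eps,4}$ restricted to $\poly_4(S_4)$ is convex (a linear function plus a sum of $\max\{0,\cdot\}$ terms), equals $1$ at the generic vertex, and is $<1$ at every other vertex, so it attains a unique maximum there with linear decay — but it requires a full $3$-dimensional geometric-sum estimate over $\poly_4(S_4)$ parallel to the paper's $D_4$ main term, which you have not carried out and which is considerably more than a reference to \cref{genericcubiccorollary}. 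This is a legitimate alternative that trades a hard input for bookkeeping, but it is not the paper's argument.

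For $D_4$ on the cusp $\poly_4(S_4)$, your proposed bound is not justified. You want $\#\cF(p,\eps,D_4,X)\ll X^{1-q_1+o(1)}$ by counting $\ZZ/2\ZZ$-cubic rings via \cref{reduciblethm} and multiplying by ``control on the fibers of the Bhargava parametrization.'' That fiber — the number of quartic rings with a fixed cubic resolvent $C$ — is \emph{not} uniformly bounded; it is governed by $2$-torsion class-group data of $C$ and is unbounded over the family, so a pointwise product bound of this shape fails, and no averaging argument is supplied. The paper's \cref{d4sievelemma} avoids fiber control entirely: one works in the prehomogeneous space, observes that $D_4$ forces $\dett(Ax+By)$ to be reducible, and applies quantitative Hilbert irreducibility (Cohen) to the lattice points in the box $MB(p,X)$, yielding the weaker but rigorous savings $\ll X^{1-\delta_p+\gamma}$ with $\delta_p=\tfrac12\max_{i,j,k}\{p_i+p_j-q_k\}>0$. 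That savings is strictly smaller than your claimed $q_1$ at the $S_4$ vertex but is all that is needed, and it is provable.
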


The points above are vertices of $\poly_4(S_4)$ and $\poly_4(D_4)$ respectively. 

\subsubsection{Vertices of $\poly_4(S_4)$}
We now discuss various special families of quartic rings that correspond to the vertices of $\poly_4(S_4)$. The vertices of $\poly_4(S_4)$ are:
\begin{enumerate}
\item $(1/6,1/6,1/6,1/4,1/4)$;
\item $(1/6,1/6,1/6,1/6,1/3)$;
\item $(1/8,1/8,1/4,1/4,1/4)$;
\item $(1/8,3/16,3/16,1/4,1/4)$;
\item $(1/10,1/5,1/5,1/5,3/10)$;
\item and $(1/12,1/6,1/4,1/6,1/3)$.
\end{enumerate}

We will give families that correspond to each vertex except $(1/8,3/16,3/16,1/4,1/4)$. Recall that we define the root height $H_r$ of a monic degree $n$ polynomial $f(x) = x^n + a_{1}x^2 + \dots + a_n \in \ZZ[x]$ by $H_r(f) = \max_i\{\lv a_i\rv^{1/i}\}$. Given a binary $n$-ic form $f(x,y) = a_0x^n + \dots + a_ny^n$, define the \emph{coefficient height of $f$} to be $H_c(f) = \max_i\{\lv a_i\rv\}$. Wood \cite{woodquartic} constructed a map from integral binary quartic forms into pairs of integral ternary quadratic forms; thus from an integral binary quartic form, we canonically obtain a quartic ring and a cubic resolvent ring. This construction is compatible with that of Birch and Merriman \cite{birch}, i.e. both constructions give isomorphic quartic rings.

\begin{theorem}
\label{monogenicquartictheorem}
Let $\cF_4^{\mon}$ be the set of integral monic quartic polynomials with nonzero discriminant. Then $(\cF_4^{\mon}, H_r)$ corresponds to $(1/12,1/6,1/4,1/6,1/3)$.
\end{theorem}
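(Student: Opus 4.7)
The plan is to verify the two defining conditions for the triple $(\cF_4^{\mon}, H_r, \psi)$ to correspond to $p = (1/12, 1/6, 1/4, 1/6, 1/3)$, in close analogy with the cubic proof of \cref{monogeniccubictheorem}. First, for condition \eqref{correspondquartic1}, the number of monic integral quartic polynomials $f(x) = x^4 + a_1x^3 + a_2x^2 + a_3x + a_4$ with $H_r(f) \leq X$ is $\asymp X^{10}$, since the condition is $|a_i| \leq X^i$. Applying Davenport's lemma (\cref{davenport}) to the discriminant polynomial $\Disc \in \ZZ[a_1, a_2, a_3, a_4]$, which is nonzero of weighted degree $12$ (with weights $1, 2, 3, 4$), the proportion of such $f$ with $|\Delta(f)| \geq X^{12}/C$ tends to $1$ as $C \to \infty$ by a direct analog of the volume computation in the proof of \cref{monogeniccubictheorem}.

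For such polynomials, I must check that $\psi(f) = (R_f, C_f)$ is $(\eps, \Delta)$-close to $p$. For the quartic ring $R_f = \ZZ[x]/(f)$, the basis $\{1, x, x^2, x^3\}$ satisfies $|x^i| \ll X^i \asymp \Delta^{i/12}$. Combined with the upper bounds $\lambda_i(R_f) \leq |x^i|$, Minkowski's second theorem $\prod_{i=0}^{3}\lambda_i(R_f) \asymp \Delta^{1/2}$, and $\lambda_0(R_f) = 1$ from \cref{lambdazerolemma}, this forces $\lambda_i(R_f) \asymp \Delta^{i/12}$, matching the first three coordinates of $p$. For the cubic resolvent $C_f$ --- which, for a monogenic quartic ring, agrees with $\ZZ[y]/(g)$ where $g$ is the classical resolvent cubic --- the coefficient of $y^{3-i}$ in $g$ is a weighted homogeneous polynomial in the $a_j$ of weighted degree $2i$, so it is bounded by $\ll X^{2i}$. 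Since $\Disc(g) = \Disc(f) = \Delta \asymp X^{12}$, the same Minkowski argument applied to the basis $\{1, y, y^2\}$ yields $\lambda_j(C_f) \asymp X^{2j} \asymp \Delta^{j/6}$ for $j = 1, 2$, matching $(q_1, q_2) = (1/6, 1/3)$.

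Finally, for condition \eqref{correspondquartic2}, I will compare monogenic and total counts in $\cF(p, \eps, Y)$. By the quartic analogue of Bhargava's bound on monogenizations, each monogenic ring has $O(1)$ essentially distinct generators modulo integer translation and sign, and each such generator yields $\asymp X$ distinct monic polynomials via translation $\alpha \mapsto \alpha + k$. Hence the number of isomorphism classes of monogenic pairs close to $p$ with $\Delta \leq Y = X^{12}$ is $\gg X^{10}/X = X^9 = Y^{3/4}$. Since \cref{fulldensitythmquartic} gives $\#\cF(p, \eps, Y) \ll_\eps Y^{3/4}$ (as $d_{\eps, 4}(p) = 3/4$ at this $p$), monogenic pairs form a positive proportion of $\cF(p, \eps, Y)$. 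The main obstacle will be the uniform successive-minima estimate for $R_f$: a Tschirnhaus-type integer shift $x \mapsto x + \lfloor -a_1/4 \rceil$ could in principle shrink $|x|$, but as handled after \cref{monogeniccubictheorem}, such shifts reduce $|x|$ by at most a constant factor for generic $f$, so the asymptotic $\lambda_i(R_f) \asymp \Delta^{i/12}$ is preserved.
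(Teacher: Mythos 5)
Your proof is correct in substance, but it takes a genuinely different route from the paper's on both halves of the ``corresponds'' condition, so a comparison is worthwhile.

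For condition~\eqref{correspondquartic1}, the paper observes that $H_r(f)\leq X^{1/12}$ is equivalent to $\psi(f)\in B(p,X)$ and then invokes \cref{lambdaboundlemmaquartic} as a black box to extract $\lambda_i(R)\asymp_M\Delta^{p_i}$ and $\lambda_i(C)\asymp_M\Delta^{q_i}$. You instead argue directly: the chain $1,x,x^2,x^3$ gives $\lambda_i(R_f)\ll X^i$, the classical resolvent cubic $g$ has weighted-degree $2i$ coefficients so $\lambda_j(C_f)\ll X^{2j}$, and then Minkowski's second theorem combined with $\Delta\gg X^{12}$ squeezes each $\lambda_i$ from below. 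This is a sound and, if anything, more self-contained argument than the paper's; it implicitly re-derives the special case of \cref{lambdaboundlemmaquartic} relevant here. One caution: your upper bound $\lambda_i(R_f)\leq|x^i|$ should be $\lambda_i(R_f)\leq\max\{1,|x^i|\}$ (harmless for $X$ large), and you should explicitly note that the root-height bound controls the complex embeddings of $x$, hence $|x|\ll X$.

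For condition~\eqref{correspondquartic2}, the paper's argument goes through binary quartic forms: a positive proportion of the forms in $B_4^{\bin}((0,1/12),X)$ are monic, \cref{binthm} at $r=(0,1/12)$ produces $\gg X^{3/4}$ $\GL_2(\ZZ)$-orbits, and Evertse--Gy\H{o}ry's finiteness theorem (\cref{evertse}) caps how many orbits hit a fixed ring. You instead count monic polynomials directly, quotient by translation (which accounts for the factor $\asymp X$), and appeal to Bhargava's finiteness theorem on monogenizations \cite{monogenizations} to bound the remaining multiplicity by $O(1)$. Both routes are legitimate; yours is arguably more natural for the monogenic family and avoids passing through binary forms at all. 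Note that the uniqueness of the cubic resolvent ring of a monogenic quartic ring (which you need to identify isomorphism classes of rings with isomorphism classes of pairs) follows from \cref{1xyxyidentity} by taking $y=x^2$. Also, the explicit upper bound $\#\cF(p,\eps,Y)\ll_\eps Y^{3/4}$ is the content of \cref{upperboundquartic}, not literally of \cref{fulldensitythmquartic} (which only records the log-exponent); this is a minor misattribution.

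Your closing paragraph about Tschirnhaus shifts is a non-issue and can be dropped: the lower bounds $\lambda_i(R_f)\gg\Delta^{p_i}$ are forced by Minkowski's theorem together with the upper bounds on the $|x^i|$, so you never need $|x|$ to be comparable to $\lambda_1(R_f)$ -- only bounded above by $\asymp\Delta^{1/12}$, which holds for every generator regardless of centering.
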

As in the monogenic cubic case, one might guess at the above result by guessing that for a monogenic quartic ring $\ZZ[\alpha]$ satisfying
\[
	\lv \alpha\rv = \min_{i \in \ZZ}\{\lv \alpha + i\rv\},
\]
a ``short'' basis of $\ZZ[\alpha]$ might ``look like'' $1,\alpha,\alpha^2,\alpha^3$. Moreover, the (unique) cubic resolvent ring of a monogenic quartic ring is monogenic as well.

\begin{theorem}
\label{binaryquartictheorem}
Let $\cF_4^{\bin}$ be the set of integral binary quartic forms with nonzero discriminant. Then $(\cF_4^{\bin}, H_c)$ corresponds to $(1/6,1/6,1/6,1/6,1/3)$.
\end{theorem}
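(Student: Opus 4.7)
The proof verifies the two defining conditions \eqref{correspondquartic1} and \eqref{correspondquartic2} for the point $p = (1/6, 1/6, 1/6, 1/6, 1/3)$. The strategy combines two ingredients: the binary $n$-ic reduction theory (\cref{lambdaboundlemmabin}) to control the quartic ring $R_f$, and an explicit identification of the cubic resolvent $C_f$ with a specific binary cubic form so that the cubic reduction theory (\cref{lambdaboundlemma}) applies.

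For the $R_f$-coordinates: with $n = 4$ and $k = 1/12$, the coefficient-height box $\{H_c(f) \leq X\}$ coincides exactly with $B_4^{\bin}((k/2, k/2), X^6)$, and $L((k/2,k/2)) = (1/6, 1/6, 1/6)$. The total number of integer forms with $H_c(f) \leq X$ is $\asymp X^5$, and a standard tubular-neighborhood argument (using that $\Disc$ is a nonzero homogeneous polynomial of degree $6$) shows that for any $C > 0$, the number of such forms with $|\Disc(f)| < X^6/C$ is $\ll X^5/C$, so for $C$ large a proportion arbitrarily close to $1$ has $|\Disc(f)| \asymp X^6$. For these forms, \cref{lambdaboundlemmabin} gives $\lambda_i(R_f) \asymp X \asymp \Delta^{1/6}$ for $i = 1, 2, 3$.

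The heart of the proof is the cubic resolvent analysis. Using Wood's construction \cite{woodquartic}, the pair $(R_f, C_f)$ is encoded by a pair of ternary quadratic forms $(A, B)$ in which $A$ is a fixed form (independent of $f$) and the entries of $B$ are linear in the coefficients of $f$; the cubic resolvent ring $C_f$ is, under Delone--Faddeev, the cubic ring associated to the binary cubic form $g(z, w) = 4\det(Az - Bw)$ (up to normalization). A direct expansion shows that the coefficient of $z^{3-i}w^i$ in $g$ is homogeneous of degree $i$ in the coefficients of $f$, hence bounded by $\ll X^i$. This matches exactly the coordinate bounds $(1, X, X^2, X^3)$ of the cubic box $B((1/6, 1/3), X^6)$, so $g$ lies in a bounded multiple of this box; since $\Disc(g) = \Disc(f) \asymp X^6$, \cref{lambdaboundlemma} gives $\lambda_1(C_f) \asymp X \asymp \Delta^{1/6}$ and $\lambda_2(C_f) \asymp X^2 \asymp \Delta^{1/3}$. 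Together with the previous paragraph, this establishes \eqref{correspondquartic1}.

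For \eqref{correspondquartic2}, \cref{fulldensitythmquartic} gives $\#\cF(p, \eps, X) \asymp X^{5/6}$. Applying \cref{binthm} with $n = 4$ and $r = (k/2, k/2)$ produces $\gg X^{5/6}$ $\GL_2(\ZZ)$-orbits of integral binary quartic forms with $|\Disc(f)| \leq X$ whose quartic ring is $(\eps, X)$-close to $(1/6, 1/6, 1/6)$; the resolvent analysis above places the corresponding pair $(R_f, C_f)$ within $(\eps, X)$-distance of $p$. Since each pair $(R, C)$ arises from $O(1)$ binary quartic forms by \cref{evertse}, this contributes $\gg X^{5/6}$ distinct pairs, a positive proportion of $\cF(p, \eps, X)$. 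The principal obstacle throughout is the resolvent computation --- making Wood's construction explicit enough to extract the precise coefficient growth of $g$ in terms of $f$; once this identification is in hand, both halves of the argument reduce to routine applications of the previously developed reduction theorems.
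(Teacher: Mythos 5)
Your proposal is correct and lands at the right conclusion, but it takes a slightly more laborious route than the paper's own argument. The paper's proof is anchored by one observation stated just before it: $H_c(f) \leq X^{1/6}$ if and only if $\psi(f) \in B(p,X)$ for the full quartic box $B(p,X) \subset \RR^2 \otimes \Sym^2\RR^3$ at $p = (1/6,1/6,1/6,1/6,1/3)$. Once this is noted, a single application of \cref{lambdaboundlemmaquartic} simultaneously controls all five coordinates ($\lambda_i(R)$ and $\lambda_i(C)$). You instead observe only the binary-$n$-ic box coincidence (for $R_f$ via \cref{lambdaboundlemmabin}) and then re-derive the cubic resolvent's successive minima from scratch --- extracting the coefficient growth of $4\det(Bx - Ay)$ and feeding it into \cref{lambdaboundlemma}. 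This resolvent computation you call the ``heart of the proof'' is exactly what is already packaged inside \cref{lambdaboundlemmaquartic} (via \cref{quarticboundlemma}, which invokes \cref{reducedcubicbasis} on the cubic resolvent form), so it is correct but redundant given the existing lemma. The second half of your argument (upper bound from \cref{fulldensitythmquartic}/\cref{upperboundquartic}, lower bound from \cref{binthm} at $r = (1/24,1/24)$, and multiplicity bound from \cref{evertse}) matches the paper exactly.

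One small quantitative slip: you claim the tail estimate $\#\{H_c(f) \leq X,\ |\Disc(f)| < X^6/C\} \ll X^5/C$. The correct decay for a degree-$6$ polynomial sublevel set is $\ll X^5/C^{1/6}$ (or possibly slower, depending on the geometry of the discriminant hypersurface), not $1/C$. This does not damage the argument, since only $o(X^5)$ as $C \to \infty$ is needed; the paper sidesteps specifying a rate entirely by applying Davenport's lemma and taking the volume limit.
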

One might guess at the above result by recalling from work of Wood \cite{woodquartic} that a pair $(R,C)$ arises from a binary quartic form if and only if $C$ is monogenic. We now discuss the other vertices of $\poly_4(S_4)$, first considering the vertex $(1/8,1/8,1/4,1/4,1/4)$. One might look at the coordinates of this vertex and immediately consider the family of nondegenerate quartic rings having a basis of the form $1,x,y,xy$; however this is not quite the right family to consider. This is because monogenic rings \emph{also} have a basis of the form $1,x,y,xy$, and for $\eps$ sufficiently large,
\[
	d_{\eps,4}((1/8,1/8,1/4,1/4,1/4)) = d_{\eps,4}((1/12,1/6,1/4,1/6,1/3)) = 3/4.
\]
In fact, these two vertices are adjacent in $\poly_4(S_4)$ and the entire edge between these two vertices evaluates to $3/4$ under the density function $d_{\eps,4}$. Thus, to restrict to the vertex $(1/8,1/8,1/4,1/4,1/4)$, we must impose one more condition. Consider triples $(R,x,y)$ where $R$ is a quartic ring and $x,y \in R$ have the property that $R = \ZZ\la 1,x,y,xy \ra$ and $\varphi(x+y) = \varphi(x) + \varphi(y)$ (here, $\varphi$ denotes the cubic resolvent map). We say two such triples $(R,x,y)$ and $(R',x',y')$ are \emph{isomorphic} if there exists a ring isomorphism $\alpha \colon R \rightarrow R'$ such that $\alpha(x) = x'$ and $\alpha(y) = y'$. We say such a triple is \emph{nondegenerate} if $R$ is nondegenerate. Let $\cF^{x,y,xy}$ be the set of isomorphism classes of nondegenerate such triples $(R,x,y)$.

We will embed $\cF^{x,y,xy}$ in the space of pairs of integral ternary quadratic forms, and use this embedding to define a height function on $\cF^{x,y,xy}$. In particular, consider the set of pairs of ternary quadratic forms of the form\footnote{The $\ast$ symbol means any real number is permitted to be in this entry, and $-$ means that this entry is determined by symmetry.}:
\begin{equation}
\label{matrixform1xyxyresolventintro}
  (A,B) = \begin{bmatrix}
    1 & 0 & \ast \\
    - &  0 & \ast  \\
    - & - & \ast
  \end{bmatrix},
  \begin{bmatrix}
    0 & 0 & \ast \\
    - &  1 & \ast  \\
    - & - & \ast
  \end{bmatrix}.
\end{equation}

Similarly, now consider quadruples $(R,x,y,\varepsilon)$ where $x,y \in R$, $\varepsilon = \pm 1$, and $1, x,y,\varepsilon xy$ is a basis of $R$. Again, we say such a quadruple is \emph{nondegenerate} if $R$ is nondegenerate. We say two such quadruples $(R,x,y,\varepsilon)$ and $(R',x',y',\varepsilon')$ are isomorphic if $\varepsilon = \varepsilon'$ and there exists a ring isomorphism $\alpha \colon R \rightarrow R'$ such that $\alpha(x) = x'$ and $\alpha(y) = y'$. 

\begin{restatable}{theorem}{onexyxyparam}
\label{1xyxyparam}
There is a canonical discriminant-preserving bijection between:
			\[
				\left \{ \begin{array}{c}
				\text{isomorphism classes of} \\
				\text{nondegenerate quadruples $(R,x,y,\varepsilon)$} \\
				\text{where $x,y \in R$ are such that } \\
				\text{$R = \ZZ\la 1,x,y,xy\ra$ and} \\
				\text{$\varphi(x+y) = \varphi(x) + \varphi(y)$}
				\text{ and $\eps = \pm 1$}
			\end{array}	 \right \}
			\longleftrightarrow
			\left \{ \begin{array}{c}
				\text{nondegenerate pairs of } \\
				\text{integral ternary quadratic forms} \\
				\text{of the form \eqref{matrixform1xyxyresolventintro}}
				\end{array}
			\right \}
			\]	
\end{restatable}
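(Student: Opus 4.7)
The strategy is to reduce everything to Bhargava's parametrization of quartic rings from Higher Composition Laws III and then translate the two algebraic conditions ($R = \ZZ\la 1, x, y, xy \ra$ and $\varphi(x+y) = \varphi(x) + \varphi(y)$) directly into linear conditions on the entries of the matrices. Recall that Bhargava's theorem, without modding out by the $\GL_3(\ZZ) \times \GL_2(\ZZ)$ action, gives a canonical discriminant-preserving bijection between pairs $(A,B)$ of integral ternary quadratic forms and \emph{based} pairs, i.e. tuples $(R, \{1,\alpha_1,\alpha_2,\alpha_3\}, C, \{1,\beta_1,\beta_2\})$ consisting of a quartic ring with basis, its cubic resolvent with basis, and the associated quadratic resolvent map $\varphi \colon R/\ZZ \to C/\ZZ$ with the explicit form $\varphi(v) = (v^T A v)\beta_1 + (v^T B v)\beta_2$. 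Our goal is to show that quadruples $(R,x,y,\varepsilon)$ as in the statement correspond precisely to based pairs in which the bases are the distinguished ones $\alpha_1 = x, \alpha_2 = y, \alpha_3 = \varepsilon xy$ and $\beta_1 = \varphi(x), \beta_2 = \varphi(y)$.

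First, I would verify that the diagonal conditions $A_{11} = 1, A_{22} = 0, B_{11} = 0, B_{22} = 1$ are simply the statement that $\varphi(\alpha_1) = \beta_1$ and $\varphi(\alpha_2) = \beta_2$; in other words, specifying those entries is equivalent to choosing $(\beta_1, \beta_2) = (\varphi(x), \varphi(y))$ as the basis of $C/\ZZ$. Second, the off-diagonal vanishing $A_{12} = B_{12} = 0$ is equivalent to the linearity condition $\varphi(x+y) = \varphi(x) + \varphi(y)$: since $\varphi$ is a quadratic map, its polarization satisfies $\varphi(\alpha_1 + \alpha_2) - \varphi(\alpha_1) - \varphi(\alpha_2) = 2A_{12}\beta_1 + 2B_{12}\beta_2$, which vanishes if and only if $A_{12} = B_{12} = 0$. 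Third, I would apply Bhargava's explicit formulas for the multiplication coefficients in $R$ to check that, under all the constraints above, the product $\alpha_1 \alpha_2$ in $R$ equals $\pm \alpha_3$ with no $1, \alpha_1, \alpha_2$ contributions; concretely, the coefficient $c^3_{12}$ reduces to the $2\times 2$ determinant $A_{11}B_{22} - A_{22}B_{11} = 1$ up to sign, giving the value $\varepsilon = \pm 1$, while the remaining coefficients $c^0_{12}, c^1_{12}, c^2_{12}$ vanish as a direct consequence of the vanishing diagonal and off-diagonal conditions.

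For the converse direction, given $(R, x, y, \varepsilon)$, the nondegeneracy of $R$ guarantees a unique cubic resolvent $C$ and a unique quadratic resolvent map $\varphi$. Setting $\alpha_1 = x, \alpha_2 = y, \alpha_3 = \varepsilon xy, \beta_1 = \varphi(x), \beta_2 = \varphi(y)$ and applying Bhargava's bijection yields a pair $(A,B)$ whose entries automatically satisfy all six prescribed equalities. Isomorphisms of quadruples correspond bijectively to equalities of pairs $(A,B)$ because the basis of $R/\ZZ$ and the basis of $C/\ZZ$ are both canonically determined by the data $(x,y,\varepsilon)$.

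The main obstacle I anticipate is verifying that $\varphi(x), \varphi(y)$ form an honest $\ZZ$-basis (not merely a $\QQ$-basis) of $C/\ZZ$, for every nondegenerate quadruple satisfying the hypotheses. To establish this I would argue as follows: since $1, x, y, xy$ is a $\ZZ$-basis of $R$ and the polar bilinear form of $\varphi$ vanishes on $(x,y)$, the map $\varphi$ restricted to the $\ZZ$-span of $x, y$ is a \emph{linear} (not merely quadratic) map valued in $C/\ZZ$; reading off the structure constants of $C$ forces the images $\varphi(x), \varphi(y)$ to $\ZZ$-span a saturated rank-$2$ sublattice of $C/\ZZ$, hence (by rank comparison) all of $C/\ZZ$. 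Once this is established, the entire bijection assembles routinely by reading off Bhargava's multiplication and discriminant formulas.
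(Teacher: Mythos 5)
Your overall strategy matches the paper's: translate the algebraic constraints on $(R,x,y,\varepsilon)$ into linear constraints on the matrix entries via Bhargava's parametrization. The paper obtains \cref{1xyxyparam} as a corollary of \cref{main1xyxyparam}, which handles the unrestricted shape \eqref{matrixform1xyxy}; the additional observation that the pairing condition $\varphi(x+y) = \varphi(x)+\varphi(y)$ is equivalent to $a_{12}=b_{12}=0$ is exactly your second step.

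However, there is a genuine gap where you address the ``main obstacle,'' and the fix you propose is wrong in two ways. First, the assertion that $\varphi$ restricted to $\ZZ x + \ZZ y$ becomes a linear map once the polar form vanishes on $(x,y)$ is false: a quadratic map with vanishing cross term still satisfies $\varphi(ax+by) = a^2\varphi(x) + b^2\varphi(y)$, which is quadratic, not linear. Second, even granting some kind of reduced structure, nothing in your sketch prevents $\varphi(x),\varphi(y)$ from spanning a proper finite-index sublattice of $C/\ZZ$; ``reading off the structure constants of $C$'' does not force saturation. The correct tool — and the one the paper uses — is Bhargava's index identity (\cref{1xyxyidentity}, Lemma~9 of HCL~III): $\Ind(1,x,y,xy)=\Ind(1,\varphi(x),\varphi(y))$. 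Since $1,x,y,xy$ is a $\ZZ$-basis of $R$, the left side is $1$, so the right side is $1$, so $1,\varphi(x),\varphi(y)$ is a $\ZZ$-basis of $C$. Note this identity does not use the pairing condition at all, and it also gives the uniqueness of the cubic resolvent ring. Your third step — that the $z$-coefficient of $XY$ equals the $2\times 2$ determinant $\pm(a_{11}b_{22}-a_{22}b_{11})$ — is itself really a restatement of that same index identity; without it you cannot conclude, from the explicit structure constants alone, that the coefficient is $\pm 1$ or that the lower coefficients can be normalized away. So the proposal correctly identifies where the real work is but leaves the hole unfilled; citing \cref{1xyxyidentity} closes it cleanly.
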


For a triple $T \in \cF^{x,y,xy}$, we attach a pair of nondegenerate ternary quadratic forms via the map above (setting $\varepsilon = 1$); define the \emph{height} to be 
\[
	H(T) \coloneqq \max\{\lv a_{13} \rv^2, \lv a_{23} \rv^2,\lv b_{13} \rv^2,\lv b_{23} \rv^2,\lv a_{33} \rv,\lv b_{33} \rv\},
\]
where here $a_{ij},b_{ij}$ are the entries of $(A,B)$; see \eqref{coordsofquarticmatrix} for more details\footnote{Another natural height function would be the $H((R,x,y)) \coloneqq \max\{\lv x\rv, \lv y\rv\}$. The author expects that with respect to this height function, the family $\cF^{x,y,xy}$ still corresponds to $(1/8,1/8,1/4,1/4,1/4)$.}. Moreover, observe that the theorem above canonically attaches a cubic resolvent ring to a triple $(R,x,y) \in \cF^{x,y,x^2}$ via Bhargava's parametrization of quartic rings. Refer to the forward direction of the map from \cref{1xyxyparam} as $\psi$; via Bhargava's parametrization of quartic rings \cite{BhargavaHCL3}, it can be interpreted as map from such triples to pairs of based quartic rings equipped with a based cubic resolvent ring.

\begin{theorem}
\label{1xyxythm}
The pair $(\cF^{x,y,xy},H,\psi)$ corresponds to $(1/8,1/8,1/4,1/4,1/4)$.
\end{theorem}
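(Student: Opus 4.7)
The plan is to carry out the same geometry--of--numbers strategy developed in \cref{cubicreductiontheory} and \cref{lowerboundcubic}, but with the parametrization of \cref{1xyxyparam} replacing the Delone--Faddeev parametrization. Concretely, the bijection in \cref{1xyxyparam} identifies $\cF^{x,y,xy}$ (together with a sign $\varepsilon \in \{\pm 1\}$, which only contributes a factor of $2$) with the set of integer points in the six--dimensional linear subspace $V \subseteq \Sym^2 \RR^3 \oplus \Sym^2 \RR^3$ cut out by the shape \eqref{matrixform1xyxyresolventintro}, with the six free coordinates being $a_{13}, a_{23}, a_{33}, b_{13}, b_{23}, b_{33}$. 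In this parametrization the height function is the defining inequality of an explicit box
\[
\cB(X) = \{(A,B) \in V \mid |a_{13}|^2, |a_{23}|^2, |b_{13}|^2, |b_{23}|^2 \leq X, \; |a_{33}|, |b_{33}| \leq X\} \subseteq \RR^6
\]
of volume $\asymp X^4$, so Davenport's lemma gives $\#(\cB(X) \cap V_{\ZZ}) \asymp X^4$.

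The next step is to write down, from Bhargava's parametrization restricted to $V$, an explicit multiplication table for the associated quartic ring $R = \ZZ \la 1, x, y, xy \ra$ and the cubic resolvent ring $C = R_f$ where $f(s,t) = \det(sA - tB)$. One verifies directly that the discriminant $\Delta$ is a polynomial in $(a_{13}, a_{23}, a_{33}, b_{13}, b_{23}, b_{33})$ which, under the weighting coming from the height, has total degree $4$, and that on $\cB(X)$ we have both $\Delta \ll X^4$ and $\Delta \asymp X^4$ away from a subset of negligibly many integer points (this genericity statement follows from the fact that $\Delta$ is a nonzero polynomial of bounded degree, plus Davenport's lemma applied to lower--dimensional slices, in direct analogy with the reasoning preceding \cref{davenportcubic}). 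From the multiplication table one reads off $|x|, |y| \ll X^{1/2}$ and $|xy| \ll X$, and then an argument modelled on \cref{reducedcubicbasis} — iteratively squaring short elements and applying \cref{generalboundlemma} — shows that these lengths are tight. Invoking Minkowski's second theorem for both $R$ and $C$ then pins down the successive minima:
\[
\lambda_1(R) \asymp \lambda_2(R) \asymp \Delta^{1/8}, \quad \lambda_3(R) \asymp \Delta^{1/4}, \quad \lambda_1(C) \asymp \lambda_2(C) \asymp \Delta^{1/4},
\]
so that $\psi(T)$ lies $O(1)$--close to $(1/8,1/8,1/4,1/4,1/4)$, and hence $(\eps,X)$--close once $\eps$ is sufficiently large.

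This already yields condition \eqref{correspondquartic1}: all but $o(X^4)$ of the $\asymp X^4$ integer points of $\cB(X)$ correspond to rings that are $(\eps,X)$--close to the target vertex. For condition \eqref{correspondquartic2}, one must bound the multiplicity of the map $\psi$, that is, the number of distinct triples $(R,x,y) \in \cF^{x,y,xy}$ giving rise to the same isomorphism class $(R,C)$. Such triples correspond to other pairs of elements $(x',y')$ of $R/\ZZ$ of lengths of order $\lambda_1(R) \asymp \lambda_2(R) \asymp \Delta^{1/8}$ satisfying the bilinear vanishing condition; writing the transition matrix in terms of the basis $(x,y)$ and bounding its entries exactly as in the proof of \cref{reducedcubiccorol} (together with the analogous $\GL_2(\ZZ)$--action analysis in \cref{autlemma}) shows this multiplicity is $\ll_\eps \Delta^{1/4}$. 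Comparing the $\asymp X$ integer points in $\{T : \Delta(T) \leq X\}$ with this multiplicity gives $\gg X^{3/4}$ distinct pairs $(R,C)$ in $\psi(\cF^{x,y,xy}) \cap \cF(p,\eps,X)$, which matches the upper bound $\asymp X^{3/4}$ coming from $d_{\eps,4}(p) = 3/4$ in \cref{fulldensitythmquartic} and therefore establishes \eqref{correspondquartic2}.

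The hard part will be the tight control on successive minima in the middle step: one must rule out the possibility that $R$ admits a hidden basis shorter than $\{1,x,y,xy\}$, which would push $\psi(T)$ away from the target vertex. This requires a careful Minkowski--embedding analysis of the quadratic form on $R$, parallel to but more delicate than \cref{reducedcubicbasis}, together with showing that the bilinear vanishing condition $\varphi(x+y) = \varphi(x) + \varphi(y)$ is rigid enough to prevent the multiplicity of $\psi$ from being larger than $\Delta^{1/4}$; establishing the matching sharp upper and lower multiplicity bounds is what ultimately produces the exact density $3/4$ rather than a strictly smaller quantity.
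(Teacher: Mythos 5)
Your proposal follows essentially the same path as the paper's proof: identify $\cF^{x,y,xy}$ (up to a sign) with integer points of a box in the affine slice \eqref{matrixform1xyxyresolventintro} via \cref{1xyxyparam}, use Davenport's lemma together with a volume argument to show the discriminant is near-maximal for $100\%$ of those points, pin down the successive minima via (the easy generalization of) \cref{lambdaboundlemmaquartic}, and for \eqref{correspondquartic2} compare the multiplicity bound $\ll \Delta^{1/4}$ against the upper bound $\#\cF_4(p,\eps,X) \ll_\eps X^{3/4}$ from \cref{upperboundquartic} with $d_{\eps,4}(p)=3/4$. Two small remarks, neither of which is a gap: the successive-minima control you flag as ``the hard part'' is already handled uniformly by \cref{quarticboundlemma} via \cref{generalboundlemma}, so no new Minkowski-embedding analysis is required; and the relevant multiplicity estimate is \cref{reducedquarticcorol} rather than the binary-form lemmas \cref{reducedcubiccorol} and \cref{autlemma} you cite.
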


We now discuss the vertex $(1/10,1/5,1/5,1/5,3/10)$ of $\poly_4(S_4)$. The coordinates of this vertex suggest considering the family of quartic rings having a basis of the form $1,x,y,x^2$. We consider a slight variant; consider triples $(R,x,y)$ where $R$ is a quartic ring, $x \in R$, $y \in R/\ZZ$, and $x,y,x^2$ forms a basis of $R/\ZZ$. We say two such triples $(R,x,y)$ and $(R',x',y')$ are \emph{isomorphic} if there exists a ring isomorphism $\alpha \colon R \rightarrow R'$ with the property that $\alpha(x) = x'$ and $\alpha(y) \equiv \alpha(y') \Mod{\ZZ}$. We say such a triple is \emph{nondegenerate} if $R$ is nondegenerate. Let $\cF^{x,y,x^2}$ be the set of isomorphism classes of nondegenerate such triples $(R,x,y)$. We employ the same strategy as previously.

Consider the set of pairs of ternary quadratic forms of the form: 
\begin{equation}
\label{matrixform1xyx2intro}
  (A,B) = \begin{bmatrix}
    1 & 0 & \ast \\
    - &  \ast & \ast  \\
    - & - & \ast
  \end{bmatrix},
    \begin{bmatrix}
    0 & 1/2 & 0 \\
    - &  \ast & \ast  \\
    - & - & \ast
  \end{bmatrix}.
\end{equation}

We consider quintuples $(R,x,y,\varepsilon,\nu)$ where $x \in R$, $y \in R/\ZZ$, $\varepsilon = \pm 1$,  $\nu \in \{0,1\}$, and $x,y,x^2$ is a basis of $R/\ZZ$. We say such a quintuple is \emph{nondegenerate} if $R$ is nondegenerate. We say two such quintuples $(R,x,y,\varepsilon,\nu)$ and $(R',x',y',\varepsilon',\nu')$ are isomorphic if $\varepsilon = \varepsilon'$, $\nu = \nu'$, and there exists a ring isomorphism $\alpha \colon R \rightarrow R'$ such that $\alpha(x)=x'$ and $\alpha(y)  = y' \Mod{\ZZ}$.

\begin{restatable}{theorem}{onexyxtwoparam}
\label{1xyx2param}
There is a canonical discriminant-preserving bijection between:
			\[
				\left \{ \begin{array}{c}
				\text{isomorphism classes of} \\
				\text{nondegenerate quintuples $(R,x,y,\eps,\nu)$} \\
				\text{where $x \in R$ and $y \in R/\ZZ$ are such that } \\
				\text{$x,y,x^2$ is a basis of $R/\ZZ$}
				\text{ and $\varepsilon = \pm 1$ and $\nu \in \{0,1\}$}
			\end{array}	 \right \}
			\longleftrightarrow
			\left \{ \begin{array}{c}
				\text{nondegenerate pairs of } \\
				\text{integral ternary quadratic forms} \\
				\text{of the form \eqref{matrixform1xyx2intro}}
				\end{array}
			\right \}
			\]	
\end{restatable}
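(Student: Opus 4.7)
The plan is to derive this parametrization from Bhargava's parametrization of quartic rings in Higher Composition Laws III, which provides a canonical discriminant-preserving bijection between pairs of integral ternary quadratic forms and isomorphism classes of quadruples $(R, C, \mathcal{B}_R, \mathcal{B}_C)$ consisting of a nondegenerate quartic ring $R$, a cubic resolvent ring $C$ of $R$, and $\ZZ$-bases $\mathcal{B}_R$, $\mathcal{B}_C$ of $R/\ZZ$ and $C/\ZZ$ respectively. Under this dictionary, $(A, B)$ encodes the induced quadratic resolvent map $\bar\varphi \colon R/\ZZ \to C/\ZZ$ via
\[
    \bar\varphi(t_1 \alpha_1 + t_2 \alpha_2 + t_3 \alpha_3) = A(t_1, t_2, t_3)\, c_1 + B(t_1, t_2, t_3)\, c_2,
\]
and the multiplicative structure of $R$ itself is recovered from $(A, B)$ through Bhargava's explicit structure-constant formulas. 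My strategy runs parallel to the proof of \cref{1xyxyparam}: identify the canonical choices of $\mathcal{B}_R$ and $\mathcal{B}_C$ making $(A, B)$ take the desired normalized form, and translate the structural constraint on $\mathcal{B}_R$ (here: that $\alpha_3 = \alpha_1^2$ as elements of $R$, not merely of $R/\ZZ$) into the vanishing of specific entries of $(A, B)$.

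For the forward direction, given a quintuple $(R, x, y, \varepsilon, \nu)$, I would take $(\alpha_1, \alpha_2, \alpha_3) = (x, y, x^2) \bmod \ZZ$ as the basis of $R/\ZZ$, and pick the basis $(c_1, c_2)$ of $C/\ZZ$ by setting $c_1 = \bar\varphi(x)$ and $c_2 = \varepsilon\bigl(\bar\varphi(x, y) - \nu c_1\bigr)$, where $\bar\varphi(u, v) \coloneqq \bar\varphi(u + v) - \bar\varphi(u) - \bar\varphi(v)$ is the polar form of $\bar\varphi$. Nondegeneracy of $R$ together with the assumption that $\{x, y, x^2\}$ is a basis of $R/\ZZ$ ensures that $(c_1, c_2)$ is genuinely a $\ZZ$-basis of $C/\ZZ$. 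Reading off the coefficients of $\bar\varphi$ in this basis immediately yields $a_{11} = 1$, $b_{11} = 0$, $a_{12} = 0$, and $b_{12} = 1/2$. The remaining condition $b_{13} = 0$ is the crux: I would derive it from the ring-theoretic identity $\alpha_1 \cdot \alpha_1 = \alpha_3$ in $R$ by expanding the associated structure-constant equations via Bhargava's formulas, showing that this multiplicative relation forces the $c_2$-component of the polar form $\bar\varphi(\alpha_1, \alpha_3)$ to vanish. The discrete parameters $\varepsilon \in \{\pm 1\}$ and $\nu \in \{0, 1\}$ record the sign ambiguity and the $\bmod\,2$ shift ambiguity encountered in selecting $c_2$ (and a compatible lift of $x$) once $c_1$ has been fixed; one verifies that these two discrete ambiguities together exhaust the stabilizer of the normalization conditions.

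For the reverse direction, given $(A, B)$ of form \eqref{matrixform1xyx2intro}, Bhargava's bijection yields a based tuple $(R, C, \{\alpha_i\}, \{c_j\})$. The conditions $a_{11} = 1$, $a_{12} = 0$, $b_{11} = 0$, $b_{12} = 1/2$, and $b_{13} = 0$, inserted into Bhargava's multiplication formulas for $R$, guarantee that some integer lift $x \in R$ of $\alpha_1$ satisfies $x^2 = \alpha_3$ in $R$; setting $y = \alpha_2 \in R/\ZZ$ and recording the sign/parity choices as $(\varepsilon, \nu)$ recovers the quintuple. The two constructions are then checked to be mutually inverse, and both are manifestly discriminant-preserving by inheritance from Bhargava's theorem.

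The main obstacle will be the forward-direction computation translating the quadratic identity $\alpha_1^2 = \alpha_3$ in $R$ into the single linear equation $b_{13} = 0$ on $(A, B)$. This demands careful bookkeeping with Bhargava's explicit structure constants, and is subtler than the corresponding step in \cref{1xyxyparam}, where the additive constraint $\varphi(x + y) = \varphi(x) + \varphi(y)$ translates cleanly into $a_{12} = b_{12} = 0$; the extra discrete parameter $\nu$ appearing here, beyond the $\varepsilon$ already present in the $1, x, y, xy$ case, is a direct reflection of this added subtlety.
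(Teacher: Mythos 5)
Your proposal misidentifies where the discrete parameters $\varepsilon$ and $\nu$ live, and this causes your forward map to produce matrices of the wrong shape. You keep the basis of $R/\ZZ$ fixed at $(x,y,x^2)$ and instead twist the basis of $C/\ZZ$ to $c_1 = \bar\varphi(x)$, $c_2 = \varepsilon(\bar\varphi(x,y) - \nu c_1)$. But the normalized form \eqref{matrixform1xyx2intro} forces the polar form $\bar\varphi(\alpha_1,\alpha_2)$ to equal $c_2$ on the nose (the $(1,2)$-entries are $a_{12}=0$, $b_{12}=1$, i.e.\ displayed as $0$ and $1/2$). With your choice of $c_2$ you instead get $\bar\varphi(x,y) = \nu c_1 + \varepsilon c_2$, so the resulting $(A,B)$ has $(1,2)$-entries $\nu/2$ and $\varepsilon/2$, which lie in the required form only when $\nu = 0$ and $\varepsilon = 1$. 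Relatedly, your claim that the ring identity $X^2 = (\text{lift of }\alpha_3)$ forces $b_{13} = 0$ is not correct: that identity (encoded already in the constraints of \eqref{matrixform1xyx2} via \cref{indexprop2}) leaves $b_{13}$ completely free, which is exactly why \cref{firstparam} has the looser normalization. The constraint $b_{13} = 0$ is an extra normalization of the \emph{third basis vector of $R/\ZZ$}, not a consequence of the multiplicative structure.

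The paper's actual mechanism is to invoke \cref{firstparam} to obtain a quadruple $(R,x,y,z)$ from any $(A,B)$ of the form \eqref{matrixform1xyx2}, and then observe that when additionally $b_{13}=0$, the third basis vector $z$ can be written uniquely as $z \equiv \varepsilon X^2 + \nu X \pmod{\ZZ\la 1,y\ra}$ for a uniquely determined lift $X \in R$ of $x$, sign $\varepsilon = \pm 1$, and $\nu \in \{0,1\}$; this is how $(\varepsilon,\nu)$ and the lift of $x$ enter. In the reverse direction one takes the basis $x, y, \varepsilon x^2 + \nu x + ay$ of $R/\ZZ$ with $a \in \ZZ$ chosen uniquely so that the $c_2$-component of $\bar\varphi(x, \varepsilon x^2 + \nu x + ay)$ vanishes, while keeping $C/\ZZ$ in the basis $\varphi(x), \varphi(x+y)-\varphi(x)-\varphi(y)$ throughout (which is a basis by \cref{indexprop2}, independent of $\varepsilon, \nu$). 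So: the freedom lives in the $R/\ZZ$ side, not the $C/\ZZ$ side. To repair your argument, replace the modification of $c_2$ by a modification of $\alpha_3$, keep $(c_1, c_2)$ fixed, and justify the $b_{13} = 0$ condition as determining the $\ZZ y$-shift of $\alpha_3$ rather than as a consequence of the ring structure.
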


For a triple $T \in \cF^{x,y,x^2}$ we attach a nondegenerate pair of integral ternary quadratic forms via the map above (setting $\varepsilon = 1$ and $\nu = 0$); define the \emph{height} to be 
\[
	H(T) \coloneqq \max\{\lv a_{13} \rv^{2},\lv a_{22} \rv,\lv a_{23} \rv,\lv a_{33} \rv, \lv b_{22} \rv^{2},\lv b_{23} \rv^{2},\lv b_{2} \rv^{2} \}.
\]
As in the previous case, the theorem above canonically attaches a cubic resolvent ring to a triple $(R,x,y)$ in $\cF^{x,y,x^2}$; we will prove that this is the \emph{unique} cubic resolvent ring of $R$. Refer to the forward direction of map from the theorem above as $\psi$.

\begin{theorem}
\label{1xyx2thm}
The pair $(\cF^{x,y,x^2},H,\psi)$ corresponds to $(1/10,1/5,1/5,1/5,3/10)$.
\end{theorem}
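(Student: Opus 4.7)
The plan is to exploit Theorem \ref{1xyx2param} to identify (a finite cover of) $\cF^{x,y,x^2}$ with the set of integral lattice points in the $7$-dimensional affine space of pairs of ternary quadratic forms of shape \eqref{matrixform1xyx2intro}, and then reduce the corresponding asymptotic statements to box-counting of exactly the same flavor as in the cubic case (\cref{upperboundcubic}--\cref{lowerboundcubic}) and the binary $n$-ic case (\cref{binthm}). Write the free coefficients of $(A,B)$ as $(a_{13},a_{22},a_{23},a_{33},b_{22},b_{23},b_{33})$. The height condition $H(T)\leq X$ translates into the explicit box
\[
\lv a_{13}\rv,\lv b_{22}\rv,\lv b_{23}\rv,\lv b_{33}\rv \leq X^{1/2}, \qquad \lv a_{22}\rv,\lv a_{23}\rv,\lv a_{33}\rv\leq X,
\]
whose volume is $\asymp X^5$. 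The discriminant $\Disc(A,B)$ is a weighted homogeneous polynomial in these $7$ variables, and a direct computation from Bhargava's explicit formulas shows that the natural weights produced by the box above give $\lv\Disc(A,B)\rv\asymp X^{10}$ on a generic point of the box.

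Step one is to show that for a generic integral triple in this box, the corresponding pair $(R,C)$ satisfies $\lambda_i(R)\asymp \Delta^{p_i}$ and $\lambda_i(C)\asymp \Delta^{q_i}$ with $(p,q)=(1/10,1/5,1/5,1/5,3/10)$. To this end I would read off from the multiplication table attached to $(A,B)$ that $R$ possesses the basis $1,x,y,x^2$ with $\lv x\rv\asymp X^{1/2}$ (forced by the normalization $a_{11}=1$, $b_{12}=1/2$), $\lv y\rv\asymp X$, and therefore $\lv x^2\rv\asymp\lv x\rv^2\asymp X$. Dividing by $\Delta^{1/2}\asymp X^5$ gives $\log_\Delta \lv x\rv = 1/10$ and $\log_\Delta\lv y\rv=\log_\Delta\lv x^2\rv=1/5$, matching $p_1,p_2,p_3$. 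An analogous direct computation on the cubic resolvent basis produced by Bhargava's parametrization, which reduces to reading off three coefficients that are bounded by $X^{\,1}$ and $X^{3/2}$ respectively, gives $\lambda_1(C)\asymp X$ and $\lambda_2(C)\asymp X^{3/2}$, matching $q_1=1/5$ and $q_2=3/10$. Then an analogue of \cref{generalboundlemma} together with Minkowski's second theorem upgrades these to the correct successive minima, not just upper bounds on short basis vectors.

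Step two is Davenport's lemma applied to the box above, exactly as in \cref{davenportcubic}: after excising the subvarieties where one of the seven entries is atypically small (each of which has volume $o(X^5)$), the number of integral points in the remaining region is $\asymp X^5$, and the discriminant is $\asymp X^{10}$ on almost all of them, so $\asymp X^5$ triples in $\cF^{x,y,x^2}$ of height $\leq X$ are $(\eps,X)$-close to $p$ for $\eps$ large enough. Since the total count of integral points in the box is also $\asymp X^5$, this establishes condition \eqref{correspondquartic1}. For condition \eqref{correspondquartic2} I would compare: the analogue of \cref{upperboundcubic} for quartic rings (which will be established in the quartic section of the paper to prove \cref{fulldensitythmquartic}) yields $\#\cF(p,\eps,X)\ll X^{d_{\eps,4}(p)}$, and plugging $p=(1/10,1/5,1/5,1/5,3/10)$ into the formula of \cref{fulldensitythmquartic} gives the same exponent that our direct lower bound produces from the box-counting above, up to the bounded multiplicity coming from Evertse--Gy\H{o}ry style finiteness (here the fiber of the map $\psi$ over a given pair $(R,C)$ is controlled by the finite group $\GL_2(\ZZ)$-style automorphisms of the presentation $1,x,y,x^2$, exactly as in the proof of \cref{autlemma}).

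The main obstacle will be the successive minima computation for the cubic resolvent: although the multiplication table on $R$ produced by the form $(A,B)$ of shape \eqref{matrixform1xyx2intro} is transparent enough that the bound $\lv x\rv\asymp X^{1/2}$ drops out immediately, the cubic resolvent $C$ is constructed from $(A,B)$ by a somewhat indirect formula, and one must verify that its short basis has the predicted lengths $\asymp X$ and $\asymp X^{3/2}$ rather than larger. I expect this to reduce to a careful but finite computation in the style of \cref{reducedcubicbasis}, with the extra input being the linear constraints $a_{11}=1$, $a_{12}=0$, $b_{11}=0$, $b_{12}=1/2$, $b_{13}=0$ that force three of the six independent coefficients of the resolvent form to be small, and the last ingredient is a routine verification that the subset of the box where the cubic resolvent degenerates (equivalently, where $\Delta = o(X^{10})$) has negligibly many integral points, via \cref{davenport}.
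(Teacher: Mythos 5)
Your overall strategy mirrors the paper's: translate the height condition $H(T)\leq X$ into membership in a box $B(p,X^5)$, apply Davenport's lemma, and divide by the number of slice lattice points that represent the same isomorphism class $(R,C)$. However, there are two problems, one minor and one that would break the argument for condition \eqref{correspondquartic2} if carried out as written.

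First, an arithmetic slip. When $H(T)\leq X$ the discriminant satisfies $\lv\Disc(A,B)\rv\asymp X^5$ (not $X^{10}$), so $\Delta^{1/2}\asymp X^{5/2}$, not $X^5$. Minkowski's second theorem reads $\lambda_1\lambda_2\lambda_3\asymp\Delta^{1/2}$, and with $\lv x\rv\asymp X^{1/2}$, $\lv y\rv,\lv x^2\rv\asymp X$, this gives $\Delta^{1/2}\asymp X^{5/2}$, i.e.\ $\Delta\asymp X^5$; the quotient $\log_\Delta\lv x\rv=(1/2)/5=1/10$ then comes out right, so the ``$X^{10}$'' appears to be a transcription error. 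More importantly, the paper short-circuits the entire Step one by verifying that $H(T)\leq X^{1/5}\iff\psi(T)\in B(p,X)$ and then invoking \cref{lambdaboundlemmaquartic} directly, rather than re-deriving the $\lv x^2\rv\asymp\lv x\rv^2$ and resolvent bounds from scratch; this is cleaner and avoids the loose statements about a ``somewhat indirect'' resolvent formula.

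Second, and this is the genuine gap: the claim of ``bounded multiplicity coming from Evertse--Gy\H{o}ry style finiteness'' is false. Evertse--Gy\H{o}ry applies to binary $n$-ic forms, not to pairs of ternary quadratic forms of the shape \eqref{matrixform1xyx2intro}, and the actual number of slice lattice points in $B(p,X)$ representing a fixed pair $(R,C)$ is not $O(1)$. The slice-preserving unipotents in $\GL_2(\ZZ)\times\GL_3(\ZZ)$ whose parameters fit in the box are parametrized by the two free entries $u$ and the $(3,1)$-entry of $g_3$, each ranging over roughly $X^{1/10}$ values at $p=(1/10,1/5,1/5,1/5,3/10)$, so the multiplicity is $\asymp X^{1/5}$. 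If the multiplicity really were bounded, the $\asymp X$ slice lattice points in $B(p,X)$ would produce $\gg X$ pairs $(R,C)$ with $\Delta\leq X$ near $p$, directly contradicting the upper bound $\#\cF(p,\eps,X)\ll X^{4/5}$ from \cref{upperboundquartic}. Dividing by the correct multiplicity $X^{1/5}$ gives $\gg X^{4/5}$, which is what matches the upper bound and yields positive proportion. This bookkeeping is exactly what \cref{ublemma2}, \cref{reducedquarticcorol}, and the last paragraph of the paper's proof carry out; you need to replace your appeal to Evertse--Gy\H{o}ry with a computation of the slice stabilizer analogous to that machinery.
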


\subsection{The parametrization of quartic rings}
\label{quarticbackground}
Our primary tool will be Bhargava's parametrization of quartic rings, which we now briefly summarize. Let $V_{\RR} = \RR^2 \otimes \Sym^2 \RR^3$. We write an element of $(A,B) \in V_{\RR}$ as a pair of $3\times 3$ real symmetric matrices as follows: 
\begin{equation}
\label{coordsofquarticmatrix}
	2(A,B) =\Bigg(\begin{bmatrix}
    2a_{11} & a_{12} & a_{13}  \\
    a_{12} & 2a_{22} & a_{23}  \\
    a_{13} & a_{23} & 2a_{33} 
  \end{bmatrix},
    \begin{bmatrix}
    2b_{11} & b_{12} & b_{13}  \\
    b_{12} & 2b_{22} & b_{23}  \\
    b_{13} & b_{23} & 2b_{33} 
  \end{bmatrix}\Bigg).
\end{equation}

An element $(A,B)$ is said to be \emph{integral} if $a_{ij},b_{ij} \in \ZZ$, and the lattice of integral elements is denoted $V_{\ZZ}$. The group $G_{\RR} = \GL_2(\RR) \times \GL_3(\RR)$ acts on $V_{\RR}$ in the following way; an element $(g_2,g_3) \in G_{\RR}$ sends $(A,B)$ to
\[
	(rg_3Ag_3^t + sg_3Bg_3^t,tg_3Ag_3^t + ug_3Bg_3^t)
\]
where we write
\[
	g_2 = \begin{bmatrix}
    r & s \\
    t & u 
  \end{bmatrix}.
\]
The subgroup $G_{\ZZ}$ acts on $V_{\RR}$ and preserves the lattice $V_{\ZZ}$. The action of $G_{\ZZ}$ on $V_{\RR}$ has a unique polynomial invariant given by the formula $\Disc(4\dett(Bx-Ay))$. This fundamental invariant is known as the \emph{discriminant} and is denoted $\Disc(A,B)$. The orbits of $G_{\ZZ}$ acting on $V_{\ZZ}$ parametrize quartic rings equipped with a cubic resolvent ring; see \cite{BhargavaHCL3} for more details.

\begin{theorem}[Bhargava, \cite{BhargavaHCL3}]
There is a canonical, discriminant-preserving bijection between $V_{\ZZ}$ and isomorphism classes of pairs $(R,C)$ where $R$ is a based quartic ring and $C$ is a based cubic resolvent ring of $R$. Under the action of $G_{\ZZ}$, this bijection descends to a bijection between the set of $G_{\ZZ}$ orbits of $V_{\ZZ}$ and the set of isomorphism classes of pairs $(R,C)$ where $R$ is a quartic ring and $C$ is a cubic resolvent ring of $R$.
\end{theorem}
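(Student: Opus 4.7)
The plan is to construct both directions of the bijection explicitly. First, I would define the forward map, sending a based pair $(R,C)$ to a pair of ternary quadratic forms $(A,B)$, via the \emph{resolvent map} $\varphi\colon R/\ZZ \to C/\ZZ$. This map is quadratic and encodes the compatibility between multiplication in $R$ and the resolvent structure; its two components with respect to a chosen basis of $C/\ZZ$ yield two integral ternary quadratic forms in the three coordinates on $R/\ZZ$, which I identify with $(A,B) \in V_{\ZZ}$. The existence and canonicity of $\varphi$ is really the conceptual heart of the construction: one shows that for any quartic ring $R$ equipped with a cubic resolvent, there is a distinguished quadratic map $R/\ZZ \to C/\ZZ$ determined by the trace pairing and the requirement that the characteristic polynomial of an element of $R$ factor through $\varphi$ in a prescribed way.

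For the inverse direction, given $(A,B)$, I would first construct $C$ as the cubic ring associated to the binary cubic form $4\det(Bx - Ay)$ via the Delone--Faddeev correspondence; this immediately identifies the fundamental invariant $\Disc(A,B)$ with $\Disc(C)$. To define $R$, I would write down an explicit multiplication table on the free $\ZZ$-module with basis $\{1,\alpha_1,\alpha_2,\alpha_3\}$, with structure constants given as explicit polynomial expressions in the $a_{ij}$ and $b_{ij}$; these are forced by requiring $\varphi$ to coincide with the pair $(A,B)$. The hard step is verifying that the resulting algebra is associative for \emph{arbitrary} $(A,B) \in V_{\ZZ}$. I would handle this by base-changing to $\QQ$ and checking associativity on the Zariski dense open subset of $V_{\QQ}$ where the putative algebra is \'etale (where it can be verified via the Galois closure and explicit root data); since associativity amounts to a finite system of polynomial identities in the $a_{ij},b_{ij}$, validity on a dense open subset forces the identities on all of $V_{\ZZ}$.

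To see that the two constructions are mutually inverse, one runs them in succession and matches structure constants. Discriminant preservation then follows from the identity $\Disc(R) = \Disc(4\det(Bx-Ay))$, which drops out of the explicit multiplication table. Finally, to descend to a bijection between orbits and isomorphism classes, I would verify equivariance of the forward map under $G_{\ZZ} = \GL_2(\ZZ) \times \GL_3(\ZZ)$: the $\GL_3(\ZZ)$-factor acts as change of basis on the three coordinates of $R/\ZZ$ on which $A,B$ are quadratic forms, while the $\GL_2(\ZZ)$-factor mixes $A$ and $B$ in exactly the way compatible with change of basis on $C/\ZZ$ under the Delone--Faddeev correspondence. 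The main obstacle throughout is the associativity verification for the reconstructed quartic ring; every other step is essentially bookkeeping or a direct polynomial calculation once the algebra structure is in hand.
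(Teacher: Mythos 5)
The paper does not prove this theorem; it is a citation to Bhargava's \emph{Higher Composition Laws III}, and the surrounding text only recalls the shape of the bijection (resolvent map in one direction, explicit structure constants in the other). So there is no internal proof to compare against; the comparison must be with Bhargava's original argument.

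Your outline captures the broad strokes correctly — the resolvent map produces $(A,B)$, $C$ is recovered via Delone--Faddeev from $4\det(Bx-Ay)$, $R$ is recovered from polynomial structure constants, and $G_{\ZZ}$-equivariance descends the bijection to orbits versus isomorphism classes. Two points deserve flagging. First, a genuine conceptual error: you say that given $R$ and a cubic ring $C$, "one shows that" there is a distinguished quadratic map $\varphi\colon R/\ZZ\to C/\ZZ$ determined by the trace pairing and a characteristic-polynomial constraint. This inverts the logic. In Bhargava's framework a \emph{cubic resolvent ring of $R$} is, by definition, a cubic ring together with a quadratic map $\varphi$ satisfying a compatibility condition (the index identity $\Ind(1,x,y,xy)=\Ind(1,\varphi(x),\varphi(y))$ recorded elsewhere in this paper as a cited Lemma of Bhargava); $\varphi$ is part of the input data, not derived from $R$ and an abstract $C$, and indeed a given nonmaximal quartic ring can admit several non-isomorphic cubic resolvents. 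Without treating $\varphi$ as data, the asserted bijection on isomorphism classes of pairs would not even be well-posed. Second, on associativity: Bhargava's proof verifies the structure-constant identities directly, not by a density argument. Your Zariski-density route is in principle sound — associativity is a polynomial identity in $a_{ij},b_{ij}$, so it suffices to check on the dense open locus where $V(Q_A)\cap V(Q_B)\subseteq\PP^2$ consists of four reduced points — but it quietly elides a nontrivial intermediate step: you must verify that Bhargava's \emph{specific} polynomial formulas for the $c^k_{ij}$ agree, on that locus, with the multiplication table of the étale algebra in its prescribed normalized basis. That matching is a computation of comparable size to the direct verification, so the density trick relocates rather than removes the explicit calculation. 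It is nonetheless a legitimate alternative strategy, closer in spirit to Wood's later geometric reinterpretation of the parametrization than to Bhargava's original algebraic proof.
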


We now describe a few important details of this bijection. A cubic resolvent ring $C$ of a quartic ring $R$ comes equipped with a quadratic map $\varphi \colon R/\ZZ \rightarrow C/\ZZ$ called a \emph{cubic resolvent map}. Choosing bases for $R/\ZZ$ and $C/\ZZ$ gives rise to an element of $V_{\ZZ}$. Conversely, for an element $(A,B) \in V_{\ZZ}$, Bhargava constructed certain polynomials in the $a_{ij},b_{ij}$ that form the structure coefficients for a basis of $R$. Moreover, the binary cubic form $4\dett(Bx-Ay)$ via \eqref{cubiccoeff} gives structure coefficients for a basis of $C$.

\subsection{Reduction theory of quartic rings}
We now fix some notation we use throughout this section. Unless otherwise specified, $(A,B)$ will denote a nondegenerate element of $V_{\ZZ}$ and $(R,C)$ will denote the corresponding quartic ring and cubic resolvent ring respectively. Let $\Delta = \lv \Disc(A,B) \rv$. Analogously to the cubic case, in this section we develop the relationship between:
\begin{enumerate}
\item pairs $(R,C)$ equipped with ``small'' bases;
\item and pairs $(A,B)$ with ``small'' coefficients.
\end{enumerate}

Our proofs are almost identical to those in the cubic case.

\subsubsection{``Short'' bases of $(R,C)$ gives rise to matrices $(A,B)$ with ``small'' coefficients}
\begin{lemma}
\label{matrixbound}
Let $\{1,v_1,v_2,v_3\}$ and $\{1,w_1, w_2\}$ be Minkowski bases of any nondegenerate quartic ring $R$ and any cubic resolvent ring $C$ of $R$. Let $(A,B)$ be the corresponding matrices given by the cubic resolvent map. Then $\lv a_{ij} \rv \ll \lv v_i\rv \lv v_j \rv/ \lv w_1 \rv$ and $\lv b_{ij} \rv \ll \lv v_i\rv \lv v_j \rv/\lv w_2 \rv$.
\end{lemma}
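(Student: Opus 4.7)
The plan is to interpret the matrices $(A,B)$ as coordinates of the cubic resolvent map $\varphi \colon R/\ZZ \rightarrow C/\ZZ$ expressed in the chosen bases, and then bound these coordinates by (i) bounding the Minkowski length of the relevant elements of $C$ in terms of $\lv v_i \rv, \lv v_j \rv$, and (ii) translating that length bound into coordinate bounds via the reduced-basis property of Minkowski bases.

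First I would set up the dictionary. Writing $v = x_1 v_1 + x_2 v_2 + x_3 v_3 \in R/\ZZ$, the map $\varphi$ sends $v$ to a pair of integral quadratic forms in $(x_1, x_2, x_3)$ valued in $\ZZ w_1 \oplus \ZZ w_2 \subseteq C/\ZZ$, and these are by definition the quadratic forms $q_A(x) = x^t A x$ and $q_B(x) = x^t B x$ of \eqref{coordsofquarticmatrix}. Thus, letting $\varphi^\sharp$ denote the symmetric bilinear polarization of $\varphi$, the entry $a_{ii}$ is the $w_1$-coefficient of $\varphi(v_i)$, the entry $2 a_{ij}$ (for $i\ne j$) is the $w_1$-coefficient of $\varphi^\sharp(v_i, v_j)$, and analogously for $b_{ii}$ and $b_{ij}$ with $w_2$. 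So it suffices to bound, for any $r, s \in R$, the $w_1$- and $w_2$-coefficients of $\varphi(r)$ and $\varphi^\sharp(r,s)$ in terms of $\lv r \rv$ and $\lv s \rv$.

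Second, I would bound the Minkowski length $\lv \varphi^\sharp(r,s) \rv_C$ by $\ll \lv r \rv \lv s \rv$ (with the analogous bound $\lv \varphi(r) \rv_C \ll \lv r \rv^2$). This is where the geometry of the cubic resolvent comes in: the map $\varphi$ extends to $R \otimes \RR \to C \otimes \RR$ and, in terms of the four complex embeddings $\sigma_1, \dots, \sigma_4$ of $R$, it is (up to the three ways of pairing $\{1,2,3,4\}$ into two pairs) given coordinatewise by expressions like $r \mapsto \sigma_1(r)\sigma_2(r) + \sigma_3(r)\sigma_4(r)$. Polarizing, the coordinates of $\varphi^\sharp(r,s)$ in $C \otimes \RR$ are sums of terms of the form $\sigma_i(r)\sigma_j(s) + \sigma_j(r)\sigma_i(s)$, each of which has absolute value $\ll \lv r \rv \lv s \rv$ since $\lv \sigma_k(r) \rv \ll \lv r \rv$ for every embedding. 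Summing the squares of these coordinates and taking a square root yields the asserted bound.

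Finally, I would apply the standard reduced-basis inequality for Minkowski bases: since $\{1, w_1, w_2\}$ realizes the successive minima of $C$, for any $c = c_0 + c_1 w_1 + c_2 w_2 \in C$ one has $\lv c_k \rv \lv w_k \rv \ll \lv c \rv_C$ for $k = 1,2$ (with implied constant depending only on the rank). Applying this with $c = \varphi(v_i)$ or $c = \varphi^\sharp(v_i, v_j)$, and combining with the length bounds from step two, gives $\lv a_{ij} \rv \ll \lv v_i \rv \lv v_j \rv/\lv w_1 \rv$ and $\lv b_{ij} \rv \ll \lv v_i \rv \lv v_j \rv/\lv w_2 \rv$. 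The main obstacle is really the coordinate bound on $\varphi^\sharp$ in terms of the Minkowski embedding of $R$; once that is set up, the remainder is a standard application of the definition of Minkowski basis. The only bookkeeping point is the factor of $2$ in the off-diagonal entries of $(A,B)$ from \eqref{coordsofquarticmatrix}, which is absorbed into the implied constants.
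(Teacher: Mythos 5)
Your proposal is correct and follows essentially the same route as the paper's own proof: identify $a_{ij}$ (resp.\ $b_{ij}$), up to factors of two, with the $w_1$-coefficient (resp.\ $w_2$-coefficient) of $\varphi(v_i\otimes v_j)$; bound that coefficient by $\lv\varphi(v_i\otimes v_j)\rv/\lv w_k\rv$ via the Minkowski-basis property of $\{1,w_1,w_2\}$; and bound $\lv\varphi(v_i\otimes v_j)\rv \ll \lv v_i\rv\lv v_j\rv$ using the explicit description of the cubic resolvent map in terms of the four complex embeddings of $R$. The paper compresses all of this into a single three-line chain of inequalities, but the content is identical.
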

\begin{proof}
Let $\sigma_1,\dots, \sigma_4$ be the $4$ nonzero homomorphisms of $R$ into the complex numbers.
The cubic resolvent map $\varphi \colon R/\ZZ \otimes R/\ZZ \rightarrow \frac{1}{2} C/\frac{1}{2}\ZZ$ is given by
\[
	\varphi(x \otimes y) \rightarrow \frac{1}{2}(\sigma_1(x)\sigma_2(y) + \sigma_2(x)\sigma_1(y)  + \sigma_3(x)\sigma_4(y) + \sigma_3(y)\sigma_4(x))
\]
For an element $z \in C \otimes \QQ$, let $\la w_i\ra z$ denote the coefficient of $z$ with respect to the basis  $\{1,w_1, w_2\}$. Because $\{1,v_1,v_2,v_3\}$ and $\{1,w_1, w_2\}$ are Minkowski bases,
\begin{align*}
\lv a_{ij} \rv &\asymp \la w_1\ra(\varphi(v_i \otimes v_j)) \\
&\ll \frac{\lv \varphi(v_i \otimes v_j) \rv}{\lv w_1 \rv} \\
&\ll \frac{\lv v_i \rv\lv v_j \rv}{\lv w_1 \rv}.
\end{align*}
An analogous argument shows that $\lv b_{ij} \rv \ll \lv v_i\rv \lv v_j \rv/\lv w_2 \rv$.
\end{proof}

\begin{definition}
For any $p=(p_1,p_2,p_3,q_1,q_2) \in \RR^5$ and any positive real number $X$, define $B(p,X)$ to be the set of elements $(A,B) \in V_{\RR}$ such that $a_{ij} \leq X^{p_i+p_j - q_1}$ and $b_{ij} \leq X^{p_i+p_j - q_2}$.     
\end{definition}

Let $p=(p_1,p_2,p_3,q_1,q_2) \in \RR^5$ denote a point. Moreover $\eps$ will denote a positive real number, $M,D$ will denote real numbers $\geq 1$, and $X$ will denote a real number $>1$. Recall that we say the pair $(R,C)$ is $(\eps,X)$-close to $p$ if
\begin{align*}
\max_{i = 1,2,3}\{\lv \log_{\Delta}\lambda_i(R) - p_i \rv\} &\leq \frac{\eps}{\log X} \\
\max_{i = 1,2}\{\lv \log_{\Delta}\lambda_i(C) - q_i \rv\} &\leq \frac{\eps}{\log X}.
\end{align*}

\begin{definition}
We say $((A,B),p)$ are $(\eps,X)$-close if the corresponding rings $(R,C)$ and $p$ are $(\eps,X)$-close. We say $(A,B)$ \emph{comes from Minkowski bases} if there exist Minkowski bases of the corresponding rings $(R,C)$ giving rise to $(A,B)$.    
\end{definition}

\begin{lemma}
\label{ublemma1}
There exists $M$ such that for every pair $(A,B)$, point $p$, and positive real number $\eps$, the following statement holds: if $\Delta > 1$ and $((A,B),p)$ are $(\eps, \Delta)$-close and $(A,B)$ comes from Minkowski bases, then $(A,B) \in Me^{3\eps}B(p,\Delta)$.
\end{lemma}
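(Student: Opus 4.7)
The plan is to mirror the argument of \cref{minkbasisshortcubic} from the cubic setting, replacing the explicit $3\times 3$ determinantal identity used there with the bound on the matrix entries of $(A,B)$ supplied by \cref{matrixbound}. So first I would fix Minkowski bases $\{1,v_1,v_2,v_3\}$ of $R$ and $\{1,w_1,w_2\}$ of $C$ that give rise to $(A,B)$. Applying \cref{matrixbound} immediately yields
\[
\lv a_{ij}\rv \ll \frac{\lv v_i\rv\,\lv v_j\rv}{\lv w_1\rv}, \qquad \lv b_{ij}\rv \ll \frac{\lv v_i\rv\,\lv v_j\rv}{\lv w_2\rv},
\]
where the implied constants are absolute (depending only on the fixed ranks $4$ and $3$).

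Next I would translate the Minkowski basis lengths into successive minima. By standard reduction theory (see \cite{siegel}, Lecture 10, \S 6) a Minkowski basis satisfies $\lv v_i\rv \asymp \lambda_i(R)$ and $\lv w_j\rv \asymp \lambda_j(C)$, again with absolute implied constants. The hypothesis that $((A,B),p)$ is $(\eps,\Delta)$-close together with the elementary identity $\Delta^{\eps/\log \Delta} = e^{\eps}$ (valid because $\Delta > 1$) then gives
\[
\lv v_i\rv \ll e^{\eps}\,\Delta^{p_i}, \qquad \lv w_j\rv \gg e^{-\eps}\,\Delta^{q_j}.
\]

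Combining the two displays produces
\[
\lv a_{ij}\rv \ll e^{3\eps}\,\Delta^{p_i+p_j-q_1}, \qquad \lv b_{ij}\rv \ll e^{3\eps}\,\Delta^{p_i+p_j-q_2},
\]
with a single absolute implied constant. Choosing $M$ to absorb this constant concludes $(A,B) \in Me^{3\eps} B(p,\Delta)$. There is no real obstacle here: the content is entirely absorbed into \cref{matrixbound}, and the remaining step is the bookkeeping needed to check that all implied constants are absolute (so they can be bundled into a single universal $M$) and to handle the exponent conversion $\Delta^{\eps/\log \Delta}=e^{\eps}$, which requires only $\Delta>1$.
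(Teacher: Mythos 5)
Your proof is correct and takes the same route as the paper: both invoke \cref{matrixbound} to bound $\lv a_{ij}\rv$ and $\lv b_{ij}\rv$ in terms of $\lv v_i\rv\lv v_j\rv/\lv w_k\rv$, convert Minkowski-basis lengths to successive minima with absolute implied constants, and then apply the $(\eps,\Delta)$-closeness hypothesis to land in $Me^{3\eps}B(p,\Delta)$. The only difference is that you spell out the intermediate bounds $\lv v_i\rv\ll e^{\eps}\Delta^{p_i}$ and $\lv w_j\rv\gg e^{-\eps}\Delta^{q_j}$ as a separate step, whereas the paper compresses them into a single chain of inequalities.
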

\begin{proof}
Let $\{1,v_1,v_2,v_3\}$ and $\{1,w_1,w_2\}$ be the Minkowski bases of $(R,C)$ giving rise to $(A,B)$. 
We have
\begin{align*}
    \lv a_{i,j} \rv &\ll \frac{\lv v_i \rv \lv v_j \rv}{\lv w_1 \rv} && \text{by \cref{matrixbound}} \\
    &\asymp \frac{\lambda_i(R)\lambda_j(R)}{\lambda_1(C)} && \\
    &\leq e^{3\eps}\Delta^{p_i+p_j-q_1} && \text{because $((A,B),p)$ is $(\eps, \Delta)$-close}.
\end{align*}
An analogous argument shows that $\lv b_{i,j} \rv \ll e^{3\eps}\Delta^{p_i+p_j-q_2}$.
\end{proof}

\subsubsection{Matrices $A,B$ with ``small'' coefficients gives rise to ``short'' bases of $R,C$}
For a pair $(A,B)$, let $\{1,v_1,v_2,v_3\}$ and $\{1,w_1,w_2\}$ denote the explicit bases of $R$ and $C$ obtained from the structure coefficients.
\begin{lemma}
\label{quarticboundlemma}
For every $p$ satisfying \eqref{basicineq}, every $(A,B)$, and every $M, X$ the following statement holds: if $(A,B) \in MB(p,X)$, then $\lv v_i \rv \ll_M X^{p_i}$ for $i = 1,2,3$ and $\lv w_i \rv \ll_M X^{q_i}$ for $i = 1,2$.
\end{lemma}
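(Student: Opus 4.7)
The plan is to mimic the proof of \cref{reducedcubicbasis}, handling the bases of $C$ and $R$ separately. I first handle the cubic resolvent ring $C$ by reducing directly to the cubic case. Recall that the basis $\{1,w_1,w_2\}$ is the Delone--Faddeev basis attached, via \eqref{cubiccoeff}, to the binary cubic form $f_C(x,y) = 4\det(Bx-Ay)$. Writing $f_C = a_Cx^3 + b_Cx^2y + c_Cxy^2 + d_Cy^3$, each coefficient is a homogeneous polynomial in the entries of $A,B$: for instance $a_C = 4\det(B)$ is a sum of products of the form $\pm b_{1\sigma(1)}b_{2\sigma(2)}b_{3\sigma(3)}$, and the hypothesis $(A,B)\in MB(p,X)$ gives $\lv a_C\rv \ll_M X^{2(p_1+p_2+p_3)-3q_2} = X^{1-3q_2} = X^{3q_1-1/2}$, using $q_1+q_2 = 1/2$. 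An identical weight count yields $\lv b_C\rv \ll_M X^{2q_1+q_2-1/2}$, $\lv c_C\rv \ll_M X^{q_1+2q_2-1/2}$, and $\lv d_C\rv \ll_M X^{3q_2-1/2}$. Thus $f_C\in O_M(1)\cdot B((q_1,q_2),X)$ as an element of $\Sym^3\RR^2$, and \cref{reducedcubicbasis} applied to $f_C$ yields $\lv w_i\rv \ll_M X^{q_i}$ for $i=1,2$.

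For the quartic ring $R$, I proceed as in the cubic case but with more bookkeeping. Using Bhargava's explicit structure constants (from \cite{BhargavaHCL3}), each product $v_iv_j$ expands as $\sum_{k=0}^{3}c^{k}_{ij}(A,B)\,\alpha_k$ where $\alpha_0=1,\alpha_1=v_1,\alpha_2=v_2,\alpha_3=v_3$ and each $c^{k}_{ij}$ is a polynomial in the entries $a_{mn},b_{mn}$. By inspection of these formulas, each monomial appearing in $c^{k}_{ij}$ has matching ``weight'', so that $\lv c^{k}_{ij}\rv \ll_M X^{p_i+p_j-p_k}$ for $k\geq 1$ and $\lv c^{0}_{ij}\rv \ll_M X^{p_i+p_j}$ (a direct verification using the constraint $p_1+p_2+p_3=1/2$ and the box inequalities $\lv a_{mn}\rv \leq MX^{p_m+p_n-q_1}$, $\lv b_{mn}\rv\leq MX^{p_m+p_n-q_2}$ together with the sizes of $w_1,w_2$ already established). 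Then the power-mean inequality $\lv v_i\rv^2 \leq \lv v_i^2\rv$ combined with the multiplication table gives
\[
\lv v_i\rv^2 \;\ll_M\; \max_{0\leq k\leq 3}\{X^{2p_i-p_k}\lv \alpha_k\rv\} \;=\; \max\bigl\{X^{2p_i},\,X^{2p_i-p_1}\lv v_1\rv,\,X^{2p_i-p_2}\lv v_2\rv,\,X^{2p_i-p_3}\lv v_3\rv\bigr\}.
\]
This is a coupled system of three quadratic inequalities in $\lv v_1\rv,\lv v_2\rv,\lv v_3\rv$, of exactly the form handled by \cref{generalboundlemma}, which yields $\lv v_i\rv \ll_M X^{p_i}$ for $i=1,2,3$.

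The main obstacle is the second step: verifying the weight bounds $\lv c^{k}_{ij}\rv \ll_M X^{p_i+p_j-p_k}$ uniformly across all Bhargava structure constants. Bhargava's formulas are intricate (some $c^{k}_{ij}$ are cubic in the entries of $(A,B)$), so one must check that every monomial appearing in every $c^{k}_{ij}$ obeys the weight inequality once the constraints $\sum p_i = \sum q_j = 1/2$ are used. Provided this case analysis works out, as it does in the cubic Delone--Faddeev formulas, the rest of the argument is formal and the lemma follows.
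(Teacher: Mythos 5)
Your proposal is correct and follows essentially the same route as the paper's proof: bound the resolvent basis by pushing $4\det(Bx-Ay)$ into a cubic box and invoking \cref{reducedcubicbasis}, then bound the quartic basis via weight estimates on Bhargava's structure coefficients and \cref{generalboundlemma}. Your write-up is in fact a bit more explicit than the paper's — you carry out the weight count for $\det(Bx-Ay)$ rather than asserting it — and the ``obstacle'' you flag (checking every monomial in the $c^k_{ij}$ has the right weight) is precisely the computation the paper also leaves to a cited reading of \cite{BhargavaHCL3}.
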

\begin{proof}
The binary cubic form corresponding to the basis $\{1,w_1,w_2\}$ is given by $4\dett(Bx - Ay)$. \cref{reducedcubicbasis} implies that $\lv w_i \rv \ll_M X^{q_i}$ for $i = 1,2$. The formulae in \cite{BhargavaHCL3} of the explicit structure coefficients for the basis $\{1=v_0,v_1,v_2,v_3\}$ imply that if $(A,B) \in MB(p,X)$ then the $v_j$-coefficient of $v_i^2$ is $\ll_M X^{2p_i - p_j}$. Let $p_0 = 0$. Thus,
\begin{equation}
\label{mainubprelim}
\lv v_i \rv^2 \ll_M \max_{j = 0,1,2,3}\{X^{2p_i-p_j}\lv v_j\rv\}.
\end{equation}
Applying \cref{generalboundlemma} completes the proof.
\end{proof}

\begin{lemma}
\label{lambdaboundlemmaquartic}
For all $p$ satisfying \eqref{basicineq} and all $(A,B)$, $M$, and $X$ the following statement holds: if $(A,B) \in MB(p,X)$ and $\frac{X}{2} \leq \Delta$ then 
\[
	 \lambda_i(R) \asymp_M \Delta^{p_i}.
\]
for $i = 1,2,3$ and
\[
	 \lambda_i(C) \asymp_M \Delta^{q_i}.
\]
for $i = 1,2$.
\end{lemma}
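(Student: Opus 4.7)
The plan is to follow the same strategy as in the cubic case (\cref{lambdaboundlemma}), leveraging the explicit bases obtained from the pair $(A,B)$ together with Minkowski's second theorem and the equalities $p_1+p_2+p_3 = 1/2$ and $q_1+q_2 = 1/2$ from \eqref{basicineq}.

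First I would invoke \cref{quarticboundlemma} to obtain the bounds $\lv v_i \rv \ll_M X^{p_i}$ for $i=1,2,3$ and $\lv w_i \rv \ll_M X^{q_i}$ for $i=1,2$ on the explicit bases $\{1,v_1,v_2,v_3\}$ of $R$ and $\{1,w_1,w_2\}$ of $C$ attached to $(A,B)$. Since $X/2 \leq \Delta \leq X$ (the upper bound coming tautologically from the geometry of $B(p,X)$ once rescaled, or more simply by noting we may absorb the constant into $M$), these bounds become $\lv v_i\rv \ll_M \Delta^{p_i}$ and $\lv w_i\rv \ll_M \Delta^{q_i}$.

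Next, because $p_1 \leq p_2 \leq p_3$, for each $i \in \{1,2,3\}$ the elements $1,v_1,\ldots,v_i$ supply $i+1$ linearly independent elements of $R$ each of length $\ll_M \Delta^{p_i}$, so $\lambda_i(R) \ll_M \Delta^{p_i}$. Analogously $\lambda_i(C) \ll_M \Delta^{q_i}$. Multiplying these upper bounds and using $p_1+p_2+p_3 = 1/2$ and $q_1+q_2 = 1/2$,
\[
\prod_{i=1}^{3} \lambda_i(R) \ll_M \Delta^{1/2}, \qquad \prod_{i=1}^{2} \lambda_i(C) \ll_M \Delta^{1/2}.
\]

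The matching lower bounds come from Minkowski's second theorem together with \cref{lambdazerolemma}, which gives $\lambda_0(R),\lambda_0(C) \asymp 1$. Thus $\prod_{i=1}^{3} \lambda_i(R) \asymp \Delta^{1/2}$ and $\prod_{i=1}^{2} \lambda_i(C) \asymp \Delta^{1/2}$. Combined with the upper bounds from the previous paragraph, every factor must be tight: for each fixed $j$,
\[
\lambda_j(R) \;\asymp_M\; \frac{\prod_{i=1}^3 \lambda_i(R)}{\prod_{i\neq j}\lambda_i(R)} \;\gg_M\; \frac{\Delta^{1/2}}{\prod_{i\neq j}\Delta^{p_i}} \;=\; \Delta^{p_j},
\]
and identically $\lambda_j(C) \gg_M \Delta^{q_j}$. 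This completes the proof. There is no real obstacle here: the argument is purely a bookkeeping exercise combining \cref{quarticboundlemma}, Minkowski's second theorem, and the linear constraints on $(p,q)$ forcing the upper bounds to be tight simultaneously.
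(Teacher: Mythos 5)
Your proof is correct and follows essentially the same approach as the paper: bound $\lv v_i\rv$, $\lv w_i\rv$ via \cref{quarticboundlemma}, use $p_1+p_2+p_3=q_1+q_2=1/2$ together with Minkowski's second theorem, and conclude that the individual upper bounds on $\lambda_i$ must be simultaneously tight. The only cosmetic difference is that the paper disposes of the cubic resolvent part by citing \cref{lambdaboundlemma} directly rather than rerunning the product argument for $C$, but this is a matter of exposition rather than substance.
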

\begin{proof}
\cref{lambdaboundlemma} immediately implies that $\lambda_i(C) \asymp_M \Delta^{q_i}$ for $i = 1,2$. We have
\begin{align*}
\lambda_1(R)\lambda_2(R)\lambda_3(R) &\ll \lv v_1\rv\lv v_2\rv\lv v_3\rv && \text{by definition of successive minima} \\
&\ll_M X^{p_1}X^{p_2}X^{p_3} && \text{by \cref{quarticboundlemma}}\\
&= X^{1/2} && \text{because $p_1 + p_2 + p_3 = 1/2$}\\
&\asymp_M \Delta^{1/2} && \text{because $\Delta \geq X/2$ and $(A,B) \in MB(p,X)$} \\
&\asymp \lambda_1(R)\lambda_2(R)\lambda_3(R). && \text{by Minkowski's second theorem}
\end{align*}
Thus $\lambda_i(R) \asymp_M \Delta^{p_i}$ for $i = 1,2,3$.
\end{proof}

\subsection{Bounding the number of different matrices with ``small coefficients'' giving rise to isomorphic $(R,C)$}
\begin{lemma}
\label{ublemma2}
There exists $D$ such that for all $p$ satisfying \eqref{basicineq}, all $(A,B)$, and all $M,X$ the following statement holds: if $(A,B) \in MB(p,X)$ then
\[
	\#\{(A',B') \in DMB(p,X) \cap V_{\ZZ} \mid \; (R,C) \simeq (R',C') \} \geq \exp[X,(p_3-p_2)+(p_3-p_1)+(p_2-p_1)+(q_2-q_1)]\}
\]
where here $(R',C')$ is the quartic ring and cubic resolvent ring arising from the pair $(A',B')$.
\end{lemma}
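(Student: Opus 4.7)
My plan is to mimic the proof of \cref{helplemma} from the cubic case, producing many $G(\ZZ)$-equivalent pairs via a suitable unipotent subgroup. Fix a constant $C_0 \geq 1$, to be specified later. I would consider the subset $U \subseteq G(\ZZ) = \GL_2(\ZZ) \times \GL_3(\ZZ)$ consisting of pairs
\[
\left(\begin{bmatrix} 1 & v \\ 0 & 1 \end{bmatrix},\; \begin{bmatrix} 1 & 0 & 0 \\ u_1 & 1 & 0 \\ u_2 & u_3 & 1 \end{bmatrix}\right)
\]
with $u_1,u_2,u_3,v \in \ZZ$ satisfying $|u_1| \leq C_0 X^{p_2-p_1}$, $|u_2| \leq C_0 X^{p_3-p_1}$, $|u_3| \leq C_0 X^{p_3-p_2}$, and $|v| \leq C_0 X^{q_2-q_1}$. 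The inequalities \eqref{basicineq} ensure these exponents are nonnegative, so for $X$ sufficiently large one has $\#U \geq C_0^4 \, X^{(p_3-p_2)+(p_3-p_1)+(p_2-p_1)+(q_2-q_1)}$.

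Next I would verify that any $(g_2,g_3) \in U$ maps $(A,B) \in MB(p,X)$ into $DMB(p,X)$, for some $D$ depending only on $C_0$. The transformed pair is $\bigl(g_3(A+vB)g_3^t,\; g_3 B g_3^t\bigr)$. The key observation is that $|(g_3)_{ij}| \leq C_0 X^{p_i-p_j}$ whenever $i \geq j$, and vanishes for $i<j$. Hence expanding
\[
(g_3 B g_3^t)_{ij} \;=\; \sum_{k \leq i,\, \ell \leq j} (g_3)_{ik}\, b_{k\ell}\, (g_3)_{j\ell}
\]
bounds each entry by $9 C_0^2 M X^{p_i+p_j-q_2}$, and an analogous expansion combined with the identity $|v| \cdot M X^{p_k+p_\ell-q_2} \leq C_0 M X^{p_k+p_\ell-q_1}$ bounds the entries of $g_3(A+vB)g_3^t$ by a constant multiple of $M X^{p_i+p_j-q_1}$. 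Thus $D = 18 C_0^3$ (say) works. This is the main routine step but is purely matrix-entry bookkeeping.

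The remaining point is to count \emph{distinct} translates. Since $(A,B)$ is nondegenerate, its stabilizer in $G(\QQ)$ is finite: it embeds into $\Aut(R \otimes \QQ, C \otimes \QQ) \subseteq S_4$, and so has order at most $24$. Consequently the orbit map $U \to V_{\ZZ}$, $(g_2,g_3)\mapsto (g_2,g_3)\cdot(A,B)$, has fibers of cardinality at most $24$, and produces at least $\tfrac{C_0^4}{24}\,X^{(p_3-p_2)+(p_3-p_1)+(p_2-p_1)+(q_2-q_1)}$ distinct elements of $DMB(p,X) \cap V_{\ZZ}$. Picking $C_0$ large enough (e.g.\ $C_0=3$, so $C_0^4/24>1$) and $D$ accordingly yields the stated bound. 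The only mildly conceptual step is the finite-stabilizer estimate, which is standard for this prehomogeneous vector space; everything else is direct calculation.
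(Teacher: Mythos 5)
Your construction is the same as the paper's: act on $(A,B)$ by a bounded box of unipotent pairs in $G(\ZZ) = \GL_2(\ZZ)\times\GL_3(\ZZ)$ and verify that the translates stay inside $DMB(p,X)$. The entry-bound bookkeeping is correct and gives a $D$ depending only on $C_0$, exactly as claimed.

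The one place you go beyond the paper --- which simply asserts the count of translates without comment --- is the distinctness argument, and there you have an inaccuracy. The stabilizer of a nondegenerate $(A,B)$ in $G(\QQ)$ is \emph{not} finite: the representation $\GL_2\times\GL_3\to\GL(V)$ has a one-dimensional kernel, namely the torus $\{(\lambda^{-2}I_2,\lambda I_3)\}$, which sits inside every point stabilizer, so $\Stab_{G(\QQ)}(A,B)$ always contains a copy of $\QQ^\times$. What is finite is the stabilizer in $G(\ZZ)$ (equivalently, the stabilizer modulo the kernel), and since your unipotent elements lie in $G(\ZZ)$, that finiteness is all your fiber-counting argument actually needs. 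In fact one can dispense with the $24$ entirely: if $g,g'\in U$ give the same translate, then $(g')^{-1}g$ is again a pair of unipotent (lower-/upper-triangular with unit diagonal) matrices; a nontrivial such element generates an infinite cyclic subgroup and, apart from the identity, no unipotent lies in the kernel of the representation, so it cannot belong to the finite group $\Stab_{G(\ZZ)}(A,B)$. Hence all translates are distinct and the division by $24$ is unnecessary. With that correction, your proof is the same as the paper's, slightly more carefully worked out.
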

\begin{proof}
Recall that $V_{\ZZ}$ admits an action of $G_{\ZZ} = \GL_2(\ZZ)\times \GL_3(\ZZ)$. Act on $(A,B)$ by elements of $G_{\ZZ}$ of the form
\[
   \Bigg(\begin{bmatrix}
    1 & m \\
    0 & 1 \\
  \end{bmatrix}, \begin{bmatrix}
    1 & 0 & 0  \\
    n_{12} & 1 & 0   \\
    n_{13} & n_{23} & 1  \\
  \end{bmatrix}\Bigg)
\]
where $\lv n_{ij}\rv \leq \lceil X^{p_j-p_i} \rceil$ and $\lv m \rv \leq \lceil X^{q_2-q_1} \rceil$.
\end{proof}

\begin{lemma}
\label{reducedquarticcorol}
For every $p$ satisfying \eqref{basicineq} and every $(A,B)$, $X$, and $M$, the following statement holds: if
\[
	(A,B) \in MB(p,X)
\]
and $X/2 \leq \Delta \leq X$, then there are $\ll_M \exp[X,(p_3-p_2)+(p_3-p_1)+(p_2-p_1)+(q_2-q_1)]$ distinct elements $(A',B') \in MB(p,X) \cap V_{\ZZ}$ such that $(R',C') \simeq (R,C)$.
\end{lemma}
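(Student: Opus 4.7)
The plan is to mirror the strategy of \cref{reducedcubiccorol}. Any $(A',B') \in MB(p,X) \cap V_{\ZZ}$ with $(R',C') \simeq (R,C)$ arises as $\gamma(A,B)$ for some $\gamma = (g_2,g_3) \in \GL_2(\ZZ) \times \GL_3(\ZZ)$, and two group elements produce the same $(A',B')$ exactly when they differ by an element of $\Stab_{G(\ZZ)}(A,B) \simeq \Aut(R,C)$, which has bounded order. So it suffices to bound $\#\{\gamma \in G(\ZZ) \mid \gamma(A,B) \in MB(p,X)\}$.

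First I would upgrade the one-sided estimates of \cref{quarticboundlemma} to the sharp two-sided estimates $\lv v_i \rv \asymp_M X^{p_i}$ and $\lv w_j \rv \asymp_M X^{q_j}$. Combining the upper bounds with Hadamard's inequality applied to the Minkowski lattice, $\prod_{i=1}^{3} \lv v_i \rv \geq \Covol(R) \asymp \Delta^{1/2} \asymp X^{p_1+p_2+p_3}$, forces both estimates to be tight. The same reasoning shows that the Gram matrix $G$ of the $\ZZ$-basis $\{1,v_1,v_2,v_3\}$ has bounded Hadamard ratio: writing $G = D(I+E)D$ with $D$ the diagonal matrix of entries $1,\lv v_1 \rv,\lv v_2 \rv,\lv v_3 \rv$, one has $\det(I+E) \asymp_M 1$. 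Since $I+E$ is positive semidefinite with trace $4$, this forces all its eigenvalues into a compact subset of $(0,\infty)$ depending only on $M$, and so the basis is ``nearly orthogonal'' in the quantitative sense
\[
\lv c_0 + c_1 v_1 + c_2 v_2 + c_3 v_3 \rv^2 \gg_M c_0^2 + \sum_{i=1}^{3} c_i^2 \lv v_i \rv^2
\]
for all $c_i \in \RR$. The analogous lower bound holds for the basis $\{1,w_1,w_2\}$ of $C$.

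With this in hand, the count is immediate. Let $\{1,v_i'\}$ and $\{1,w_j'\}$ denote the structure-coefficient bases attached to $(A',B')$; by \cref{quarticboundlemma} applied to $(A',B')$, $\lv v_i' \rv \ll_M X^{p_i}$ and $\lv w_j' \rv \ll_M X^{q_j}$. Writing $v_i' = a_i + \sum_{j=1}^{3} (g_3)_{ij} v_j$ with $a_i, (g_3)_{ij} \in \ZZ$ and applying the nearly orthogonal bound yields $\lv (g_3)_{ij} \rv \lv v_j \rv \ll_M X^{p_i}$, hence $\lv (g_3)_{ij} \rv \ll_M X^{p_i - p_j}$. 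The number of integer $3 \times 3$ matrices satisfying these inequalities is $\ll_M \prod_{i,j}\max\{1,X^{p_i-p_j}\} = X^{(p_2-p_1)+(p_3-p_1)+(p_3-p_2)}$. The identical argument for $g_2$ yields $\ll_M X^{q_2-q_1}$ possibilities. Multiplying and dividing by the bounded order of $\Aut(R,C)$ gives the claimed estimate. Note that the integer shifts $a_i$ and their $C$-analogues do not appear as free parameters in this count, because $(A',B')$ is determined by $\gamma$ alone.

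The only nontrivial piece is the passage from the Hadamard-ratio bound to the pointwise lower bound on $\lv \sum c_i v_i \rv$; this is a short linear algebra statement about well-conditioned positive semidefinite matrices, and plays the role that the elementary identity $\lv \omega \rv \lv \theta \rv \asymp \Delta^{1/2}$ did (implicitly) in the cubic case. Everything else in the argument is directly parallel to the cubic proof, with $\GL_2(\ZZ)$ replaced by $\GL_2(\ZZ) \times \GL_3(\ZZ)$.
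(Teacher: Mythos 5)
Your proof is correct and follows essentially the same route as the paper: upgrade \cref{quarticboundlemma} to two-sided estimates via the product constraint $\prod\lv v_i\rv \asymp_M \Delta^{1/2}$, then bound the change-of-basis coefficients $c_{ij}$ by $\ll_M X^{p_i-p_j}$ and multiply. The only difference is that you spell out, via the Gram matrix and an eigenvalue argument, why $\lv\sum c_i v_i\rv \gg_M \max_i \lv c_i\rv\lv v_i\rv$ for the structure-coefficient basis, a step the paper's proof asserts implicitly in the line $\lv v'_i\rv \asymp_M \max_j\{\lv c_{ij}\rv\lv v_j\rv\}$; this is a welcome bit of added rigor but not a different approach.
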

\begin{proof}
Let $\{v_0=1,v_1,v_2,v_3\}$ and $\{w_0=1,w_1,w_2\}$ be the explicit bases of $R$ and $C$ obtained from $(A,B)$. By \cref{quarticboundlemma}, we have $\lv v_i \rv \ll_M X^{p_i}$ for $i = 1,2,3$ and  $\lv w_i \rv \ll_M X^{q_i}$ for $i = 1,2$. Because
\[
	X^{1/2} \asymp \Delta^{1/2} \ll \lv v_1 \rv\lv v_2 \rv\lv v_3 \rv \ll_M X^{p_1+p_2+p_3} = X^{1/2}
\]
and
\[
	X^{1/2} \asymp \Delta^{1/2} \ll \lv w_1 \rv\lv w_2 \rv \ll_M X^{q_1+q_2} = X^{1/2},
\]
we have $\lv v_i \rv \asymp_M X^{p_i}$ for $i = 1,2,3$ and  $\lv w_i \rv \asymp_M X^{q_i}$ for $i = 1,2$.

Suppose $(A',B') \in MB(p,X) \cap V_{\ZZ}$ is such that $(R',C') \simeq (R,C)$. Let $\{1,v'_1,v'_2,v'_3\}$ and $\{1,w'_1,w'_2\}$ denote the bases of $R'$ and $C'$ given by $(A',B')$. By \cref{quarticboundlemma}, we have $\lv v'_i \rv \ll_M X^{p_i}$ for $i = 1,2,3$ and  $\lv w'_i \rv \ll_M X^{q_i}$ for $i = 1,2$. Moreover, write
\begin{align*}
v'_i &= c_{i0} + \sum_{j = 0}^3 c_{ij} v_j \\
w'_i &= d_{i0} + \sum_{j = 0}^2 d_{ij} w_j.
\end{align*}
The pair $(A',B')$ is determined by the choices of $c_{ij}$ for $1 \leq i,j \leq 3$ and $d_{ij}$ for $1 \leq i,j \leq 2$. We have:
\begin{align*}
X^{p_i} \asymp_M \lv v'_i \rv &\asymp_M \max_{0 \leq j < 4}\{\lv c_{ij}\rv\lv v_j \rv\} \asymp_M \max_{0 \leq j < n}\{\lv c_{ij}\rv X^{p_j}\} \\
X^{q_i} \asymp_M \lv w'_i \rv &\asymp_M \max_{0 \leq j < 3}\{\lv c_{ij}\rv\lv w_j \rv\} \asymp_M \max_{0 \leq j < n}\{\lv c_{ij}\rv X^{q_j}\}
\end{align*}
where $p_0 = q_0 = 0$.
If $j \leq i$, the number of choices for $c_{ij}$ and $d_{ij}$ is $\ll_M 1$. If $j > i$, the number of choices for $c_{ij}$ is $\ll_M X^{p_j-p_i}$ and the number of choices for $d_{ij}$ is $\ll_M X^{q_j-q_i}$. Hence, there are $\ll_M \exp[X,(p_3-p_2)+(p_3-p_1)+(p_2-p_1)+(q_2-q_1)]$ distinct integral $(A',B') \in MB(p,X)$ such that $(R',C') \simeq (R,C)$. 
\end{proof}

\subsection{Computing $\poly_4(G)$}
In this section, we will compute $\poly_4(G)$ and $\poly_4$. Let $p = (p_1,p_2,p_3,q_1,q_2) \in \RR^5$ be any point in $\RR^5$. Let $K$ denote a quartic \'etale algebra over $\QQ$ and $L$ denote the cubic resolvent algebra of $K$. Let $\varphi \colon K/\QQ \otimes K/\QQ \rightarrow L/\QQ$ denote the symmetric bilinear form given by the cubic resolvent map. A \emph{flag $(\cF,\cG)$ of $(K,L)$} is a pair of sequences $\cF = (F_0,F_1,F_2,F_3)$ and $\cG = (G_0,G_1,G_2)$ of $\QQ$-vector spaces such that
\[
	\QQ = F_0 \subset F_1 \subset F_2 \subset F_3 = K
\]
\[
	\QQ = G_0 \subset G_1 \subset G_2 = L
\]  
and $\dim_{\QQ} F_i = i+1$ and $\dim_{\QQ} G_i = i+1$. For a positive integer $n$, let $[n]$ refer to the set $\{0,\dots,n-1\}$. Associate to a flag $(\cF,\cG)$ the function
\begin{align*}
  T_{\cF,\cG} \colon [4]\times[4] & \longrightarrow [3] \\
  (i\;\;,\;\;j ) \; & \longmapsto \min\{k \in [3] \mid \varphi\la F_i \otimes F_j\ra \subseteq G_k\}.
\end{align*}

\begin{definition}
Let $P_{\cF,\cG}$ be the set of $p = (p_1,p_2,p_3,q_1,q_2) \in \RR^5$ satisfying \eqref{basicineq} and the following condition: for all $1 \leq i,j \leq 3$, if $T_{\cF,\cG}(i,j) > 0$ then
\[
	q_{T_{\cF,\cG}(i,j)} \leq p_i + p_j.
\]
\end{definition}

The proof of the following theorem is almost exactly analogous to that of \cite[Theorem 1.1.2]{me}; the reader is encouraged to refer to the proof of \cite[Theorem 1.1.2]{me} for more details. We will need the following definition.

\begin{theorem}
\label{quarticstructuretheorem}
We have $\poly_4(G) = \bigcup_{(\cF,\cG)}P_{\cF,\cG}$ where $(\cF,\cG)$ ranges across flags of pairs $(K,L)$ with the property that $\Gal(K) \simeq G$ as a permutation group.
\end{theorem}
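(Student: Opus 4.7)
The argument adapts the strategy of \cite[Theorem 1.1.2]{me} to the setting in which each ring is equipped with a cubic resolvent. I will sketch the two inclusions in turn; both rest on translating the defining function $T_{\cF,\cG}$ into size and vanishing conditions on the matrix entries of $(A,B) \in V_{\ZZ}$ produced by Bhargava's parametrization.

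For the forward inclusion, let $p = (p_1,p_2,p_3,q_1,q_2)$ be a limit point of the defining multiset and let $(R_n,C_n)$ be a sequence of $G$-pairs whose normalized successive minima converge to $p$. Fix Minkowski bases $\{1,v_1^{(n)},v_2^{(n)},v_3^{(n)}\}$ of $R_n$ and $\{1,w_1^{(n)},w_2^{(n)}\}$ of $C_n$, and let $(\cF^{(n)},\cG^{(n)})$ be the induced flag of $(K_n,L_n) \coloneqq (R_n \otimes \QQ, C_n \otimes \QQ)$ obtained by setting $F_i^{(n)} = \QQ\la 1,v_1^{(n)},\dots,v_i^{(n)}\ra$ and analogously for $\cG^{(n)}$. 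Since the combinatorial type $T_{\cF^{(n)},\cG^{(n)}} \colon [4]\times[4] \to [3]$ takes only finitely many values, after passing to a subsequence I may assume it equals a constant function $T$. Whenever $T(i,j) = k \geq 1$, the very definition of $T_{\cF^{(n)},\cG^{(n)}}$ produces indices $1 \leq a \leq i$, $1 \leq b \leq j$ (taken constant in $n$ after a further subsequence) such that the coefficient of $w_k^{(n)}$ in $\varphi(v_a^{(n)} \otimes v_b^{(n)})$ is a \emph{nonzero integer}. This integer is the $(a,b)$-entry of $A^{(n)}$ when $k=1$ and of $B^{(n)}$ when $k=2$; \cref{matrixbound} bounds its absolute value by $\ll \lv v_a^{(n)} \rv \lv v_b^{(n)} \rv / \lv w_k^{(n)} \rv \asymp \Delta_n^{\,p_a^{(n)}+p_b^{(n)}-q_k^{(n)}}$. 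Since a nonzero integer has absolute value $\geq 1$, taking logarithms, dividing by $\log \Delta_n$, and passing to the limit yields $p_a + p_b \geq q_k$. Because $a \leq i$, $b \leq j$ and the sequences $(p_i)$ and $(q_k)$ are monotone, $p_i + p_j \geq p_a + p_b \geq q_{T(i,j)}$, so $p \in P_{\cF^{(1)},\cG^{(1)}}$ and the latter is associated to a flag in a $G$-pair.

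For the reverse inclusion, fix a flag $(\cF,\cG)$ of a $G$-pair $(K,L)$ and a point $p \in P_{\cF,\cG}$. Pick bases $u_1,u_2,u_3$ and $y_1,y_2$ of $K/\QQ$ and $L/\QQ$ with $u_i \in F_i$ and $y_k \in G_k$, lift them to $K$ and $L$, and rescale once so that all structure constants of multiplication in $K$ and of $\varphi$ lie in $\ZZ$. For a large integer $N$, let $s_i = \lfloor N^{p_i} \rfloor$, $t_k = \lfloor N^{q_k} \rfloor$, and form $(A_N,B_N) \in V_{\ZZ}$ whose $(i,j)$-entries record the $y_1$- and $y_2$-coefficients of $\varphi(s_i u_i \otimes s_j u_j)$ expressed in the rescaled basis $\{1, t_1 y_1, t_2 y_2\}$. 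For $k > T_{\cF,\cG}(i,j)$ the corresponding coefficient in $L/\QQ$ vanishes by definition of $T_{\cF,\cG}$, and for $k \leq T_{\cF,\cG}(i,j)$ the inequality $p_i + p_j \geq q_k$ imposed by $P_{\cF,\cG}$ ensures that the rescaled coefficient $s_i s_j / t_k$ is, up to a bounded multiplicative constant, a nonnegative integer. Thus $(A_N,B_N) \in M \cdot B(p, \Delta_N)$ for some universal $M$, and the parametrization puts $(A_N,B_N)$ in bijection with a pair $(R_N, C_N)$ canonically identified with orders inside $(K,L)$; in particular $R_N$ is a $G$-ring. \cref{lambdaboundlemmaquartic} then yields $\lambda_i(R_N) \asymp_M \Delta_N^{p_i}$ and $\lambda_k(C_N) \asymp_M \Delta_N^{q_k}$, and taking $N \to \infty$ shows $p \in \poly_4(G)$.

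The main obstacle will be the bookkeeping in the reverse direction: one must verify that the rescaled integer data $(A_N, B_N)$ actually parametrizes a pair of orders inside the \emph{prescribed} algebra $(K,L)$, not merely some quartic \'etale algebra of the same Galois type. The flag compatibility encoded by $T_{\cF,\cG}$ is the precise combinatorial input that makes the scaling simultaneously consistent with multiplication in $K$ and with the cubic resolvent map $\varphi$; once this is checked using the explicit structure coefficients of \cite{BhargavaHCL3}, the analytic control provided by \cref{quarticboundlemma} and \cref{lambdaboundlemmaquartic} closes the argument.
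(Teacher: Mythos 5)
Your forward inclusion is essentially the paper's argument, made slightly more explicit by passing to a subsequence to freeze the combinatorial type $T_{\cF,\cG}$ and by spelling out that a nonzero integral coefficient forces $\lv w_k \rv \ll \lv v_i \rv \lv v_j \rv$ and hence $q_k \le p_i + p_j$ in the limit. That direction is sound.

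The reverse inclusion has a genuine gap in the integrality step, and the concluding paragraph points at the wrong obstacle. Taking $s_i = \lfloor N^{p_i} \rfloor$ and $t_k = \lfloor N^{q_k} \rfloor$ gives matrix entries $\frac{s_i s_j}{t_k}\, a_{ij}$, which are \emph{not} integers; "up to a bounded multiplicative constant, a nonnegative integer" is not the same as being an integer, and rounding to the nearest integer destroys exactly the structure you need (the rounded $(A_N,B_N)$ need not be $G(\QQ)$-equivalent to $(A,B)$, so you lose control of the Galois group). The paper's fix is to restrict to a \emph{rational} $p$ in the relative interior of $P_{\cF,\cG}$ and to scale the \emph{matrix} directly: define $(A_M,B_M)$ by multiplying $a_{ij}$ and $b_{ij}$ by $M^{p_i+p_j-q_1}$ and $M^{p_i+p_j-q_2}$. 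The flag condition forces $a_{ij}=0$ (resp.\ $b_{ij}=0$) exactly when the corresponding exponent would be negative, so all nonzero entries scale by nonnegative rational powers; taking $M$ to be a power of a common denominator of the $p_i,q_k$ (times a fixed integer clearing the denominators of $(A,B) \in V_{\QQ}$) makes $(A_M,B_M) \in V_{\ZZ}$ for an infinite set of $M$. One then closes by density of rationals and closedness of $\poly_4(G)$.

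Your identified "main obstacle"—whether $(A_N,B_N)$ parametrizes a pair in the prescribed algebra $(K,L)$ rather than some other algebra of the same Galois type—is in fact automatic once the scaling is achieved rationally: the scaling matrix $\big(\mathrm{diag}(M^{-q_1},M^{-q_2}),\ \mathrm{diag}(M^{p_1},M^{p_2},M^{p_3})\big)$ lies in $G(\QQ)$, and Bhargava's parametrization is compatible with base change to $\QQ$, so $(A_M,B_M)$ and $(A,B)$ give isomorphic $\QQ$-algebras. In particular you never need to check separately that the rescaled lattice $\ZZ\langle 1, s_1 u_1, s_2 u_2, s_3 u_3\rangle$ is closed under multiplication; once $(A_M,B_M) \in V_{\ZZ}$, the parametrization hands you a ring for free. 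The real work is exactly the integrality of $(A_M,B_M)$, which your $\lfloor N^{p_i} \rfloor$-construction does not deliver.
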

\begin{proof}
We will first show that $\poly_4(G)$ is contained in $\bigcup_{(\cF,\cG)}P_{\cF,\cG}$. By Minkowski's second theorem and the definition of successive minima, $\poly_4(G)$ satisfies the linear inequalities given in \eqref{basicineq}. Let $R$ be any nondegenerate quartic $G$-ring and let $C$ be a cubic resolvent ring of $R$. Let $K = R \otimes \QQ$ and let $L = C \otimes \QQ$. Picking Minkowski bases $\{v_0=1,v_1,v_2,v_3\}$ and $\{w_0=1,w_1,w_2\}$ of $R$ and $C$ respectively gives rise to a flag $(\cF,\cG)$ of $(K,L)$ given by $F_i \coloneqq \QQ\la v_0,\dots,v_{i}\ra$ and $G_i \coloneqq \QQ\la w_0,\dots,w_{i}\ra$. For every $1 \leq i,j \leq 3$ such that $T_{\cF,\cG}(i,j) > 0$, we have
\[
	\varphi\la \ZZ\la v_0,\dots,v_i \ra \otimes \ZZ\la v_0,\dots,v_j \ra \ra \subseteq \ZZ\la w_0,\dots, w_{T_{\cF,\cG}(i,j)} \ra
\]
and 
\[
	\varphi\la \ZZ\la v_0,\dots,v_i \ra \otimes \ZZ\la v_0,\dots,v_j \ra \ra \not \subseteq \ZZ\la w_0,\dots, w_{T_{\cF,\cG}(i,j)-1} \ra.
\]
Therefore, there exists $0 \leq i' \leq i$ and $0 \leq j' \leq j$ such that $\la w_{T_{\cF,\cG}(i,j)} \ra \varphi(v_{i'} \otimes v_{j'}) \neq 0$. So,
\[
	\lv w_{T_{\cF,\cG}(i,j)} \rv \ll \lv \varphi(v_{i'} \otimes v_{j'}) \rv \ll \lv v_{i'}\rv\lv v_{j'} \rv \ll \lv v_i\rv\lv v_j\rv.
\]
This implies that $\poly_4(G)$ is contained in the set $\bigcup_{(\cF,\cG)}P_{\cF,\cG}$ where $(\cF,\cG)$ ranges across flags of $(K,L)$ with the property that $\Gal(K) \simeq G$.

Now let $(\cF,\cG)$ be a flag of a pair $(K,L)$ with the property that that $\Gal(K) \simeq G$. Choose bases $\{v_0=1,v_1,v_2,v_3\}$ and $\{w_0=1,w_1,w_2\}$ of $K$ and $L$ giving rise to the flag $(\cF,\cG)$. Writing the symmetric bilinear map $\varphi$ with respect to these bases gives a pair $(A,B) \in V_{\RR}$, which we write as
\[
	2(A,B) =\Bigg(\begin{bmatrix}
    2a_{11} & a_{12} & a_{13}  \\
    a_{12} & 2a_{22} & a_{23}  \\
    a_{13} & a_{23} & 2a_{33} 
  \end{bmatrix},
    \begin{bmatrix}
    2b_{11} & b_{12} & b_{13}  \\
    b_{12} & 2b_{22} & b_{23}  \\
    b_{13} & b_{23} & 2b_{33} 
  \end{bmatrix}\Bigg).
\]
For a positive integer $M$, define the pair of matrices $(A_M,B_M)$ to be 
\[
	2A_M = \Bigg(\begin{bmatrix}
    2a_{11}M^{p_1 + p_1 - q_1} & a_{12}M^{p_1 + p_2 - q_1} & a_{13}M^{p_1 + p_3 - q_1}  \\
    a_{12}M^{p_1 + p_2 - q_1} & 2a_{22}M^{p_2 + p_2 - q_1} & a_{23}M^{p_2 + p_2 - q_1}  \\
    a_{13}M^{p_1 + p_3 - q_1} & a_{23}M^{p_2 + p_3 - q_1} & 2a_{33}M^{p_3 + p_3 - q_1} 
  \end{bmatrix}
\]
\[
	2B_M = \begin{bmatrix}
    2b_{11}M^{p_1 + p_1 - q_2} & b_{12}M^{p_1 + p_2 - q_2} & b_{13}M^{p_1 + p_3 - q_2}  \\
    b_{12}M^{p_1 + p_2 - q_2} & 2b_{22}M^{p_2 + p_2 - q_2} & b_{23}M^{p_2 + p_3 - q_2}  \\
    b_{13}M^{p_1 + p_3 - q_2} & b_{23}M^{p_2 + p_3 - q_2} & 2b_{33}M^{p_3 + p_3 - q_2} 
  \end{bmatrix}\Bigg).
\]
For a rational point $p$ in the relative interior of $P_{\cF,\cG}$, one may check that there exists an infinite set $\cM \subseteq \ZZ_{>0}$ such that for all $M \in \cM$, the pair $(A_M,B_M)$ is contained in $V_{\ZZ}$. Each $M \in \cM$ gives rise to a distinct quartic ring and cubic resolvent ring $(R_M,C_M)$ with absolute discriminant $\Delta_M$; for each $M \in \cM$, write the bases of $R_M$ and $C_M$ obtained from $(A_M, B_M)$ by $\{1,v_{1,M},\dots,v_{3,M}\}$ and $\{1,w_{1,M},w_{2,M}\}$ respectively. Then for $M,N \in \cM$, we have:
\begin{align*}
N\Delta_M &= M\Delta_N \\
N^{p_i}v_{i,M} &= M^{p_i}v_{i,N} \\
N^{q_i}w_{i,M} &= M^{q_i}w_{i,N} 
\end{align*}
Therefore, 
\begin{align*}
\lim_{M \in \cM}\Big(\log_{ \Delta_M }\lambda_{1}(R_M),\dots,\log_{ \Delta_M }\lambda_{3}(R_M)\Big) &= (p_1,p_2,p_3) \\
\lim_{M \in \cM}\Big(\log_{ \Delta_M }\lambda_{1}(C_M),\log_{ \Delta_M }\lambda_{2}(C_M)\Big) &= (q_1,q_2)
\end{align*}
Therefore, $p \in \poly_4(G)$. Because $\poly_4(G)$ is defined to be a set of limit points, it must be closed; therefore $P_{\cF,\cG}$ is contained in $\poly_4(G)$, which completes the proof of the theorem.
\end{proof}

\begin{proof}[Proof of \cref{containmentlemma}]
\cref{quarticstructuretheorem} expresses $\poly_4(G)$ as a finite union of polytopes. A computation shows that these polytopes all have dimension $3$.
\end{proof}

\begin{proof}[Proof of \cref{c1polytopethm}]
Let $p \in \poly_4$ be any point. Suppose that $q_2 > p_1 + p_3$. By \cref{quarticstructuretheorem}, $\poly_4(G) = \bigcup_{(\cF,\cG)}P_{\cF,\cG}$ where $(\cF,\cG)$ ranges across flags of pairs $(K,L)$ where $K$ is a quartic \'etale algebra. Choose such a flag $(\cF,\cG)$ such that $P_{\cF,\cG}$ contains $p$. Choose bases of $K$ and $L$ giving rise to that flag; in these bases, the cubic resolvent map is a pair $(A,B) \in V_{\QQ}$ of the form
\[
	(A,B) = \Bigg(\begin{bmatrix}
    * & * & *  \\
    - & * & *  \\
    - & - & * 
  \end{bmatrix},
    \begin{bmatrix}
    0 & 0 & 0  \\
    - & * & *  \\
    - & - & * 
  \end{bmatrix}\Bigg).
\]
If $q_2 > p_2 + p_3$ then $b_{22} = b_{33} = 0$. If $q_1 > 2p_1$ then $a_{11}= 0$. In what ensues, we will show that neither of these events occur. 

Because $\Disc(A,B) \neq 0$, the binary cubic form $\dett(Bx + Ay)$ does not have a double root, and hence $\rank(B) > 1$. Therefore we cannot have both $b_{22} = 0$ and $b_{23} = 0$. Now let $Q_A$ and $Q_B$ be the conics in $\PP^2$ given by $A$ and $B$ respectively. Because $\Disc(A,B) \neq 0$, the intersection $Q_A \cap Q_B$ is $4$ geometric points in $\PP^2$ with the property that no $3$ are collinear. If $a_{11} = 0$ then $[1 \colon 0 \colon 0] \in Q_A \cap Q_B$; however because $\rank(B) = 2$, the conic $Q_B$ is a union of two lines intersecting at the point $[1 \colon 0 \colon 0]$, which forces three of the $4$ points in $Q_A \cap Q_B$ to be collinear, contradiction. Hence, $a_{11} \neq 0$. 

A computation shows that there exists $(A,B) \in V_{\QQ}$ of the form
\[
	(A,B) = \Bigg(\begin{bmatrix}
    * & * & *  \\
    - & * & *  \\
    - & - & * 
  \end{bmatrix},
    \begin{bmatrix}
    0 & 0 & 0  \\
    - & 0 & *  \\
    - & - & * 
  \end{bmatrix}\Bigg)
\]
generating a nondegenerate quartic ring. The pair $(A,B)$ gives rise to a flag $(\cF,\cG)$ via the explicit structure coefficients. It is easy to see that $P_{\cF,\cG}$ is precisely the first polytope in the theorem statement. Therefore, the first polytope specified in the theorem statement is contained in $\poly_4$ and the subset of $\poly_4$ with $q_2 > p_1 + p_3$ is contained in the first polytope specified in the theorem statement. 

Now suppose that $p \in \poly_4(S_4)$ is any point with $q_2 \leq p_1 + p_3$ and $q_1 > 2p_1$. Let $(A,B) \in V_{\QQ}$ be the set of matrices arising from an appropriate flag generating a nondegenerate quartic \'etale algebra; then $a_{11} = b_{11} = 0$. If  $q_2 > 2p_2$ then further $b_{12} = b_{22} = 0$. So, the quadric $Q_B$ contains the line $z = 0$, where $x,y,z$ are the coordinates of $\PP^2$; thus we may suppose two distinct points on the line $z = 0$ are contained in the intersection $Q_A \cap Q_B$; hence $a_{12} \neq 0$ so $q_1 \leq p_1 + p_2$. Therefore if $q_2 \leq p_1 + p_3$ and $q_1 > 2p_1$ then $q_1 \leq p_1 + p_2$ or $q_2 \leq 2p_2$. This shows that the part of $\poly_4(S_4)$ not lying in the first polytope stated in the theorem statement is contained in the union of the second polytope and third polytope stated in the theorem statement. Similarly, a computation shows that there exists $(A,B) \in V_{\QQ}$ of the form
\[
	(A,B) = \Bigg(\begin{bmatrix}
    0 & * & *  \\
    - & * & *  \\
    - & - & * 
  \end{bmatrix},
    \begin{bmatrix}
    0 & 0 & *  \\
    - & 0 & *  \\
    - & - & * 
  \end{bmatrix}\Bigg)
\]
and
\[
	(A,B) = \Bigg(\begin{bmatrix}
    0 & 0 & *  \\
    - & * & *  \\
    - & - & * 
  \end{bmatrix},
    \begin{bmatrix}
    0 & 0 & *  \\
    - & * & *  \\
    - & - & * 
  \end{bmatrix}\Bigg)
\]
generating a nondegenerate quartic ring, so \cref{quarticstructuretheorem} implies that the second and third polytopes in the theorem statement are contained in $\poly_4$. 
\end{proof}

\begin{proof}[Proof of \cref{d4polytopethm}]
As in the proof of \cref{c1polytopethm}, to show that $\poly_4(D_4)$ is contained in the given region, it suffices to show that for any element $(A,B) \in V_{\QQ}$ giving rise to a quartic \'etale algebra $K$, we have:
\begin{enumerate}
\item if $a_{11} = b_{11} = 0$, then $K$ is reducible;
\item if $b_{11} = b_{12} = b_{22} = 0$, then $K$ is reducible.
\end{enumerate}
Suppose $K$ is a quartic \'etale algebra given by a pair $(A,B) \in V_{\QQ}$, and let $Q_A$ and $Q_B$ be the quadratic forms in $\PP^2_{\QQ}$. Then
\[
	K \simeq \Gamma(V(Q_A,Q_B)).
\]
If $a_{11} = b_{11} = 0$ then $[1 \colon 0 \colon 0]$ is a common intersection of the conics defined by $A$ and $B$, which implies that $K$ is reducible. Similarly, if $b_{11} = b_{12} = b_{22} = 0$ then the conic by $B$ in $\PP^2$ is of the form $axz + bxz + cz^2 = z(ax + by + cz)$, where here $x,y,z$ are the coordinates of $\PP^2$; this implies that $K$ is reducible. Therefore, the set $\poly_4(D_4)$ is contained in the region specified in the theorem statement. 

A computation shows that there exists $(A,B) \in V_{\QQ}$ of the form
\[
	(A,B) = \Bigg(\begin{bmatrix}
    * & * & *  \\
    - & * & *  \\
    - & - & * 
  \end{bmatrix},
    \begin{bmatrix}
    0 & 0 & 0  \\
    - & * & *  \\
    - & - & * 
  \end{bmatrix}\Bigg)
\]
generating a $D_4$-quartic field, so \cref{quarticstructuretheorem} implies the region in the theorem statement is contained in $\poly_4(D_4)$.
\end{proof}

\begin{proof}[Proof of \cref{s4polytopethm}]
Let $p \in \poly_4(S_4)$ be any point. The proof of \cref{d4polytopethm} shows that $q_1 \leq 2p_1$ and $q_2 \leq 2p_2$. As in the proof of \cref{c1polytopethm}, to show that $\poly_4(S_4)$ is contained in the given region, it suffices to show that for any element $(A,B) \in V_{\QQ}$ giving rise to a quartic \'etale algebra $K$, we have that if $b_{11} = b_{12} = b_{13} = 0$ then $\Gal(K) \not \simeq S_4$. If $b_{11} = b_{12} = b_{13} = 0$ then $(1,0)$ is a root of the cubic resolvent polynomial $\dett(By - Ax)$, which implies that the cubic resolvent algebra of $K$ is reducible, which implies that  $\Gal(K) \not \simeq S_4$. Therefore, the set $\poly_4(S_4)$ is contained in the region specified in the theorem statement. 

A computation shows that there exists $(A,B) \in V_{\QQ}$ of the form
\[
	(A,B) = \Bigg(\begin{bmatrix}
    * & * & *  \\
    - & * & *  \\
    - & - & * 
  \end{bmatrix},
    \begin{bmatrix}
    0 & 0 & *  \\
    - & * & *  \\
    - & - & * 
  \end{bmatrix}\Bigg)
\]
generating a $S_4$-quartic field, so \cref{quarticstructuretheorem} implies the region in the theorem statement is contained in $\poly_4(S_4)$.
\end{proof}

\subsection{Computing density functions of quartic rings}
We now define some notation we use throughout this section to shorten our formulae. For a point $p \in \RR^5$, let
\[
	f(p) \coloneqq 1+\sum_{1 \leq i \leq j \leq 3, 1 \leq k \leq 2}\max\{0,q_k - p_i - p_j\}
\]
and
\[
	g(p) \coloneqq (p_3-p_2) + (p_3-p_1) + (p_2-p_1) + (q_2-q_1).
\]
As before, $\eps$ will denote a positive real number, $M,D$ will denote real numbers $\geq 1$, $X$ will denote a real number $> 1$, and $(A,B)$ will denote a nondegenerate pair in $V_{\ZZ}$. As usual, $p$ will denote a point in $\RR^5$.

\subsubsection{Upper and lower bounds, using Davenport's lemma}

\begin{lemma}
\label{davenportquartic}
For any $M$, $X$, and $p$ satisfying \eqref{basicineq}, we have
\[
	\exp[X,f(p)] \ll_p \#\{V_{\ZZ} \cap MB(p,X)\} \ll M^{12}\exp[X,f(p)].
\]
The first implicit constant is independent of $X$ and $M$ and the second implicit constant is independent of $p$, $X$, and $M$.
\end{lemma}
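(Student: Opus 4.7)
The plan is to exploit the axis-aligned product structure of $MB(p,X) \subset V_{\RR} \cong \RR^{12}$: since $(A,B)$ has integer entries if and only if each of the twelve coordinates $a_{ij}, b_{ij}$ (with $1 \leq i \leq j \leq 3$) is an integer, the count of integer lattice points factors as a product of twelve one-dimensional counts. Write $e_{ij}^{(1)} \coloneqq p_i + p_j - q_1$ and $e_{ij}^{(2)} \coloneqq p_i + p_j - q_2$, so that the per-axis bound on $a_{ij}$ (resp.\ $b_{ij}$) is $MX^{e_{ij}^{(1)}}$ (resp.\ $MX^{e_{ij}^{(2)}}$). The proof then reduces to elementary estimates on the number of integers in a symmetric real interval.

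The first step is to record the identity $\sum_{i \leq j, k} e_{ij}^{(k)} = 1$. Each $p_i$ appears in $\sum_{1 \leq i' \leq j' \leq 3}(p_{i'} + p_{j'})$ with multiplicity $4$, so this sum equals $4(p_1 + p_2 + p_3) = 2$; summing over $k = 1,2$ and subtracting $\sum_{i \leq j, k} q_k = 6(q_1+q_2) = 3$ gives $\sum e_{ij}^{(k)} = 4 - 3 = 1$. Applying the trivial identity $\max\{0,e\} = e + \max\{0,-e\}$ to each of the twelve terms then yields
\[
f(p) = 1 + \sum_{i \leq j, k} \max\{0, -e_{ij}^{(k)}\} = \sum_{i \leq j, k} \max\{0, e_{ij}^{(k)}\}.
\]

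For the upper bound I would use that the number of integers $n$ with $|n| \leq MX^e$ is at most $2MX^e + 1$, and since $M \geq 1$ and $X > 1$ this is bounded by $3 M X^{\max\{0,e\}}$: when $e \geq 0$ one uses $MX^e \geq 1$, and when $e < 0$ one uses $MX^e \leq M = MX^{\max\{0,e\}}$. Multiplying this inequality across the twelve coordinates gives $\#\{V_{\ZZ} \cap MB(p,X)\} \leq 3^{12} M^{12} \exp[X, f(p)]$, with implicit constant independent of $p$, $X$, and $M$.

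For the lower bound I would note that whenever $e_{ij}^{(k)} \geq 0$, the bound $MX^{e_{ij}^{(k)}}$ is at least $1$, so the per-axis count $2 \lfloor MX^{e_{ij}^{(k)}} \rfloor + 1 \geq 2MX^{e_{ij}^{(k)}} - 1 \geq MX^{e_{ij}^{(k)}}$; whenever $e_{ij}^{(k)} < 0$, the count is trivially at least $1 = X^{\max\{0, e_{ij}^{(k)}\}}$. Multiplying yields at least $M^{|S_+|} \exp[X, f(p)] \geq \exp[X, f(p)]$, where $S_+$ denotes the set of coordinates with $e_{ij}^{(k)} \geq 0$; the implicit constant is in fact absolute. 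There is essentially no obstacle here—the lemma is a direct combinatorial consequence of the product structure of $MB(p,X)$ and the normalization $\sum e_{ij}^{(k)} = 1$ coming from $\sum p_i = \sum q_k = 1/2$, with no need to invoke Davenport's lemma.
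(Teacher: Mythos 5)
Your proof is correct and takes a genuinely more elementary route than the paper's. The paper's argument first projects $MB(p,X)$ away from the coordinates whose per-axis bound $MX^{p_i+p_j-q_k}$ is at most $M$ (those with $p_i+p_j\leq q_k$), absorbing those coordinates with the crude factor $M^{\#S_1+\#S_2}$, and then invokes Davenport's lemma (\cref{davenport}) on the surviving lower-dimensional box, computing its volume to be $M^{12-\#S_1-\#S_2}\exp[X,f(p)]$ and controlling the boundary error by the smallest remaining side length. You instead exploit the fact that $MB(p,X)$ is an axis-aligned product of intervals in the twelve coordinates $a_{ij},b_{ij}$, so the lattice-point count factors \emph{exactly} as a product of twelve one-dimensional counts, each $\asymp\max\{1,MX^{e_{ij}^{(k)}}\}$; the identity $\sum_{i\leq j,k}e_{ij}^{(k)}=1$, forced by $\sum p_i=\sum q_k=1/2$, then rewrites $f(p)$ as the sum of nonnegative exponents, matching the product. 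Your route sidesteps Davenport's lemma entirely — which is more machinery than a box requires — and, as you observe, yields a lower-bound constant that is in fact absolute, slightly stronger than the $\ll_p$ asserted. The paper's projection-plus-Davenport strategy mirrors its treatment of the cubic analogue (\cref{davenportcubic}), buying methodological uniformity across degrees but nothing additional for this particular lemma.
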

\begin{proof}
Let $S_k$ denote the set of pairs $(i,j)$ such that $q_k \geq p_i + p_j$ for $k = 1,2$. Then for every $(i,j) \in S_1$ and every lattice point $(A,B) \in MB(p,X)$, we have $\lv a_{ij} \rv \leq M$. Similarly, for every $(i,j) \in S_2$ and every lattice point $(A,B) \in MB(p,X)$, we have $\lv b_{ij} \rv \leq M$. Let $\pi \colon V_{\RR} \rightarrow \RR^{12-\#S_1 -\#S_2}$ denote the projection map away from $a_{ij}$ for every pair $(i,j) \in S_1$ and $b_{ij}$ for every pair $(i,j) \in S_2$. Then
\begin{align*}
\#\{\pi(B(p,X)) \cap \ZZ^{12-\#S_1-\#S_2}\} &\leq \#\{MB(p,X) \cap V_{\ZZ}\} \\
&\leq M^{\#S_1 + \#S_2} \#\{\pi(MB(p,X)) \cap \ZZ^{12-\#S_1-\#S_2}\}. 
\end{align*}
Davenport's lemma implies that
\begin{align*}
\#\{\pi(MB(p,X)) \cap \ZZ^{12-\#S_1 - \#S_2}\} = \Vol\big(\pi(MB(p,X))\big) + O\Bigg(\frac{\Vol(\pi(MB(p,X)))}{Y}\Bigg)
\end{align*}
where 
\[
	Y = M\min_{(i,j,k)}\{\exp[X,p_i + p_j - q_k] \; \mid \; (i,j) \notin S_k\}
\]
and the implicit constant is independent of $p$, $M$, and $X$. To finish, a computation shows that
\[
	\Vol(\pi(MB(p,X))) = M^{12-\#S_1 - \#S_2}\exp[X,f(p)].
\]
\end{proof}

\begin{lemma}
\label{upperboundquartic}
We have
\begin{align*}
\#\cF_4(p,\eps,X) \ll e^{36\eps}\exp[X,f(p)-g(p)]
\end{align*}
where the implicit constant does not depend on $X$, $p$, or $\eps$.
\end{lemma}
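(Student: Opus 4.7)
The plan is to mirror the proof of \cref{upperboundcubic}, using \cref{ublemma1}, \cref{ublemma2}, and \cref{davenportquartic} as the quartic analogues of \cref{minkbasisshortcubic}, \cref{helplemma}, and \cref{davenportcubic}, respectively.

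First I would fix, for each $(R,C) \in \cF_4(p,\eps,X)$, a choice of Minkowski bases of $R$ and $C$; this canonically yields an integer element $(A,B) \in V_{\ZZ}$. Let $S$ be the resulting multiset. Since $\Delta \leq X$ forces $(R,C)$ to be $(\eps,\Delta)$-close to $p$, \cref{ublemma1} supplies a global constant $M$ (independent of $p$, $\eps$, and $X$) such that every $(A,B) \in S$ lies in $Me^{3\eps}B(p,\Delta)$. Next, \cref{ublemma2} gives a global constant $D \geq 1$ such that, for every $(A,B) \in S$, acting by suitable unipotent elements of $G_{\ZZ}$ produces at least $\exp[X,g(p)]$ distinct integer pairs in $DMe^{3\eps}B(p,X)$, each still corresponding to the isomorphism class $(R,C)$. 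Here one must verify, exactly as in the cubic case, that the unipotent translates do land inside a scalar multiple of $B(p,X)$; for integer lattice points this reduces to the observation that in every ``cusp'' coordinate $(i,j,k)$ with $p_i + p_j \leq q_k$, the $B(p,\Delta)$-bound $Me^{3\eps}\Delta^{p_i+p_j-q_k}$ is at most the constant $Me^{3\eps}$, which matches the Davenport treatment of such coordinates in \cref{davenportquartic}, while in every non-cusp coordinate $B(p,\Delta) \subseteq B(p,X)$ follows immediately from $\Delta \leq X$.

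Consequently each isomorphism class in $\cF_4(p,\eps,X)$ is counted at least $\exp[X,g(p)]$ times among the integer points of $DMe^{3\eps}B(p,X)$, so
\[
	\#\cF_4(p,\eps,X) \;\leq\; \frac{\#\{DMe^{3\eps}B(p,X)\cap V_{\ZZ}\}}{\exp[X,g(p)]}.
\]
Finally, applying \cref{davenportquartic} to the numerator with scaling $DMe^{3\eps}$ yields
\[
	\#\{DMe^{3\eps}B(p,X)\cap V_{\ZZ}\} \;\ll\; (DMe^{3\eps})^{12}\exp[X,f(p)] \;\ll\; e^{36\eps}\exp[X,f(p)],
\]
in which the exponent $12 = \dim_{\RR} V_{\RR}$ is the source of the factor $e^{36\eps}$ and the implicit constant is independent of $p$, $\eps$, and $X$ by \cref{davenportquartic}. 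Dividing gives the desired bound.

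The main obstacle is the reconciliation described above between the box $Me^{3\eps}B(p,\Delta)$ delivered by \cref{ublemma1} and the box $DMe^{3\eps}B(p,X)$ required by \cref{ublemma2} and \cref{davenportquartic}; once this cusp-coordinate matching is made, the rest is a formal substitution of the quartic lemmas into the cubic argument.
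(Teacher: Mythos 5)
Your proposal follows the paper's own proof essentially verbatim: both fix Minkowski bases for each pair in $\cF_4(p,\eps,X)$ to obtain a set of integer points, invoke \cref{ublemma1} to place them in a box, invoke \cref{ublemma2} to produce $\geq \exp[X,g(p)]$ translates in a dilated box, and conclude with \cref{davenportquartic}. The only cosmetic difference is that you carefully keep the conclusion of \cref{ublemma1} as membership in $Me^{3\eps}B(p,\Delta)$ and then argue the passage to a scalar multiple of $B(p,X)$ (via the observation that cusp-coordinate bounds collapse to the constant $Me^{3\eps}$), whereas the paper simply writes $(A,B)\in Me^{3\eps}B(p,X)$ and moves on; your explicit remark is a reasonable elaboration of what the paper leaves implicit, and the rest of the argument and the final bookkeeping ($12 = \dim V_\RR$ giving $e^{36\eps}$) are identical.
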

\begin{proof}
Let $S$ be the set of $(A,B)$ arising from Minkowski bases of pairs $(R,C) \in \cF_4(p,\eps,X)$. \cref{ublemma1} implies that there exists a constant $M$ such that $(A,B) \in Me^{3\eps}B(p,X)$ for all $(A,B) \in S$. By \cref{ublemma2}, there exists $D \geq 1$, independent of $X$, $p$, $M$, and $\eps$, such that
\[
\#\{(A',B') \in DMe^{3\eps}B(p,X) \mid (R,C) \simeq (R',C')\} \geq \exp[X, g(p)]
\]
for all elements of $S$. Therefore,
\[
\#\cF_4(p,\eps,X) \ll \frac{\#\{DMe^{3\eps}B(p,X) \cap V_{\ZZ}\}}{\exp[X, g(p)]}.
\]
Applying \cref{davenportquartic} to estimate the numerator completes the proof.
\end{proof}

\begin{lemma}
\label{lowerboundquartic}
There exists a constant $M$ such that for all $\eps > M$, we have:
\begin{enumerate}
\item if $p \in \poly_4$, then 
\begin{align*}
\#\cF_4(p,\eps,X) \gg_{p,\eps} \exp[X,f(p)-g(p)];
\end{align*}
\item if $p \in \poly_4(S_4)$, then 
\begin{align*}
\#\cF(p,\eps,S_4,X) \gg_{p,\eps} \exp[X,f(p)-g(p)];
\end{align*}
\item if $p \in \poly_4(D_4)$, then 
\begin{align*}
\#\cF(p,\eps,D_4,X) \gg_{p,\eps} \exp[X,1 - g(p) + (q_2-2p_1) + (q_2-p_1-p_2) + (q_2 - p_1 -p_3)].
\end{align*}
\end{enumerate}
\end{lemma}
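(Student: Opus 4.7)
The plan is to follow the strategy of Proposition \ref{lowerboundcubic}: count integral pairs $(A,B) \in B(p,X)$ with $X/2 \leq \Delta \leq X$, translate each into a pair $(R,C)$ via Bhargava's parametrization, then control the successive minima via Lemma \ref{lambdaboundlemmaquartic} and the multiplicity of each isomorphism class via Lemma \ref{reducedquarticcorol}. Concretely, once $\eps$ exceeds the constant $M$ from Lemma \ref{lambdaboundlemmaquartic}, every such pair lies $(\eps,X)$-close to $p$, so
\[
\#\cF_4(p,\eps,X) \;\geq\; \frac{\#\{(A,B) \in B(p,X) \cap V_{\ZZ}\,:\, X/2 \leq \Delta \leq X\}}{\max_{(A,B)}\,\#\{(A',B') \in B(p,X) \cap V_{\ZZ} \,:\, (R',C')\simeq (R,C)\}},
\]
where the denominator is $\ll \exp[X, g(p)]$ by Lemma \ref{reducedquarticcorol}. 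For the numerator, a homogeneity/rescaling argument --- writing $B(p,X)$ as a linear image of a fixed box on which $\Disc$ is a nonzero polynomial of degree $12$ --- will show that a positive proportion of the volume satisfies $X/2 \leq |\Disc| \leq X$. Combining with Lemma \ref{davenportquartic} produces a numerator of order $\gg_p \exp[X, f(p)]$ and establishes (1).

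For part (2), the idea is to restrict to congruence classes cutting out the $S_4$-locus: an $S_4$-pair $(R,C)$ is detected by imposing that $R$ is a field and its cubic resolvent is a field of nonsquare discriminant, both Zariski-open conditions that I will sieve by reducing $(A,B)$ modulo a small auxiliary prime $\ell$ and requiring a specific $S_4$-realization over $\FF_\ell$ compatible with any cusp-forced zero entries (which may occur for $p \in \poly_4(S_4)$ with $q_2 > 2p_1$ or $q_2 > p_1 + p_2$, by Theorem \ref{s4polytopethm}). Each nontrivial coordinate direction of $B(p,X)$ still expands with $X$, so a positive proportion of admissible congruence classes is sampled, and the lower bound $\gg_{p,\eps} \exp[X, f(p) - g(p)]$ is retained.

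Part (3) is where the main technical work lies. The essential case is $p \in \poly_4(D_4) \setminus \poly_4(S_4)$, where $q_2 > p_1 + p_3$: then the box widths $X^{p_1+p_j-q_2}$ for $j=1,2,3$ are less than $1$, so every $(A,B) \in B(p,X) \cap V_{\ZZ}$ automatically satisfies $b_{11}=b_{12}=b_{13}=0$. This annihilates the first row of $B$, forcing $\det(B)=0$, so $4\det(Bx-Ay)$ is divisible by $y$ and the cubic resolvent algebra splits off a copy of $\QQ$; in particular $\Gal(R\otimes\QQ)$ cannot be $S_4$ or $A_4$. I then impose congruence conditions modulo an auxiliary prime $\ell$, \emph{compatible with the cusp constraint} $b_{11}=b_{12}=b_{13}=0$, that produce a $D_4$-quartic algebra over $\FF_\ell$ (as opposed to $C_4$, $V_4$, or reducible). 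Counting lattice points in the remaining $9$-dimensional box and using $p_1+p_2+p_3 = q_1+q_2 = 1/2$ gives logarithmic count
\[
\sum_{1 \leq i \leq j \leq 3}(p_i + p_j - q_1) \;+\; \sum_{2 \leq i \leq j \leq 3}(p_i + p_j - q_2) \;=\; f(p),
\]
so dividing by the multiplicity $\exp[X, g(p)]$ yields exponent $f(p) - g(p)$, which in this regime coincides with the expression in the theorem statement. For $p \in \poly_4(D_4) \cap \poly_4(S_4)$ the target exponent is bounded above by $f(p)-g(p)$ and may be negative; in the negative case the bound is trivial once I produce a single $D_4$-ring $(\eps,X)$-close to $p$, which exists because $p$ is a limit point of $\poly_4(D_4)$ by hypothesis. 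The main obstacle throughout is verifying the feasibility of the $D_4$-sieve in the $b_{11}=b_{12}=b_{13}=0$ cusp: one must exhibit a concrete $D_4$-realization in Bhargava's parametrization supported on this linear subvariety modulo $\ell$, which should follow from a direct construction analogous to the existence proofs in Theorems \ref{d4polytopethm} and \ref{quarticstructuretheorem}.
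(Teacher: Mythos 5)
For parts (1) and (2) your approach matches the paper's: divide the lattice-point count in $B(p,X)$ restricted to $X/2 \leq \Delta \leq X$ by the multiplicity bound from \cref{reducedquarticcorol}, invoking \cref{davenportquartic} for the numerator; and sieve an $S_4$-realization via congruence conditions at finitely many primes after exhibiting a suitable lattice point in $DB(p,X)$.

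For part (3) you have a genuine gap. You handle $p \in \poly_4(D_4) \setminus \poly_4(S_4)$ correctly, but you dispose of $p \in \poly_4(D_4) \cap \poly_4(S_4)$ by noting that the target exponent ``may be negative,'' so a single $D_4$-ring would suffice. It is in fact \emph{never} negative on $\poly_4(D_4)$: using $p_1+p_2+p_3 = 1/2$ and $q_1+q_2 = 1/2$, the exponent $1 - g(p) + (q_2-2p_1) + (q_2-p_1-p_2) + (q_2 - p_1 -p_3)$ simplifies to $\tfrac{1}{2} + q_2 + p_2 - p_3 \geq \tfrac{1}{4}$, since $q_2 \geq \tfrac{1}{4}$ and $p_3-p_2 \leq p_3 \leq \tfrac{1}{2}$. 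So a genuine lattice count is needed throughout, not only in your ``essential case.'' The repair, which is what the paper does, is to impose $b_{11}=b_{12}=b_{13}=0$ \emph{by fiat} for all $p \in \poly_4(D_4)$, also freezing the coordinates with $q_k = p_i+p_j$ (whose widths are $O(1)$); the logarithmic count of lattice points on this affine slice of $DB(p,X)$ is then exactly $1 + (q_2-2p_1) + (q_2-p_1-p_2) + (q_2-p_1-p_3)$, since dropping each $b_{1j}$ removes $\max\{0,p_1+p_j-q_2\}$ and the identity $\max\{0,q_2-s\}-\max\{0,s-q_2\}=q_2-s$ recovers the target even when those dropped widths grow with $X$, i.e.\ when $p\in\poly_4(S_4)$. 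Finally, the paper secures a positive proportion of $D_4$-rings on this slice not by the kind of $\FF_\ell$-congruence sieve you sketch (which would need careful specification, since the slice forces a reducible cubic resolvent and restricts the available Galois groups), but by exhibiting one $D_4$-pair on the slice and invoking quantitative Hilbert irreducibility (Theorem 2.1 of \cite{cohen}): the generic Galois group on the slice is $D_4$, so Hilbert irreducibility gives density $1$ directly.
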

\begin{proof}
We first prove $(1)$. Choose $p \in \poly_4$. \cref{lambdaboundlemmaquartic} shows that there exists a constant $M$ such that for all $\eps > M$, if $(A,B) \in B(p,X) \cap V_{\ZZ}$ and $X/2 \leq \Delta$, then $(A,B)$ is $(\eps,X)$-close to $p$. Let $S = \{(A,B) \in B(p,X) \cap V_{\ZZ} \mid X/2 \leq \Delta \leq X\}$.
Therefore
\begin{align*}
\#\cF_4(p,\eps,X) &\geq \frac{\#S}{\max_{(A,B) \in S}\#\{(A',B') \in B(p,X) \mid (R',C')\simeq (R,C)\}} && \text{} \\
&\asymp \frac{\#\{(A,B) \in B(p,X) \cap V_{\ZZ} \}}{\max_{(A,B)\in S}\#\{(A',B') \in B(p,X) \mid (R',C')\simeq (R,C)\}} && \text{by the definition of $B(p,X)$}.
\end{align*}
Applying \cref{davenportquartic} to estimate the numerator and applying \cref{reducedquarticcorol} to upper bound the denominator completes the proof of $(1)$.

The argument above also proves $(2)$, with the additional input that there exists $D \in \RR_{\geq 1}$ such that a positive proportion, as $X \rightarrow \infty$, of the lattice points in $DB(p,X) \cap V_{\ZZ}$ have Galois group $S_4$ and $\Delta \geq X/2$. In fact, we may replace the $\Delta \geq X/2$ condition with $\Delta \geq X/M$ for arbitrarily large $M$. Observe that
\[
	\lim_{M \rightarrow \infty} \lim_{X \rightarrow \infty} \frac{\#\{(A,B) \in DB(p,X) \cap V_{\ZZ} \; \mid \; \Delta > X/M\}}{\#(DB(p,X)\cap V_{\ZZ})} = 1.
\]
Thus it suffices to prove that that there exists $D \in \RR_{\geq 1}$ such that a positive proportion of the lattice points in $DB(p,X) \cap V_{\ZZ}$ have Galois group $S_4$ as $X \rightarrow \infty$. Consider the subset of lattice points of $DB(p,X)$ such that $\Delta \equiv 5 \Mod{25}$, the ring $R$ is irreducible modulo $7$, and the ring $C$ is irreducible modulo $11$; every element of this subset has Galois group $S_4$. If this set is nonempty for large enough $X$, then it forms a positive proportion of $DB(p,X)$. The nonemptiness follows from the fact that there exists a pair of integral ternary quadratic forms
\[
(A,B) = \Bigg(\begin{bmatrix}
    * & * & *  \\
    - & * & *  \\
    - & - & * 
  \end{bmatrix},
    \begin{bmatrix}
    0 & 0 & *  \\
    - & * & *  \\
    - & - & * 
  \end{bmatrix}\Bigg)
\]
such that $\Delta = 5 \Mod{25}$, the ring $R$ is irreducible modulo $7$, and the ring $C$ is irreducible modulo $11$. Choosing $D$ to be large enough such that $(A,B) \in DB(p,X)$ completes the proof of this case.

Similarly, to prove $(3)$ we will show that there exists $D \in \RR_{\geq 1}$ such that a positive proportion, as $X \rightarrow \infty$, of the lattice points $(A,B) \in B(p,X) \cap V_{\ZZ}$ satisfying $b_{11} = b_{12} = b_{13} = 0$ have Galois group $D_4$. First, it is easy to see that there exists a pair of integral ternary quadratic forms 
\[
	(A,B) = \Bigg(\begin{bmatrix}
    * & * & *  \\
    - & * & *  \\
    - & - & * 
  \end{bmatrix},
    \begin{bmatrix}
    0 & 0 & 0  \\
    - & * & *  \\
    - & - & * 
  \end{bmatrix}\Bigg)
\]
giving rise to a $D_4$-ring. Choose $D$ large enough so that $(A,B) \in DB(p,X)$. Let $S_k = \{(i,j) \colon q_k = p_i + p_j\}$ for $k = 1,2$. Let
\[
	Z = \{(A',B') \in V_{\ZZ} \mid \forall (i,j) \in S_1,\; \; a'_{ij} = a_{ij} \text{ and } \forall (i,j) \in S_2,\; \; b'_{ij} = b_{ij} \text{ and } b_{11} = b_{12} = b_{13} = 0\}.
\]
Observe that $Z \cap DB(p,X)$ contains a positive proportion of the lattice points in $DB(p,X)$. Further, observe that a general pair of integral ternary quadratic forms in $Z$ gives rise to a $D_4$-ring. Thus, by Hilbert's irreducibility theorem, the subset of $Z$ with Galois group $\neq D_4$ is a thin set; an application of quantitative Hilbert irreducibility (specifically Theorem~2.1 of \cite{cohen}) shows that the density, as $X \rightarrow \infty$, of lattice points in $Z$ with Galois group $\neq D_4$ is $0$.
\end{proof}

\subsubsection{Computing density functions}

\begin{proof}[Proof of \cref{fulldensitythmquartic}]
Clearly, the support of $d_{\eps,4}$ is contained in $\poly_4$. If $p \in \poly_4$, then combining \cref{upperboundquartic} and \cref{lowerboundquartic} shows that there exists a constant $M$ such that for all $\eps > M$, we have
\begin{align*}
\#\cF_4(p,\eps,X) \asymp_{\eps,p} \exp[X,1 - &(p_3 - p_2) - (p_3 - p_1) - (p_2 - p_1) - (q_2 - q_1) \\
&+\sum_{i \leq j,k}\max\{0, q_k-p_i-p_j\};
\end{align*}
\end{proof}

\begin{proof}[Proof of \cref{s4densitythm}]
The proof is analogous to that of \cref{fulldensitythmquartic}. Combine \cref{s4polytopethm}, \cref{upperboundquartic}, and \cref{lowerboundquartic}.
\end{proof}

\begin{proof}[Proof of \cref{d4densitythm}]
Again, the proof is analogous to that of \cref{fulldensitythmquartic}. Combine \cref{d4polytopethm}, \cref{upperboundquartic}, and \cref{lowerboundquartic}.
\end{proof}

\subsection{The generic points of $D_4$ and $S_4$}
In this section, we prove \cref{genericgaloisgroupscorollary}, first for $D_4$ and then for $S_4$. To prove \cref{genericgaloisgroupscorollary} for $D_4$, we will need the following lemma\footnote{This lemma can be strengthened significantly, but for our application this statement suffices.}.

\subsubsection{The generic point of $D_4$}
\begin{lemma}
\label{d4sievelemma}
Choose $p \in \poly_4(S_4)$ such that $p_i + p_j - q_k > 0$ for all $1 \leq i,j \leq 3$ and $1 \leq k \leq 2$. Then
\begin{equation*}
	\#\{(R,C) \mid \; \; \Delta \leq X \text{, $R$ is a $D_4$-ring, and $R$ is $(\eps,X)$-close to $p$}\} \ll_{\eps,\gamma} \exp[X,1 - \delta_{p} + \gamma]
\end{equation*}
where $\delta_{p} = \frac{1}{2}\max_{1 \leq i, j\leq 3, 1\leq k \leq 2}\{p_i + p_j - q_i\}$. Moreover, the implicit constant is independent of $p$.
\end{lemma}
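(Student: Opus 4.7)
The key algebraic fact is that when $R$ is a $D_4$-ring, its resolvent cubic algebra equals $\QQ \times K$ for a quadratic field $K$, so $C$ is reducible over $\QQ$. Under the Delone--Faddeev parametrization applied to $C$, this is equivalent to the associated binary cubic form $f_{A,B} \coloneqq 4\dett(Bx - Ay)$ being reducible over $\QQ$. The plan is to sieve the bound of \cref{upperboundquartic} by imposing reducibility of $f_{A,B}$, leveraging \cref{upperboundreduciblept1}.

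By \cref{ublemma1}, each isomorphism class $(R,C)$ appearing in the count gives rise to at least one pair $(A,B) \in Me^{3\eps}B(p,X) \cap V_{\ZZ}$ via a Minkowski basis, for some absolute $M \geq 1$; by \cref{ublemma2}, each such class accounts for at least $\exp[X, g(p)]$ lattice points of this form. So it will suffice to bound
\[
\#\{(A,B) \in Me^{3\eps}B(p,X) \cap V_{\ZZ} : f_{A,B} \text{ is reducible over } \QQ\} \ll_{\eps,\gamma} X^{1 - \delta_p + g(p) + \gamma}.
\]
A direct coefficient calculation will show that for $(A,B)$ in this box, the cubic form $f_{A,B}$ lies in the cubic box $B((q_1,q_2), cX)$ of \cref{cubic} (up to an absolute constant $c$); the key identity is $1 - 3q_k = 3q_{3-k} - 1/2$, which follows from $q_1 + q_2 = 1/2$ and matches the $a,b,c,d$-coefficient bounds of the cubic box with those obtained by expanding $4\dett(Bx - Ay)$. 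Consequently \cref{upperboundreduciblept1} will give $\ll_{\nu} X^{3/2 - 3q_1 + \nu}$ reducible integer binary cubic forms in this cubic box.

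The main obstacle will be to control, uniformly in the reducible form $f$, the fiber $\{(A,B) \in Me^{3\eps}B(p,X) \cap V_{\ZZ} : f_{A,B} = f\}$ and to sum the resulting estimate. I plan to decompose the sum over reducible $f$'s by their rational root $(s:t) \in \PP^1(\QQ)$: for each root, the $\GL_2(\ZZ)$-change of basis sending $(s:t)$ to $(1:0)$ reduces the fiber count to the subcase $\dett(B) = 0$, which I would handle via a Davenport-style count of integer points on the variety of symmetric rank-$\leq 2$ matrices in the relevant $B$-box. The worst case should correspond to the index $(i,j,k)$ achieving the maximum in $\delta_p = \frac{1}{2}\max_{i,j,k}\{p_i + p_j - q_k\}$, with the factor $\frac{1}{2}$ arising from the rank-deficiency structure of the vanishing determinant. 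Combining this fiber bound with the count of reducible $f$'s and dividing by $\exp[X, g(p)]$ should yield the claimed exponent.
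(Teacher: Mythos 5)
Your approach is genuinely different from the paper's, and it has gaps that are not addressed. The paper's proof is short: by \cref{ublemma1}, it reduces to showing the number of lattice points $(A,B) \in MB(p,X)$ whose associated resolvent cubic $4\dett(Bx-Ay)$ is reducible is $\ll X^{1-\delta_p+\gamma}$, and then it cites a generalization of Cohen's quantitative Hilbert irreducibility theorem (Theorem 2.1 of \cite{cohen}) together with \cref{davenportquartic} to get the $\sqrt{\max_i N_i}$-type savings. You instead propose an elementary argument via reducible binary cubics, roots $(s:t)$, and Davenport-style counts on $\{\dett(B) = 0\}$, bypassing Cohen's theorem.

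The key difficulties your plan does not resolve are the following. First, the $\GL_2(\ZZ)$ change of basis sending a root $(s:t)$ to $(1:0)$ does not preserve the box $Me^{3\eps}B(p,X)$; once you conjugate to the case $\dett(B)=0$ you are working in a box whose shape depends on $(s:t)$, and the dependence on the size of $(s:t)$ (which can be as large as roughly $X^{3q_2-1/2}$ in one coordinate) has to be quantified. Second, you must sum over rational roots, of which there can be on the order of $X^{1/2}$; the interplay between the count of $(A,B)$ fixing a given root and the number of allowable roots is precisely where the argument should close, but it is left open. Third, and most seriously, the assertion that a Davenport-style count on the rank-deficiency locus $\{\dett(B)=0\}$ produces the half-exponent saving $\delta_p = \tfrac{1}{2}\max\{p_i+p_j-q_k\}$ is not substantiated. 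The variety $\{\dett(B)=0\}$ contains the coordinate linear subspace $\{b_{11}=b_{12}=b_{13}=0\}$, which for box shapes $p$ satisfying the hypothesis can carry on the order of $X^{2-3p_1-3q_1}$ lattice points — near the binary-quartic vertex $(1/6,1/6,1/6,1/6,1/3)$ (where the hypothesis $p_i+p_j-q_k>0$ is barely satisfied) this exceeds $X^{1-\delta_p}$. A naive Davenport count will not see past this subspace, so your proposed mechanism for producing the saving $\delta_p$ does not obviously work. (Indeed this subspace issue also means the \emph{paper's} intermediate lattice-point bound is false near that boundary, but because the lemma is only applied in a small ball around the $S_4$-vertex $(1/6,1/6,1/6,1/4,1/4)$ this does not affect the paper's application.) One genuine improvement in your plan is that you divide by the multiplicity $\exp[X,g(p)]$ before bounding, which is sharper than the paper's reduction and does rescue the class count near the problematic boundary; but the box in \cref{ublemma2} is $DMe^{3\eps}B(p,X)$ rather than $Me^{3\eps}B(p,X)$, a small discrepancy you should repair, and in any case the lattice-point estimate you then need is not established.
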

\begin{proof}
By \cref{ublemma1}, it suffices to show that for any $M \geq 1$, we have
\begin{equation*}
\{(A,B) \in MB(p,X) \mid R \text{ is a $D_4$-ring and } \Delta \leq X\} \ll_{M,\gamma} X^{1-\delta_{p} + \gamma}.
\end{equation*}
Observe that if $(A,B)$ gives rise to a $D_4$-ring, then the binary cubic form $\dett(Ax + By)$ is reducible. We now use a version of quantitative Hilbert irreducibility (specifically an easy generalization of Theorem~2.1 of \cite{cohen}) and \cref{davenportquartic} to conclude.
\end{proof}

\begin{proof}[Proof of \cref{genericgaloisgroupscorollary} for $D_4$]
Let $p(D_4) = (0,1/4,1/4,0,1/2)$. We prove \eqref{correspondquartic1} for $D_4$ by demonstrating the following equivalent statement:
\begin{equation}
\label{D4allfar}
	\lim_{\eps \rightarrow \infty} \limsup_{X \rightarrow \infty}\frac{\#\{(R,C) \; \mid \; \Delta \leq X \text{ and $R$ is a $D_4$-ring that is not $(\eps,X)$-close to $p(D_4)$}\}}{\#\{(R,C) \; \mid \; \Delta \leq X \text{ and $R$ is a $D_4$-ring}\}} = 0.
\end{equation}
It is easy to bound the denominator; when ordered by absolute discriminant, $100\%$ of quadratic extensions of $\QQ(i)$ are quartic $D_4$-extensions, and there are $\asymp X$ quadratic extensions of $\QQ(i)$ of absolute discriminant $\leq X$. Hence  
\begin{equation}
\label{totald4count}
	\#\{(R,C) \; \mid \; \Delta \leq X \text{ and $R$ is a $D_4$-ring}\} \gg X.
\end{equation}

We now focus our attention on bounding the numerator of \eqref{D4allfar}. Let $\ell \leq k$ be positive integers whose values we fix their value later. Our proof strategy is very similar to that of \cref{genericcubiccorollary}; we count the number of $D_4$-rings of absolute discriminant $\leq X$ that are $(\eps/\ell,X)$-close to some \emph{other} point $p'$ on $\poly_4(D_4)$ with the property that $p'$ has distance at least $\eps/\log X$ from $p$. We then sum over an appropriate set of points $p'$; we evaluate this sum by splitting it into a \emph{main term} and a \emph{cusp}. We bound both the main term and the cusp to conclude.

To this end, there exists a constant $M$ such that for all $\eps > M$, we have:
\begin{align*}
& \#\{(R,C) \mid \Delta \leq X \text{, $R$ is a $D_4$-ring, and $R$ is not $(\eps,X)$-close to $p(D_4)$}\} \\
&\leq \MT + \Cusp
\end{align*}
where:
\begin{align*}
\MT & \coloneqq \#\{(R,C) \mid \; \; \Delta \leq X \text{, $R$ is a $D_4$-ring, and $R$ is $(\eps/k,X)$-close to a point in} \\
& \; \; \; \;\;\;\;\;\;\;\;\;\;\;\;\;\;\; \;\;\;\;\;\;\;\;\; \poly_4(D_4) \cap \{p \in \RR^5 \mid \frac{\eps}{\log X} \leq p_1 \leq 1/100\} \\
\Cusp & \coloneqq \#\{(R,C) \mid \; \; \Delta \leq X \text{, $R$ is a $D_4$-ring, and $R$ is $(\eps,X)$-close to a point in} \\
& \; \; \; \;\;\;\;\;\;\;\;\;\;\;\;\;\;\; \;\;\;\;\;\;\;\;\; \poly_4(D_4) \cap \{p \in \RR^5 \mid p_1 \geq 1/100\}
\end{align*} 
Therefore, to prove \eqref{D4allfar} it suffices to show that
\begin{equation}
\label{mtcuspzero}
	\lim_{\eps \rightarrow \infty}\limsup_{X \rightarrow \infty}\frac{\MT}{X} = \lim_{\eps \rightarrow \infty}\limsup_{X \rightarrow \infty}\frac{\Cusp}{X} = 0. 
\end{equation}
We begin by evaluating $\MT$. Choose values of $\ell$ and $k$ such that:
\begin{enumerate}
\item $\ell > 36$;
\item and for any point $(p_1,p_2,p_3,q_1,q_2) \in \poly_4$ and any pair $(R,C)$, if
\begin{align*}
\lv \log_{\Delta}\lambda_1(R) - p_1 \rv &\leq \frac{\eps}{k \log \Delta} \\
\lv \log_{\Delta}\lambda_2(R) - p_2 \rv &\leq \frac{\eps}{k \log \Delta} \\
\lv \log_{\Delta}\lambda_1(C) - q_1 \rv &\leq \frac{\eps}{k \log \Delta},
\end{align*}
then $(R,C)$ is $(\eps/\ell, \Delta)$-close to $(p_1,p_2,p_3,q_1,q_2)$.
\end{enumerate} 
We now define our appropriate set of points; they will form a grid whose points are spaced $\frac{\eps}{k\log X}$ distance apart and lie in the subset of $\poly_4(D_4)$ where $p_1 \leq 1/100$. For any integers $x$, $y$, and $z$, define the point
\[
	p_{xyz} \coloneqq (p_1(x),p_2(x,y),p_3(x,y),q_1(x,y,z),q_2(x,y,z))
\]
via the following formulae:
\begin{align*}
 p_1(x) &\coloneqq \frac{\eps}{\log X} + \frac{\eps x}{k \log X} \\
 p_2(x,y) &\coloneqq \bigg(\frac{1}{4} - \frac{1}{2}p_1(x)\bigg) - \frac{\eps y}{k \log X} \\
 p_3(x,y) &\coloneqq 1/2 - p_1(x) - p_2(x,y) \\
 q_1(x,y,z) &\coloneqq 1/2 - 2p_2(x,y) +  \frac{\eps z }{k\log X}\\
 q_2(x,y,z) &\coloneqq 1/2 - q_1(x,y,z)
\end{align*}
Let $M$ be the smallest positive integer such that 
\[
	p_1(M) \geq 1/100.
\]
For any nonnegative integer $x$, let $N_x$ be the smallest positive integer such that 
\[
	p_2(x,N_x) \leq \frac{1}{4} - p_1(x).
\]
For any nonnegative integers $x$ and $y$, let $L_{xy}$ be the smallest positive integer such that
\[
	q_1(x,y,L_{xy}) \geq 2p_1(x).
\]
The values $M$, $N_x$, and $L_{xy}$ indicate where the grid meets the boundaries of $\poly_4(D_4)$; see \cref{d4polytopethm} for an explicit description of the facets. Thus, we may write $\MT$ as the following triple sum\footnote{This is essentially Riemann integration on $\poly_4(D_4)$!}:
\begin{equation}
\label{triplesum}
	\MT \leq  \sum_{x = 0}^{M}\sum_{y = 0}^{N_{x}}\sum_{z = 0}^{L_{xy}}\#\{(R,C) \mid \Delta \leq X \text{, $R$ is a $D_4$-ring, and $R$ is $(\eps/\ell,X)$-close to $p_{xyz}$}\} 
\end{equation}
Without loss of generality, fix $\eps$ to be large enough so that $d_{\eps/k,4}$ does not depend on $\eps$; for conciseness, let $h = d_{\eps/k,4}$. We now bound the inner sum of \eqref{triplesum}; choose positive integers $x$ and $y$ such that $0 \leq  x \leq M$ and $0 \leq y \leq N_x$. Then:
\begin{align*}
&\sum_{z = 0}^{L_{xy}}\#\{(R,C) \mid \Delta \leq X \text{, $R$ is a $D_4$-ring, and $R$ is $(\eps/\ell,X)$-close to $p_{xyz}$}\} && \\
&\ll \sum_{z = 0}^{L_{xy}}e^{36\eps/\ell}\exp[X,h(p_{xyz})] && \text{by \cref{upperboundquartic}} \\
&= \sum_{z = 0}^{L_{xy}}e^{36\eps/\ell}\exp[X,h(p_{xy0})]X^{-\eps z/k\log X} && \text{by \cref{fulldensitythmquartic}} \\
&= e^{36\eps/\ell}\exp[X,h(p_{xy0})]\sum_{z = 0}^{L_{xy}}e^{-\eps z/k} \\
&\leq e^{36\eps/\ell}\exp[X,h(p_{xy0})]\frac{1}{1-e^{-\eps/k}}
\end{align*}
Moreover, the implicit constants are independent of $x$, $y$, and $\eps$. We now bound the second sum, also using \cref{upperboundquartic} and \cref{fulldensitythmquartic}. Fix a positive integer $0 \leq x \leq M$; there exists a positive real number $s$ such that: 
\begin{align*}
\sum_{y = 0}^{N_{x}} \exp[X,h(p_{xy0})] &= \exp[X,h(p_{x00})]\sum_{y = 0}^{N_{x}}X^{-\eps sy/k\log X} \\
&\leq \exp[X,h(p_{x00})]\frac{1}{1-e^{-\eps s/k}}.
\end{align*}
We bound the outer sum in exactly the same way; again \cref{upperboundquartic} implies that there exists a positive real number $t$ such that:
\begin{align*}
\sum_{x = 0}^{M} \exp[X,h(p_{x00})] &= \exp[X,h(p_{000})]\sum_{z = 1}^{M}X^{-\eps tx/k\log X} \\
&\leq \exp[X,h(p_{000})]\frac{1}{1-e^{-\eps t/k}}.
\end{align*}
Thus,
\begin{align*}
\frac{\MT}{X} &\ll e^{36\eps/\ell}X^{-1}\exp[X,h(p_{000})] \frac{1}{1-e^{-\eps t/k}}\frac{1}{1-e^{-\eps s/k}}\frac{1}{1-e^{\eps/k}}\\
&= e^{36\eps/\ell}X^{-\eps/\log X}\frac{1}{1-e^{-\eps t/k}}\frac{1}{1-e^{-\eps s/k}}\frac{1}{1-e^{-\eps/k}} \\
&= e^{36\eps/\ell - \eps}\frac{1}{1-e^{-\eps t/k}}\frac{1}{1-e^{-\eps s/k}}\frac{1}{1-e^{-\eps/k}}
\end{align*}
where the implicit constant is independent of $\eps$. Because $\ell > 36$, we have
\[
	\lim_{\eps \rightarrow \infty}e^{36\eps/\ell - \eps}\frac{1}{1-e^{-\eps t/k}}\frac{1}{1-e^{-\eps s/k}}\frac{1}{1-e^{\eps/k}} = 0.
\]
So,
\[
	\lim_{\eps \rightarrow \infty}\limsup_{X \rightarrow \infty}\frac{\MT}{X} = 0.
\]

We now bound $\Cusp$. For $p \in \poly_4$, \cref{upperboundquartic} implies that:
\begin{equation}
\label{d4cuspbound1}
	\#\{(R,C) \mid \; \; \Delta \leq X \text{, $R$ is a $D_4$-ring, and $R$ is $(\eps,X)$-close to $p$}\} \ll e^{36\eps}\exp[X,h(p)]
\end{equation}
where the implicit constant is independent of $\eps$ and $p$. Let $B$ be an open ball of radius $1/100$ around $(1/6,1/6,1/6,1/4,1/4)$. For $p \in B\cap \poly_4(S_4)$, \cref{d4sievelemma} shows that
\begin{equation}
\label{d4cuspbound2}
	\#\{(R,C) \mid \; \; \Delta \leq X \text{, $R$ is a $D_4$-ring, and $R$ is $(\eps,X)$-close to $p$}\} \ll_{\eps,\gamma} \exp[X,1 - \delta_{p} + \gamma]
\end{equation}
where $\delta_{p} = \frac{1}{2}\max_{1 \leq i, j\leq 3, 1\leq k \leq 2}\{p_i + p_j - q_i\}$ and the implicit constant is independent of $p$. Combining \eqref{d4cuspbound1} and \eqref{d4cuspbound2}, we conclude there exists a positive real number $\eta < 1$ such that for every $p \in \poly_4(D_4)$ with $p_1 \geq 1/100$, we have 
\[
	\#\{(R,C) \mid \; \; \Delta \leq X \text{, $R$ is a $D_4$-ring, and $R$ is $(\eps,X)$-close to $p$}\} \ll_{\eps} \exp[X,\eta],
\]
with an implicit constant independent of $p$. Summing over an appropriate grid in $\poly_4(D_4)$, we see that:
\[
	\Cusp \ll_{\eps} (\log X)^3\exp[X,\eta].
\]
Thus
\[	
	\lim_{\eps \rightarrow \infty}\lim_{X \rightarrow \infty}\frac{\Cusp}{X} = 0.
\]

We now show \eqref{correspondquartic2}, which translates to showing that
\[
	\liminf_{X \rightarrow \infty}\frac{\#\{(R,C) \; \mid \; \Delta \leq X \text{ and $(R,C)$ is a $D_4$-ring and is $(\eps,X)$-close to $p(D_4)$}\}}{\#\{(R,C) \; \mid \; \Delta \leq X \text{ and $(R,C)$ is $(\eps,X)$-close to $p(D_4)$}\}} > 0.
\]
Combining \cref{upperboundquartic} and \cref{lowerboundquartic} shows that the denominator is $\asymp_{\eps} X$.

We now show that the numerator is $\gg X$. When ordered by absolute discriminant, $100\%$ of quadratic extensions of $\QQ(i)$ are quartic $D_4$-extensions. Let $R$ be a maximal order in a quartic $D_4$-extension containing $\QQ(i)$. Then $i \in R$, so $\lambda_1(R) = 1$; moreover, let $v \in R\setminus \ZZ[i]$ be such that $\lv v \rv =  \lambda_2(R)$; then $\lv iv \rv = \lv v \rv$, so $\lambda_3(R) = \lambda_2(R)$. Moreover, $R$ has a unique cubic resolvent ring $C$; let $\varphi \colon R/\ZZ \rightarrow C/\ZZ$ be the cubic resolvent map. Then because $R$ is irreducible, $\varphi(i)$ must be a nonzero element of $C/\ZZ$; moreover, $\lambda_1(C) \leq \lv \varphi(i) \rv \ll 1$. Therefore there exists a positive real number $M$ such that for all $\eps > M$, every such ring $R$ is $(\eps,\Delta)$-close to $p(D_4)$. Observing that there are $\asymp X$ many quadratic extensions of $\QQ(i)$ of absolute discriminant $\leq X$ concludes the proof.
\end{proof}

\subsubsection{The generic point of $S_4$}
To prove \cref{genericgaloisgroupscorollary} when $G = S_4$, we appeal to upcoming work of H and Vincent. Given a nondegenerate rank $n$ ring $R$, we consider the rank $(n-1)$ sublattice $\cL_R \coloneqq \{x \in \ZZ + nR \; \mid \; \Tr(x) = 0\}$, up to isometry. It is naturally seen as an element of the double coset space
\[
	\mathcal{S}_{n-1} \coloneqq \GL_{n-1}(\ZZ)\backslash \GL_{n-1}(\RR)/\GO_{n-1}(\RR).
\]
The space $\mathcal{S}_{n-1}$ has a natural measure $\mu_{n-1}$, obtained from the Haar measure on $\GL_{n-1}(\RR)$ and $\GO_{n-1}(\RR)$. To a nondegenerate pair $(R,C)$ we associate an element $(\cL_R,\cL_C) \in \cS_{3} \times \cS_2$; we call this the \emph{shape} of the pair $(R,C)$.

\begin{theorem}[H, Vincent]
\label{equidistribofquartic}
When ordered by absolute discriminant, the shapes of pairs $(R,C)$ where $R$ is an $S_4$-ring are equidistributed with respect to $\mu_3\times \mu_2$.
\end{theorem}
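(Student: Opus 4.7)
The plan is to adapt the averaging / geometry of numbers framework of Bhargava and H to the pair of lattice invariants $(\cL_R, \cL_C)$, using Bhargava's parametrization of pairs $(R,C)$ by $G_\ZZ = \GL_2(\ZZ) \times \GL_3(\ZZ)$ orbits on $V_\ZZ \subseteq \RR^2 \otimes \Sym^2 \RR^3$ as the organizing principle. Fix an open, bounded, measurable subset $U = U_3 \times U_2 \subseteq \cS_3 \times \cS_2$ whose boundary has measure zero. The goal is to show
\[
    \#\{(R,C) \mid \text{$R$ an $S_4$-ring, } \Delta \leq X, \; (\cL_R, \cL_C) \in U\} \sim \mu_3(U_3)\,\mu_2(U_2)\,\#\{(R,C) \mid \text{$R$ an $S_4$-ring, } \Delta \leq X\}.
\]

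First, I would set up a Siegel-type fundamental domain $\cF$ for $G_\ZZ$ acting on the nondegenerate locus $V_\RR^{\nondeg}$ and use the Iwasawa decomposition $g = n a k$ on each of $\GL_2(\RR)$ and $\GL_3(\RR)$ to write $\cF = N' A' K \cdot V_0$, where $N'$ is a bounded region in the unipotent, $A'$ ranges over a positive cone of diagonal torus elements, $K$ is the product of maximal compacts, and $V_0$ is a bounded set of ``base points'' in $V_\RR$. The key observation is that the shape $(\cL_R, \cL_C)$ of a pair $(R,C)$ corresponding to $(A,B) \in n a k \cdot V_0$ depends only on the $A'$-component $a$ (up to the compact factors), so restricting to shapes in $U$ amounts to restricting $a$ to a specific subset $A'(U) \subseteq A'$ whose Haar measure equals $\mu_3(U_3)\mu_2(U_2)$ after the change of variables from Iwasawa to symmetric-space coordinates.

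Next, following Bhargava's averaging trick, I would replace counting points in $\cF$ by counting points in $\frac{1}{\Vol(K_0)} \int_{K_0} g \cF \, dg$ for a sufficiently large compact $K_0 \subseteq G_\RR$, and swap the order of integration. This expresses the weighted count as an integral over $G_\RR/G_\ZZ$ of the number of lattice points in a translate of a fixed box, which for each fixed torus parameter $a$ can be evaluated by Davenport's lemma (\cref{davenport}) as the volume of the box, with an error term controlled by boundary projections. The total weighted count becomes, to leading order, $X \cdot \Vol_{K_0}(U)$ where the volume factorizes through the Iwasawa decomposition as $\mu_3(U_3) \mu_2(U_2)$ times a constant that matches the total count of \cref{lowerboundquartic} and \cref{upperboundquartic}, proving equidistribution on the ``main body'' of the fundamental domain.

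The main obstacle, as always in this circle of ideas, will be the cusp: the region of $\cF$ where the torus coordinate $a$ is large, which corresponds to $(R,C)$ with highly skew lattice invariants. One must show that the contribution from the cusp to both the numerator (pairs with shape in $U$) and the denominator (all $S_4$-pairs) is negligible compared with the main term of order $X$. For reducible and $\neq S_4$ strata one can use the sieving ideas that underlie \cref{s4densitythm} together with a quantitative Hilbert irreducibility estimate (as invoked in the proof of \cref{lowerboundquartic}(3)) to get a power-saving. The delicate part is uniformity: one needs the error estimates to be uniform in $U$ so that they do not swamp the leading term for open sets $U$ that extend toward the boundary of $\cS_3 \times \cS_2$, and one needs to verify that the cuspidal loci in $V_\RR$ which do not contribute $S_4$-rings (forms with $b_{11}=b_{12}=b_{13}=0$, etc., appearing in \cref{s4polytopethm}) carry proportionally negligible shape-measure. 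Combining this cusp analysis with the main-term computation above yields the equidistribution statement.
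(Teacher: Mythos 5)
This theorem is not proved in the paper: it is explicitly attributed to ``[H, Vincent]'' and the surrounding text states that the author is ``appeal[ing] to upcoming work'' of those authors. There is therefore no internal proof for your proposal to be compared against. What the paper does with the theorem is only to \emph{apply} it, inside the proof of \cref{genericgaloisgroupscorollary} for $G = S_4$, to deduce that shapes of pairs $(R,C)$ concentrate near the center of $\cS_3 \times \cS_2$, and then combine that with \cref{upperboundquartic} and \cref{lowerboundquartic}.

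Your sketch is a reasonable account of the template one would expect such a proof to follow --- Iwasawa coordinates on a Siegel domain for $G_\ZZ = \GL_2(\ZZ)\times\GL_3(\ZZ)$ acting on $\RR^2\otimes\Sym^2\RR^3$, Bhargava's averaging over a compact to linearize the cusp, Davenport's lemma for the main term, and a cusp estimate using the stratification behind \cref{s4polytopethm}. That is the standard blueprint from Terr and Bhargava--H for shape equidistribution, and it is likely close in spirit to what the cited work does. But you should be aware of two things. First, since the paper does not contain a proof, correctness of your proposal cannot be judged by comparison; it would have to be verified on its own terms, and as written it is a roadmap rather than a proof --- the crucial uniformity-in-$U$ claim near the boundary of $\cS_3\times\cS_2$ and the precise factorization of the Iwasawa Jacobian into $\mu_3\times\mu_2$ are both asserted without argument and are exactly the places where the real work lies. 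Second, the theorem as stated concerns \emph{pairs} $(R,C)$, and the shape pairs $(\cL_R,\cL_C)$ are highly correlated (they both descend from the same point of $V_\ZZ$ through the same torus coordinate $a$), so asserting equidistribution with respect to the \emph{product} measure $\mu_3\times\mu_2$ is a genuinely nontrivial independence statement; your sketch treats it as an automatic consequence of the Iwasawa splitting, which deserves more care.
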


\begin{proof}[Proof of \cref{genericgaloisgroupscorollary} when $G = S_4$]
Let $p(S_4) = (1/6,1/6,1/6,1/4,1/4)$. We will first prove \eqref{correspondquartic1}. 
For an integer $n\geq 2$, an element $\cL \in \cS_{n}$, and an integer $0 \leq i < n$, define $\lambda_i(\cL)$ to be the $i$-th successive minimum of a lattice representing $\cL$ with determinant $1$. For a positive real number $\eps$ and an integer $n \geq 2$, define
\[
	\cS_{n,\eps} \coloneqq \{\cL \in \cS_n \mid \max_{1 \leq i < n}\{\log\lambda_i(\cL)\} \leq \eps\}.
\]
Then 
\begin{align*}
&\lim_{\eps \rightarrow \infty} \liminf_{X \rightarrow \infty}\frac{\#\{(R,C) \; \mid \; \Delta \leq X \text{ and $R$ is a $S_4$-ring that is $(\eps,X)$-close to $p(S_4)$}\}}{\#\{(R,C) \; \mid \; \Delta \leq X \text{ and $R$ is a $S_4$-ring}\}} \\
&\geq \lim_{\eps \rightarrow \infty} \liminf_{X \rightarrow \infty}\frac{\#\{(R,C) \; \mid \Delta \leq X \text{ and $R$ is a $S_4$-ring and } (\cL_R,\cL_C) \in \cS_{3,\eps} \times \cS_{2,\eps}\}}{\#\{(R,C) \; \mid \; \Delta \leq X \text{ and $R$ is a $S_4$-ring}\}} \\
&= \lim_{\eps \rightarrow \infty}\frac{(\mu_3\times\mu_2)(\cS_{3,\eps} \times \cS_{2,\eps})}{(\mu_3\times\mu_2)(\cS_{3} \times \cS_2)} \\
&= 1
\end{align*}
We prove \eqref{correspondquartic2} with the following observation: \cref{upperboundquartic} and \cref{lowerboundquartic} combine to prove that there exists a positive real number $M$ such that for all $\eps > M$, a positive proportion of rings in $\cF(p(S_4),\eps,X)$ as $X \rightarrow \infty$ are $S_4$-rings.
\end{proof}

\subsection{Special families of quartic rings}
\subsubsection{Quartic rings with a basis of the form $1,x,y,xy$}

The following index formula of Bhargava \cite{BhargavaHCL3} is essential to our study of quartic rings equipped with a basis of the form $1,x,y,xy$. For a rank $n$ ring $R$ and $n$ elements $v_1,\dots,v_n \in R$, define $\Ind(v_1,\dots,v_n)$ to be the index of $\ZZ\la v_1,\dots,v_n\ra$ in $R$ if $ v_1,\dots,v_n$ are linearly independent, and $0$ otherwise.

\begin{proposition}[Bhargava \cite{BhargavaHCL3}, Lemma~9]
\label{1xyxyidentity}
If $R$ is a quartic ring and $C$ is a cubic resolvent ring of $R$, then for any $x,y \in R$, 
\[
	\Ind(1,x,y,xy) = \Ind(1,\varphi(x),\varphi(y)).
\]
\end{proposition}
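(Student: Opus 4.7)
The plan is to reduce the claimed identity to a polynomial identity in the Galois conjugates of $x$ and $y$, and then verify that identity by a generalized Laplace expansion.

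First, because both sides are polynomial functions in the Bhargava data $(A,B) \in V_{\ZZ}$ and in the coefficients of $x,y$ with respect to a basis of $R$, it suffices by a density argument to work on the locus where $R\otimes\QQ$ is an $S_4$-quartic field with cubic resolvent $C\otimes\QQ$. There I may fix the four embeddings $\sigma_1,\dots,\sigma_4 \colon R\hookrightarrow \CC$ and the three embeddings $\tau_1,\tau_2,\tau_3 \colon C\hookrightarrow \CC$. The $\tau_k$ are naturally indexed by the three partitions of $\{1,2,3,4\}$ into unordered pairs, and the explicit formula for the cubic resolvent map recorded in Section 3 gives $\tau_k(\varphi(z)) = \sigma_i(z)\sigma_j(z) + \sigma_{i'}(z)\sigma_{j'}(z)$ whenever $k$ corresponds to the pairing $\{\{i,j\},\{i',j'\}\}$.

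Second, I apply the standard Vandermonde identity $\Ind(v_1,\dots,v_n)^2\,\lv\Disc(S)\rv = \det(\rho_i(v_j))^2$ both to $R$ with $(1,x,y,xy)$ and to $C$ with $(1,\varphi(x),\varphi(y))$. Using the equality $\Disc(R) = \Disc(C)$ and writing $x_i = \sigma_i(x)$, $y_i = \sigma_i(y)$, $u_k = x_ix_j + x_{i'}x_{j'}$, $v_k = y_iy_j + y_{i'}y_{j'}$, the lemma collapses to the algebraic identity
\[
\det\!\begin{pmatrix} 1 & x_1 & y_1 & x_1y_1 \\ 1 & x_2 & y_2 & x_2y_2 \\ 1 & x_3 & y_3 & x_3y_3 \\ 1 & x_4 & y_4 & x_4y_4 \end{pmatrix} \;=\; \pm\det\!\begin{pmatrix} 1 & u_1 & v_1 \\ 1 & u_2 & v_2 \\ 1 & u_3 & v_3 \end{pmatrix}.
\]

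Third, I verify this by a generalized Laplace expansion of the $4\times 4$ determinant along the column pairs $(1, x_i)$ and $(y_i, x_iy_i)$: the $2\times 2$ minor on rows $\{i,j\}$ from the first pair is $x_j - x_i$, and from the second pair is $y_iy_j(x_j - x_i)$. The six surviving products collapse into three terms, one for each partition $\{\{i,j\},\{i',j'\}\}$, each of the form $\pm(x_j-x_i)(x_{j'}-x_{i'})(y_iy_j + y_{i'}y_{j'})$. On the other side, expanding $\det N = u_1(v_2-v_3) + u_2(v_3-v_1) + u_3(v_1-v_2)$ and regrouping by monomial produces precisely the same three terms.

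The main obstacle is sign bookkeeping in the Laplace expansion. Rather than track signs monomial-by-monomial, I would observe that both determinants are alternating under the diagonal $S_4$-action on $(x,y)$ — for the $3\times 3$ side via the surjection $S_4\twoheadrightarrow S_3$ on pairings, which agrees with the sign character of $S_4$ — so matching the coefficient of a single test monomial such as $x_1x_2y_1y_3$ (a quick computation gives $-1$ on the left and $+1$ on the right) pins down the global sign and completes the proof.
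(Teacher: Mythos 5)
Your proof is correct and follows the same strategy Bhargava sketches for Lemma~9 and that the paper itself executes for the parallel statement, \cref{indexprop2}: pass to the four conjugates, use the explicit description of the resolvent map on conjugates, and reduce to a determinant identity verified by expansion. Since the paper simply cites Bhargava here, there is no paper-internal proof to compare against directly, but the companion proof of \cref{indexprop2} ("The proof proceeds exactly as that of \cref{1xyxyidentity}. Namely, the assertion of the lemma is equivalent to the following identity... The identity may be verified by direct calculation.") confirms that the method you use is the intended one; you simply fill in the "direct calculation" more explicitly.

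A few small imprecisions worth tightening. (i) As defined in this paper, $\Ind$ is the nonnegative sublattice index, hence not a polynomial in the $(A,B)$-data and the coordinates of $x,y$; the density argument should be run on $\Ind^2$ or on the signed determinant of the change-of-basis matrix, after which the unsigned identity follows since both sides are nonnegative. (ii) The two expansions do not literally produce "the same three terms": the Laplace expansion of the $4\times 4$ determinant along $\{1,2\}\cup\{3,4\}$ gives terms of the form $\pm(x_j-x_i)(x_{j'}-x_{i'})(y_iy_j+y_{i'}y_{j'})$, while the cofactor expansion of $\det N$ along its first column gives terms of the form $(x_ix_j+x_{i'}x_{j'})(y_i-y_j)(y_{i'}-y_{j'})$; these families are swapped by $x\leftrightarrow y$ rather than being equal term-by-term, and the match holds only after further monomial expansion (or after noting that the $x\leftrightarrow y$ swap negates each determinant). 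With the natural indexing of the three pairings the final relation is $D_4 = -\det N$, consistent with your two test-coefficient computations; the overall sign is harmless since $\Ind$ is unsigned. (iii) The alternating-representation argument does pin down a unique scalar, but only because the sign representation appears with multiplicity exactly one in $\Sym^2(\CC^4)\otimes\Sym^2(\CC^4)$ — this is true, but it is a fact one has to check, not a formal consequence of both sides being alternating; since you already have the Laplace expansion, it is cleaner to treat the equivariance observation as a consistency check rather than as the step that "completes the proof."
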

The above proposition implies that any nondegenerate quartic ring with a basis $1,x,y,xy$ has a \emph{unique} cubic resolvent ring, namely $\ZZ \la 1,\varphi(x),\varphi(y) \ra$. Suppose $R$ is a nondegenerate quartic ring with a basis $1,x,y,xy$. Let $C$ be the unique cubic resolvent ring of $R$ and equip $C/\ZZ$ with the basis $\varphi(x),\varphi(y)$. Choose a basis $x,y,z$ for $R/\ZZ$. Then, the cubic resolvent map $\varphi \colon R/\ZZ \rightarrow C/\ZZ$, expressed in these bases, has the form:
\begin{equation}
\label{matrixform1xyxy}
  (A,B) = \begin{bmatrix}
    1 & \ast & \ast \\
    - &  0 & \ast  \\
    - & - & \ast
  \end{bmatrix},
    \begin{bmatrix}
    0 & \ast & \ast \\
    - &  1 & \ast  \\
    - & - & \ast
  \end{bmatrix}
\end{equation}
Conversely, a nondegenerate element of $V_{\ZZ}$ of the form \eqref{matrixform1xyxy} gives rise to the following data: a pair $(R,C)$, a basis $x,y,z$ of $R/\ZZ$, and a basis $\varphi(x),\varphi(y)$ of $C/\ZZ$. \cref{1xyxyidentity} implies that
\[
	\Ind(1,x,y,xy) = \Ind(1,\varphi(x),\varphi(y)) = 1,
\]
so $x,y,xy$ is also a basis of $R/\ZZ$. There are unique lifts $X,Y \in R$ of $x,y \in R/\ZZ$ and a unique value $\varepsilon = \pm 1$ such that $z \equiv \varepsilon XY \Mod{\ZZ}$. This discussion will prove \cref{main1xyxyparam}, which we state after making the following definitions.

Consider quadruples $(R,x,y,z)$ where $x,y,z \in R/\ZZ$ is a basis of $R/\ZZ$ and $1,X,Y,XY$ is a basis of $R$ for any lift $X,Y \in R$ of $x,y$. We say such a quadruple is \emph{nondegenerate} if $R$ is nondegenerate. Moreover, we say two such quadruples $(R,x,y,z)$ and $(R',x',y',z')$ are isomorphic if there exists a ring isomorphism $\alpha \colon R \rightarrow R'$ such that:
\begin{align*}
\alpha(x) &\equiv x' \Mod{\ZZ} \\
\alpha(y) &\equiv y' \Mod{\ZZ} \\
\alpha(z) &\equiv z' \Mod{\ZZ}
\end{align*} 

Similarly, now consider quadruples $(R,X,Y,\varepsilon)$ where $X,Y \in R$, $\varepsilon = \pm 1$, and $1, X,Y,\varepsilon XY$ is a basis of $R$. Again, we say such a quadruple is \emph{nondegenerate} if $R$ is nondegenerate. We say two such quadruples $(R,X,Y,\varepsilon)$ and $(R',X',Y',\varepsilon')$ are isomorphic if $\varepsilon = \varepsilon'$ and there exists a ring isomorphism $\alpha \colon R \rightarrow R'$ such that:
\begin{align*}
\alpha(X) &= X' \\
\alpha(Y) &= Y'
\end{align*} 
\begin{theorem}
\label{main1xyxyparam}
There is a canonical discriminant-preserving bijection between the following three sets:
\[
				\left \{ \begin{array}{c}
				\text{isomorphism classes of nondegenerate quadruples $(R,x,y,z)$ such that } \\
				\text{$x,y,z \in R/\ZZ$  and $x,y,z$ is a basis of $R/\ZZ$ and $1, X,Y,XY$ } \\
				\text{is a basis of $R$ for any lifts $X,Y \in R$ of $x,y \in R/\ZZ$}
			\end{array}	 \right \}
			\]	
\[
				\left \{ \begin{array}{c}
				\text{isomorphism classes of nondegenerate quadruples $(R,X,Y, \varepsilon)$} \\
				\text{such that $X,Y \in R$, $\varepsilon = \pm 1$, and $1, X,Y,\varepsilon XY$ is a basis of $R$}
			\end{array}	 \right \}
			\]	
\[
			\left \{ \begin{array}{c}
				\text{nondegenerate pairs of integral ternary } \\
				\text{quadratic forms of the form \eqref{matrixform1xyxy}}
				\end{array}
			\right \}
			\]	

\end{theorem}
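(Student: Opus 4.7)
The plan is to establish two separate bijections: between set 1 and set 2 by an elementary combinatorial argument, and between set 2 and set 3 by invoking Bhargava's parametrization of quartic rings together with the index identity in \cref{1xyxyidentity}.

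For the first bijection, given $(R,x,y,z) \in$ set 1, I would pick any lifts $X_0, Y_0 \in R$ of $x, y$ and any lift $\alpha_3 \in R$ of $z$. Since $1, X_0, Y_0, X_0Y_0$ is a $\ZZ$-basis of $R$ by hypothesis, expanding $X_0Y_0 = c_0 + c_1X_0 + c_2Y_0 + \varepsilon\alpha_3$ forces $\varepsilon = \pm 1$, and this sign is independent of the choices of lifts (a direct substitution check). Setting $X = X_0 - c_2$ and $Y = Y_0 - c_1$ and expanding $\varepsilon XY$ modulo $\ZZ$ gives $\varepsilon XY \equiv z \Mod{\ZZ}$. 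Uniqueness of $(X, Y, \varepsilon)$ follows because any further integer shift $(X,Y)\mapsto(X+a',Y+b')$ changes $\varepsilon XY \bmod \ZZ$ by $\varepsilon(a'Y + b'X) \bmod \ZZ$, which is nonzero in $R/\ZZ$ unless $a' = b' = 0$, as $x, y$ are $\ZZ$-linearly independent in $R/\ZZ$. The reverse map $(R,X,Y,\varepsilon) \mapsto (R, X \bmod \ZZ, Y \bmod \ZZ, \varepsilon XY \bmod \ZZ)$ is immediate, and compatibility with the isomorphism relations on both sides is straightforward.

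For the second bijection, I would use Bhargava's parametrization from \cref{quarticbackground} as a black box. Given $(R,X,Y,\varepsilon) \in$ set 2, the discussion following \cref{1xyxyidentity} shows that $R$ admits a unique cubic resolvent $C = \ZZ\la 1, \varphi(X), \varphi(Y)\ra$, with $\varphi(X), \varphi(Y)$ forming a $\ZZ$-basis of $C/\ZZ$. Expressing the cubic resolvent map in the basis $X, Y, \varepsilon XY$ of $R/\ZZ$ and the basis $\varphi(X), \varphi(Y)$ of $C/\ZZ$ yields an element $(A, B) \in V_{\ZZ}$; the tautologies $\varphi(X,X) = \varphi(X)$ and $\varphi(Y,Y) = \varphi(Y)$ force $a_{11} = 1$, $a_{22} = 0$, $b_{11} = 0$, $b_{22} = 1$, placing $(A,B)$ in the form \eqref{matrixform1xyxy}. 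Conversely, given $(A,B)\in V_{\ZZ}$ of the form \eqref{matrixform1xyxy}, Bhargava's parametrization produces a based pair $(R,C)$ with basis $1, \alpha_1, \alpha_2, \alpha_3$ of $R$ and $1, \beta_1, \beta_2$ of $C$; the prescribed entries force $\varphi(\alpha_1) = \beta_1$ and $\varphi(\alpha_2) = \beta_2$. The index identity then yields $\Ind(1, \alpha_1, \alpha_2, \alpha_1\alpha_2) = \Ind(1, \beta_1, \beta_2) = 1$, so $1, \alpha_1, \alpha_2, \alpha_1\alpha_2$ is a basis of $R$; writing $\alpha_3$ in this basis extracts a unique sign $\varepsilon \in \{\pm 1\}$ as the coefficient of $\alpha_1\alpha_2$, and the quadruple $(R, \alpha_1, \alpha_2, \varepsilon)$ lies in set 2. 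Discriminant preservation is inherited directly from Bhargava's parametrization.

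I anticipate the main obstacle will be the entry-by-entry verification, using Bhargava's explicit structure-coefficient formulae, that the normalization conditions $a_{11} = 1$, $a_{22} = 0$, $b_{11} = 0$, $b_{22} = 1$ characterize precisely the pairs $(R, C)$ with based structure coming from an element of set 2 --- that is, confirming that the two directions of the map between sets 2 and 3 described above are mutually inverse with no hidden sign or normalization ambiguity.
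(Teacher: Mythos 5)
Your approach matches the paper's own sketch: both establish the bijections via Bhargava's parametrization together with the index identity of \cref{1xyxyidentity}, and both isolate the unique lifts $X,Y$ and sign $\varepsilon$ by the same normalization argument. The only difference is cosmetic --- you chain set~1~$\leftrightarrow$~set~2~$\leftrightarrow$~set~3 rather than relating each of sets~2 and~3 to set~1 directly --- and the brief slip in the set~3~$\to$~set~2 direction (where you write $(R,\alpha_1,\alpha_2,\varepsilon)$ before having fixed lifts of $\alpha_1,\alpha_2$ in $R$) is repaired by the very normalization you already spelled out in the set~1~$\leftrightarrow$~set~2 step.
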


Let $H$ be the subgroup of $\GL_2(\ZZ) \times \GL_3(\ZZ)$ acting on pairs of matrices of the form \eqref{matrixform1xyxy}; $H$ is generated by the following matrices:
\begin{equation}
\label{groupformmatrixform1xyxy}
\begin{split}
	\begin{bmatrix}
    1 & 0  \\
    0 &  1
  \end{bmatrix}&,
    \begin{bmatrix}
    \pm 1 & 0 & 0 \\
    0 &  \pm 1 & 0  \\
    \ast & \ast & \pm 1
  \end{bmatrix}  \\
  \begin{bmatrix}
    0 & 1  \\
    1 &  0
  \end{bmatrix}&,
    \begin{bmatrix}
    0 & 1 & 0 \\
    1 &  0 & 0  \\
    0 & 0 & 1
  \end{bmatrix}
\end{split}
\end{equation}

Suppose $R$ is a nondegenerate quartic ring. Consider pairs $(x,y) \in R$ such that $1,x,y,xy$ is a basis of $R$. Consider the equivalence relation on such pairs generated by $(x,y) \sim (y,x)$ and $(x,y) \sim (\pm x + a,y)$ for any $a \in \ZZ$. We define a \emph{digenization of $R$} to be such an equivalence class $(x,y)$. A \emph{digenized quartic ring} is defined to be a nondegenerate quartic ring equipped with a digenization\footnote{The notion of digenized quartic rings is a natural variant of the concept of a \emph{monogenized} quartic ring.}. We say two digenized quartic rings $(R,x,y)$ and $(R',x',y')$ are isomorphic if there exists a ring isomorphism $\alpha \colon R \rightarrow R'$ such that $(\alpha(x),\alpha(y))\sim (x',y')$. Then we obtain:
\begin{theorem}
There is a canonical discriminant-preserving bijection between:
			\[
				\left \{ \begin{array}{c}
				\text{isomorphism classes of} \\
				\text{digenized quartic rings} \\
			\end{array}	 \right \}
			\longleftrightarrow
			\left \{ \begin{array}{c}
				\text{nondegenerate orbit classes of} \\
				\text{pairs of integral ternary quadratic forms} \\
				\text{of the form \eqref{matrixform1xyxy}} \\
				\text{under the action of $H$} 
				\end{array}
			\right \}
			\]	
\end{theorem}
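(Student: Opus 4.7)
The plan is to pass through the bijection already established in \cref{main1xyxyparam}. That theorem gives a canonical, discriminant-preserving correspondence between isomorphism classes of nondegenerate quadruples $(R,X,Y,\varepsilon)$ and nondegenerate pairs of integral ternary quadratic forms of the form \eqref{matrixform1xyxy}, so it suffices to show that the $H$-orbits on pairs correspond under this bijection to digenized quartic rings $(R,(x,y))$ modulo the generating equivalences $(x,y)\sim(y,x)$ and $(x,y)\sim(\pm x+a,y)$.

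Next I would work out how the generators of $H$ listed in \eqref{groupformmatrixform1xyxy} act on the quadruple side. The swap generator $(g_2,g_3)$ with $g_2=\bigl(\begin{smallmatrix}0&1\\1&0\end{smallmatrix}\bigr)$ and $g_3$ the permutation of the first two basis vectors exchanges the roles of $w_1$ and $w_2$ in $C/\ZZ$ and of $v_1,v_2$ in $R/\ZZ$; via the identification $v_3 \equiv \varepsilon XY \pmod{\ZZ}$ this simply swaps $X$ and $Y$ and leaves $\varepsilon$ unchanged (using $XY = YX$). The $\GL_3$ generators of type $\mathrm{diag}(\pm1,\pm1,\pm1)$ together with the elementary lower-triangular moves in the third row translate to (i) flipping the sign of $X$ or $Y$ (which forces a compensating sign on $\varepsilon$ so that the product $\varepsilon XY$ remains identified with the chosen third basis vector), and (ii) replacing the lifts $X,Y$ by $X+a,Y+b$ for integers $a,b$ (which is absorbed by the lower row of $g_3$, since $\varepsilon(X+a)(Y+b)$ differs from $\varepsilon XY$ by an integer combination of $1,X,Y$). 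Verifying these identifications is a straightforward change-of-basis calculation using the structure constants of Bhargava's parametrization, together with \cref{1xyxyidentity} to ensure that the $33$-entry of $g_3$ can be absorbed into the ambiguity of choosing the third basis vector.

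With the $H$-action on quadruples made explicit, I would next show that the map $(R,X,Y,\varepsilon)\mapsto (R,(X,Y))$, where $(X,Y)$ is the digenization of $R$ represented by the pair, descends to a bijection from $H$-orbits of quadruples onto isomorphism classes of digenized quartic rings. The sign of $\varepsilon$ carries no information beyond the choice of representative, since the orbit move $(X,Y,\varepsilon)\mapsto(-X,Y,-\varepsilon)$ lets us normalize $\varepsilon=+1$; after this normalization the remaining orbit moves are $(X,Y)\mapsto(Y,X)$, $(X,Y)\mapsto(-X,-Y)$, and $(X,Y)\mapsto(X+a,Y)$ (and symmetrically in $Y$ via composition with the swap). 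These are exactly the generators of the digenization equivalence, since $(-X,Y)\sim(Y,-X)\sim(-Y,-X)\sim(-X,-Y)$ bridges the gap between a single sign flip and the paired sign flip $(X,Y)\mapsto(-X,-Y)$. Injectivity follows because any isomorphism of digenized quartic rings that matches up the digenization representatives lifts to an isomorphism of quadruples after possibly adjusting $\varepsilon$; surjectivity is immediate from \cref{main1xyxyparam}, since every digenization representative $(X,Y)$ gives a quadruple $(R,X,Y,+1)$.

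The main obstacle I expect is step two: bookkeeping the sign parameter $\varepsilon$ correctly through the $\GL_3$-action. In particular, the subtlety is that the digenization relation $(x,y)\sim(-x+a,y)$ is ``single sign flip'' on $x$ alone, while the $H$-orbit relation (after forcing $\varepsilon=+1$) gives $(X,Y)\mapsto(-X,-Y)$ rather than $(-X,Y)$. Resolving this requires tracking a single sign flip into the $\varepsilon=-1$ sheet and then flipping the other coordinate to return to $\varepsilon=+1$; once this is correctly done one sees that the $H$-orbit of $(R,X,Y,+1)$ realizes all eight combinations of signs and translations on $(X,Y)$, matching the digenization equivalence exactly. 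Discriminant preservation is inherited from \cref{main1xyxyparam}.
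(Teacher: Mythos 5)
Your overall strategy is the natural one and is what the paper implicitly intends: pass through the bijection of \cref{main1xyxyparam}, compute the $H$-action on quadruples $(R,X,Y,\varepsilon)$, and match orbits with digenization classes. The error is in your inventory of the residual $H$-moves on the $\varepsilon=+1$ slice, and your attempt to repair the perceived gap is circular.

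You claim that after normalizing $\varepsilon=+1$, the remaining moves on $(X,Y)$ are the swap, the \emph{paired} flip $(X,Y)\mapsto(-X,-Y)$, and translations. This is an undercount. The diagonal $\GL_3$-generator $\mathrm{diag}(\eta_1,\eta_2,\eta_3)$ with $\eta_i=\pm1$ sends $(R,X,Y,\varepsilon)$ to $(R,\eta_1 X,\eta_2 Y,\eta_1\eta_2\eta_3\,\varepsilon)$, because flipping the third basis vector $z$ of $R/\ZZ$ also flips $\varepsilon$ (since $z\equiv\varepsilon XY\bmod\ZZ$). So there are \emph{four} sign patterns preserving $\varepsilon$, not two: $(1,1,1)$, $(-1,-1,1)$, $(-1,1,-1)$, and $(1,-1,-1)$. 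The last two are the single flips $(X,Y)\mapsto(-X,Y)$ and $(X,Y)\mapsto(X,-Y)$ at constant $\varepsilon$, which you omit. You seem to assume the only way to compensate the sign of $XY$ is to also flip $Y$, but one can instead flip $z$, and $\mathrm{diag}(-1,1,-1)$ does exactly that.

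This is a genuine gap and not a presentational one. The chain $(-X,Y)\sim(Y,-X)\sim(-Y,-X)\sim(-X,-Y)$ you offer to ``bridge'' single and paired flips does not help: the middle step $(Y,-X)\sim(-Y,-X)$ is itself a single flip, i.e.\ the very move you are claiming not to have, so the argument is circular. Moreover, the group generated by swap, paired flip, and translations genuinely does not contain $(X,Y)\mapsto(-X,Y)$, since every element of that group sends $(X,Y)$ to $\big(\epsilon(X+a),\epsilon(Y+b)\big)$ or $\big(\epsilon(Y+a),\epsilon(X+b)\big)$ for a single $\epsilon\in\{\pm1\}$, and for generic $X,Y$ none of these equals $(-X,Y)$. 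So if your inventory were correct, the bijection with digenization classes would fail. Once the two omitted diagonal generators are included, the $H$-orbit relation on $(X,Y)$ at $\varepsilon=+1$ coincides exactly with the relation generated by $(x,y)\sim(y,x)$ and $(x,y)\sim(\pm x+a,y)$, which is the definition of a digenization; with that fix, the rest of your argument (normalizing $\varepsilon$, absorbing the lower-row $\GL_3$ entries, surjectivity, injectivity, discriminant preservation via \cref{main1xyxyparam}) is sound.
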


A theorem of Bhargava \cite{monogenizations} proves that a nondegenerate quartic ring has at most $2760$ many monogenizations; it is natural to ask if the number of digenizations of a quartic ring is uniformly bounded! However, it is clear that a monogenic quartic ring has infinitely digenizations. It is interesting to ask if there is a positive integer $N$ such that for every nondegenerate quartic ring $R$:
\begin{enumerate}
\item if $R$ has infinitely many digenizations, then it is monogenic;
\item else, $R$ has at most $N$ digenizations.
\end{enumerate}

We now consider matrices of the form \eqref{matrixform1xyxyresolventintro}, which we repeat here:
\begin{equation*}
  (A,B) = \begin{bmatrix}
    1 & 0 & \ast \\
    - &  0 & \ast  \\
    - & - & \ast
  \end{bmatrix},
  \begin{bmatrix}
    0 & 0 & \ast \\
    - &  1 & \ast  \\
    - & - & \ast
  \end{bmatrix}.
\end{equation*}

We obtain the following theorem as a corollary of \cref{main1xyxyparam}.
\onexyxyparam*

Recall that $\cF^{x,y,xy}$ is defined to be the set of isomorphism classes of triples $(R,x,y)$ where $R$ is a nondegenerate quartic ring and $x,y \in R$ are such that $1,x,y,xy$ is a basis of $R$ and $\varphi(x+y) = \varphi(x) + \varphi(y)$. Let $W_{\ZZ}$ denote the set of matrices of the form \eqref{matrixform1xyxyresolventintro} and let $\psi$ be the map from triples $(R,x,y) \in \cF^{x,y,xy}$ to pairs of integral ternary quadratic forms referred to in \cref{1xyxyparam} given by setting $\varepsilon = 1$. We now prove \cref{1xyxythm}. Let $p = (1/8,1/8,1/4,1/4,1/4)$. 

\begin{proof}[Proof of \cref{1xyxythm}]
Let $T = (R,x,y) \in \cF^{x,y,xy}$ be any such triple of absolute discriminant $\Delta$ and let $C$ be the unique cubic resolvent ring. A computation shows that $H(T) \leq X^{1/4}$ if and only if $\psi(T) \in B(p,X)$; thus for any real numbers $M \geq 1$ and $X > 1$, an easy generalization of \cref{lambdaboundlemmaquartic} implies that if $\Delta \geq X/M$ and $H(T)^4 \leq X$, then
\begin{align*}
\lambda_i(R) &\asymp_M \Delta^{p_i} \\
\lambda_i(C) &\asymp_M \Delta^{q_i}.
\end{align*}
So, for each $M \geq 1$, there exists a positive real number $\eps_M$ such that if  $\Delta \geq X/M$ and $H(T)^4 \leq X$, we have that $\psi(T)$ is $(\eps_M, X)$-close to $p$. Moreover, we may suppose $\lim_{M \rightarrow \infty}\eps_M = \infty$. We now prove \eqref{correspondquartic1}. 
We have:
\begin{align*}
&\lim_{\eps \rightarrow \infty} \liminf_{X \rightarrow \infty}\frac{\#\{T \in \cF^{x,y,xy} \; \mid \; H(T) \leq X \text{ and $\psi(T)$ is $(\eps,X)$-close to $p$}\}}{\#\{T \in \cF^{x,y,xy}\; \mid \; H(T) \leq X \}} \\
&=\lim_{\eps \rightarrow \infty} \liminf_{X \rightarrow \infty}\frac{\#\{T \in \cF^{x,y,xy} \; \mid \; H(T) \leq X^{1/4} \text{ and $\psi(T)$ is $(\eps,X^{1/4})$-close to $p$}\}}{\#\{T \in \cF^{x,y,xy}\; \mid \; H(T) \leq X^{1/4} \}} \\
&=\lim_{\eps \rightarrow \infty} \liminf_{X \rightarrow \infty}\frac{\#\{T \in \cF^{x,y,xy} \; \mid \; H(T)^4 \leq X \text{ and $\psi(T)$ is $(4\eps,X)$-close to $p$}\}}{\#\{T \in \cF^{x,y,xy}\; \mid \; H(T)^4 \leq X \}} \\
&=\lim_{\eps \rightarrow \infty} \liminf_{X \rightarrow \infty}\frac{\#\{T \in \cF^{x,y,xy} \; \mid \; H(T)^4 \leq X \text{ and $\psi(T)$ is $(\eps,X)$-close to $p$}\}}{\#\{T \in \cF^{x,y,xy}\; \mid \; H(T)^4 \leq X \}} \\
&\geq \lim_{M \rightarrow \infty} \liminf_{X \rightarrow \infty}\frac{\#\{T  \in \cF^{x,y,xy} \; \mid \; H(T)^4 \leq X \text{ and $\Delta \geq X/M$}\}}{\#\{T \in \cF^{x,y,xy}\; \mid \; H(T)^4 \leq X \}}.
\end{align*}
Davenport's lemma gives:
\begin{align*}
{\#\{T \in \cF^{x,y,xy} \; \mid \; H(T)^4 \leq X \}} &= \frac{1}{2}\#\{(A,B) \in W_{\ZZ} \cap B(p,X) \mid \Disc(A,B) \neq 0\} \\
&=\frac{X}{2}\Vol\{(A,B) \in W_{\RR} \cap B(p,1)\} + O(X^{7/8}).
\end{align*}
We now compute the numerator:
Let 
\[
	g_X = \Bigg(\begin{bmatrix}
    X^{1/4} & 0  \\
    0 &  X^{1/4} 
  \end{bmatrix},
  \begin{bmatrix}
    X^{-1/8} & 0 & 0 \\
    0 &  X^{-1/8} & 0  \\
    0 & 0 & X^{-1/4}
  \end{bmatrix}\Bigg).
\]
The element $g_X$ maps $B(p,X)$ to $B(p,1)$. By Davenport's lemma, we have
\begin{align*}
&\#\{T \in \cF^{x,y,xy} \; \mid \; H(T)^4 \leq X \text{ and } \Delta \geq X/M\} \\
&= \frac{1}{2}\#\{(A,B) \in W_{\ZZ} \cap B(p,X) \; \mid \;  \lv \disc(A,B) \rv \geq X/M\} \\
&= \frac{1}{2}\#\{(A,B) \in W_{\ZZ} \cap B(p,X) \; \mid \;  \lv \disc(g(A,B)) \rv \geq 1/M\} \\
&= \frac{1}{2} X\Vol\{(A,B) \in W_{\RR} \cap B(p,1) \; \mid \; \lv \disc(A,B) \rv \geq 1/M\} + O(X^{7/8}).
\end{align*}
Note that $W_{\RR}$ lies on an affine linear subspace, so the ``volume'' above is taken relatively. Now observe that
\[
	\lim_{M\rightarrow \infty}\Vol\{(A,B) \in W_{\RR} \cap B(p,1) \; \mid \; \lv \disc(A,B) \rv \geq 1/M\} = 1
\]
so 
\[
\lim_{\eps \rightarrow \infty} \liminf_{X \rightarrow \infty}\frac{\#\{T \in \cF^{x,y,xy} \; \mid \; H(T) \leq X \text{ and $\psi(T)$ is $(\eps,X)$-close to $p$}\}}{\#\{T \in \cF^{x,y,xy} \; \mid \; H(T) \leq X \}} = 1.
\]

We obtain Condition \eqref{cond2cubicspecialfamily} via the proof of \cref{upperboundquartic}, using the additional observation that a positive proportion (depending on $\eps$) of lattice points in $B(p,X)$ lie on the affine linear subspace  defined by $a_{11} = b_{22} = 1$ and $a_{12} = b_{12} = a_{22} = b_{11} = 0$.
\end{proof}

\subsubsection{Quartic rings with a basis of the form $1,x,y,x^2$}

As in the case of quartic rings with a basis of the form $1,x,y,xy$, our proofs crucially rely on an \emph{index formula}.

\begin{proposition}
\label{indexprop2}
If $R$ is a quartic ring and $C$ is a cubic resolvent ring of $R$, then for any $x,y \in R$, 
\[
	\Ind(1,x,y,x^2) = \Ind(1,\varphi(x),\varphi(x+y)-\varphi(x)-\varphi(y)).
\]
\end{proposition}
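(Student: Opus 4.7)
The plan is to reduce the identity to \cref{1xyxyidentity} by a multilinearity trick, via the polarization $\langle x,y\rangle := \varphi(x+y)-\varphi(x)-\varphi(y)$ of the quadratic cubic resolvent map. Since $\Ind$ is only defined as $|\det|$, I first need to upgrade the conclusion of \cref{1xyxyidentity} to a signed determinant identity so that multilinearity of the determinant is available.

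For this upgrade, I would fix $\ZZ$-bases of $R$ and $C$, and set $P(\bar x,\bar y) := \det(1,x,y,xy)$ and $Q(\bar x,\bar y) := \det(1,\varphi(x),\varphi(y))$, computed with respect to these bases. Both are invariant under integer translation of the lifts $x,y \in R$, so they depend only on $\bar x, \bar y \in R/\ZZ$, and both are polynomials in the coordinates of $\bar x,\bar y$ (via Bhargava's explicit multiplication in $R$ and the matrix entries of the cubic resolvent map). \cref{1xyxyidentity} says $P^2 = Q^2$ on all integer coordinates, hence as polynomials. Because the polynomial ring is an integral domain and each of $P, Q$ is a nonzero polynomial (for instance, whenever $1,x,y,xy$ forms a $\QQ$-basis of $R \otimes \QQ$, both sides are nonzero by \cref{1xyxyidentity}), the factorization $(P-Q)(P+Q) = 0$ forces a global sign $\epsilon = \pm 1$ depending only on the chosen bases with $P = \epsilon Q$ as polynomials.

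I would then apply this signed identity at $(\bar x,\bar x+\bar y)$ in place of $(\bar x,\bar y)$. On the left, replacing column 3 by column 3 minus column 2 does not change the determinant, so $\det(1,x,x+y,x(x+y)) = \det(1,x,y,x^2+xy)$; multilinearity of the 4th column then splits this as $\det(1,x,y,x^2) + \det(1,x,y,xy)$. On the right, $\varphi(x+y) = \varphi(x)+\varphi(y)+\langle x,y\rangle$, so multilinearity of the 3rd column gives
\[
\det(1,\varphi(x),\varphi(x+y)) = \det(1,\varphi(x),\varphi(x)) + \det(1,\varphi(x),\varphi(y)) + \det(1,\varphi(x),\langle x,y\rangle),
\]
and the first term vanishes. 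Subtracting the original signed identity $\det(1,x,y,xy) = \epsilon\det(1,\varphi(x),\varphi(y))$ then yields $\det(1,x,y,x^2) = \epsilon\det(1,\varphi(x),\langle x,y\rangle)$, and taking absolute values recovers the proposition.

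The main technical step is the polynomial upgrade of \cref{1xyxyidentity}; once it is in hand the rest is pure multilinear algebra. A direct alternative would be to verify the identity by grinding through the explicit structure-coefficient formulas of \cite{BhargavaHCL3}, but this seems unnecessarily tedious compared with the polynomial-identity route.
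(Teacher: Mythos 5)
Your approach is correct and genuinely different from the paper's. The paper does not reduce to \cref{1xyxyidentity} at all: it proves \cref{indexprop2} from scratch by the same \emph{strategy} used to prove \cref{1xyxyidentity}, namely writing each index as (a constant times) a Vandermonde-style determinant in the conjugates $\sigma_i(x),\sigma_i(y)$, and then verifying the resulting explicit polynomial identity in eight variables ``by direct calculation.'' Your proof instead treats \cref{1xyxyidentity} as a black box, upgrades it to a signed determinant identity $\det(1,x,y,xy)=\epsilon\det(1,\varphi(x),\varphi(y))$ via the factorization $(P-Q)(P+Q)=0$ in a polynomial ring, and then obtains \cref{indexprop2} by evaluating at $(x,x+y)$, using column operations and multilinearity, and subtracting. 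This buys you a proof that requires no new computation: the only calculation is the one already packaged inside \cref{1xyxyidentity}. The paper's route is more self-contained and symmetric with Bhargava's original argument, but repeats a brute-force verification; yours is shorter and more structural.

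One small gap worth tightening: you justify $P\neq 0$ by observing that both sides are nonzero whenever $1,x,y,xy$ is a $\QQ$-basis of $R\otimes\QQ$, but such $x,y$ need not exist when $R$ is degenerate (and the proposition is stated for arbitrary quartic rings). The fix is trivial — if $P\equiv 0$ then $Q\equiv 0$ by $P^2=Q^2$, and any sign $\epsilon$ works — but it should be said. Alternatively, and more cleanly, you can avoid fixing $R$ at all: view $P$ and $Q$ as polynomials in the entries $(a_{ij},b_{ij})$ of the pair $(A,B)\in V_\ZZ$ \emph{and} the coordinates of $x,y$, using Bhargava's explicit structure constants. Then $P$ is manifestly a nonzero polynomial (any nondegenerate example with nonzero index witnesses this), the sign $\epsilon$ is a universal constant, and the degeneracy issue disappears.
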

\begin{proof}
The proof proceeds exactly as that of \cref{1xyxyidentity}. Namely, the assertion of the lemma is equivalent to the following identity:
\[
	\begin{array}{|c c c c|}
1 & 1 & 1 & 1 \\
x & x' & x'' & x''' \\
y & y' & y'' & y''' \\
x^2 & (x')^2 & (x'')^2 & (x''')^2
\end{array}
= 
	\begin{array}{|c c c|}
1 & 1 & 1 \\
xx' + x''x''' & xx'' + x'x''' & xx''' + x'x'' \\
f & g & h 
\end{array}
\]
where
\begin{align*}
f &= (x + y)(x' + y') + (x'' + y'')(x''' + y''') - (xx' + x''x''') - (yy' + y''y''') \\
g &= (x + y)(x'' + y'') + (x' + y')(x''' + y''') - (xx'' + x'x''') - (yy'' + y'y''') \\
h &= (x + y)(x''' + y''') + (x' + y')(x'' + y'') - (xx''' + x'x'') - (yy''' + y'y'').
\end{align*}
The identity may be verified by direct calculation.
\end{proof}

From the above proposition, we see that any nondegenerate quartic ring with a basis of the form $1,x,y,x^2$ has a \emph{unique} cubic resolvent ring, namely the ring spanned by the vectors $1,\varphi(x),\varphi(x+y) - \varphi(x) - \varphi(y)$. Let $R$ be a nondegenerate quartic ring equipped with the basis $1,x,y,x^2$ and let $C$ be the unique cubic resolvent ring; equip $C/\ZZ$ with the basis $\varphi(x),\varphi(x+y) - \varphi(x)-\varphi(y)$. Choose a basis $x,y,z$ for $R/\ZZ$. Then, the cubic resolvent map $\varphi \colon R/\ZZ \rightarrow C/\ZZ$, expressed in these bases, has the form:
\begin{equation}
\label{matrixform1xyx2}
  (A,B) = \begin{bmatrix}
    1 & 0 & \ast \\
    - &  \ast & \ast  \\
    - & - & \ast
  \end{bmatrix},
    \begin{bmatrix}
    0 & 1/2 & \ast \\
    - &  \ast & \ast  \\
    - & - & \ast
  \end{bmatrix}.
\end{equation}

Conversely, a nondegenerate element of $V_{\ZZ}$ of the form \eqref{matrixform1xyx2} gives rise to the following data: a pair $(R,C)$, a basis $x,y,z$ of $R/\ZZ$, and a basis $\varphi(x),\varphi(x+y)-\varphi(x)-\varphi(y)$ of $C/\ZZ$. \cref{indexprop2} implies that
\[
	\Ind(1,x,y,x^2) = \Ind(1,\varphi(x),\varphi(x+y)-\varphi(x)-\varphi(y)) = 1
\]
so $1,X,Y,X^2$ is a basis of $R$ for any lifts $X,Y \in R$ of $x,y \in R/\ZZ$.  This discussion proves the following theorem.
\begin{theorem}
\label{firstparam}
There is a canonical discriminant-preserving bijection between:
			\[
				\left \{ \begin{array}{c}
				\text{isomorphism classes of} \\
				\text{nondegenerate quadruples $(R,x,y,z)$} \\
				\text{where $x,y,z \in R/\ZZ$ are such that } \\
				\text{$1,X,Y,X^2$ is a basis of $R$} \\
				\text{for any lifts $X,Y \in R$ of $x,y \in R/\ZZ$}
			\end{array}	 \right \}
			\longleftrightarrow
			\left \{ \begin{array}{c}
				\text{nondegenerate pairs of} \\
				\text{integral ternary quadratic forms} \\
				\text{of the form \eqref{matrixform1xyx2}}
				\end{array}
			\right \}
			\]	
\end{theorem}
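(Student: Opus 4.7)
The plan is to make explicit the maps sketched in the discussion immediately preceding the theorem, and then to verify that they are mutually inverse, equivariant with respect to isomorphism, and discriminant-preserving. The two key inputs are Bhargava's parametrization of quartic rings (\cref{quarticbackground}) and the index identity \cref{indexprop2}; beyond those, the argument is essentially bookkeeping.

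For the forward direction, I would start with a quadruple $(R,x,y,z)$ and pick any cubic resolvent ring $C$ of $R$ with cubic resolvent map $\varphi$. Applying \cref{indexprop2} to the hypothesis that $1,X,Y,X^2$ is a basis of $R$ (for any lifts $X,Y$ of $x,y$) gives
\[
\Ind(1,\varphi(x),\varphi(x+y)-\varphi(x)-\varphi(y)) = \Ind(1,X,Y,X^2) = 1,
\]
so $w_1 \coloneqq \varphi(x)$ and $w_2 \coloneqq \varphi(x+y)-\varphi(x)-\varphi(y)$ form a $\ZZ$-basis of $C/\ZZ$. Expressing $\varphi$ in the bases $\{x,y,z\}$ of $R/\ZZ$ and $\{w_1,w_2\}$ of $C/\ZZ$ yields an element $(A,B) \in V_{\ZZ}$ under Bhargava's parametrization. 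The defining identities $\varphi(x) = w_1$ and $\varphi(x+y)-\varphi(x)-\varphi(y) = w_2$ directly force the entries of $(A,B)$ at positions $(1,1)$ and $(1,2)$ to match the prescribed values in \eqref{matrixform1xyx2}, with the remaining entries unconstrained.

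For the reverse direction, I would start from $(A,B) \in V_{\ZZ}$ of the form \eqref{matrixform1xyx2} and invoke Bhargava's parametrization to produce a pair $(R,C)$ equipped with bases $\{v_1,v_2,v_3\}$ of $R/\ZZ$ and $\{w_1,w_2\}$ of $C/\ZZ$. The prescribed entries of $(A,B)$ translate precisely into the identities $\varphi(v_1) = w_1$ and $\varphi(v_1+v_2)-\varphi(v_1)-\varphi(v_2) = w_2$, so $\{1,\varphi(v_1),\varphi(v_1+v_2)-\varphi(v_1)-\varphi(v_2)\}$ is a $\ZZ$-basis of $C$. Applying \cref{indexprop2} in reverse then yields $\Ind(1,V_1,V_2,V_1^2) = 1$ for any lifts $V_1,V_2 \in R$ of $v_1,v_2$, so $1,V_1,V_2,V_1^2$ is a basis of $R$. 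Setting $x = v_1$, $y = v_2$, $z = v_3$ produces the required quadruple. The two constructions are manifestly mutually inverse, since each reconstructs the bases used by the other; isomorphism classes match because the choice of basis on both $R/\ZZ$ and $C/\ZZ$ is rigidified by the constructions; and the bijection is discriminant-preserving because Bhargava's parametrization is.

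The main obstacle I anticipate is checking that the forward map is well-defined independent of the \emph{a priori} choice of cubic resolvent $C$ in the first step. This is resolved \emph{a posteriori}: the reverse direction shows that each $(A,B)$ of the form \eqref{matrixform1xyx2} determines a unique pair $(R,C)$, so the cubic resolvent compatible with these constructions is uniquely determined by the quadruple $(R,x,y,z)$. Equivalently, the values $\varphi(x)$ and $\varphi(x+y)-\varphi(x)-\varphi(y)$ generate $C/\ZZ$, and the ring structure on $C$ in this basis is recorded by $(A,B)$, which depends only on the quadruple. Once this uniqueness is in hand the bijection is complete, and the same strategy will in fact give a clean alternate proof of the uniqueness of the cubic resolvent in this setting.
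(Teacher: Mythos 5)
Your proposal follows the same route as the paper's proof: the two constructions (quadruple $\to (A,B)$ via the index formula \cref{indexprop2} and Bhargava's parametrization, and $(A,B) \to$ quadruple via the structure coefficients and the index formula in reverse) are precisely what the paper's preceding discussion carries out.

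There is, however, a logical gap in your treatment of the well-definedness of the forward map. You write that ``the reverse direction shows that each $(A,B)$ of the form \eqref{matrixform1xyx2} determines a unique pair $(R,C)$, so the cubic resolvent compatible with these constructions is uniquely determined by the quadruple $(R,x,y,z)$.'' This inference does not follow: knowing that $(A,B)\mapsto (R,C)$ is a well-defined function tells you nothing about whether two \emph{a priori} different cubic resolvents $C_1,C_2$ of $R$ would produce the same $(A,B)$. Your follow-up sentence (``the ring structure on $C$ in this basis is recorded by $(A,B)$, which depends only on the quadruple'') is the very claim you are trying to establish, so the argument is circular as written.

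The clean resolution, which the paper asserts but does not fully spell out either, is to observe that the cubic resolvent map is canonical at the level of the $\QQ$-algebra: every cubic resolvent ring $C$ of $R$ is an order in the single cubic resolvent algebra $L \coloneqq C\otimes\QQ$, and the quadratic map $\varphi$ on $R/\ZZ$ is the restriction of the canonical map $(R\otimes\QQ)/\QQ \to L/\QQ$. Hence $\varphi(x)$ and $\varphi(x+y)-\varphi(x)-\varphi(y)$ are unambiguously defined elements of $L/\QQ$, independent of the choice of resolvent. \cref{indexprop2} then says $1,\varphi(x),\varphi(x+y)-\varphi(x)-\varphi(y)$ span any cubic resolvent $C$, so $C$ is forced to be exactly this $\ZZ$-span, giving uniqueness. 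With that settled, $(A,B)$ records the canonical $\varphi$ in the now-determined bases and is itself uniquely determined, and your argument goes through. I recommend replacing the a posteriori/circular step with this direct observation.
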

Let $H$ be the largest subgroup of $\GL_2(\ZZ) \times \GL_3(\ZZ)$ acting on matrices of the form \eqref{matrixform1xyx2}; $H$ is generated by of elements of the form:
\begin{equation}
\label{groupformmatrixform1xyxy}
\begin{split}
	\begin{bmatrix}
    1 & -2u  \\
    0 &  1
  \end{bmatrix}&,
    \begin{bmatrix}
    1 & 0 & 0 \\
    u &  1 & 0  \\
    \ast & \ast & 1
  \end{bmatrix} \\
  \begin{bmatrix}
    1 & 0  \\
    0 &  1
  \end{bmatrix}&,
   \begin{bmatrix}
    1 & 0 & 0 \\
    0 &  1 & 0  \\
    0 & 0 & \pm 1
  \end{bmatrix} \\
   \begin{bmatrix}
    1 & 0  \\
    0 &  -1
  \end{bmatrix}&,
   \begin{bmatrix}
    -1 & 0 & 0 \\
    0 &  -1 & 0  \\
    0 & 0 & 1
  \end{bmatrix} 
\end{split}
\end{equation}

Suppose $R$ is a nondegenerate quartic ring. Consider pairs $(x,y) \in R$ such that $1,x,y,x^2$ is a basis of $R$. Consider the equivalence relation on all pairs generated by $(x,y) \sim (\pm x + a,  b + cx + \pm y)$ for integers $a,b,c \in \ZZ$. Now consider triples $(R,x,y)$ where $x\in R/\ZZ$, $y \in R/\ZZ\la 1,x\ra$ are such that $R = \ZZ\la 1,x,y,x^2\ra$. We say the triple is \emph{nondegenerate} if $R$ is nondegenerate. We say two such triples $(R,x,y)$ and $(R',x',y)$ are isomorphic if there exists a ring isomorphism $\alpha \colon R \rightarrow R'$ such that $\alpha(x) \equiv x' \Mod{\ZZ}$ and $\alpha(y) \equiv y' \Mod{\ZZ\la 1,x \ra}$. We obtain:
\begin{theorem}
There is a canonical discriminant preserving bijection between:
			\[
				\left \{ \begin{array}{c}
				\text{isomorphism classes of} \\
				\text{nondegenerate triples $(R,x,y)$} \\
				\text{where $x\in R/\ZZ$, $y \in R/\ZZ\la 1,x\ra$ are such that } \\
				\text{$R = \ZZ\la 1,X,Y,X^2\ra$ for any lifts} \\
				\text{$X,Y \in R$ of $x,y$}
			\end{array}	 \right \}
			\longleftrightarrow
			\left \{ \begin{array}{c}
				\text{nondegenerate orbit classes of} \\
				\text{pairs of integral ternary quadratic forms} \\
				\text{of the form \eqref{matrixform1xyx2}} \\
				\text{under the action of $H$} 
				\end{array}
			\right \}
			\]	
\end{theorem}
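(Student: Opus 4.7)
The plan is to deduce this theorem from \cref{firstparam} by quotienting both sides by the action of the group $H$ displayed in \eqref{groupformmatrixform1xyxy}. On the right-hand side the quotient is by definition; the content is to transport the $H$-action across the bijection of \cref{firstparam} and show that the resulting orbit space on based quadruples $(R,x,y,z)$ is exactly the set of isomorphism classes of triples $(R,x,y)$ with $x \in R/\ZZ$ and $y \in R/\ZZ\la 1,x\ra$, with the equivalence described in the theorem statement.

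My first step is to compute the action of each listed generator of $H$ on the quadruple $(R,x,y,z)$, using the fact that a pair $(g_2,g_3) \in \GL_2(\ZZ)\times \GL_3(\ZZ)$ simultaneously changes the basis $\{\varphi(x),\varphi(x+y)-\varphi(x)-\varphi(y)\}$ of $C/\ZZ$ via $g_2$ and the basis $\{x,y,z\}$ of $R/\ZZ$ via $g_3$. The ``bottom-row'' generators (those with $g_2 = \id$ and $g_3$ unipotent in its last row, possibly with a sign on the $(3,3)$ entry) send $z$ to $\pm z + ax + by$ for any $a,b \in \ZZ$, so after quotienting the data of $z$ is eliminated save for the requirement that $1,X,Y,X^2$ span $R$. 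The ``$u$-shear'' generator, with $g_2 = \left[\begin{smallmatrix}1 & -2u \\ 0 & 1\end{smallmatrix}\right]$ and $g_3$ having $u$ in the $(2,1)$ slot, performs $x \mapsto x + u\,y$ (or symmetrically $y \mapsto y + u\,x$, depending on convention) on $R/\ZZ$; the shift in $g_2$ is precisely the compensation forced by the quadratic identity for $\varphi$, which replaces $\varphi(x+y)-\varphi(x)-\varphi(y)$ by itself minus $2u\,\varphi(x)$. The sign-swap generator enacts $(x,y) \mapsto (-x,\pm y)$, and integer translations $x \mapsto x+a$, $y \mapsto y+b$ fall out from the translation subgroup that is implicit in the passage to $R/\ZZ$. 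Assembled, these relations are exactly the equivalence generated by $(x,y) \sim (\pm x + a,\, b + cx \pm y)$, together with ring isomorphism — which is the equivalence describing triples $(R,x,y)$ with $x \in R/\ZZ$, $y \in R/\ZZ\la 1,x\ra$.

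I also need to verify that $H$ is the \emph{full} stabilizer of the form \eqref{matrixform1xyx2} in $\GL_2(\ZZ)\times \GL_3(\ZZ)$; this is a direct linear-algebra computation, since the constraints $a_{11}=1$, $b_{12}=1/2$, $a_{12}=b_{11}=b_{22}=0$ pin down the action of $g_3$ on the first two basis vectors up to the freedoms encoded by the displayed generators and force $g_2$ to have the stated shape, while leaving the third column of $g_3$ unconstrained (reflected by the asterisks in the bottom row of the third generator block). The discriminant-preserving assertion is automatic: $H \subseteq \GL_2(\ZZ)\times \GL_3(\ZZ)$ acts by unimodular transformations, and \cref{firstparam} is already discriminant-preserving at the based level.

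The main obstacle will be the bookkeeping for the $u$-shear generator: one must check that the specific matrix $g_2 = \left[\begin{smallmatrix}1 & -2u \\ 0 & 1\end{smallmatrix}\right]$ is exactly what Bhargava's explicit structure coefficients produce as the compensating change of basis on $C/\ZZ$ when $y$ is sheared by $ux$. Once this compatibility is established via the quadratic identity $\varphi(x+y+ux)-\varphi(x)-\varphi(y+ux) = \bigl(\varphi(x+y)-\varphi(x)-\varphi(y)\bigr) - 2u\,\varphi(x)$, every other generator is essentially formal, and the theorem follows by combining \cref{firstparam} with the orbit analysis.
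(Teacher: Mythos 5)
This matches the paper's implicit proof: the theorem is stated as a direct corollary of \cref{firstparam} (prefaced only by ``We obtain:''), and your plan to take $H$-orbits on both sides of that bijection and identify the orbit space of based quadruples with the declared equivalence classes of triples is precisely the intended argument. One small correction in the bookkeeping: since $\varphi(2x)-2\varphi(x)=2\varphi(x)$, the shear $y\mapsto y+ux$ in fact replaces $\varphi(x+y)-\varphi(x)-\varphi(y)$ by itself \emph{plus} $2u\,\varphi(x)$, not minus; the $-2u$ entry of $g_2$ is not this polarization identity itself but rather the contragredient transformation of the coordinate pair $(A,B)$ under the change of basis $\omega_2\mapsto\omega_2+2u\,\omega_1$ on $C/\ZZ$, which sends $A\mapsto A-2uB$.
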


We now restrict our attention to matrices of the form \eqref{matrixform1xyx2intro}, which we repeat here:
\begin{equation*}
  (A,B) = \begin{bmatrix}
    1 & 0 & \ast \\
    - &  \ast & \ast  \\
    - & - & \ast
  \end{bmatrix},
    \begin{bmatrix}
    0 & 1/2 & 0 \\
    - &  \ast & \ast  \\
    - & - & \ast
  \end{bmatrix}.
\end{equation*}

We consider quintuples $(R,x,y,\varepsilon,\nu)$ where $x \in R$, $y \in R/\ZZ$, $\varepsilon = \pm 1$, $\nu \in \{0,1\}$, and $x,y,x^2$ is a basis of $R/\ZZ$. We such a quintuple is \emph{nondegenerate} if $R$ is nondegenerate. We say two such quintuples $(R,x,y,\varepsilon,\nu)$ and $(R',x',y',\varepsilon',\nu')$ are isomorphic if $\varepsilon = \varepsilon'$, $\nu = \nu'$, and there exists a ring isomorphism $\alpha \colon R \rightarrow R'$ such that $\alpha(x)=x'$ and $\alpha(y)  = y' \Mod{\ZZ}$.

\onexyxtwoparam*
\begin{proof}
Given such a nondegenerate pair of integral ternary quadratic forms, \cref{firstparam} gives a nondegenerate quartic ring with a basis $x,y,z$ of $R/\ZZ$ with the property that $1,X,Y,X^2$ is a basis of $R$ for any lifts $X,Y \in R$ of $x,y \in R/\ZZ$. There is a unique lift $X \in R$ of $x$ and a unique value $\varepsilon = \pm 1$ and $\nu \in \{0,1\}$ such that $z \equiv \varepsilon X^2 + \nu X \Mod{\ZZ\la 1,y\ra}$. We take the quintuple $(R,X,y,\varepsilon,\nu)$. Conversely, given such a quintuple $(R,x,y,\epsilon,\nu)$, there exists a unique integer $a$ such that 
\[
	\varphi(x + (\varepsilon x^2 + \nu x + ay)) - \varphi(\varepsilon x^2 + \nu x + ay) \equiv 0 \Mod{\ZZ\la 1,\varphi(x)\ra}.
\]
Equip $R/\ZZ$ with the basis $x,y,\varepsilon x^2 + \nu x + ay$. Let $C$ be the unique cubic resolvent ring of $R$ and equip $C/\ZZ$ with the basis $1,\varphi(x),\varphi(x+y)-\varphi(x)-\varphi(y)$; taking the cubic resolvent map with respect to these bases gives an element of \eqref{matrixform1xyx2intro}. A computation shows that these two maps are inverse to each other.
\end{proof}

Recall that $\cF^{x,y,x^2}$ is defined to be the set of isomorphism classes of triples $(R,x,y)$ where $R$ is a nondegenerate quartic ring and $x \in R$ and $y \in R/\ZZ$ is such that $x,y,x^2$ forms a basis of $R/\ZZ$. Let $W_{\ZZ}$ denote the set of matrices of the form \eqref{matrixform1xyx2intro} and $\psi$ be the map from triples $(R,x,y) \in \cF^{x,y,x^2}$ to pairs of integral ternary quadratic forms referred to in \cref{1xyx2param} given by setting $\varepsilon = 1$ and $\nu = 0$. We now prove \cref{1xyx2thm}. Let $p = (1/10,1/5,1/5,1/5,3/10)$. 

\begin{proof}[Proof of \cref{1xyx2thm}]
The proof is almost identical to that of \cref{1xyxythm}. Let $T = (R,x,y) \in \cF^{x,y,x^2}$ be any triple of absolute discriminant $\Delta$ and let $C$ be the unique cubic resolvent ring. A computation shows that $H(T) \leq X^{1/5}$ if and only if $\psi(T) \in B(p,X)$; thus for any positive real numbers $M$ and $X$, \cref{lambdaboundlemmaquartic} implies that if $\Delta \geq X/M$, then
\begin{align*}
\lambda_i(R) &\asymp_M \Delta^{p_i} \\
\lambda_i(C) &\asymp_M \Delta^{q_i}.
\end{align*}
So, for each $M$ there exists a constant $\eps_M$ such that if $\Delta \geq X/M$ and $H(T)^5 \leq X$, we have that $T$ is $(\eps_M,X)$-close to $p$. Moreover, we may suppose $\lim_{M \rightarrow \infty}\eps_M = \infty$. We first prove \eqref{correspondquartic1}. 
\begin{align*}
&\lim_{\eps \rightarrow \infty} \liminf_{X \rightarrow \infty}\frac{\#\{T \in \cF^{x,y,x^2} \; \mid \; H(T) \leq X \text{ and $\psi(f)$ is $(\eps,X)$-close to $p$}\}}{\#\{T \in \cF^{x,y,x^2} \; \mid \; H(T) \leq X \}} \\
&\geq \lim_{M \rightarrow \infty} \liminf_{X \rightarrow \infty}\frac{\#\{T \in \cF^{x,y,x^2} \; \mid \; H(T)^5 \leq X \text{ and $\Delta \geq X/M$}\}}{\#\{T \in \cF^{x,y,x^2}\; \mid \; H(T)^5 \leq X \}}.
\end{align*}
Davenport's lemma gives:
\begin{align*}
{\#\{T \in \cF^{x,y,s^2} \; \mid \; H(T)^5 \leq X \}} &\asymp \frac{1}{4}\#\{(A,B) \in W_{\ZZ} \cap B(p,X) \mid \Disc(A,B) \neq 0\} \\
&=\frac{X}{4}\Vol\{(A,B) \in W_{\RR} \cap B(p,1)\} + O(X^{9/10}).
\end{align*}
We now compute the numerator. Let 
\[
	g_X = \Bigg(\begin{bmatrix}
    X^{1/5} & 0  \\
    0 &  X^{3/10} 
  \end{bmatrix},
  \begin{bmatrix}
    X^{-1/10} & 0 & 0 \\
    0 &  X^{-1/5} & 0  \\
    0 & 0 & X^{-1/5}
  \end{bmatrix}\Bigg).
\]

By Davenport's lemma, we have
\begin{align*}
&\#\{T \in \cF^{x,y,x^2}\; \mid \; H(T)^5 \leq X \text{ and } \Delta \geq X/M\} \\
&\asymp \frac{1}{4}\#\{(A,B) \in W_{\ZZ} \cap B(p,X) \; \mid \;  \lv \disc(A,B) \rv \geq X/M\} \\
&= \frac{1}{4}\#\{(A,B) \in W_{\ZZ} \cap B(p,X) \; \mid \;  \lv \disc(g(A,B)) \rv \geq 1/M\} \\
&= \frac{1}{4}X\Vol\{(A,B) \in W_{\RR} \cap B(p,1) \; \mid \; \lv \disc(A,B) \rv \geq 1/M\} + O(X^{9/10}).
\end{align*}
Now observe that 
\[
	\lim_{M \rightarrow \infty}\Vol\{(A,B) \in W_{\RR} \cap B(p,1) \; \mid \; \lv \disc(A,B) \rv \geq 1/M\} = 1
\]
so 
\[
\lim_{\eps \rightarrow \infty} \liminf_{X \rightarrow \infty}\frac{\#\{T \in \cF^{x,y,x^2} \; \mid \; H(T) \leq X \text{ and $\psi(f)$ is $(\eps,X)$-close to $p$}\}}{\#\{T \in \cF^{x,y,x^2}\; \mid \; H(T) \leq X \}} = 1
\]
We now prove Condition \eqref{correspondquartic2}. Observe that a positive proportion of lattice points in $B(p,X)$ satisfy the conditions $a_{11} = b_{12} = 1$ and $a_{12} = b_{11} = 0$. Moreover, there exists $C$ such that for every lattice point in $B(p,X)$ with $a_{11} = b_{12} = 1$ and $a_{12}=b_{11} = 0$, there exists a $\GL_2(\ZZ)$-translate with $a_{11} = b_{12} = 1$ and $a_{12} = b_{11} = b_{13} = 0$ in $CB(p,X)$. Thus, a positive proportion of lattice points in $CB(p,X)$ arise from elements of $\cF^{1,x,y,x^2}$; combining this observation with the proof of \cref{upperboundquartic} shows that for $\eps$ sufficiently large, we have:
\[
	\#(\cF(p,\eps,X)\cap \psi(\cF^{1,x,y,x^2})) \ll_{\eps} X^{4/5}
\]
Conversely, \cref{lowerboundquartic} shows that
\[
	\#\cF(p,\eps,X) \gg X^{4/5}.
\]
\end{proof}

\subsubsection{Binary quartic forms}
Let $f = f_0x^4 + f_1x^3y + f_2x^2y^2 + f_3xy^3 + f_4y^4 \in \Sym^4\ZZ^2$ be a binary quartic form. Wood defines a map $\Sym^4 \ZZ^2 \rightarrow \ZZ^2 \otimes \Sym^2\ZZ^3$ which canonically attaches to a binary quartic form a based quartic ring equipped with a cubic resolvent ring (\cite{woodquartic}). In this thesis we use a slight variant of Wood's map\footnote{Our map and the map of Wood send a form $f$ to isomorphic quartic rings and cubic resolvent rings. The only difference is a choice of basis of the quartic ring and cubic resolvent ring.}. In particular, we send a form $f$ to the following element of $\ZZ^2 \otimes \Sym^2\ZZ^3$:
\[
   \begin{bmatrix}
    f_0 & f_1/2 & 0 \\
    - &  f_2 & f_3/2  \\
    - & - & f_4
  \end{bmatrix},
    \begin{bmatrix}
    0 & 0 & 1/2 \\
    - &  1 & 0  \\
    - & - & 0
  \end{bmatrix}.
\]

Let $p = (1/6,1/6,1/6,1/6,1/3)$ and let $\psi$ denote the map from $\Sym^4 \ZZ^2 \rightarrow \ZZ^2 \otimes \Sym^2 \ZZ^3$.  Observe that for any form $f \in \Sym^4 \ZZ^2$, we have $H_c(f) \leq X^{1/6}$ if and only if $\psi(f) \in B(p,X)$. 

\begin{proof}[Proof of \cref{binaryquartictheorem}]
Again, the proof is almost identical to that of \cref{1xyxythm}. Let $M$ be any constant. Let $f \in \cF_4^{\bin}$ and let $(R,C)$ be the corresponding rings. By \cref{lambdaboundlemmaquartic}, if $\Delta \geq X/M$ and $\psi(f) \in B(p,X)$, then
\begin{align*}
\lambda_i(R) &\asymp_M \Delta^{p_i} \\
\lambda_i(C) &\asymp_M \Delta^{q_i}.
\end{align*}
To prove \eqref{correspondquartic1}, it suffices to show that
\[
\lim_{M \rightarrow \infty} \liminf_{X \rightarrow \infty} \frac{\#\{f \in \cF_4^{\bin} \; \mid \; H_c(f) \leq X \text{ and } \Delta \geq X^6/M\}}{\#\{f \in \cF_4^{\bin} \; \mid \; H_c(f) \leq X\}} = 1.
\]
The denominator is clearly $X^5 + o(X^5)$. By Davenport's lemma, we have:
\begin{align*}
&\#\{f \in \cF_4^{\bin}  \; \mid \; H_c(f) \leq X \text{ and } \Delta > X^6/M\} \\
&= \#\{(f_0,\dots,f_4) \in \ZZ^5 \; \mid \; \max\{\lv f_i \rv\}\leq X \text{ and } \lv \disc(f_0,\dots,f_4) \rv > X^6/M\} \\
&= \Vol\{(f_0,\dots,f_4) \in \RR^5 \; \mid \; \max\{\lv f_i \rv\}\leq X \text{ and } \lv \disc(f_0,\dots,f_4) \rv > X^6/M\} + O(X^{4}) \\
&= X^5\Vol\{(f_0,\dots,f_5) \in \RR^5  \; \mid \; \max\{\lv f_i \rv \}\leq 1 \text{ and } \lv \disc(f_0,\dots,f_4) \rv > 1/M\} + O(X^{4})
\end{align*}

We now prove \eqref{correspondquartic2}. Observe that by \cref{upperboundquartic}, there exists a constant $M$ such that for all $\eps > M$, there are $\ll_{\eps} X^{5/6}$ isomorphism classes of tuples $(R,C)$ such that $\Delta \leq X$ and $(R,C)$ is $(\eps,X)$-close to $p$. \cref{evertse} shows that there are at most $2^{80}$ $\GL_2(\ZZ)$-equivalence classes of binary quartic forms giving rise to the same quartic ring. Therefore, it suffices to show that there are $\gg X^{5/6}$ $\GL_2(\ZZ)$-equivalence classes of binary quartics such that $\Delta \leq X$ and the corresponding rings $(R,C)$ are $(\eps,X)$-close to $p$; this is precisely the statement of \cref{binthm} when $r = (1/24,1/24)$.
\end{proof}

\subsubsection{Monic quartic forms}
Let $p = (1/12,1/6,1/4,1/6,1/3)$.  Observe that for any monic integral quartic form $f$, we have $H_r(f) \leq X^{1/12}$ if and only if $H_r(f) \in B(p,X)$. Let $\cF_4^{\mon}$ denote the set of monic quartic forms with integral coefficients and nonzero discriminant.

\begin{proof}[Proof of \cref{monogenicquartictheorem}]
Again, the proof is almost identical to that of \cref{1xyxythm}. Let $M$ be any constant. For any $f \in \cF_4^{\mon}$ let $(R,C)$ be the corresponding rings. By \cref{lambdaboundlemmaquartic}, if $\Delta \geq X/M$ and $\psi(f) \in B(p,X)$, then
\begin{align*}
\lambda_i(R) &\asymp_M \Delta^{p_i} \\
\lambda_i(C) &\asymp_M \Delta^{q_i}.
\end{align*}
To prove \eqref{correspondquartic1}, it suffices to show that
\[
\lim_{M \rightarrow \infty} \liminf_{X \rightarrow \infty} \frac{\#\{f \in \cF_4^{\mon} \; \mid \; H_r(f) \leq X \text{ and } \Delta \geq X^{12}/M\}}{\#\{f \in \cF_4^{\mon} \; \mid \; H_r(f) \leq X\}} = 1.
\]
The denominator is clearly $X^{10} + o(X^{10})$. By Davenport's lemma, we have:
\begin{align*}
&\#\{f \in \cF_4^{\mon}  \; \mid \; H_r(f) \leq X \text{ and } \Delta > X^{12}/M\} \\
&= \#\{(f_1,\dots,f_4) \in \ZZ^4 \; \mid \; \max\{\lv f_i \rv^{1/i}\}\leq X \text{ and } \lv \disc(1,f_1,\dots,f_4) \rv > X^{12}/M\} \\
&= \Vol\{(f_1,\dots,f_4) \in \RR^4 \; \mid \; \max\{\lv f_i \rv^{1/i}\}\leq X \text{ and } \lv \disc(1,f_1,\dots,f_4) \rv > X^{12}/M\} + O(X^{9}) \\
&= X^{10}\Vol\{(f_1,\dots,f_4) \in \RR^4  \; \mid \; \max\{\lv f_i \rv \}\leq 1 \text{ and } \lv \disc(1,f_1,\dots,f_4) \rv > 1/M\} + O(X^{9})
\end{align*}

We now prove \eqref{correspondquartic2}. Observe that by \cref{upperboundquartic}, there exists a constant $M$ such that for all $\eps > M$, there are $\ll_{\eps} X^{3/4}$ isomorphism classes of tuples $(R,C)$ such that $\Delta \leq X$ and $(R,C)$ is $(\eps,X)$-close to $p$. \cref{evertse} shows that there are at most $2^{80}$ $\GL_2(\ZZ)$-equivalence classes of binary quartic forms giving rise to the same quartic ring. Therefore, it suffices to show that there are $\gg X^{3/4}$ $\GL_2(\ZZ)$-equivalence classes of binary quartics such that $\Delta \leq X$, the binary quartic represents $1$, and the corresponding rings $(R,C)$ are $(\eps,X)$-close to $p$; this follows from \cref{binthm} when $r = (0,1/12)$.
\end{proof}

\subsection{Pictures of $\poly_4$, $\poly_4(S_4)$, and $\poly_4(D_4)$}

\begin{figure}[!htb]
\captionsetup{width=.3\linewidth}
\minipage{0.32\textwidth}
 \includegraphics[width=\linewidth]{quarticFullColorCORRECT.png}
 \caption{$\poly_4$ projected onto $p_1$, $p_2$, and $q_1$. The color indicates the value of the density function $d_{\eps,4}$; blue is the highest value.}\label{fig:awesome_image1}
\endminipage\hfill
\minipage{0.32\textwidth}
 \includegraphics[width=\linewidth]{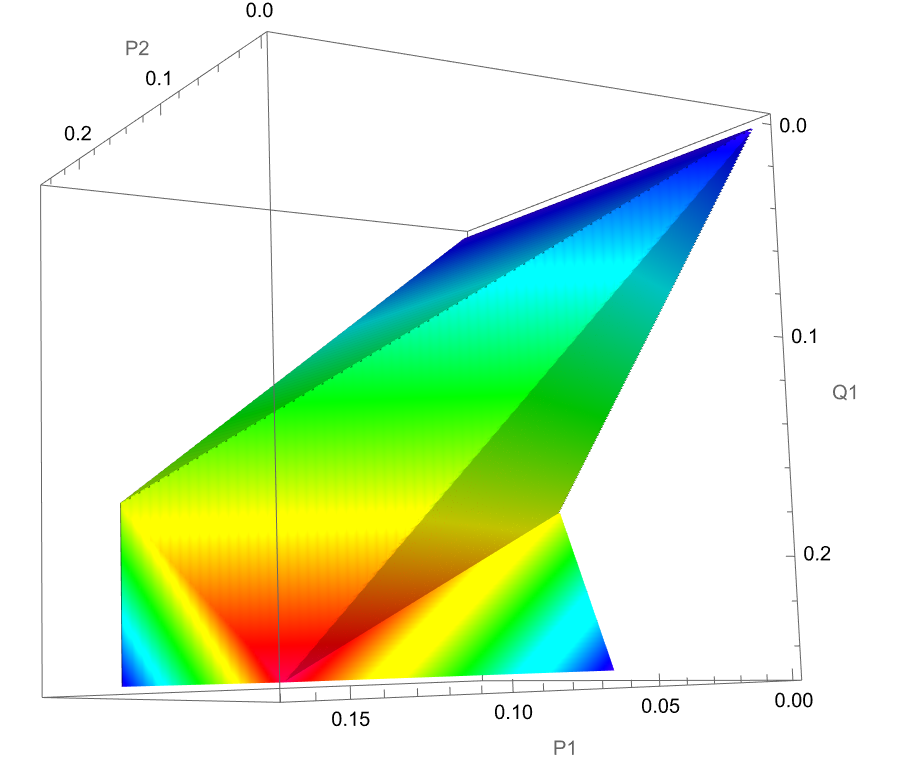}
 \caption{A rotated version of the previous image.}\label{fig:awesome_image2}
\endminipage\hfill
\minipage{0.32\textwidth}%
\includegraphics[width=\linewidth]{d4FullColor.png}
 \caption{$\poly_4(D_4)$ projected onto $p_1$, $p_2$, and $q_1$. The color indicates the value of the lower bound on the density function $d_{\eps,D_4}$ given in \cref{d4densitythm}; blue is the highest value.}\label{fig:awesome_image1}
\endminipage
\end{figure}

\begin{figure}[!htb]
\captionsetup{width=.3\linewidth}
\minipage{0.32\textwidth}
 \includegraphics[width=\linewidth]{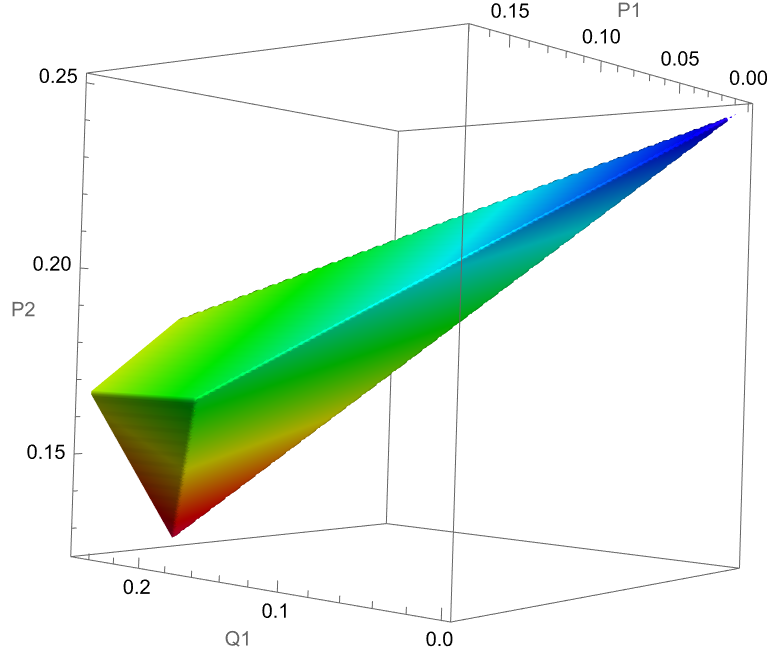}
 \caption{A rotated version of the previous image.}\label{fig:awesome_image2}
\endminipage\hfill
\minipage{0.32\textwidth}
 \includegraphics[width=\linewidth]{s4FullColor.png}
 \caption{$\poly_4(S_4)$ projected onto $p_1$, $p_2$, and $q_1$. The color indicates the value of the density function $d_{\eps,S_4}$; blue is the highest value.}\label{fig:awesome_image1}
\endminipage\hfill
\minipage{0.32\textwidth}%
\includegraphics[width=\linewidth]{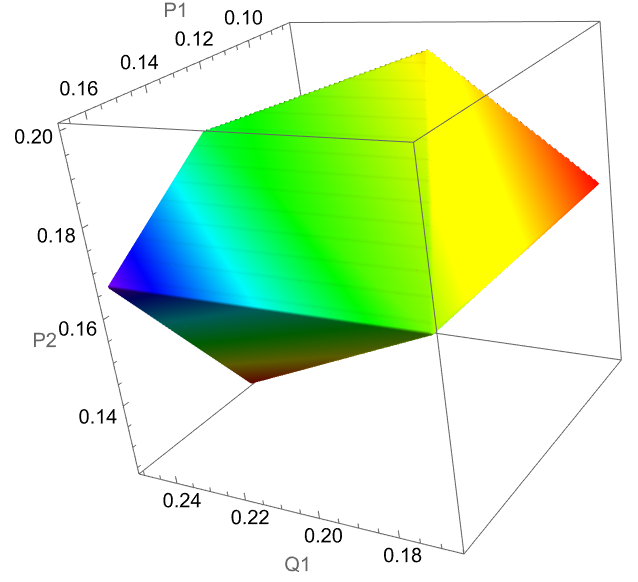}
 \caption{A rotated version of the previous image.}\label{fig:awesome_image2}
\endminipage
\end{figure}

\begin{figure}[!htb]
\captionsetup{width=.45\linewidth}
\minipage{0.5\textwidth}
 \includegraphics[width=\linewidth]{s4labeledsplit.png}
 \caption{$\poly_4(S_4)$ projected onto $p_1$, $p_2$, and $q_1$. The colored regions are the regions on which $d_{\eps,S_4}$ is linear. The vertices are labelled by the family of rings ``corresponding'' to them.}\label{fig:awesome_image1}
\endminipage\hfill
\minipage{0.5\textwidth}
 \includegraphics[width=\linewidth]{s4labeledsplitreverse.png}
 \caption{A rotated version of the previous image.}\label{fig:awesome_image2}
\endminipage
\end{figure}

\section{Successive minima of quintic rings}
\label{quintic}
We now investigate density functions of quintic rings. In this section, let $R$ denote a nondegenerate quintic ring with absolute discriminant $\Delta$ and let $S$ denote a sextic resolvent ring of $R$.

\subsection{Additional results on quintic rings}
\subsubsection{Theorems on $\poly_5(G)$}
\begin{theorem}
\label{quinticcontainmentlemma}
For every permutation group $G \subseteq S_5$, the set $\poly_5(G)$ is a finite union of polytopes of dimension $7$. 
\end{theorem}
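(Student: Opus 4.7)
The plan is to mirror the argument for \cref{containmentlemma}, whose proof goes via the structure theorem \cref{quarticstructuretheorem}. Specifically, I would first establish an analogous structure theorem for quintic rings: for each permutation group $G \subseteq S_5$, the set $\poly_5(G)$ equals a finite union $\bigcup_{(\cF,\cG)} P_{\cF,\cG}$, indexed by complete flags $(\cF,\cG)$ of pairs $(K,M)$, where $K$ is a quintic \'etale $\QQ$-algebra with $\Gal(K) \simeq G$ as a permutation group and $M$ is its sextic resolvent algebra. Here $\cF = (F_0,\dots,F_4)$ is a chain of $\QQ$-subspaces of $K$ with $\dim_{\QQ} F_i = i+1$, and similarly $\cG = (G_0,\dots,G_5)$ is a chain of $\QQ$-subspaces of $M$ with $\dim_{\QQ} G_j = j+1$.

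To cut out $P_{\cF,\cG}$, I would collect the basic inequalities $0 \leq p_1 \leq \dots \leq p_4$, $p_1 + \dots + p_4 = 1/2$, $0 \leq q_1 \leq \dots \leq q_5$, $q_1 + \dots + q_5 = 1/2$, together with inequalities of the form $q_k \leq p_i + p_j$ coming from Bhargava's parametrization of quintic rings. In that parametrization, a nondegenerate pair $(R,S)$ corresponds to an element of $\ZZ^4 \otimes \wedge^2 \ZZ^5$, that is, a quadruple of alternating bilinear forms on $R/\ZZ$ whose coefficients record the resolvent interaction between $R/\ZZ$ and $S/\ZZ$. For each flag $(\cF,\cG)$ I would define the analogue of $T_{\cF,\cG}$ from \cref{quarticstructuretheorem}, namely the function recording, for each pair $(i,j)$ and each of the four alternating forms, the smallest index $k$ such that the image of $F_i \wedge F_j$ under the relevant component of the resolvent lies inside $G_k$. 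Each such datum then yields a linear inequality, and $P_{\cF,\cG}$ is the polytope cut out by the full list.

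The inclusion $\poly_5(G) \subseteq \bigcup_{(\cF,\cG)} P_{\cF,\cG}$ would follow from Minkowski reduction exactly as in the proof of \cref{quarticstructuretheorem}: choosing Minkowski bases of $R$ and $S$ induces a flag $(\cF,\cG)$ of $(R \otimes \QQ, S \otimes \QQ)$, and comparing the length of the images of basis vectors under the resolvent maps to products of the lengths of those basis vectors gives the required inequalities. For the reverse containment, given a rational point $p$ in the relative interior of $P_{\cF,\cG}$, I would choose bases of $K$ and $M$ adapted to $(\cF,\cG)$ and then scale the coordinates of the resolvent data by appropriate powers of an integer $N$, drawn from an infinite set $\cM \subseteq \ZZ_{>0}$ making the rescaled data integral; this produces a family of pairs $(R_N,S_N)$ with scaled successive-minima converging to $p$, mirroring the construction of the family $(A_M,B_M)$ in the proof of \cref{quarticstructuretheorem}. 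Since $\poly_5(G)$ is closed by definition, this yields the reverse containment.

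The main obstacle will be the combinatorial bookkeeping: in the quartic case there is a single symmetric bilinear resolvent map $\varphi$, whereas here the resolvent is encoded by four alternating forms, so there are more families of linear constraints to track and the definition of $T_{\cF,\cG}$ becomes correspondingly more intricate. Once the structure theorem is in place, the dimension claim is direct: the ambient affine subspace $\{\sum p_i = \sum q_j = 1/2\} \subset \RR^9$ has dimension $7$, and for each flag one exhibits a $G$-pair $(R,S)$ whose normalized successive minima lie in the relative interior of $P_{\cF,\cG}$ by perturbing the Minkowski bases, so each polytope attains full dimension $7$, as in the quartic case.
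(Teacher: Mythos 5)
Your high-level strategy is the same as the paper's: reduce \cref{quinticcontainmentlemma} to a structure theorem expressing $\poly_5(G)$ as $\bigcup_{(\cF,\cG)} P_{\cF,\cG}$ and prove both containments via Minkowski bases (forward) and scaling a rational interior point (backward). However, several of the concrete ingredients you propose are wrong, and they are not cosmetic.

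First, the normalization of the $q_i$. You write $q_1+\dots+q_5 = 1/2$, but since $\Disc(S) = \Disc(R)^3$ for a sextic resolvent ring, Minkowski's second theorem gives $\prod \lambda_i(S) \asymp \Disc(R)^{3/2}$, so the correct constraint (and the one in \eqref{basicineqquintic}) is $q_1 + \dots + q_5 = 3/2$. Second, and relatedly, the resolvent inequalities cannot take the form $q_k \leq p_i + p_j$. That shape is correct for quartics, where the resolvent map lands in $C/\ZZ$, but the sextic resolvent is a map $\varphi \colon \widetilde{S} \otimes \widetilde{S} \to \widetilde{R}$ on the \emph{dual} modules, and the explicit formula carries a factor of $\sqrt{\Disc(S)}/\Disc(R)$. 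Passing from the successive minima of $\widetilde{R},\widetilde{S}$ to those of $R,S$ (via \cref{succminlemma}) introduces both an index reversal and a shift by $\Delta^{1/2}$, so the resulting inequalities have the form $1/2 + p_{5-T(i,j)} \geq q_{6-i} + q_{6-j}$; compare \cref{s5polytopetheorem}. Without the $1/2$ shift and the index flips, your polytopes $P_{\cF,\cG}$ are not the right ones.

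Third, you misidentify the flag data: the paper's flags sit in the trace-zero subspaces $L_0$ and $K_0$ (equivalently, $\widetilde{S}\otimes\QQ$ and $\widetilde{R}\otimes\QQ$), not in $K$ and $M$ themselves, and the resolvent is packaged as a single alternating bilinear form $\varphi\colon L_0 \otimes L_0 \to K_0$. Because of this, $T_{\cF,\cG}$ is a single function with the same shape as in the quartic case (one $\min$ per pair $(i,j)$), and the combinatorial complexity you anticipate from ``four alternating forms'' does not in fact appear. Your anticipated obstacle is not where the difficulty lies; the difficulty is entirely in the duality and the $\Delta^{1/2}$ normalization, which your sketch omits.
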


In fact, the facets of these polytopes can be defined quite explicitly; see \cref{quinticstructuretheorem} for an explicit description of these polytopes. \cref{quinticcontainmentlemma} is a natural generalization of \cite[Theorem 1.1.2]{me} in the quintic case. We now explicitly describe this finite union of polytopes when $G = S_5$. Let
\[
	p = (p_1,\dots,p_4,q_1,\dots,q_5)\in \RR^9
\]
refer to a point of $\RR^9$. By the definition of successive minima and Minkowski's theorem, the set $\poly_5(G)$ satisfies the linear inequalities
\begin{equation}
\label{basicineqquintic}
\begin{tabular}{c c c c}
$0 \leq p_1 \leq \dots \leq p_4$, & $1/2 = \sum_i p_i$, & $0 \leq q_1 \leq \dots \leq q_5$, & and $3/2 = \sum_i q_i$.
\end{tabular}
\end{equation}
 
\begin{theorem}
\label{s5polytopetheorem}
The set $\poly_5(S_5)$ is the set of points $p \in \RR^{9}$ such that Equation~\eqref{basicineqquintic} is satisfied and
\begin{center}
\begin{tabular}{ c c c c }
 $1/2+p_1 \geq q_4+q_1$, & $1/2+p_1\geq q_3+q_2$, & $1/2+p_2\geq q_1+q_5$, & $1/2+p_2\geq q_2+q_4$, \\
 $1/2+p_3 \geq q_5+q_2$, & $1/2+p_3 \geq q_3+q_4$, & and $1/2+p_4\geq q_5+q_3$. & \text{}
\end{tabular}
\end{center}
Moreover, $\poly_5(S_5) = \cup_G \poly_5(G)$ as $G$ ranges across transitive subgroups of $S_5$.
\end{theorem}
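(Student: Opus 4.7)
The plan is to adapt the strategy of the proofs of \cref{s4polytopethm} and \cref{d4polytopethm} to the quintic setting. As in the quartic case, the starting point is the quintic analogue of \cref{quarticstructuretheorem}, namely \cref{quinticstructuretheorem}, which presents $\poly_5(G)$ as a finite union of polytopes $P_{\cF,\cG}$ indexed by flags $(\cF,\cG)$ of pairs $(K,T)$ of a quintic \'etale algebra equipped with its sextic resolvent algebra, subject to $\Gal(K) \simeq G$ as a permutation group. Each polytope $P_{\cF,\cG}$ is cut out by linear inequalities whose shape is determined by how the bilinear pairings implicit in the sextic resolvent send filtered pieces of $K$ into filtered pieces of $T$.

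For the forward inclusion $\poly_5(S_5) \subseteq \{\text{region in the statement}\}$, I would use Bhargava's parametrization of pairs $(R,S)$ by $\ZZ^4 \otimes \wedge^2 \ZZ^5$ (see \cref{quinticbackground}). Picking bases of $R/\ZZ$ and $S/\ZZ$ compatible with a given flag, the seven listed inequalities each correspond to a distinct potential vanishing block in the parametrizing $4$-tuple of skew-symmetric $5 \times 5$ matrices. For each of the seven inequalities, I would show that its failure forces such a block to be zero, and that this vanishing in turn forces the sextic resolvent algebra $T$ to admit a nontrivial factorization. Since $\Gal(K) \simeq S_5$ realizes the $S_5 \to S_6$ transitive action, $T$ must be a field, ruling out the violation. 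This is the quintic analogue of the cubic resolvent argument in the proof of \cref{s4polytopethm}, where $b_{11}=b_{12}=b_{13}=0$ forced a rational root of the binary cubic form $\det(By-Ax)$ and hence a reducible cubic resolvent.

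For the reverse inclusion, I would exhibit, for each maximal face of the claimed polytope, an explicit element $A \in \QQ^4 \otimes \wedge^2 \QQ^5$ whose nonzero coordinate support is minimal subject to the corresponding inequalities being saturated, and whose associated quintic algebra has Galois group $S_5$. Quantitative Hilbert irreducibility guarantees that these choices can be made with Galois group $S_5$. Each such $A$ produces a flag whose polytope $P_{\cF,\cG}$ realizes the given face, and taking closed convex hulls and invoking \cref{quinticstructuretheorem} recovers the full region. For the union statement, the nontrivial inclusion $\bigcup_G \poly_5(G) \subseteq \poly_5(S_5)$ follows because the seven listed inequalities depend only on the fact that $K$ is an irreducible quintic algebra and $T$ is its irreducible sextic resolvent, not on the precise transitive subgroup $\Gal(K) \subseteq S_5$; every flag of such a pair $(K,T)$ satisfies the same seven pairing inequalities.

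I expect the forward inclusion for $S_5$ to be the main obstacle. Unlike the cubic resolvent of a quartic ring, which is governed by a concrete symmetric bilinear map $\varphi \colon R/\ZZ \otimes R/\ZZ \to C/\ZZ$ with transparent matrix coefficients, the sextic resolvent of a quintic ring is defined through the considerably more intricate Bhargava construction, and deducing reducibility of $T$ from vanishing patterns of $A$ requires careful tracking of how those patterns propagate through the construction and interact with the exceptional outer automorphism $S_5 \to S_6$. As in the quartic case, I expect each ``bad'' vanishing pattern to produce a recognizable rational point or common component on an appropriate resolvent variety, yielding a rational factor of $T$; carrying this out for each of the seven inequalities is the technical heart of the proof.
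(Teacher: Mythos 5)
Your proposed route for the forward inclusion passes through a more delicate intermediate object than the paper needs, and this leads to a genuine gap in your argument for the ``union'' part of the statement. The key observation you're missing is that the paper does not argue via reducibility of the sextic resolvent $T$ at all: it invokes \cref{quinticzerolemma} (Bhargava's Lemma~10 from \cite{Bhquintic}), which says that each of the seven vanishing patterns in the tensor $\mathcal{A}$ forces the \emph{quintic} $K = R\otimes\QQ$ itself to be reducible. Since $\Gal(K)$ transitive is equivalent to $K$ irreducible, the contradiction is immediate and uniform over all transitive $G$. By contrast you propose to show that each vanishing pattern forces the sextic resolvent $T$ to factor, then rule this out using the fact that $T$ is a field when $\Gal(K)\simeq S_5$. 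Aside from being technically harder (as you anticipate, since the sextic resolvent map is far less transparent than the cubic resolvent map used in \cref{s4polytopethm}), this strategy does not extend to the union statement: for $\Gal(K)\simeq C_5$ or $D_5$ the sextic resolvent $T$ is already reducible (the Frobenius subgroup $F_{20}\subseteq S_5$ is the point stabilizer of the exotic embedding $S_5\hookrightarrow S_6$, so $C_5$ and $D_5$ fix a point of the $6$-element set), so your assertion that ``$T$ is its irreducible sextic resolvent'' for every transitive $G$ is simply false, and no contradiction is obtained for those groups. The paper's argument via irreducibility of $K$ sidesteps this entirely, since $K$ is a field for every transitive $G$ by definition.

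For the reverse inclusion you also propose more than is needed: the paper produces a single $\mathcal{A}\in V_{\QQ}$ with a block upper-triangular vanishing pattern defining an $S_5$-quintic field, and the associated $P_{\cF,\cG}$ from \cref{quinticstructuretheorem} is by construction exactly the region cut out by \eqref{basicineqquintic} and the seven inequalities, so a single example suffices rather than one per face plus Hilbert irreducibility. To repair your proof you should replace the sextic-resolvent reducibility claim with the quintic reducibility statement from Bhargava's Lemma~10, which is already available and makes the seven constraints on $T_{\cF,\cG}$ immediate for every transitive $G$ simultaneously.
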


\subsubsection{Successive minima of resolvent rings}
We now make a slight detour and discuss successive minima of resolvent rings. A sextic resolvent ring of a quintic $S_5$-ring is a sextic $\psi(S_5)$-ring, where $\psi$ is a transitive embedding $\psi \colon S_5 \xhookrightarrow{} S_6$; the embedding $\psi$ is unique up to outer automorphism. It is a natural question to ask if the successive minima of sextic resolvent rings satisfy the same constraints as those of sextic $\psi(S_5)$-rings. More precisely, is the set of limit points of the multiset
\[
\{(\log_{\lv \Disc(S) \rv}\lambda_1(S),\dots,\log_{\lv \Disc(S) \rv}\lambda_{5}(S))\} \subseteq \RR^{5}
\]
as $S$ ranges across sextic resolvent rings of quintic $S_5$-rings equal to set of limit points of the multiset
\[
\{(\log_{\lv \Disc(S) \rv}\lambda_1(S),\dots,\log_{\lv \Disc(S) \rv}\lambda_{5}(S))\} \subseteq \RR^{5}
\]
as $S$ ranges across sextic $\psi(S_5)$-rings? Let $R_1$ denote the first set of limit points and let $R_2$ denote the second.

\begin{corollary}
\label{resolventsextic}
$R_1$ is strictly contained in $R_2$.
\end{corollary}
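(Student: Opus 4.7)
The inclusion $R_1\subseteq R_2$ is immediate: by \cref{quinticbackground}, a sextic resolvent ring of a quintic $S_5$-ring is itself a $\psi(S_5)$-sextic ring, so every limit point appearing in $R_1$ also appears in $R_2$. For strict containment, I plan to exhibit the ``monogenic sextic vertex''
\[
r^{\star}:=\left(\tfrac{1}{30},\tfrac{2}{30},\tfrac{3}{30},\tfrac{4}{30},\tfrac{5}{30}\right)\in\RR^{5}
\]
as a point of $R_2\setminus R_1$.

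To show $r^{\star}\in R_2$, I will fix a $\psi(S_5)$-sextic number field $F$ (for instance, the sextic resolvent field of any $S_5$-quintic field) and a primitive integral element $\alpha\in\cO_F$ with $F=\QQ(\alpha)$. For each positive integer $N$, the scaled monogenic order $\cO_N:=\ZZ[N\alpha]\subseteq\cO_F$ is again a $\psi(S_5)$-sextic ring, with discriminant $|\Disc(\cO_N)|=N^{30}|\Disc(\ZZ[\alpha])|$ and basis $1,N\alpha,N^2\alpha^2,\ldots,N^5\alpha^5$. An application of Minkowski's second theorem to the (fixed) lattice $\ZZ[\alpha]$ yields $\lambda_i(\cO_N)\asymp_{\alpha} N^i$ for $1\leq i\leq 5$, with implicit constants depending only on the fixed element $\alpha$. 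Letting $N\to\infty$ gives $\log_{|\Disc(\cO_N)|}\lambda_i(\cO_N)\to i/30$, so $r^{\star}\in R_2$.

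To show $r^{\star}\notin R_1$, I will argue by contradiction using the inequalities of \cref{s5polytopetheorem}. Since $|\Disc(S)|=\Delta^{3}$ for any sextic resolvent $S$ of a quintic ring of absolute discriminant $\Delta$ (visible from the sum rule $\sum_j q_j=3/2$ in \eqref{basicineqquintic}), a membership $r^{\star}\in R_1$ would yield $(p_1,p_2,p_3,p_4)\in\RR^{4}$ with
\[
(p_1,p_2,p_3,p_4,\tfrac{1}{10},\tfrac{1}{5},\tfrac{3}{10},\tfrac{2}{5},\tfrac{1}{2})\in\poly_5(S_5).
\]
The three inequalities $1/2+p_2\geq q_2+q_4=3/5$, $1/2+p_3\geq q_3+q_4=7/10$, and $1/2+p_4\geq q_3+q_5=4/5$ would then force $p_2\geq 1/10$, $p_3\geq 1/5$, and $p_4\geq 3/10$, so $\sum_i p_i\geq 3/5>1/2$, contradicting the basic constraint $\sum_i p_i=1/2$.

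The main technical step will be the successive-minima computation for $\cO_N$: verifying the scaling $\lambda_i(\cO_N)\asymp_\alpha N^i$ requires ruling out unexpectedly short $\ZZ$-linear combinations of the stretched basis vectors $1,N\alpha,\ldots,N^5\alpha^5$ as $N$ grows. This reduces to the observation that the lattice $\ZZ[\alpha]\subset F\otimes_{\QQ}\RR$ is fixed and nondegenerate, so that its successive minima are uniformly comparable to the lengths of the basis $1,\alpha,\ldots,\alpha^5$; stretching the non-unit basis vectors by factors $N, N^2,\ldots,N^5$ then stretches the successive minima by the corresponding powers of $N$ uniformly. Once this is in place, the remaining argument is a direct algebraic manipulation of the inequalities in \cref{s5polytopetheorem}.
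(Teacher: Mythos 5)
Your proposal is correct and identifies the same witness point $\tfrac{1}{30}(1,2,3,4,5)$ that the paper uses. The paper's proof is terser: it quotes \cite[Theorem 1.1.2]{me} to identify $R_2$ with the polytope $\Len(6)$ (and to certify that the monogenic vertex lies there), and simply asserts without computation that the vertex is not in $R_1 = \tfrac{1}{3}\pi(\poly_5(S_5))$. Your version re-derives the inclusion $r^\star\in R_2$ from scratch by exhibiting the explicit monogenic orders $\ZZ[N\alpha]$ and computing $\lambda_i(\ZZ[N\alpha])\asymp_\alpha N^i$, and it spells out the inequality violation for $r^\star\notin R_1$ (using $1/2+p_2\geq q_2+q_4$, $1/2+p_3\geq q_3+q_4$, $1/2+p_4\geq q_3+q_5$ together with $\sum p_i = 1/2$, $p_1\geq 0$); you also make the containment $R_1\subseteq R_2$ explicit, which the paper leaves implicit. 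The net effect is a self-contained proof that avoids the external citation while arriving at the same conclusion via the same key idea; the paper's version is shorter but outsources the essential content. One small remark: your identity $|\Disc(S)|=\Delta^3$ holds only up to a bounded multiplicative constant in general (it is Minkowski's second theorem applied to $\sum q_i = 3/2$ that matters), but since limit points are unaffected by bounded factors this does not affect the argument.
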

\begin{proof}
The first region $R_1$ is $1/3$ times the projection of $\poly_5(S_5)$ onto the last $5$ coordinates of $\RR^9$. By \cref{onlyifthmshorten}, the second region $R_2$ is contained in of the set of points $q = (q_1,\dots,q_5) \in \RR^5$ such that:
\begin{enumerate}
\item $1/2 = q_1 + \dots + q_5$;
\item $0 \leq q_1 \leq \dots \leq q_5$;
\item and $q_{i + j} \leq q_i + q_j$ for all $1 \leq i \leq j < i + j < 6$.
\end{enumerate}
Notice that this region is precisely $\Len(6)$; by \cite[Theorem 1.1.2]{me}, $\Len(6)$ is contained in $R_2$, as $\Len(6)$ is the polytope of the flag generated by a monogenic basis of number field. In particular, the second region contains the point $\frac{1}{30}(1,2,3,4,5)$ but the first does not.
\end{proof}

\subsubsection{Results on density functions of quintic rings}
Let $p = (p_1,\dots,p_4,q_1,\dots,q_5) \in \RR^9$ denote a point. For any $\eps \in \RR_{>0}$, $X \in \RR_{> 1}$, and permutation group $G \subseteq S_5$, define $\cF(p,\eps,G,X)$ to be the set of isomorphism classes of pairs $(R,S)$ such that $R$ is a $G$-ring, $\Delta \leq X$, and 
\begin{align}
\label{condquinticintro}
\max_{i = 1,2,3,4}\{\lv \log_{\Delta}\lambda_i(R) - p_i \rv\} &\leq \frac{\eps}{\log X} \\
\label{condquinticintro2}
\max_{i = 1,2,3,4,5}\{\lv \log_{\Delta}\lambda_i(S) - q_i \rv\} &\leq \frac{\eps}{\log X}.
\end{align}
Define the density function $d_{\eps,G} \colon \RR^9 \rightarrow \RR \cup \{\ast\}$ which assigns to $p \in \RR^9$ the value 
\[
	\lim_{X \rightarrow \infty}\frac{\log (\#\cF(p,\eps,G,X)+1)}{\log X}
\]
if it exists and else assigns $\ast$. Similarly, let $\cF(p,\eps,X)$ be the set of isomorphism classes of pairs $(R,S)$ such that $\Delta \leq X$, \eqref{condquinticintro}, and \eqref{condquinticintro2}. Let $d_{\eps, 5}$ denote the corresponding density function. We now compute the total density function for quintic rings and the $S_5$-density function. In this section, we prove \cref{quintictotaldensity} and \cref{s5densitythm}.

\subsubsection{Special families of quintic rings}
Again, we see that quintic rings with extremal successsive minima tend to also be extremal  in an \emph{algebraic} sense. In the quartic case, we considered vertices of $\poly_4(S_4)$ and showed that many of these vertices corresponded to interesting families of quintic rings. The polytope $\poly_5(S_5)$ is significantly more complicated than $\poly_4(S_4)$; the polytope $\poly_5(S_5)$ has $18$ vertices, and the author believes that many of them likely give rise to interesting and natural families of quartic rings. In this section, we discuss the following $3$ of these vertices:
\begin{align*}
&(1/8,1/8,1/8,1/8,3/10,3/10,3/10,3/10,3/10) \\
&(1/8, 1/8, 1/8, 1/8, 1/4, 1/4, 1/4, 3/8, 3/8) \\
&(1/20,2/20,3/20,4/20,4/20,5/20,6/20,7/20,8/20)
\end{align*}
We'll show that the first ``corresponds'' to generic $S_5$-rings, the second ``corresponds'' to binary quintic forms, and the third ``corresponds'' to monogenic quintic rings. We now formalize our definition of ``correspond''.

We say a pair $(R,S)$ is $(\eps,X)$-close to $p$ if
\begin{align}
\label{condquintic}
\max_{1\leq i \leq 4}\{\lv \log_{\Delta}\lambda_i(R) - p_i \rv\} &\leq \frac{\eps}{\log X} \\
\label{condquintic2}
\max_{1\leq i \leq 5}\{\lv \log_{\Delta}\lambda_i(S) - q_i \rv\} &\leq \frac{\eps}{\log X}.
\end{align}

Let $\cF$ be an infinite set equipped with a map $H \colon \cF \rightarrow \RR_{> 0}$ such that for all $X$, we have $\#\{f \in \cF \mid H(f) \leq X \} < \infty$. We call $H$ the \emph{height function} of $\cF$. Further suppose that $\psi$ is a map from $\cF$ to isomorphism classes of nondegenerate pairs of quintic rings equipped with a sextic resolvent. For a point $p \in \poly_5$, we say the triple \emph{$(\cF,H,\psi)$ corresponds to $p$} if 
\begin{equation}
\label{correspondquintic1}
\lim_{\eps \rightarrow \infty} \liminf_{X \rightarrow \infty}\frac{\#\{f \in \cF \; \mid \; H(f) \leq X \text{ and $\psi(f)$ is $(\eps,X)$-close to $p$}\}}{\#\{f \; \mid \; H(f) \leq X \}} = 1
\end{equation}
and there exists a constant $C$ such that for all $\eps > C$, we have
\begin{equation}
\label{correspondquintic2}
\liminf_{X \rightarrow \infty}\frac{\#\{(R,S)  \in \cF(p,\eps, X)\; \mid \; (R,S) \in \psi(\cF)\}}{\#\cF(p,\eps, X)} > 0.
\end{equation}
When $H$ or $\psi$ are implicit, we simply write that $\cF$ corresponds to $p$. The following is a corollary of upcoming work of H.

\begin{corollary}
\label{generics5thm}
The set of isomorphism classes of nondegenerate $S_5$-rings, where $H$ is the absolute discriminant, corresponds to
\[
	(1/8,1/8,1/8,1/8,3/10,3/10,3/10,3/10,3/10).
\]
\end{corollary}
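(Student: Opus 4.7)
The plan is to mirror the proof of \cref{genericgaloisgroupscorollary} in the case $G = S_4$, replacing \cref{equidistribofquartic} with the analogous equidistribution-of-shapes theorem for pairs $(R,S)$ with $R$ an $S_5$-ring, which is the upcoming work of H alluded to in the statement. For a nondegenerate rank $n$ ring, the shape $\cL_R$ sits in the double coset space $\cS_{n-1}$, so the shape $(\cL_R, \cL_S)$ of such a pair lies in $\cS_4 \times \cS_5$. As in the quartic setup, write $\lambda_i(\cL)$ for the $i$-th successive minimum of a unit-covolume representative of $\cL$, and set
\[
\cS_{n,\eps} \coloneqq \{\cL \in \cS_n \mid \max_{1 \leq i < n} \lv \log \lambda_i(\cL)\rv \leq \eps\}.
\]

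For condition \eqref{correspondquintic1}, H's theorem asserts that, ordered by $\Delta$, the shapes of such pairs equidistribute with respect to $\mu_4 \times \mu_5$. The generic vertex $p$ has its four $R$-coordinates all equal to $1/8$ and its five $S$-coordinates all equal to $3/10$, which are the unique values forced by Minkowski's second theorem (using $\sum_i p_i = 1/2$ and $\sum_j q_j = 3/2$) when the successive minima of $R$, and those of $S$, are of comparable magnitude. Consequently, a pair $(R,S)$ is $(\eps,\Delta)$-close to $p$ if and only if its shape lies in $\cS_{4,\eps'} \times \cS_{5,\eps'}$ for some $\eps'$ depending only on $\eps$. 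Equidistribution then yields
\[
\liminf_{X \to \infty} \frac{\#\{(R,S) \mid \Delta \leq X,\; R \text{ is an } S_5\text{-ring and } (R,S) \text{ is } (\eps,X)\text{-close to } p\}}{\#\{(R,S) \mid \Delta \leq X,\; R \text{ is an } S_5\text{-ring}\}} \;\geq\; \frac{(\mu_4 \times \mu_5)(\cS_{4,\eps'} \times \cS_{5,\eps'})}{(\mu_4 \times \mu_5)(\cS_4 \times \cS_5)},
\]
and the right-hand side tends to $1$ as $\eps \to \infty$ because $\cS_4$ and $\cS_5$ have finite measure.

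For condition \eqref{correspondquintic2}, I would invoke the quintic analogs of \cref{upperboundquartic} and \cref{lowerboundquartic} (developed elsewhere in \cref{quintic}), which give $\#\cF(p,\eps,X) \asymp_{p,\eps} X^{d_{\eps,5}(p)}$ for $\eps$ sufficiently large. To restrict to $S_5$-rings inside this count, I follow the strategy of \cref{lowerboundquartic}(2): imposing congruence conditions at two small primes that force $R$ to be irreducible modulo one prime and $S$ to be irreducible modulo another pins the Galois group of $R$ to $S_5$. Since these are congruence conditions on a positive proportion of the lattice points parametrizing pairs $(R,S)$ near $p$, a positive proportion of rings in $\cF(p,\eps,X)$ are $S_5$-rings, establishing \eqref{correspondquintic2}.

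The main obstacle is the equidistribution input itself, which is the announced upcoming work of H and lies outside the scope of this paper; modulo that theorem, the proof is a faithful translation of the $S_4$ argument. A secondary point to verify carefully is the identification of the generic vertex $(1/8,1/8,1/8,1/8,3/10,3/10,3/10,3/10,3/10)$ as the \emph{unique} point of $\poly_5(S_5)$ compatible with shape-roundness: this follows immediately from the constraints $\sum p_i = 1/2$ on four coordinates and $\sum q_j = 3/2$ on five coordinates, together with equality of all $\lambda_i$ within each of $R$ and $S$.
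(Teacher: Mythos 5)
Your proposal matches the paper's proof essentially verbatim: condition \eqref{correspondquintic1} is handled by the equidistribution theorem of H (\cref{equidistribofquintic}) via the bound involving $\cS_{4,\eps} \times \cS_{5,\eps}$, and condition \eqref{correspondquintic2} by combining \cref{upperboundquintic} and \cref{lowerboundquintic}, where the positive-proportion-of-$S_5$-rings claim is secured by the congruence conditions (actually three of them: $\Delta \equiv 5 \Mod{25}$, $R$ irreducible mod $7$, $S$ irreducible mod $11$) already built into the proof of \cref{lowerboundquintic}(2). The argument is a faithful translation of the $S_4$ case and the proposal correctly identifies both the structure and the external input required.
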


The root height of a monic degree $n$ polynomial $f(x) = x^n + a_{1}x^2 + \dots + a_n \in \ZZ[x]$ is defined to be $H_r(f) \coloneqq \max_i\{\lv a_i\rv^{1/i}\}$. Given a binary $n$-ic form $f(x,y) = a_0x^n + \dots + a_ny^n$, define the \emph{coefficient height of $f$} to be $H_c(f) \coloneqq \max_i\{\lv a_i\rv\}$. Fess \cite{fess} constructed a map from $\Sym^5 \ZZ^2 \rightarrow \ZZ^4 \otimes \wedge^2 \ZZ^5$; via Bhargava's parametrization of quintic rings \cite{BhargavaHCL4}, this map canonically associates to an integral binary quintic form the data of a quintic ring and a sextic resolvent ring. Fess' construction is compatible with that of Birch and Merriman \cite{birch}; both constructions give isomorphic quintic rings from a binary quintic form. Let $\cF_5^{\bin}$ be the set of binary quintic forms with integral coefficients and nonzero discriminant. 

\begin{theorem}
\label{binaryquintictheorem}
Then $(\cF_5^{\bin}, H_c)$ corresponds to
\[
	(1/8, 1/8, 1/8, 1/8, 1/4, 1/4, 1/4, 3/8, 3/8).
\] 
\end{theorem}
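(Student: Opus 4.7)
The plan is to closely follow the proofs of \cref{binaryquartictheorem} and \cref{monogenicquartictheorem}, with Fess' construction replacing Wood's. Set $p^{\ast} = (1/8,1/8,1/8,1/8,1/4,1/4,1/4,3/8,3/8)$ and let $r = (1/40,1/40) \in \ell$, so that $L(r) = (1/8,1/8,1/8,1/8)$ matches the first four coordinates of $p^{\ast}$. Let $\psi \colon \Sym^5 \ZZ^2 \to \ZZ^4 \otimes \wedge^2 \ZZ^5$ denote Fess' map composed with Bhargava's parametrization of quintic rings, so that each $f \in \cF_5^{\bin}$ is sent to a based pair $(R_f,S_f)$. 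The first step is to verify, directly from Fess' explicit formulas, that the sizes of the entries of $\psi(f)$ are bounded in terms of $H_c(f)$ in such a way that $H_c(f) \leq X^{1/8}$ is equivalent to $\psi(f)$ lying in an appropriate box $B(p^{\ast},X) \subseteq \ZZ^4 \otimes \wedge^2 \ZZ^5$ modeled on the reduction theory of quintic rings developed in this section.

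With this verified, I would invoke the quintic analogue of \cref{lambdaboundlemmaquartic} to conclude that for every $M \geq 1$ there is a constant $\eps_M$, with $\lim_{M \to \infty} \eps_M = \infty$, such that whenever $\psi(f) \in B(p^{\ast}, X)$ and $\Delta \geq X/M$, the pair $(R_f,S_f)$ is $(\eps_M, X)$-close to $p^{\ast}$. To establish the first condition \eqref{correspondquintic1} of corresponding, I would then apply Davenport's lemma to the region in $\RR^6$ cut out by $H_c(f) \leq X$ and $\lv \Disc(f) \rv \geq X^8/M$: since $\Disc$ is a nondegenerate polynomial of degree $2n-2 = 8$ in the coefficients of $f$, the proportion of forms with $H_c(f) \leq X$ that also satisfy $\lv \Disc(f) \rv \geq X^8/M$ tends to $\Vol\{f \in \RR^6 : H_c(f) \leq 1, \; \lv \Disc(f) \rv \geq 1/M\}$ as $X \to \infty$, and this volume tends to $1$ as $M \to \infty$. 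After the substitution $X \mapsto X^{1/8}$, this delivers \eqref{correspondquintic1}.

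For the second condition \eqref{correspondquintic2}, I would match upper and lower bounds on the number of pairs $(R,S) \in \cF(p^{\ast},\eps,X)$. The quintic analogue of \cref{upperboundquartic}, combined with the evaluation of $d_{\eps,5}$ at $p^{\ast}$ via \cref{quintictotaldensity} (which yields $d_{\eps,5}(p^{\ast}) = 1 - 3/4 + 1/2 = 3/4$, the correction term contributed entirely by the pair $\{q_4,q_5\}$ summed over the four choices of $k$), produces an upper bound of $X^{3/4 + o(1)}$ on $\#\cF(p^{\ast},\eps,X)$. Matching this from below, \cref{binthm} with $r = (1/40,1/40)$ produces $\gg X^{(n+1)/(2n-2)} = \gg X^{3/4}$ $\GL_2(\ZZ)$-orbits of integral binary quintic forms whose quintic rings are $(\eps,X)$-close to $L(r)$, and \cref{evertse} limits the number of $\GL_2(\ZZ)$-classes of binary quintic forms per quintic ring by $2^{125}$, so the count of distinct pairs $(R_f,S_f)$ arising from $\cF_5^{\bin}$ and close to $p^{\ast}$ is also $\gg X^{3/4}$.

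The main obstacle will be the bookkeeping in the first step: \cref{binthm} only directly controls the successive minima of $R_f$ via the line segment $\ell$, and gives no \emph{a priori} control on the sextic resolvent $S_f$. I expect the coordinates $q_j$ of $\psi(f)$ to behave as predicted by $p^{\ast}$ because the entries of the quadruple of alternating forms $(A_1,A_2,A_3,A_4)$ output by Fess' map are homogeneous of specific, known bidegrees in the coefficients of $f$, and these degrees should align exactly with the weights $p_i + p_j + q_k$ that define the box $B(p^{\ast},X)$; verifying this degree bookkeeping, together with the discriminant-preservation property $\Disc(\psi(f)) = \Disc(f)$ (up to a fixed sign or power), is the concrete computation on which the entire argument rests.
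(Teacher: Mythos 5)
Your proposal is correct and follows essentially the same route as the paper's proof: observe that Fess' explicit map $\psi$ sends a binary quintic form $f$ with $H_c(f) \leq X^{1/8}$ to a tensor lying in $B(p^*,X)$, then invoke \cref{lambdaboundlemmaquintic} to deduce both the quintic and sextic successive minima asymptotics, use Davenport's lemma for \eqref{correspondquintic1}, and pair the upper bound $\ll_\eps X^{3/4}$ from \cref{upperboundquintic} with the lower bound $\gg X^{3/4}$ from \cref{binthm} at $r=(1/40,1/40)$ and \cref{evertse} for \eqref{correspondquintic2}. The ``main obstacle'' you flag --- that \cref{binthm}'s statement controls only the successive minima of $R_f$ and not $S_f$ --- is indeed a real subtlety that the paper passes over quickly; the resolution is exactly as you suggest: the forms produced in the proof of \cref{binthm} lie in $B_5^{\bin}((1/40,1/40),X)$, which under Fess' map is identified with $B(p^*,X)$, and \cref{lambdaboundlemmaquintic} then controls the sextic resolvent's minima as well, because $\psi$ places the $f_i$'s directly into specific matrix entries of fixed homogeneous degree (the remaining entries being the constants $0,\pm 1$ displayed in the paper). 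Your density computation $d_{\eps,5}(p^*) = 1 - 3/4 + 1/2 = 3/4$, with the only nonvanishing correction coming from $q_4+q_5-p_k-1/2 = 1/8$ over the four values of $k$, is accurate.
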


As in the case of monogenic cubics, monogenic quartics, and quartic rings coming from binary quartic forms (\cref{monogeniccubictheorem}, \cref{monogenicquartictheorem}, \cref{binaryquartictheorem}), there is a good reason to expect that $\cF_5^{\bin}$ (equipped with the coefficient height) corresponds to the point above; this reason comes from Fess' characterization of pairs $(R,S)$ arising from binary quintic forms. For a rank $n$ ring $R$, let $\widetilde{R} \coloneqq \Hom(R/\ZZ,\ZZ)$. Recall that a pair $(R,S)$ comes equipped with a \emph{sextic resolvent map} $\varphi \colon \widetilde{S} \otimes \widetilde{S} \rightarrow \widetilde{R}$. For an basis $\{v_0=1,v_1,\dots,v_{n-1}\}$ of a nondegenerate rank $n$ ring, let $\{v^*_0,v^*_1,\dots,v^*_{n-1}\}$ denote the dual basis with respect to the trace pairing.

\begin{theorem}[Fess \cite{fess}, Theorem~5.1.4]
Let $(R,S)$ be a nondegenerate pair. There exists $f \in \Sym^5 \ZZ^2$ giving rise to the pair $(R,S)$ if and only if $S$ has a basis $\{1,w_1,\dots,w_5\}$ such that
\[
	(w_1,w_2,w_3) \equiv 8\Disc(R)((w^*_5)^2,2w^*_4w^*_5,(w^*_4)^2) \Mod{\QQ}
\]
and $\varphi(w_4^*,w_5^*) = 0$.
\end{theorem}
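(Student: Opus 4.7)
The plan is to leverage Bhargava's parametrization of quintic rings together with sextic resolvents (\cite{BhargavaHCL4}), which identifies $\GL_4(\ZZ) \times \GL_5(\ZZ)$-orbits on $\ZZ^4 \otimes \wedge^2 \ZZ^5$ with isomorphism classes of nondegenerate pairs $(R,S)$. Both a basis of $R$ (coming from $\ZZ^4$) and a basis of $S$ (coming from $\ZZ^5$) are determined by the choice of representative, and the sextic resolvent map $\varphi\colon \widetilde{S}\otimes\widetilde{S}\to \widetilde{R}$ is given by explicit polynomials in the $40$ coordinates of the representative. Fess's construction embeds $\Sym^5\ZZ^2$ into $\ZZ^4\otimes \wedge^2 \ZZ^5$ in a $\GL_2(\ZZ)$-equivariant way, so a binary quintic $f$ determines a pair $(R_f,S_f)$ together with a distinguished orbit representative.

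For the forward implication, given $f \in \Sym^5\ZZ^2$ of nonzero discriminant, I would take Fess's image of $f$ in $\ZZ^4 \otimes \wedge^2 \ZZ^5$ and run it through Bhargava's structure coefficients. This produces a canonical basis $\{1,w_1,\dots,w_5\}$ of the corresponding sextic resolvent $S_f$, where $w_4$ and $w_5$ correspond to the two summands of the natural decomposition $\ZZ^2 \oplus \ZZ^2 \hookrightarrow \ZZ^4$ or $\ZZ^5$ used to build Fess's embedding. The relation $\varphi(w_4^*,w_5^*)=0$ and the identity $(w_1,w_2,w_3) \equiv 8\Disc(R_f)\big((w_5^*)^2,\, 2w_4^*w_5^*,\, (w_4^*)^2\big) \pmod{\QQ}$ can then be verified by a direct (if lengthy) check using the explicit formulae for $\varphi$ and the structure coefficients for $S_f$ in terms of $f_0,\dots,f_5$; the normalization $8\Disc(R)$ is precisely what is required to reconcile the two ways the resolvent map can be read off from $f$.

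For the converse, suppose $(R,S)$ is a nondegenerate pair equipped with a basis $\{1,w_1,\dots,w_5\}$ of $S$ satisfying the stated conditions. I would choose any representative $(A_1,A_2,A_3,A_4) \in \ZZ^4\otimes \wedge^2 \ZZ^5$ for the $G$-orbit corresponding to $(R,S)$ with this fixed basis of $S$. The vanishing $\varphi(w_4^*,w_5^*)=0$ forces one specific component of $(A_1,\dots,A_4)$ to vanish, and the relation $(w_1,w_2,w_3) \equiv 8\Disc(R)\big((w_5^*)^2,2w_4^*w_5^*,(w_4^*)^2\big) \pmod{\QQ}$ determines three further components of $(A_1,\dots,A_4)$ as explicit polynomials in the remaining ones. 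A dimension count and direct verification then show that the locus cut out by these conditions coincides with the image of Fess's embedding, so $(A_1,\dots,A_4)$ lies in $\Sym^5\ZZ^2$ and we recover $f$.

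The main obstacle is purely computational: tracking Bhargava's structure coefficients for quintic rings and their sextic resolvents through Fess's embedding, and verifying that the conditions on $\{1,w_1,\dots,w_5\}$ cut out exactly the locus of the embedding rather than a larger variety. The appearance of the factor $8\Disc(R)$, and the symmetric structure of the triple $((w_5^*)^2, 2w_4^*w_5^*, (w_4^*)^2)$, strongly suggest that the right framework is to reinterpret these relations as the statement that the "first half" of $S$ is canonically dual, via $\varphi$, to $\Sym^2$ of the "second half"; establishing this cleanly should make both directions conceptually clear and reduce the proof to the identification of one scalar normalization.
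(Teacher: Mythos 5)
The paper does not prove this result; it cites it verbatim from Fess's thesis (Theorem 5.1.4) and uses it as a black box, so there is no internal proof to compare against. Your proposal is therefore evaluated on its own merits.

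Your strategy is the natural one: route everything through Bhargava's parametrization of pairs $(R,S)$ by $\GL_4(\ZZ)\times\GL_5(\ZZ)$-orbits on $\ZZ^4\otimes\wedge^2\ZZ^5$, identify Fess's embedding of $\Sym^5\ZZ^2$, and read off the claimed basis conditions from the explicit structure coefficients. That is a sound skeleton. However, essentially all of the actual mathematical content is postponed: the forward direction rests on a ``direct (if lengthy) check'' of the identity $(w_1,w_2,w_3)\equiv 8\Disc(R)\bigl((w_5^*)^2,2w_4^*w_5^*,(w_4^*)^2\bigr)\pmod{\QQ}$, which is precisely the thing to be proved and involves tracking the normalization $8\Disc(R)$ through the resolvent-map formula and the duality $\widetilde{S}=\Hom(S/\ZZ,\ZZ)$. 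In the converse direction the argument is weaker: the hypotheses are conditions on a basis of $S$ as an abstract ring equipped with $\varphi$, not directly on a chosen tensor representative, and you need to show that some $\GL_4(\ZZ)\times\GL_5(\ZZ)$-translate of your chosen $(A_1,\dots,A_4)$ lands \emph{exactly} in the image of Fess's embedding. The claim that ``a dimension count and direct verification'' suffices glosses over the real issue, which is that the image of the embedding is a rather thin locus inside the $40$-dimensional space and you have to argue that the algebraic conditions you wrote down cut out nothing larger; a dimension count alone cannot distinguish the image from a union of components of the same dimension. Until those verifications are carried out, the proposal is a correct plan but not a proof.
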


From this classification, one might naively guess the following: if a pair $(R,S)$ arises from a nondegenerate binary quintic form and $\{1,w_1,\dots,w_5\}$ is a Minkowski basis of $S$, then it is likely that
\begin{align*}
\lv w_1 \rv &\asymp \Delta \lv w_5\rv^{-2} \\
\lv w_2 \rv &\asymp \Delta \lv w_4\rv^{-1}\lv w_5\rv^{-1} \\
\lv w_3 \rv &\asymp \Delta \lv w_4\rv^{-2}.
\end{align*}
For lack of other information, one might also guess that $\lv w_4 \rv$ is of similar size to $\lv w_5 \rv$ and that the successive minima of the quintic ring $R$ are roughly equal as well. \cref{binaryquintictheorem} formalizes this heuristic. A similar heuristic informs the monogenic case. Let $\cF_5^{\mon}$ be the set of binary quintic forms with integral coefficients and nonzero discriminant. 

\begin{theorem}
\label{monogenicquintictheorem}
Then $(\cF_5^{\mon}, H_r)$ corresponds to
\[
	(1/20,2/20,3/20,4/20,4/20,5/20,6/20,7/20,8/20).
\] 
\end{theorem}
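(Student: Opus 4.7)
The plan is to mimic the proofs of \cref{monogenicquartictheorem} and \cref{binaryquintictheorem}. Set $p = (1/20, 2/20, 3/20, 4/20, 4/20, 5/20, 6/20, 7/20, 8/20)$ and $r = (0, 1/20) \in \ell$. For $f(x) = x^5 + a_1 x^4 + \cdots + a_5 \in \cF_5^{\mon}$, let $F(x,y) = y^5 f(x/y) \in \Sym^5 \ZZ^2$ and define $\psi(f) \coloneqq (R_F, S_F)$, where $R_F \simeq \ZZ[x]/(f)$ is the Birch--Merriman ring and $S_F$ is Fess's sextic resolvent. A direct computation shows that $H_r(f) \leq X^{1/20}$ iff $F \in B_5^{\bin}(r, X)$ and $F_0 = 1$. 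The first technical ingredient I need is a uniform bound of the following form: there exists $\eps_M$ with $\eps_M \to \infty$ as $M \to \infty$ such that if $H_r(f)^{20} \leq X$ and $\lv \Disc(f) \rv \geq X/M$, then $\psi(f)$ is $(\eps_M, X)$-close to $p$. For the four quintic coordinates $p_1, \dots, p_4$ this is immediate from \cref{lambdaboundlemmabin} (noting that $L(r) = (1/20, 2/20, 3/20, 4/20)$); the corresponding bounds on $\lambda_i(S_F)$ require the quintic analog of \cref{lambdaboundlemmaquartic} applied to the element of $\ZZ^4 \otimes \wedge^2 \ZZ^5$ attached to $F$ via Bhargava's parametrization of quintic rings, a reduction-theoretic input that also underlies \cref{quintictotaldensity}.

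Granted this input, I would verify \eqref{correspondquintic1} by showing
\[
	\lim_{M \to \infty} \liminf_{X \to \infty} \frac{\#\{f \in \cF_5^{\mon} : H_r(f) \leq X, \ \lv \Disc(f) \rv \geq X^{20}/M\}}{\#\{f \in \cF_5^{\mon} : H_r(f) \leq X\}} = 1.
\]
The denominator equals $32 X^{15} + O(X^{14})$. By Davenport's lemma together with the substitution $a_i \mapsto X^i a_i$, the numerator equals
\[
	X^{15} \Vol\{(a_1, \dots, a_5) \in [-1,1]^5 : \lv \Disc(1, a_1, \dots, a_5) \rv > 1/M\} + O(X^{14}),
\]
whose leading coefficient tends to $32$ as $M \to \infty$ since the discriminant is a nonzero polynomial in the $a_i$.

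For \eqref{correspondquintic2}, a direct computation using \cref{quintictotaldensity} gives $d_{\eps,5}(p) = 1 - \tfrac{1}{2} - 1 + \tfrac{6}{5} = \tfrac{7}{10}$, so the upper-bound analog of \cref{upperboundquartic} for quintics yields $\#\cF(p, \eps, X) \ll_\eps X^{7/10}$ for $\eps$ sufficiently large. For a matching lower bound, the previous two steps produce $\gg X^{15/20} = X^{3/4}$ monic polynomials $f$ with $\lv \Disc(f) \rv \leq X$ whose image $\psi(f)$ lies in $\cF(p, \eps, X)$. By \cref{evertse} each pair $(R, S)$ arises from at most $2^{125}$ $\GL_2(\ZZ)$-orbits of binary quintic forms; within a single such orbit, the monic representatives of height $\leq X^{1/20}$ form a shift orbit of size $\ll X^{1/20}$ (coming from the action of $\bigl(\begin{smallmatrix}1 & t \\ 0 & 1\end{smallmatrix}\bigr)$ with $|t| \ll X^{1/20}$). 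Dividing yields $\gg X^{3/4 - 1/20} = X^{7/10}$ distinct pairs $(R, S) \in \psi(\cF_5^{\mon}) \cap \cF(p, \eps, X)$, establishing the required positive proportion.

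The hardest part will be the sextic-resolvent component of the first step: bounding $\lambda_i(S_F)$ for $F \in B_5^{\bin}(r, X)$ with $F_0 = 1$. Conceptually this is a quintic analog of \cref{lambdaboundlemmaquartic}---replacing the pair of $3 \times 3$ symmetric matrices in Bhargava's quartic parametrization by a pair of $5 \times 5$ skew-symmetric matrices in $\ZZ^4 \otimes \wedge^2 \ZZ^5$---but the bookkeeping required to track the sizes of the five generators $w_1, \dots, w_5$ of $S_F$ via the relevant structure relations is substantially heavier than in the quartic case.
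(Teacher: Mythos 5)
Your proposal follows essentially the same route as the paper's proof. The reduction-theoretic input you flag as the hardest step --- bounding $\lambda_i(S_F)$ from the coefficient sizes of $\psi(F) \in \ZZ^4 \otimes \wedge^2 \ZZ^5$ --- is already available as \cref{lambdaboundlemmaquintic}, so no additional bookkeeping is needed: the paper simply observes that $H_r(f) \le X^{1/20}$ iff $\psi(f) \in B(p,X)$ and invokes that lemma. For \eqref{correspondquintic2} the paper appeals directly to \cref{binthm} with $r = (0,1/20)$ rather than counting monic representatives and dividing by the shift-orbit size as you do, but your division by $\ll X^{1/20}$ is exactly the content of \cref{autlemma}, which is the orbit-counting bound underlying \cref{binthm}, so the two versions of the lower bound coincide; your computed value $d_{\eps,5}(p) = 7/10$ is correct.
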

Observe that, as case of \cref{binaryquintictheorem}, the vertex
\[
	(1/20,2/20,3/20,4/20,4/20,5/20,6/20,7/20,8/20)
\]
satisfies the conditions:
\begin{align*}
q_1 &= 1 - 2q_5 \\
q_2 &= 1 - q_4 - q_5 \\
q_3 &= 1 - 2q_4
\end{align*}

\subsection{The parametrization of quintic rings}
\label{quinticbackground}
Our primary tool will be Bhargava's parametrization of quintic rings; we now briefly summarize this parametrization. Let $V_{\RR} = \RR^4 \otimes \wedge^2 \RR^5$. We write an element $\mathcal{A} = (A_1,A_2,A_3,A_4)$ as a quadruple of $5\times 5$ real alternating matrices as follows:
\[
  A_k = \begin{bmatrix}
    0 & a^k_{12} & a^k_{13} & a^k_{14} & a^k_{15} \\
    -a^k_{12} & 0 & a^k_{23} & a^k_{24} & a^k_{25} \\
    -a^k_{13} & - & 0 & a^k_{34} & a^k_{35} \\
    -a^k_{14} & -a^k_{24} & -a^k_{34} & 0 & a^k_{45} \\
    -a^k_{15} & -a^k_{25} & -a^k_{35} & -a^k_{45} & 0 
  \end{bmatrix}.
\]
An element $\mathcal{A}$ is said to be \emph{integral} if $a^k_{ij} \in \ZZ$, and the lattice of integral elements is denoted $V_{\ZZ}$. The group $\GL_4(\RR) \times \GL_5(\RR)$ acts on $V_{\RR}$ in a natural way; an element $(g_4,g_5)$ sends $\mathcal{A}$ to
\[
	g_4\begin{bmatrix}
	g_5A_1g_5^t \\
	g_5A_2g_5^t \\
	g_5A_3g_5^t \\
	g_5A_4g_5^t
	\end{bmatrix}.
\]
The subgroup $G_{\ZZ} = \GL_4(\ZZ) \times \GL_5(\ZZ)$ acts on $V_{\RR}$ and preserves the lattice $V_{\ZZ}$. The action of $G_{\ZZ}$ on $V_{\ZZ}$ has a unique polynomial invariant, denoted $\Disc(\mathcal{A})$. The orbits of $G_{\ZZ}$ acting on $V_{\ZZ}$ parametrize quintic rings equipped with a sextic resolvent ring; see \cite{BhargavaHCL4} for more details.

\begin{theorem}[Bhargava, \cite{BhargavaHCL4}]
There is a canonical, discriminant-preserving bijection between $V_{\ZZ}$ and isomorphism classes of pairs $(R,S)$ where $R$ is a based quintic ring and $S$ is a based sextic resolvent ring of $R$. Under the action of $G_{\ZZ}$, this bijection descends to a bijection between the set of $G_{\ZZ}$ orbits of $V_{\ZZ}$ and the set of isomorphism classes of pairs $(R,S)$ where $R$ is a quintic ring and $S$ is a sextic resolvent ring of $R$. 
\end{theorem}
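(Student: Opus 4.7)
The theorem is Bhargava's parametrization of quintic rings, a foundational result whose full proof (see \cite{BhargavaHCL4}) is substantial; here I will only outline the strategy. The plan is to construct explicit maps in both directions and verify that they are mutually inverse, discriminant-preserving, and $G_{\ZZ}$-equivariant. In one direction, starting from $\mathcal{A} = (A_1,A_2,A_3,A_4) \in V_{\ZZ}$, I would define a based quintic ring $R = \ZZ\langle 1, \alpha_1, \alpha_2, \alpha_3, \alpha_4\rangle$ and a based sextic resolvent $S = \ZZ\langle 1, \beta_1, \dots, \beta_5\rangle$ by writing down their structure constants as explicit polynomials in the $a^k_{ij}$. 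The correct formulas arise from $4 \times 4$ sub-Pfaffians of the alternating matrices: the coefficient of $\alpha_k$ in $\alpha_i \alpha_j$ and the coefficient of $\beta_k$ in $\beta_i \beta_j$ are certain signed sums of $\mathrm{Pf}(A_k[\hat{i},\hat{j}])$-type expressions, designed so that the natural bilinear map $\widetilde{S} \otimes \widetilde{S} \to \widetilde{R}$ obtained by dualizing $\mathcal{A} \in R/\ZZ \otimes \wedge^2(S/\ZZ)$ agrees with the resolvent map built from the ring structures.

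In the reverse direction, starting from a pair of based rings $(R,S)$ equipped with a sextic resolvent structure, I would extract the resolvent bilinear form $\varphi \colon \widetilde{S} \otimes \widetilde{S} \to \widetilde{R}$ and observe that, because $\widetilde{R}$ has rank $4$ and $\wedge^2 \widetilde{S}$ has rank $10$, the resolvent data produces a canonical element of $\ZZ^4 \otimes \wedge^2 \ZZ^5 = V_{\ZZ}$ after choosing bases. One then checks that this assignment inverts the forward construction at the level of structure coefficients. Discriminant preservation is a direct (if lengthy) computation: both $\Disc(R)$ and $\Disc(\mathcal{A})$ are polynomial $\SL_4(\ZZ) \times \SL_5(\ZZ)$-invariants of the same degree, so it suffices to match them on a single nondegenerate specialization and invoke irreducibility. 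Equivariance under $\GL_4(\ZZ) \times \GL_5(\ZZ)$ corresponds transparently to simultaneous change of basis on $\widetilde{R}$ and $\widetilde{S}$, yielding the descent to isomorphism classes.

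The hard step, by far, is proving that the structure constants defined on the $R$-side by sub-Pfaffians of $\mathcal{A}$ actually satisfy associativity and, simultaneously, that $S$ together with the bilinear form $\varphi$ recovered from $\mathcal{A}$ satisfies the axioms of a sextic resolvent (including the cubic polynomial resolvent identities that characterize sextic resolvents of quintic rings). The standard way to handle this is to first verify the identities over $\QQ$ in the generic case, where $R \otimes \QQ$ is an \'etale $S_5$-algebra and the resolvent $S \otimes \QQ$ can be exhibited as the index-$6$ coset-algebra of $S_5$ acting on the $6$ Sylow $5$-subgroups; the associativity and resolvent identities then reduce to polynomial identities in the roots of the minimal polynomial of a generator of $R \otimes \QQ$. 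Since these identities are polynomial relations in the entries of $\mathcal{A}$ that vanish on a Zariski-dense subset, they vanish identically, which gives the result over $\ZZ$ by clearing denominators. Finally, I would check that the construction is insensitive to which cubic-resolvent-type identity is used (they are all equivalent modulo the others), completing the bijection.
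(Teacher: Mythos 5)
The paper does not actually prove this theorem: it is stated verbatim as a black-box result with a citation to Bhargava's \emph{Higher Composition Laws IV}, and the paper only uses the explicit structure coefficients it provides. So there is no internal proof for your attempt to be compared against; what you have written is an outline of the argument in the cited reference, and as such it is a reasonable high-level sketch. Your strategy of defining structure constants via sub-Pfaffians, checking associativity and the resolvent compatibility generically over $\QQ$ (where the pair is an \'etale $S_5$-algebra together with its coset algebra for the index-$6$ subgroup), and then extending by Zariski density of the nondegenerate locus is indeed the broad shape of Bhargava's proof. The discriminant computation via uniqueness of the degree-$40$ $\SL_4 \times \SL_5$-invariant (up to scalar), pinned down on one nondegenerate point, is also the standard device.

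Two caveats worth flagging. First, your phrase ``cubic polynomial resolvent identities that characterize sextic resolvents of quintic rings'' imports terminology from the quartic case: in HCL III the quartic ring comes with a \emph{cubic} resolvent characterized by a quadratic resolvent map, whereas in HCL IV the sextic resolvent of a quintic ring is characterized by the alternating fundamental resolvent map $\widetilde S\otimes\widetilde S\to\widetilde R$ together with the compatibility (and norm/integrality) conditions specific to that setting; there is no ``cubic resolvent identity'' in the quintic theory. Second, the hard work in Bhargava's paper is not just verifying associativity in the generic case, but establishing a \emph{bijection}: one must also show that every pair $(R,S)$ with a sextic resolvent structure arises from some $\mathcal{A}\in V_{\ZZ}$, i.e.\ surjectivity, and that the assignment $(R,S)\mapsto\mathcal{A}$ is genuinely inverse to the forward construction at the level of integral structure constants, not merely rationally. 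Your outline acknowledges this but treats it as routine; in the reference it is the bulk of the argument. Since the paper under review supplies no independent proof, these remarks only concern the fidelity of your summary of the external source, not a disagreement with the paper.
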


We now describe a few important details of this bijection. Let $\widetilde{R}$ and $\widetilde{S}$ denote $\Hom(R/\ZZ,\ZZ)$ and $\Hom(S/\ZZ,\ZZ)$ respectively. A sextic resolvent ring $S$ of a quintic ring comes equipped with an alternating map $\widetilde{S} \otimes \widetilde{S} \rightarrow \widetilde{R}$. Choosing bases for $R/\ZZ$ and $S/\ZZ$ gives rise to an element of $V_{\ZZ}$. Conversely, given an element $\mathcal{A} \in V_{\ZZ}$, we obtain bases for $R$ and $S$ via certain explicit polynomials in the $a^k_{ij}$ that form the structure coefficients for these bases. 

\subsection{Reduction theory of quintic rings}

We now fix some notation we use throughout this section. Unless otherwise specified, let $\mathcal{A}=(A_1,A_2,A_3,A_4)$ denote an element of $V_{\ZZ}$ and let $(R,S)$ denote the corresponding quintic ring and sextic resolvent ring.  Let $\Delta = \lv \Disc(\mathcal{A}) \rv = \lv \Disc(R) \rv$. For a basis $\{v_0=1,v_1,\dots,v_{n-1}\}$ of a nondegenerate rank $n$ ring $R$, let $\{v^*_0,v^*_1,\dots,v^*_{n-1}\}$ denote the dual basis with respect to the trace pairing and let $R^* = \Hom(R, \ZZ)$. Unless otherwise specified, $\{1=v_0,\dots,v_4\}$ and $\{1=w_0,\dots,w_5\}$ will denote bases of the quintic ring $R$ and the sextic resolvent ring $S$ respectively. Given a pair $(R,S)$ and bases $\{1=v_0,\dots,v_4\}$ and $\{1=w_0,\dots,w_5\}$ respectively, we obtain a tensor $\mathcal{A}$ by writing the resolvent $\varphi \colon \widetilde{S}\otimes \widetilde{S} \rightarrow \widetilde{R}$ map in terms of the bases $\{v^*_4,\dots,v^*_1\}$ and $\{w^*_5,\dots,w^*_1\}$ of $\widetilde{R}$ and $\widetilde{S}$ respectively\footnote{This ordering is very important! We encourage the reader to keep this ordering of the basis elements in mind throughout this section.}. In this section, we develop the relationship between:
\begin{enumerate}
\item pairs $(R,S)$ equipped with ``small'' bases;
\item and tensors $\mathcal{A}$ with ``small'' coefficients.
\end{enumerate}

\subsubsection{``Short'' bases of $R,S$ gives rise to matrices $\mathcal{A}$ with ``small'' coefficients}
We will need to relate the lengths of $\{v^*_1,\dots,v^*_4\}$ and $\{w^*_1,\dots,w^*_5\}$ to the lengths of $\{1,v_1,\dots,v_4\}$ and $\{1,w_1,\dots,w_5\}$, when the latter two bases are ``short''; the content of the following lemma consists of the relationship between these two lengths. 
 
\begin{lemma}
\label{succminlemma}
Let $R$ be any nondegenerate rank $n$ ring and let $\{1=v_0,\dots,v_{n-1}\}$ be a basis of $R$. Let $\theta$ be a positive real number such that the angle between any $v_i$ and $v_j$ in $\RR^n$ is bounded below by $\theta$ in the Minkowski embedding of $R$. For all $0 \leq i \leq n-1$, we have
\[
	\lv v_i\rv \asymp_{n,\theta} \lambda_i(R) \asymp_{n,\theta} \lambda_{n-1-i}(R^*)^{-1} \asymp_{n,\theta} \lv v_i^{*}\rv^{-1}.
\]
\end{lemma}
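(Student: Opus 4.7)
The plan is to prove the three asymptotic equivalences in the chain separately, using a combination of linear algebra with the angle condition, classical transference theorems from the geometry of numbers, and a self-dualization argument.

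For the first equivalence $\lv v_i\rv \asymp_{n,\theta}\lambda_i(R)$, after relabeling we may assume $\lv v_0\rv\leq\cdots\leq\lv v_{n-1}\rv$; the inequality $\lambda_i(R)\leq\lv v_i\rv$ is then immediate because $v_0,\dots,v_i$ are $i+1$ linearly independent elements of length $\leq\lv v_i\rv$. The reverse direction uses that the normalized Gram matrix $(\la v_i,v_j\ra/(\lv v_i\rv\lv v_j\rv))$ has all off-diagonal entries bounded in absolute value by $\cos\theta<1$, so its smallest eigenvalue is bounded below by some $c=c(n,\theta)>0$. This yields the key estimate $\lv \sum c_jv_j\rv^2\geq c\sum c_j^2\lv v_j\rv^2$ for all real $c_j$. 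Given any $i+1$ linearly independent elements $u_0,\dots,u_i\in R$ of length $\leq\lambda_i(R)$, at least one of them must involve some $v_j$ with $j\geq i$ and nonzero integer coefficient; the previous estimate then gives $\lambda_i(R)\gg_{n,\theta}\lv v_i\rv$.

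For the middle equivalence $\lambda_i(R)\asymp_n\lambda_{n-1-i}(R^*)^{-1}$, we identify $R^*=\Hom(R,\ZZ)$ with the Euclidean dual lattice of $R$ inside $R\otimes\RR$ via the trace pairing (which agrees with the Minkowski inner product up to the universal constant $1/n$). The classical transference theorem of Banaszczyk (or, with weaker constants, Mahler) then asserts $1\leq\lambda_i(L)\lambda_{n-1-i}(L^*)\leq n$ in our zero-indexing convention, which is exactly the claimed asymptotic equivalence.

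For the third equivalence $\lv v_i^*\rv\asymp_{n,\theta}\lambda_{n-1-i}(R^*)$, the strategy is to reduce to the first equivalence applied to the dual lattice $R^*$ with the reversed dual basis $\tilde v_i:=v_{n-1-i}^*$. For this we need to verify two facts about $\{\tilde v_i\}$: pairwise angles are bounded below by some $\theta'=\theta'(n,\theta)>0$, and the lengths satisfy $\lv\tilde v_0\rv\leq\cdots\leq\lv\tilde v_{n-1}\rv$ up to multiplicative constants. Both facts follow from the same Gram matrix argument: since the normalized Gram matrix $G$ of $\{v_i\}$ is well-conditioned with condition number bounded in terms of $n$ and $\theta$, so is its inverse $G^{-1}$, which (after diagonal rescaling) is the normalized Gram matrix of $\{v_i^*\}$; in particular $\lv v_i^*\rv\asymp_{n,\theta}\lv v_i\rv^{-1}$, so the length order is reversed up to constants, and the angle bound is preserved in $\tilde v_i$. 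The main obstacle in the whole proof is precisely bookkeeping this Gram-matrix inversion cleanly, because it requires simultaneously tracking diagonal entries (to get the length-ordering reversal) and off-diagonal entries (to get the angle bound for $\{v_i^*\}$). Once these are established, applying the first equivalence to $R^*$ with basis $\{\tilde v_i\}$ gives $\lv\tilde v_i\rv\asymp_{n,\theta'}\lambda_i(R^*)$, i.e. $\lv v_i^*\rv\asymp_{n,\theta}\lambda_{n-1-i}(R^*)$, completing the chain.
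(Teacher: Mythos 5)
Your proposal takes a genuinely different route from the paper. For the middle equivalence you invoke Banaszczyk's transference theorem as a black box, whereas the paper never quotes a transference result: it first establishes $\lvert v_i^*\rvert\,\lvert v_i\rvert\asymp_{n,\theta}1$ directly and then runs a covolume sandwich,
\[
\Covol(R)^{-1}\asymp_{n,\theta}\prod_i\lvert v_i\rvert^{-1}\asymp_{n,\theta}\prod_i\lvert v_i^*\rvert\gg_n\prod_i\lambda_i(R^*)\asymp_n\Covol(R^*)\asymp_n\Covol(R)^{-1},
\]
which forces every $\gg$ to an $\asymp$ and hence termwise $\lvert v_i^*\rvert\asymp\lambda_{n-1-i}(R^*)$ once the trivial inequalities $\lambda_j(R^*)\leq\lvert v_{\sigma(j)}^*\rvert$ are noted. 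Likewise for the first and third equivalences you use a Gram-matrix condition-number argument, while the paper gets $\lvert v_i^\vee\rvert\lvert v_i\rvert\asymp_\theta 1$ in one line from the angle bound via $1=v_i^\vee\cdot v_i\asymp_\theta\lvert v_i^\vee\rvert\,\lvert v_i\rvert$. Both routes are valid; yours is more modular (each of the three links is self-contained), the paper's is more compressed and avoids quoting transference.

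There is, however, a real conceptual error in your middle step that you should fix. The trace pairing $\Tr(xy)=\sum_i\sigma_i(x)\sigma_i(y)$ does \emph{not} agree with the Minkowski inner product $\langle x,y\rangle=\frac1n\sum_i\sigma_i(x)\overline{\sigma_i(y)}$ up to the constant $1/n$ unless $R$ is totally real: when there are complex places the trace form has signature $(r+s,s)$ and is indefinite, while the Minkowski form is positive definite. Consequently the trace dual $R^*=\Hom(R,\ZZ)$ is not a scalar multiple of the Euclidean dual lattice $R^\vee$. The paper addresses exactly this point: it writes out the trace form in Minkowski coordinates and produces an explicit map $\sigma(y,z)=(y,\tfrac12\bar z)$ with $\sigma(v_i^\vee)=v_i^*$, i.e.\ $R^*=\sigma(R^\vee)$. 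Since $\sigma$ is diagonal with entries $\pm1,\pm\tfrac12$, it is bilipschitz with universal constants, so the two dual lattices have comparable successive minima and lengths — but that comparison has to be made explicitly, not asserted by identifying the two pairings. Your argument is salvageable by inserting exactly this observation, but as written the sentence ``the trace pairing agrees with the Minkowski inner product up to the universal constant $1/n$'' is false, and if you actually believed it you would have no reason to introduce anything like $\sigma$, so this is a gap rather than a cosmetic slip.
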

\begin{proof}
Let $r$ be the number of nonzero homomorphisms of $R$ into $\RR$ and let $s$ be the number of complex conjugate pairs of nonzero homomorphisms of $R$ into $\CC$ whose image does not lie in $\RR$. The Minkowski embedding of $R$ into $\RR^n$ is given via the composition of $R\rightarrow \RR^r \oplus \CC^s$ with the map
\begin{align*}
\RR^r \oplus \CC^s &\rightarrow \RR^n \\
(y_1,\dots,y_r,z_1,\dots,z_s) &\rightarrow (y_1,\dots,y_r,\Ree(z_1),\Imm(z_1),\dots,\Ree(z_s),\Imm(z_s)).
\end{align*}
Because all norms on finite dimensional vector spaces are equivalent, it suffices to suppose that all lengths are taken with respect to the Euclidean norm on $\RR^n$, i.e. let $\lv \cdot \rv$ denote the Euclidean norm. Note that $v_0,\dots,v_{n-1}$ form a set of linearly independent vectors in $\RR^n$ via the Minkowski embedding; let $\{ v_0^{\vee},\dots, v_{n-1}^{\vee}\}$ denote the dual basis with respect to the standard Euclidean norm. We will show that the length of $v_i^*$ is comparable to the length of $v_i^{\vee}$. Afterwards, we compute the lengths of $v_i^{\vee}$.

On $\RR^n$, the trace form is given by the quadratic form
\[
	q(x_1,\dots,x_n) = \sum_{1\leq i \leq r}x_i^2 + \sum_{1\leq i \leq s}(2x_{r + 2i-1}^2-2x_{r + 2i}^2).
\]
Let $\sigma \colon \RR^n \rightarrow \RR^n$ be the map sending
\[
	(y_1,\dots,y_r,z_1,\dots,z_s) \rightarrow (y_1,\dots,y_r,\frac{1}{2}\overline{z_1},\dots,\frac{1}{2}\overline{z_s}).
\]
For $0 \leq i \leq n-1$ write $v_i = (v_{1,i},\dots,v_{n,i})$ and $v^{\vee}_i = (v^{\vee}_{1,i},\dots,v^{\vee}_{n,i})$. 
\begin{align*}
q(\sigma(v_i^{\vee}),v_j) &= \sum_{1 \leq k \leq r}v^{\vee}_{k,i}v_{k,j} + \sum_{1 \leq k \leq s}(2\frac{1}{2}v^{\vee}_{r+2k-1,i}v_{r+2k-1,j} - 2\frac{-1}{2}v^{\vee}_{r+2k,i}v_{r+2k,j}) \\
&= \sum_{1 \leq k \leq r}v^{\vee}_{k,i}v_{k,j} + \sum_{1 \leq k \leq s}(v^{\vee}_{r+2k-1,i}v_{r+2k-1,j} + v^{\vee}_{r+2k,i}v_{r+2k,j}) \\
&= \delta_{ij}
\end{align*}
Thus, $\sigma(v_i^{\vee}) = v_i^*$ for all $0 \leq i \leq n-1$. Moreover, the angle between $v_i^{\vee}$ and $v_i$ is $\leq c_{\theta} \pi/2$ where $0 \leq c_{\theta} < 1$ and $c_{\theta}$ is dependent only on $\theta$. Hence
\begin{align*}
1 &= v_i^{\vee}\cdot v_i && \text{because $v_i^{\vee}\cdot v_j = \delta_{ij}$}\\
&\asymp_{\theta} \lv v_i^{\vee} \rv \lv v_i \rv && \text{because of the angle bound $c_{\theta}$}\\
&= \lv \sigma^{-1}(v_i^{*}) \rv \lv v_i \rv && \text{because $\sigma(v_i^{\vee}) = v_i^*$} \\
&\asymp_n \lv v_i^{*} \rv \lv v_i \rv.
\end{align*}
Moreover,
\[
	\Covol(R)^{-1} \asymp_{n,\theta} \prod_{i = 0}^{n-1}\lv v_i \rv^{-1} \asymp_{n,\theta} \prod_{i = 0}^{n-1}\lv v_i^{*} \rv \gg_n \prod_{i = 0}^{n-1} \lambda_i(R^*) \asymp_{n} \Covol(R^*) \asymp_{n} \Covol(R)^{-1}.
\]
Hence, $\lv v_i^* \rv \asymp_{n,\theta} \lambda_{n - 1 - i}(R^{\vee})$, which completes the proof.
\end{proof}

We now show that if the sextic resolvent map is expressed with respect to ``short'' bases of $R$ and $S$, then the coefficients of the map are ``small''. To do so, we will need the following explicit description of the sextic resolvent map. Let $\sigma_1,\dots,\sigma_6$ be the $6$ nonzero homomorphisms of $S$ into the complex numbers. The sextic resolvent map $\varphi \colon \widetilde{S} \otimes \widetilde{S} \rightarrow \widetilde{R}$ is given by the formula:
\[
	\varphi(u \otimes v) = \frac{\sqrt{\Disc(S)}}{48\Disc(R)}\;\; \begin{array}{|c c c|}
1 & 1 & 1 \\
\sigma_1(u) + \sigma_2(u) & \sigma_3(u) + \sigma_6(u) & \sigma_4(u) + \sigma_5(u) \\
\sigma_1(v) + \sigma_2(v) & \sigma_3(v) + \sigma_6(v) & \sigma_4(v) + \sigma_5(v) 
\end{array}
\]
\begin{lemma}
\label{matrixboundquintic}
If $\{1,v_1,\dots,v_4\}$ and $\{1,w_1,\dots,w_5\}$ are Minkowski bases of $R$ and $S$ respectively, then for any $1 \leq i,j \leq 4$ and any $1 \leq k \leq 5$, we have:
\[
	\lv a^k_{ij} \rv \ll \frac{\Delta^{1/2}\lv v_{5-k}\rv}{\lv w_{6-i}\rv\lv w_{6-j}\rv}
\]
\end{lemma}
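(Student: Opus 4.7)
The plan is to unpack the definition of $a^k_{ij}$ in terms of the sextic resolvent map $\varphi$ and then estimate each factor using the Minkowski-basis hypothesis together with \cref{succminlemma}. Recall that $\mathcal{A}$ encodes $\varphi \colon \widetilde{S}\otimes\widetilde{S}\to\widetilde{R}$ in the reverse-ordered trace-dual bases $\{v^*_4,\dots,v^*_1\}$ of $\widetilde{R}$ and $\{w^*_5,\dots,w^*_1\}$ of $\widetilde{S}$; thus by definition
\[
\varphi(w^*_{6-i}\otimes w^*_{6-j}) \;=\; \sum_{k} a^k_{ij}\,v^*_{5-k}.
\]
Using the trace duality $\Tr(v^*_\ell v_m)=\delta_{\ell m}$ to project onto the $v^*_{5-k}$-coordinate yields the clean identity
\[
a^k_{ij} \;=\; \Tr\!\bigl(\varphi(w^*_{6-i}\otimes w^*_{6-j})\cdot v_{5-k}\bigr).
\]

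First I would bound $|a^k_{ij}|$ by the Minkowski norms of the two factors on the right. Since $\Tr(xy)=\sum_\ell\sigma_\ell(x)\sigma_\ell(y)$ and each $|\sigma_\ell(\cdot)|$ is controlled by the Minkowski length, Cauchy--Schwarz gives $|\Tr(xy)|\ll|x|\,|y|$, so
\[
|a^k_{ij}| \;\ll\; \bigl|\varphi(w^*_{6-i},w^*_{6-j})\bigr|\cdot|v_{5-k}|.
\]
Next, I would apply the explicit determinantal formula for $\varphi$ recorded just before the lemma, bounding each summand $\sigma_a(u)+\sigma_b(u)$ by $\ll |u|$ (and likewise for $v$); since the same formula, up to relabeling the Galois conjugates of $S$, produces every Minkowski component of $\varphi(u,v)$, this yields the operator-norm estimate
\[
|\varphi(u,v)| \;\ll\; \frac{\sqrt{|\Disc(S)|}}{|\Disc(R)|}\,|u|\,|v|.
\]

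Finally, because $\{1,w_1,\dots,w_5\}$ is Minkowski reduced, \cref{succminlemma} gives $|w^*_m|\asymp|w_m|^{-1}$. Combined with the standard sextic-resolvent discriminant identity $|\Disc(S)|=\Delta^3$ from \cite{BhargavaHCL4} --- which forces $\sqrt{|\Disc(S)|}/|\Disc(R)|=\Delta^{1/2}$ --- this assembles into
\[
|a^k_{ij}| \;\ll\; \frac{\Delta^{1/2}\,|v_{5-k}|}{|w_{6-i}|\,|w_{6-j}|},
\]
as claimed. The main obstacle will be establishing the operator-norm bound $|\varphi(u,v)|\ll (\sqrt{|\Disc(S)|}/|\Disc(R)|)\,|u|\,|v|$ from the given determinantal formula, since that formula produces a single Galois-equivariant component of $\varphi(u,v)$ rather than its full image in $\widetilde{R}\otimes\RR$; once the component bound is promoted to every component by invoking the symmetry of the partition of the six homomorphisms, the rest of the argument is a routine application of the Minkowski-basis dictionary supplied by \cref{succminlemma}.
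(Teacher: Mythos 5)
Your proposal is correct and follows essentially the same three-step approach as the paper's proof: bound the coefficient $a^k_{ij}$ in terms of $\lvert \varphi(w^*_{6-i}\otimes w^*_{6-j})\rvert$ and the length of the relevant $v^*$ (or equivalently, as you do, via trace duality and Cauchy--Schwarz), then bound $\lvert \varphi(u,v)\rvert \ll \Delta^{1/2}\lvert u\rvert\lvert v\rvert$ from the explicit determinantal formula together with $\lvert\Disc(S)\rvert\asymp\Delta^3$, and finally convert $\lvert w^*_m\rvert$ to $\lvert w_m\rvert^{-1}$ via \cref{succminlemma}. The "obstacle" you flag — promoting the single-component estimate to all Galois conjugates — is precisely what the paper elides with "by the explicit description of $\varphi$," and your remark that it is handled by symmetry of the partition of the six embeddings is the right resolution.
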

\begin{proof}
From the definition of the sextic resolvent map, we have that:
\begin{align*}
\lv a^k_{ij} \rv &\ll \frac{\lv \varphi(w^*_{6-i} \otimes w^*_{6-j})\rv}{\lv v^*_{5-k}\rv} \\
&  \ll \frac{\Delta^{1/2}\lv w^*_{6-i}\rv\lv w^*_{6-j}\rv}{\lv v^*_{5-k}\rv} && \text{by the explicit description of $\varphi$} \\
&\ll \frac{\Delta^{1/2}\lv v_{5-k}\rv}{\lv w_{6-i}\rv\lv w_{6-j}\rv} && \text{by \cref{succminlemma}}
\end{align*}

\end{proof}

Let $p = (p_1,\dots,p_4,q_1,\dots,q_5) \in \RR^9$ be any point. 

\begin{definition}
Let $B(p,X)$ denote the set of elements $\mathcal{A} \in V_{\RR}$ such that
\[
	\lv a^k_{ij} \rv \leq \exp[X,1/2 + p_{5-k} - q_{6-i} - q_{6-j}].
\]    
\end{definition}

We now fix more notation we use throughout this section. Let $\eps$ denote a positive real number, let $C,D$ denote real numbers $\geq 1$, and let $X$ denote a real number $>1$. Recall that we say the pair $(R,S)$ is $(\eps,X)$-close to $p$ if:
\begin{align*}
\max_{1 \leq i \leq 4}\{\lv \log_{\Delta}\lambda_i(R) - p_i \rv\} &\leq \frac{\eps}{\log X} \\
\max_{1 \leq i \leq 5}\{\lv \log_{\Delta}\lambda_i(S) - q_i \rv\} &\leq \frac{\eps}{\log X}
\end{align*}

\begin{definition}
For an integral element $\mathcal{A}$, we say $(\mathcal{A},p)$ are $(\eps,X)$-close if the corresponding pair $(R,S)$ is $(\eps,X)$-close to $p$. We say an integral element $\mathcal{A}$ \emph{comes from Minkowski bases} if there exist Minkowski bases of the corresponding rings giving rise to $\mathcal{A}$.    
\end{definition}

\begin{lemma}
\label{ublemmaquintic}
There exists $C$ such that for all $\eps$, $\mathcal{A}$ and $p$ the following statement holds: if $\Delta > 1$ and $(\mathcal{A},p)$ are $(\eps, \Delta)$-close and $\mathcal{A}$ comes from Minkowski bases, then $\mathcal{A} \in Ce^{3\eps}B(p,\Delta)$.
\end{lemma}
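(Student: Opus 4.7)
The plan is to follow the strategy used in the cubic case (\cref{minkbasisshortcubic}) and the quartic case (\cref{ublemma1}): estimate each coefficient $a^k_{ij}$ by plugging the closeness conditions on successive minima into the bound from \cref{matrixboundquintic}. Since the inequalities defining $B(p,\Delta)$ are precisely of the form $\lv a^k_{ij} \rv \leq \exp[\Delta, 1/2 + p_{5-k} - q_{6-i} - q_{6-j}]$, the constant $C$ will come directly from the implicit constant in \cref{matrixboundquintic}, inflated by a uniform factor of $e^{3\eps}$ arising from the definition of $(\eps,\Delta)$-closeness.

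First I would fix an element $\mathcal{A}$ coming from Minkowski bases $\{1,v_1,\dots,v_4\}$ and $\{1,w_1,\dots,w_5\}$ of $R$ and $S$, and assume $\Delta > 1$ together with $(\eps,\Delta)$-closeness to $p$. Because the given bases are Minkowski bases, $\lv v_i\rv \asymp \lambda_i(R)$ and $\lv w_j\rv \asymp \lambda_j(S)$, with implicit constants depending only on $n$. The $(\eps,\Delta)$-closeness then yields
\[
\lv v_i \rv \leq C_0 e^{\eps}\Delta^{p_i}, \qquad \lv w_j \rv \geq C_0^{-1}e^{-\eps}\Delta^{q_j},
\]
for a constant $C_0$ depending only on $n$.

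Substituting these estimates into \cref{matrixboundquintic} gives
\[
\lv a^k_{ij}\rv \ll \frac{\Delta^{1/2}\lv v_{5-k}\rv}{\lv w_{6-i}\rv \lv w_{6-j}\rv} \ll e^{3\eps}\,\Delta^{1/2 + p_{5-k} - q_{6-i} - q_{6-j}},
\]
for every $1 \leq i,j \leq 4$ and $1 \leq k \leq 5$, where the implicit constant depends only on the absolute constants appearing in \cref{matrixboundquintic} and \cref{succminlemma}. Taking $C$ to be this implicit constant, the inequalities defining $Ce^{3\eps}B(p,\Delta)$ are exactly the ones just verified, so $\mathcal{A} \in Ce^{3\eps}B(p,\Delta)$.

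There is no serious obstacle here: the only subtlety is that one has to use the correct indexing convention (the entries $a^k_{ij}$ correspond to the dual basis elements $v^*_{5-k}, w^*_{6-i}, w^*_{6-j}$, and \cref{succminlemma} is what converts the dual-basis lengths produced by the explicit formula for $\varphi$ back into the primal-basis lengths appearing in the closeness hypothesis), but this bookkeeping is already built into \cref{matrixboundquintic}. Everything else is a direct imitation of the cubic and quartic arguments.
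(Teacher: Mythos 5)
Your proof is correct and follows the same route as the paper: apply \cref{matrixboundquintic}, use the Minkowski-basis property (via \cref{succminlemma}) to identify $\lv v_i\rv$, $\lv w_j\rv$ with the successive minima, and then substitute the $(\eps,\Delta)$-closeness bounds to land in $Ce^{3\eps}B(p,\Delta)$. The only (helpful) refinement over the paper's write-up is that you explicitly record the needed lower bound on $\lv w_j\rv$ before dividing, which the paper implicitly subsumes in its $\asymp$ step.
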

\begin{proof}
Let $\{1,v_1,\dots,v_4\}$ and $\{1,w_1,\dots,w_5\}$ be the Minkowski bases of $(R,S)$ giving rise to $\mathcal{A}$. 
We have:
\begin{align*}
    \lv a^k_{ij} \rv &\ll \frac{\Delta^{1/2}\lv v_{5-k}\rv}{\lv w_{6-i}\rv\lv w_{6-j}\rv} && \text{by \cref{matrixboundquintic}} \\
    &\asymp \frac{\Delta^{1/2}\lambda_{5-k}(R)}{\lambda_{6-i}(S)\lambda_{6-j}(S)} && \text{by \cref{succminlemma}}\\
    &\leq e^{3\eps}\exp[X,1/2 + p_{5-k} - q_{6-i} - q_{6-j}] && \text{because $(\mathcal{A},p)$ is $(\eps, \Delta)$-close}
\end{align*}
\end{proof}

\subsubsection{Matrices $\mathcal{A}$ with ``small'' coefficients gives rise to ``short'' bases of $R,S$}
Choose $\mathcal{A} \in V_{\ZZ}$ and let $\{v_0=1,v_1,\dots,v_4\}$ and $\{w_0=1,w_1,\dots,w_5\}$ be the explicit bases of $R$ and $S$ obtained from $\mathcal{A}$ via the explicit multiplication coefficients in \cite{BhargavaHCL4}.
\begin{lemma}
\label{quinticboundlemma}
For all $\mathcal{A}$, $C$, $p$, and $X$, the following statement holds: if $p$ satisfies Equation \eqref{basicineqquintic} and $\mathcal{A} \in CB(p,X)$, then $\lv v_i \rv \ll_C X^{p_i}$ for $i = 1,2,3,4$ and $\lv w_i \rv \ll_C X^{q_i}$ for $i = 1,2,3,4,5$.
\end{lemma}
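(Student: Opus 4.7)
The plan is to mimic the proofs of \cref{reducedcubicbasis} and \cref{quarticboundlemma}. In both of those cases, the explicit structure coefficients from the relevant parametrization give inequalities of the shape
\[
\lv v_i \rv^{2} \ll_{C} \max_{0 \leq j < n}\{ X^{2p_i - p_j}\lv v_j\rv\},
\]
from which \cref{generalboundlemma} extracts the desired bound $\lv v_i \rv \ll_{C} X^{p_i}$. The goal is to establish exactly this shape of inequality for both the quintic ring $R$ and its sextic resolvent $S$ using the explicit multiplication tables of \cite{BhargavaHCL4}, and then to run \cref{generalboundlemma} twice.

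First, I would unpack Bhargava's formulas for the basis $\{1=v_0,v_1,\dots,v_4\}$ of $R$. Each structure constant $c^k_{ij}$ (the $v_k$-coefficient of $v_iv_j$) is a fixed homogeneous polynomial in the $a^{\ell}_{rs}$. The point is that its monomials are \emph{balanced}: given the ``weights'' $\wt(a^{\ell}_{rs}) = \tfrac{1}{2} + p_{5-\ell} - q_{6-r} - q_{6-s}$ dictated by the box $B(p,X)$ (so that $\mathcal{A} \in CB(p,X)$ forces $\lv a^{\ell}_{rs}\rv \ll_C X^{\wt(a^{\ell}_{rs})}$), every monomial in $c^k_{ij}$ has total weight exactly $p_i+p_j-p_k$, where we set $p_0 = 0$. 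Granting this balance, we obtain $\lv c^k_{ij}\rv \ll_C X^{p_i+p_j-p_k}$ and hence
\[
\lv v_i\rv^{2} \ll_{C} \max_{0 \leq k \leq 4}\{ X^{2p_i - p_k}\lv v_k\rv\}.
\]
Applying \cref{generalboundlemma} to this system yields $\lv v_i \rv \ll_{C} X^{p_i}$ for $1 \leq i \leq 4$.

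Next, I would run the same argument for the basis $\{1=w_0,w_1,\dots,w_5\}$ of $S$: the structure constants $d^{\ell}_{ij}$ giving $w_iw_j$ in the basis $\{w_k\}$ are again homogeneous polynomials in the $a^{\ell}_{rs}$, and a weight-counting check shows that every monomial has total weight exactly $q_i+q_j-q_{\ell}$ (with $q_0=0$). This gives $\lv d^{\ell}_{ij}\rv \ll_C X^{q_i+q_j-q_{\ell}}$, hence
\[
\lv w_i\rv^{2} \ll_{C} \max_{0 \leq \ell \leq 5}\{X^{2q_i - q_{\ell}}\lv w_{\ell}\rv\},
\]
and a second application of \cref{generalboundlemma} delivers $\lv w_i \rv \ll_C X^{q_i}$ for $1 \leq i \leq 5$.

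The main obstacle — and really the only non-routine input — is confirming the weight-balance assertion for the structure constants in \cite{BhargavaHCL4}. This is a purely combinatorial check that the explicit polynomials of Bhargava are homogeneous with respect to the multi-grading in which $a^{\ell}_{rs}$ has weight $\tfrac{1}{2} + p_{5-\ell} - q_{6-r} - q_{6-s}$; equivalently, it follows from the $G_{\ZZ}$-equivariance of the parametrization under the torus of diagonal scalings that rescale the bases $\{v_i\}$ and $\{w_j\}$. This is the quintic analogue of the computation implicit in the proof of \cref{quarticboundlemma}, and once it is verified, the rest of the argument is exactly as above.
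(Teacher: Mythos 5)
Your proposal takes essentially the same route as the paper: bound the structure constants of $R$ and $S$ using the box constraints on the $a^k_{ij}$, derive the two systems of inequalities $\lv v_i\rv^2 \ll_C \max_j\{X^{2p_i-p_j}\lv v_j\rv\}$ and $\lv w_i\rv^2 \ll_C \max_j\{X^{2q_i-q_j}\lv w_j\rv\}$, and close with \cref{generalboundlemma}. The only difference is that you articulate the weight-homogeneity/$G_{\ZZ}$-equivariance reasoning behind the coefficient bounds, whereas the paper simply asserts them from Bhargava's explicit formulas.
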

\begin{proof}
Using the fact that $\mathcal{A} \in B(p,X)$, we may bound the size of the multiplication coefficients given in \cite{BhargavaHCL4}. We obtain:
\[
	\lv v_i\rv^2 \ll_C \max_{0 \leq j \leq 4}\{X^{2p_i-p_j}\lv v_j \rv\}
\]
\[
	\lv w_i\rv^2 \ll_C \max_{0 \leq j \leq 5}\{X^{2q_i-q_j}\lv w_j \rv\}
\]
Applying \cref{generalboundlemma} completes the proof.
\end{proof}

\begin{lemma}
\label{lambdaboundlemmaquintic}
For all $p$ satisfying Equation \eqref{basicineqquintic}, all $X,C$, and all $\mathcal{A}$, the following statement holds: if $\mathcal{A} \in V_{\ZZ} \cap CB(p,X)$ and $\frac{X}{2} \leq \Delta$ then 
\[
	 \lambda_i(R) \asymp_C \Delta^{p_i}
\]
for $i = 1,2,3,4$ and
\[
	 \lambda_i(C) \asymp_C \Delta^{q_i}
\]
for $i = 1,2,3,4,5$.
\end{lemma}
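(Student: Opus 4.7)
The plan is to mimic the proofs of \cref{lambdaboundlemma} and \cref{lambdaboundlemmaquartic}. First I would invoke \cref{quinticboundlemma} to get the one-sided bounds $\lv v_i \rv \ll_C X^{p_i}$ for $1 \leq i \leq 4$ and $\lv w_j \rv \ll_C X^{q_j}$ for $1 \leq j \leq 5$ from the assumption $\mathcal{A} \in CB(p,X)$. Since $\{1,v_1,\dots,v_4\}$ and $\{1,w_1,\dots,w_5\}$ are honest bases of $R$ and $S$ respectively (and $1$ has bounded length), the definition of successive minima gives
\[
\prod_{i=1}^{4}\lambda_i(R) \ll \prod_{i=1}^4 \lv v_i \rv \quad\text{and}\quad \prod_{j=1}^{5}\lambda_j(S) \ll \prod_{j=1}^5 \lv w_j \rv,
\]
with implied constants depending only on the dimensions.

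Next, I would use the hypothesis $X/2 \leq \Delta$ (together with the fact that $\mathcal{A} \in CB(p,X) \cap V_{\ZZ}$ automatically implies $\Delta \ll_C X$ since $\Disc$ is a polynomial of bounded degree in the coordinates) to conclude $\Delta \asymp_C X$. Thus $X^{p_i} \asymp_C \Delta^{p_i}$ and $X^{q_j} \asymp_C \Delta^{q_j}$. Combining with the displayed inequalities and the constraints $\sum p_i = 1/2$ and $\sum q_j = 3/2$ from \eqref{basicineqquintic} gives
\[
\prod_{i=1}^4 \lambda_i(R) \ll_C \Delta^{1/2} \quad\text{and}\quad \prod_{j=1}^5 \lambda_j(S) \ll_C \Delta^{3/2}.
\]

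By Minkowski's second theorem, these inequalities are in fact asymptotic equalities (in both directions). This is the main leverage point: if any single $\lambda_i(R)$ were much smaller than $\Delta^{p_i}$, the product bound combined with the individual upper bounds $\lambda_k(R) \leq \lv v_k\rv \ll_C \Delta^{p_k}$ for $k \neq i$ would force $\prod \lambda_i(R) \ll_C \Delta^{1/2}$ strictly, contradicting Minkowski. Hence each $\lambda_i(R) \asymp_C \Delta^{p_i}$, and likewise $\lambda_j(S) \asymp_C \Delta^{q_j}$.

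There is no genuine obstacle here; the only mildly delicate point is bookkeeping the implied constants, all of which depend only on $C$ (and on the dimensions $4$ and $5$), since each inequality used is either a structural bound from \cref{quinticboundlemma}, a comparison $\Delta \asymp_C X$, or Minkowski's second theorem with its universal constants. The argument is entirely parallel to the cubic and quartic cases already treated, with the combinatorics just changed to $\sum p_i = 1/2$ and $\sum q_j = 3/2$.
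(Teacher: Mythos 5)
Your proposal is correct and follows the paper's approach: bound $\prod\lambda_i(R)$ by $\prod\lv v_i\rv$, use the box bounds from \cref{quinticboundlemma} together with $\sum p_i = 1/2$ and $\sum q_j = 3/2$, invoke Minkowski's second theorem to upgrade the product estimate to an $\asymp$, and peel off individual bounds using the coordinatewise upper bounds. One small imprecision: your stated justification that $\Delta \ll_C X$ follows ``since $\Disc$ is a polynomial of bounded degree in the coordinates'' is insufficient on its own, because the coordinates of $CB(p,X)$ are bounded by assorted powers of $X$ (some positive, some negative), so a degree bound only gives $\Delta \ll_C X^{O(1)}$ a priori. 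What makes the exponent come out to exactly $1$ is that $\Disc$ is a relative invariant of $\GL_4 \times \GL_5$ (every monomial has the same torus weight, forcing the exponent sum to be exactly $1$); alternatively, the paper's proof sidesteps proving $\Delta \ll_C X$ up front and instead deduces $X \asymp_C \Delta$ as a byproduct of the chain $\Delta^{1/2} \asymp \prod\lambda_i \ll_C X^{1/2} \ll \Delta^{1/2}$, where the last step uses $X/2 \leq \Delta$. Either fix makes your argument airtight.
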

\begin{proof}
We have
\begin{align*}
\prod_{i = 1}^4\lambda_i(R) &\ll \prod_{i = 1}^4\lv v_i\rv && \text{by definition of successive minima} \\
&\ll_C \prod_{i = 1}^4X^{p_i} && \text{by \cref{quinticboundlemma}}\\
&= X^{1/2} && \text{because $p_1 + p_2 + p_3 + p_4 = 1/2$}\\
&\asymp \Delta^{1/2} && \text{because $\Delta \geq X/2$} \\
&\asymp \prod_{i = 1}^4\lambda_i(R). && \text{by Minkowski's second theorem}
\end{align*}
Thus $\lambda_i(R) \asymp_C \Delta^{p_i}$ for $i = 1,2,3,4$. The same argument shows, for the sextic resolvent, that $\lambda_i(S) \asymp_C \Delta^{q_i}$ for $i = 1,2,3,4,5$.
\end{proof}

\subsection{Upper and lower bounding the number of $\mathcal{A} \in B(p,X)$ giving rise to isomorphic $(R,S)$}
\begin{lemma}
\label{quinticlblemma}
For all $p$ satisfying Equation \eqref{basicineqquintic}, all $X,C$, and all $\mathcal{A}$, the following statement holds: if $\mathcal{A} \in V_{\ZZ} \cap CB(p,X)$ and $X/2 \leq \Delta \leq X$, then there are
\[
	\#\{\mathcal{A}' \in CB(p,X) \cap V_{\ZZ} \mid \; (R,S) \simeq (R',S') \} \ll_C \exp[X,\sum_{1 \leq i < j \leq 4}(p_j-p_i)+\sum_{1 \leq i < j \leq 5}(q_j-q_i)].
\]
\end{lemma}
\begin{proof}
Let $\{v_0=1,v_1,v_2,v_3,v_4\}$ and $\{w_0=1,w_1,\dots,w_5\}$ be the explicit bases of $R$ and $S$ obtained from $\mathcal{A}$. By \cref{quinticboundlemma}, we have $\lv v_i \rv \ll_C X^{p_i}$ for $i = 1,2,3,4$ and  $\lv w_i \rv \ll_C X^{q_i}$ for $i = 1,2,3,4,5$. Because
\[
	X^{1/2} \asymp \Delta^{1/2} \ll \prod_{i = 1}^4\lv v_i \rv \ll_C X^{\sum_{i = 1}^4 p_i} = X^{1/2}
\]
and
\[
	X^{3/2} \asymp \Delta^{3/2} \ll \prod_{i = 1}^5\lv w_i \rv \ll_C X^{\sum_{i = 1}^5 q_i} = X^{3/2}
\]
we have $\lv v_i \rv \asymp_C X^{p_i}$ for $i = 1,2,3,4$ and  $\lv w_i \rv \asymp_C X^{q_i}$ for $i = 1,2,3,4,5$.

Suppose $\mathcal{A}' \in CB(p,X) \cap V_{\ZZ}$ is such that $(R',S') \simeq (R,S)$. Let $\{1,v'_1,\dots,v'_4\}$ and $\{1,w'_1,\dots,w'_5\}$ denote the bases of $R'$ and $S'$ given by $\mathcal{A}'$. By \cref{quinticboundlemma}, we have $\lv v'_i  \rv \asymp_C X^{p_i}$ for $i = 1,2,3,4$ and  $\lv w'_i \rv \asymp_C X^{q_i}$ for $i = 1,2,3,4,5$. Moreover, write
\begin{align*}
v'_i &= c_{i0} + \sum_{j = 1}^4 c_{ij} v_j \\
w'_i &= d_{i0} + \sum_{j = 1}^5 d_{ij} w_j.
\end{align*}
The tensor $\mathcal{A}'$ is determined by the choices of $c_{ij}$ and $d_{ij}$. Letting $p_0 = q_0 = 0$, we have:
\begin{align*}
X^{p_i} &\asymp_C \lv v'_i \rv \asymp_C \max_{0 \leq j < 5}\{\lv c_{ij}\rv \lv v_j \rv\} \asymp_C \max\{0 \leq j < 5\}\{\lv c_{ij}\rv X^{p_j}\} \\
X^{q_i} &\asymp_C \lv w'_i \rv \asymp_C \max_{0 \leq j < 6}\{\lv d_{ij}\rv \lv w_j \rv\} \asymp_C \max\{0 \leq j < 6\}\{\lv d_{ij}\rv X^{q_j}\}
\end{align*}
Thus if $j \leq i$, the number of choices for $c_{ij}$ and $d_{ij}$ is $\ll_C 1$. If $j > i$, the number of choices for $c_{ij}$ is $\ll_C X^{p_j-p_i}$ and the number of choices for $d_{ij}$ is $\ll_C X^{q_j-q_i}$. Hence, there are $\ll_C \exp[X,\sum_{1 \leq i < j \leq 4}(p_j-p_i)+\sum_{1 \leq i < j \leq 5}(q_j-q_i)]$ distinct integral $\mathcal{A}' \in CB(p,X)$ such that $(R',S') \simeq (R,S)$. 
\end{proof}

\begin{lemma}
\label{ublemma3}
There exists $D$ such that for all $p$, $X$, $C$ and $\mathcal{A}$, the following statement holds: if $p$ satisfies Equation \eqref{basicineqquintic} and $\mathcal{A} \in CB(p,X)$ then
\[
	\#\{\mathcal{A}' \in DCB(p,X) \cap V_{\ZZ} \mid \; (R,S) \simeq (R',S') \} \geq \exp[X,\sum_{1\leq i < j \leq 4}(p_j-p_i)+\sum_{1\leq i < j \leq 5}(q_j-q_i)]\}.
\]
where here $(R',S')$ denotes the quintic ring and sextic resolvent ring arising from tensor $\mathcal{A}'$.
\end{lemma}
\begin{proof}
Recall that $V_{\ZZ}$ admits an action of $G_{\ZZ} = \GL_4(\ZZ)\times \GL_5(\ZZ)$. Act by elements of $G_{\ZZ}$ of the form
\[
   \Bigg(\begin{bmatrix}
    1 & n_{12} & n_{13} & n_{14}  \\
    0 & 1 & n_{23} & n_{24}  \\
    0 & 0 & 1 & n_{34}  \\
    0 & 0 & 0 & 1  \\
  \end{bmatrix}, \begin{bmatrix}
    1 & 0 & 0 & 0 & 0 \\
    m_{12} & 1 & 0 & 0 & 0  \\
    m_{13} & m_{23} & 1 & 0 & 0  \\
    m_{14} & m_{24} & m_{34} & 1 & 0  \\
    m_{15} & m_{25} & m_{35} & m_{45}  & 1\\
  \end{bmatrix}\Bigg)
\]
where $\lv n_{ij}\rv \leq \lceil \exp[X,p_{5-i}-p_{5-j}] \rceil$ and $\lv m_{ij}\rv \leq \lceil \exp[X,q_{6-i}-q_{6-j}] \rceil$.
\end{proof}

\subsection{Computing $\poly_5(G)$}
Let $K$ denote a quintic \'etale algebra over $\QQ$ and $L$ denote the sextic resolvent algebra of $K$. Let $K_0$ denote the subspace of $K$ with trace $0$, and let $L_0$ denote the subspace of $L$ with trace $0$. Let $\varphi \colon L_0 \otimes L_0 \rightarrow K_0$ denote the alternating bilinear form given by the sextic resolvent map. A \emph{flag $(\cF,\cG)$ of $(K,L)$} is a pair of sequences $\cF = (F_0,\dots,F_5)$ and $\cG = (G_0,\dots,G_4)$ of $\QQ$-vector spaces such that
\[
	\{0\} = F_0 \subset F_1 \subset F_2 \subset \dots \subset F_5 = L_0
\]
\[
	\{0\} = G_0 \subset G_1 \subset G_1 \subset \dots \subset G_4 = K_0
\]  
and $\dim_{\QQ} F_i = i$ and $\dim_{\QQ} G_i = i$. Recall that for an integer $n > 1$, the set $[n] = \{0,\dots,n-1\}$. Associate to the flag $(\cF,\cG)$ the function
\begin{align*}
  T_{\cF,\cG} \colon [6] \times [6] & \longrightarrow [5] \\
  (i\;\;,\;\;j ) \; & \longmapsto \min\{k \in [5] \mid \varphi\la F_i \otimes F_j\ra \subseteq G_k\}.
\end{align*}

\begin{definition}
The polytope $P_{\cF,\cG}$ is the set of $p = (p_1,\dots,p_4,q_1,\dots,q_5) \in \RR^9$ satisfying Equation \eqref{basicineqquintic} and the following condition: for all $1 \leq i \leq j \leq 5$, if $T_{\cF,\cG}(i,j) > 0$ then
\[
	p_{5-T_{\cF,\cG}(i,j)} + 1/2 \geq q_{6-i} + q_{6-j}.
\]
\end{definition}

The proof of the following theorem is almost exactly analogous to that of \cref{quarticstructuretheorem}.

\begin{theorem}
\label{quinticstructuretheorem}
We have $\poly_5(G) = \bigcup_{(\cF,\cG)}P_{\cF,\cG}$ where $(\cF,\cG)$ ranges across flags of pairs $(K,L)$ with the property that $\Gal(K) \simeq G$ as a permutation group.
\end{theorem}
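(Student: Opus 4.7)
The plan is to follow the same two-step strategy as in the proof of \cref{quarticstructuretheorem}, using \cref{matrixboundquintic} (which bounds the entries $a^k_{ij}$ of the resolvent map) and \cref{quinticboundlemma} (which bounds the lengths of basis elements from the entries).

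For the containment $\poly_5(G) \subseteq \bigcup_{(\cF,\cG)} P_{\cF,\cG}$, I would start with a nondegenerate quintic $G$-ring $R$ with sextic resolvent $S$ and fix Minkowski bases $\{1, v_1, \ldots, v_4\}$ and $\{1, w_1, \ldots, w_5\}$. Setting $(K,L) = (R\otimes\QQ, S\otimes\QQ)$, I would produce a flag by taking $F_i \coloneqq \QQ\la w^*_5, w^*_4, \ldots, w^*_{6-i}\ra \subseteq L_0$ and $G_j \coloneqq \QQ\la v^*_4, \ldots, v^*_{5-j}\ra \subseteq K_0$, matching the dual-basis convention used to define the coefficients $a^k_{ij}$. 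For $1 \leq i \leq j \leq 5$ with $T_{\cF,\cG}(i,j) = k > 0$, the minimality built into the definition of $T_{\cF,\cG}$ forces the existence of $i' \leq i$, $j' \leq j$ with $a^k_{i',j'} \neq 0$, hence $\lv a^k_{i',j'} \rv \geq 1$. Combining this with the bound of \cref{matrixboundquintic} and \cref{succminlemma} yields $\Delta^{1/2 + p_{5-k} - q_{6-i'} - q_{6-j'}} \gg 1$, and the monotonicity $q_{6-i'} \geq q_{6-i}$, $q_{6-j'} \geq q_{6-j}$ then gives the required inequality $p_{5-k} + 1/2 \geq q_{6-i} + q_{6-j}$ after passing to limits.

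For the reverse containment, I would fix a flag $(\cF,\cG)$ of a pair $(K,L)$ with $\Gal(K)\simeq G$ and a rational point $p$ in the relative interior of $P_{\cF,\cG}$. Choosing $\QQ$-bases of $K_0$ and $L_0$ compatible with the flag and writing the alternating form $\varphi \colon L_0 \otimes L_0 \to K_0$ in these bases gives a rational tensor $\mathcal{A} = (a^k_{ij}) \in V_\QQ$; compatibility with the flag translates to $a^k_{ij} = 0$ whenever $k > T_{\cF,\cG}(i,j)$. I would then scale entrywise by setting
\[
a^k_{ij,M} \coloneqq a^k_{ij}\, M^{1/2 + p_{5-k} - q_{6-i} - q_{6-j}},
\]
which is the effect of acting by the diagonal element $(\mathrm{diag}(M^{p_{5-k}}), \mathrm{diag}(M^{1/4 - q_{6-i}})) \in \GL_4(\RR) \times \GL_5(\RR)$, so $\mathcal{A}_M$ corresponds to a pair $(R_M, S_M)$ whose tensor with $\QQ$ is still $(K,L)$, in particular has Galois group $G$. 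The key point is that for nonzero $a^k_{ij}$ we have $k \leq T_{\cF,\cG}(i,j)$, so by the defining inequality of $P_{\cF,\cG}$ the exponent is nonnegative; choosing $M$ to range over a suitable arithmetic progression (to clear denominators in $p$) makes $\mathcal{A}_M \in V_\ZZ$, exactly as in the argument for \cref{quarticstructuretheorem}. Each $\mathcal{A}_M$ then yields an integral pair $(R_M, S_M)$ with absolute discriminant $\Delta_M \asymp M$, and \cref{quinticboundlemma} combined with Minkowski's second theorem forces $\lambda_i(R_M) \asymp M^{p_i}$ and $\lambda_i(S_M) \asymp M^{q_i}$. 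Therefore the associated points converge to $p$, showing $p \in \poly_5(G)$ by closedness.

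The main obstacle will be verifying that the scaling construction genuinely produces integer tensors for infinitely many $M$ (so that we obtain distinct integral pairs $(R_M, S_M)$). This is precisely the technical step carried out in the proof of \cref{quarticstructuretheorem} via a reference to the auxiliary \texttt{constructM}-type lemma, and I expect the quintic analogue to follow by the same denominator-clearing argument applied coordinate by coordinate; the inequalities cutting out $P_{\cF,\cG}$ guarantee that every nonzero entry gets multiplied by a nonnegative rational power of $M$, so the construction is robust. Everything else --- the identification of the Galois group via the fixed algebra $K$, the passage to limits, and the closedness of $\poly_5(G)$ --- is formal given this step.
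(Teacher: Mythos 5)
Your proposal is correct and takes essentially the same route as the paper. The only cosmetic difference is in the forward containment: the paper derives the inequality from the non-containment $\varphi\langle F_i \otimes F_j\rangle \not\subseteq G_{T_{\cF,\cG}(i,j)-1}$ by directly bounding $\lv v^*_{5-T_{\cF,\cG}(i,j)}\rv$, while you invoke the integrality of the nonzero entry $a^{T_{\cF,\cG}(i,j)}_{i',j'}$; these are equivalent applications of \cref{matrixboundquintic} together with \cref{succminlemma}, and the scaling construction in the reverse direction (including the appeal to the quartic case for the existence of the infinite set of admissible $M$, which should be a set of suitable perfect powers rather than an arithmetic progression) matches the paper.
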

\begin{proof}
We first show that $\poly_5(G)$ is contained in $\bigcup_{(\cF,\cG)}P_{\cF,\cG}$. By Minkowski's second theorem and the definition of successive minima, $\poly_5(G)$ satisfies the linear inequalities given in Equation \eqref{basicineqquintic}. Let $R$ be any nondegenerate quintic $G$-ring and let $S$ be a sextic resolvent ring of $G$. Let $K = R \otimes \QQ$ and let $L = S \otimes \QQ$. Picking Minkowski bases $\{v_0=1,\dots,v_4\}$ and $\{w_0=1,\dots,w_5\}$ of $R$ and $S$  gives rise to dual bases $\{v^*_0,\dots,v^*_4\}$ and $\{w^*_0,\dots,w^*_5\}$ of $R$ and $S$ respectively. These give rise to a flag $(\cF,\cG)$ of $(K,L)$ given by $F_0 = G_0 \coloneqq \{0\}$ and
\begin{align*}
	F_i &\coloneqq \QQ\la w^*_5, \dots, w^*_{6-i} \ra \\
	G_i &\coloneqq \QQ\la v^*_4, \dots, v^*_{5-i} \ra.
\end{align*}
Choose $1 \leq i \leq j \leq 5$ and suppose $T_{\cF,\cG}(i,j) > 0$. Then if $T_{\cF,\cG}(i,j) = 1$ then
\[
	0 \neq \varphi\la \ZZ\la w^*_5,\dots,w^*_{6-i} \ra \otimes \ZZ\la w^*_5,\dots,w^*_{6-j} \ra \ra \subseteq \ZZ\la v^*_4,\dots,v^*_{5-T_{\cF,\cG}(i,j)} \ra.
\]
If $T_{\cF,\cG}(i,j) > 1$ then
\[
	\varphi\la \ZZ\la w^*_5,\dots,w^*_{6-i} \ra \otimes \ZZ\la w^*_5,\dots,w^*_{6-j} \ra \ra \subseteq \ZZ\la v^*_4,\dots,v^*_{5-T_{\cF,\cG}(i,j)} \ra
\]
and 
\[
	\varphi\la \ZZ\la w^*_5,\dots,w^*_{6-i} \ra \otimes \ZZ\la w^*_5,\dots,w^*_{6-j} \ra \ra \not\subseteq \ZZ\la v^*_4,\dots,v^*_{5-T_{\cF,\cG}(i,j)+1} \ra
\]
In either case we obtain:
\[
	\lv v^*_{5-T_{\cF,\cG}(i,j)} \rv \ll \lv w^*_{6-i}\rv\lv w^*_{6-j}\rv\Delta^{1/2}
\] 
which implies by \cref{succminlemma} that
\[
	\lv v_{5-T_{\cF,\cG}(i,j)} \rv^{-1} \ll \lv w_{6-i}\rv^{-1}\lv w_{6-j}\rv^{-1}\Delta^{1/2}.
\] 
Multiplying out the inverses, we obtain
\[
	\lv w_{6-i}\rv\lv w_{6-j} \rv \ll \lv v_{5-T_{\cF,\cG}(i,j)} \rv\Delta^{1/2}.
\]
So, $\poly_5(G)$ is contained in the set $\bigcup_{(\cF,\cG)}P_{\cF,\cG}$ where $(\cF,\cG)$ ranges across flags of $(K,L)$ with the property that $\Gal(K) \simeq G$.

Now let $(\cF,\cG)$ be a flag of a pair $(K,L)$ with the property that $\Gal(K) \simeq G$. Choose bases $\{v_0=1,v_1,\dots,v_4\}$ and $\{w_0=1,w_1,\dots,w_5\}$ of $K$ and $L$ giving rise to bases $\{v^*_4,\dots,v^*_1\}$ and $\{w^*_5,\dots,w^*_1\}$ of $K_0$ and $L_0$. These bases give rise to a flag $(\cF,\cG)$ of $(K,L)$. Moreover, the dual bases give rise to a tensor $\mathcal{A} \in V_{\RR}$. For a positive integer $M$, define $g_M \in \GL_4(\RR) \times \GL_5(\RR)$ to be
\[
	g_M = \Bigg(
	\begin{bmatrix}
		M^{p_4} & 0 & 0 & 0 \\
		0 & M^{p_3} & 0 & 0 \\
		0 & 0 & M^{p_2} & 0 \\
		0 & 0 & 0 & M^{p_1} \\	
	\end{bmatrix},
	\begin{bmatrix}
		M^{-q_5} & 0 & 0 & 0 & 0 \\
		0 & M^{-q_4} & 0 & 0 & 0 \\
		0 & 0 & M^{-q_3} & 0 & 0 \\
		0 & 0 & 0 & M^{-q_2} & 0 \\
		0 & 0 & 0 & 0 & M^{-q_1} \\
	\end{bmatrix}\Bigg)
\]
Define $\mathcal{A}_M = \sqrt{M}g_M\mathcal{A}$. For a rational point $p$ in the relative interior of $P_{\cF,\cG}$, one may check that there exists an infinite set $\cM \subseteq \ZZ_{>0}$ such that for all $M \in \cM$ the tensor $\mathcal{A}_M \in V_{\ZZ}$. Each $M \in \cM$ gives rise to a distinct quintic ring and sextic resolvent ring $(R_M,S_M)$ with absolute discriminant $\Delta_M$; for each $M \in \cM$, write the bases of $R_M$ and $S_M$ obtained from $\mathcal{A}_M$ by $\{1,v_{1,M},\dots,v_{4,M}\}$ and $\{1,w_{1,M},\dots,w_{5,M}\}$ respectively. Then for $M,N \in \cM$, we have: 
\begin{align*}
N\Delta_M &= M\Delta_N \\
N^{p_i}v_{i,M} &= M^{p_i}v_{i,N} \\
N^{q_i}w_{i,M} &= M^{q_i}w_{i,N} 
\end{align*}
Therefore, 
\begin{align*}
\lim_{M \in \cM}\Big(\log_{ \Delta_M }\lambda_{1}(R_M),\dots,\log_{ \Delta_M }\lambda_{4}(R_M)\Big) &= (p_1,p_2,p_3,p_4) \\
\lim_{M \in \cM}\Big(\log_{ \Delta_M }\lambda_{1}(S_M),\dots,\log_{ \Delta_M }\lambda_{5}(S_M)\Big) &= (q_1,q_2,q_3,q_4,q_5)
\end{align*}
Therefore, $p \in \poly_5(G)$. Because $\poly_5(G)$ is defined to be a set of limit points, it must be closed; therefore $P_{\cF,\cG}$ is contained in $\poly_5(G)$, which completes the proof of the theorem.
\end{proof}

\begin{proof}[Proof of \cref{quinticcontainmentlemma}]
\cref{quinticstructuretheorem} expresses $\poly_5(G)$ as a finite union of polytopes. A computation shows that these polytopes have dimension $7$.
\end{proof}

To compute $\poly_5(S_5)$, we will need the following lemma.

\begin{lemma}[Bhargava \cite{Bhquintic}, Lemma~10]
\label{quinticzerolemma}
Let $\mathcal{A} \in V_{\QQ}$ be an element such that all the variables in one of the following sets vanish:
\begin{enumerate}
\item $\{a^4_{12},a^4_{13},a^4_{14},a^4_{15},a^4_{23},a^4_{24},a^4_{25}\}$
\item $\{a^4_{12},a^4_{13},a^4_{14},a^4_{23},a^4_{24},a^4_{34}\}$
\item $\{a^4_{12},a^4_{13},a^4_{14},a^4_{15}\} \cup \{a^3_{12},a^3_{13},a^3_{14},a^3_{15}\}$
\item $\{a^4_{12},a^4_{13},a^4_{14},a^4_{23},a^4_{24}\} \cup \{a^3_{12},a^3_{13},a^3_{14},a^3_{23},a^3_{24}\}$
\item $\{a^4_{12},a^4_{13},a^4_{14}\}\cup \{a^3_{12},a^3_{13},a^3_{14}\} \cup \{a^2_{12},a^2_{13},a^2_{14}\}$
\item $\{a^4_{12},a^4_{13},a^4_{23}\}\cup \{a^3_{12},a^3_{13},a^3_{23}\} \cup \{a^2_{12},a^2_{13},a^2_{23}\}$
\item  $\{a^4_{12},a^4_{13}\}\cup \{a^3_{12},a^3_{13}\} \cup \{a^2_{12},a^2_{13}\} \cup \{a^1_{12},a^1_{13}\}$
\end{enumerate}
Then $\mathcal{A}$ is reducible.
\end{lemma}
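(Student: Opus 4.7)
The plan is to show, in each of the seven cases, that the five-point scheme in $\PP^3_\QQ$ cut out by $\mathcal{A}$ admits a nontrivial Galois-invariant decomposition, which is equivalent to reducibility of the quintic \'etale algebra $R\otimes\QQ$. Recall that from $\mathcal{A}=(A_1,A_2,A_3,A_4)$ one forms the pencil of alternating forms $\mathcal{A}(y) \coloneqq \sum_{k=1}^{4} y_k A_k$ on $\PP^3_\QQ$ with projective coordinates $(y_1,y_2,y_3,y_4)$; the common zero locus of the five $4\times 4$ sub-Pfaffians of $\mathcal{A}(y)$ is generically five reduced points, namely $\Spec(R\otimes\overline{\QQ})$. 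Since $a^k_{ij}$ is the coefficient of $v^*_{5-k}$ in $\varphi(w^*_{6-i},w^*_{6-j})$, each vanishing pattern also translates into a flag condition $(\cF,\cG)$ respected by the resolvent map, consistent with \cref{quinticstructuretheorem}.

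First I would translate each pattern into a statement about restricted rank of $\mathcal{A}(y)$. Cases~(1) and (2) constrain only $A_4$: case~(1) says every entry of $A_4$ indexed by $\{1,2\}\times\{1,\dots,5\}$ vanishes, so the first two rows and columns of $A_4$ are zero and $\ker(A_4)\supseteq\Span(e_1,e_2)$; case~(2) is similar but forces the principal $4\times 4$ minor of $A_4$ indexed by $\{1,2,3,4\}$ to vanish. In either situation the implied block of zeros in $\mathcal{A}(y)$ forces one of the five Pfaffians to vanish identically on a coordinate hyperplane, exhibiting a proper Galois-stable subscheme of the five-point scheme. Cases~(3)--(7) impose joint vanishing on $A_4$, then $A_3$, then $A_2$, then $A_1$ simultaneously; in each of these, the collection of vanishing entries aligns so that several of the sub-Pfaffians share a common linear factor $\ell(y)$, pinning a subset of the five points to the hyperplane $\{\ell=0\}$ and producing the required non-trivial Galois-invariant partition.

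To make the argument concrete I would invoke Bhargava's explicit structure coefficients from \cite{BhargavaHCL4}, which express the multiplication table of $R$ as polynomials in the $a^k_{ij}$. In each of the seven patterns, these formulas yield a nontrivial idempotent $e \in R\otimes\QQ$, written explicitly as a polynomial in the surviving $a^k_{ij}$; existence of such an idempotent is equivalent to reducibility of the \'etale algebra. An entirely analogous computation can be performed on the sextic resolvent side, constructing explicit zero-divisors in $S\otimes\QQ$ from the same data, which is useful because the resolvent is itself reducible whenever $R$ is.

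The main obstacle is the case-by-case, computational nature of the proof: each of the seven patterns leads to a separate polynomial identity to verify, and the patterns are genuinely different (for instance, (1) gives a rank drop in a single $A_k$, whereas (7) is a simultaneous two-column condition across all four $A_k$). A uniform, conceptual argument would presumably require a sharper geometric statement about how these particular rank and flag conditions on the pencil $\mathcal{A}(y)$ interact with its sub-Pfaffian scheme; I do not see such a statement immediately, so I would expect the proof to proceed as a direct verification case by case, following Bhargava's strategy in \cite{Bhquintic}.
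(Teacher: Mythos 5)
The paper does not prove this lemma; it is quoted verbatim from Lemma~10 of Bhargava's quintic density paper \cite{Bhquintic}, so there is no internal argument to compare against. Your geometric translation is the right one---the five $4\times 4$ sub-Pfaffians of $\mathcal{A}(y)$ cut out $\Spec(R\otimes\overline{\QQ})$ as a degree-five subscheme of $\PP^3$, and reducibility of the quintic \'etale algebra is equivalent to the existence of a proper Galois-stable $\QQ$-subscheme---and Bhargava's actual argument is a case-by-case verification along essentially these lines, so your high-level strategy matches the cited source.

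That said, several of your intermediate descriptions do not reflect what actually happens and would have to be repaired before the verification could be executed. In cases (1) and (2) nothing vanishes ``identically on a coordinate hyperplane''; the listed seven (resp.\ six) zero entries simply force $A_4$ itself to have rank at most $2$, so all five Pfaffians vanish at the single rational point $[0{:}0{:}0{:}1]\in\PP^3$, which already gives reducibility---the mechanism is a distinguished rational point, not a degenerating Pfaffian. In case (3), the vanishing kills the first row of $A_3$ and $A_4$, so on the \emph{line} $\{y_1=y_2=0\}$ (codimension $2$, not a hyperplane) the whole first row of $\mathcal{A}(y)$ vanishes, four of the Pfaffians are identically zero there, and the fifth restricts to a binary quadratic whose (at most two) zeros give a proper $\QQ$-subscheme; this is not a matter of Pfaffians ``sharing a common linear factor.'' The cases where a rational linear form genuinely divides several Pfaffians (e.g.\ (7) with $\mathcal{A}_{23}(y)$ dividing $\Pf_4,\Pf_5$) behave differently again. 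So the seven patterns are geometrically heterogeneous, one must check in each that the resulting $\QQ$-subscheme is both nonempty and proper (including potentially degenerate $\mathcal{A}$), and neither that check nor the proposed ``explicit idempotent'' computation is carried out. You correctly flag that the proof is necessarily case-by-case; the imprecisions above are exactly where the case-by-case work would have to go.
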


\begin{proof}[Proof of \cref{s5polytopetheorem}]
Let $(\cF,\cG)$ be any flag of a pair $(K,L)$ such that $\Gal(K)$ is a transitive subgroup of $S_5$. Then the seven statements of \cref{quinticzerolemma} imply that:
\begin{enumerate}
\item $T_{\cF,\cG}(2,5) = 4$
\item $T_{\cF,\cG}(3,4) = 4$
\item $T_{\cF,\cG}(1,5) \geq 3$
\item $T_{\cF,\cG}(2,4) \geq 3$
\item $T_{\cF,\cG}(1,4) \geq 2$
\item $T_{\cF,\cG}(2,3) \geq 2$
\item $T_{\cF,\cG}(1,3) \geq 1$
\end{enumerate}
The $7$ statements above imply that $P_{\cF,\cG}$ satisfies the linear inequalities:
\begin{enumerate}
\item $p_1 + 1/2 \geq q_4 + q_1$
\item $p_1 + 1/2 \geq q_3 + q_2$
\item $p_2 + 1/2 \geq q_5 + q_1$
\item $p_2 + 1/2 \geq q_4 + q_2$
\item $p_3 + 1/2 \geq q_5 + q_2$
\item $p_3 + 1/2 \geq q_4 + q_3$
\item $p_4 + 1/2 \geq q_5 + q_3$
\end{enumerate} 
In total, this proves that $\poly_5(\Gal(K))$ is contained in the region stated in the theorem statement. The converse is shown via computer calculation by producing an example of $\mathcal{A} = (A_1,A_2,A_3,A_4) \in V_{\QQ}$ of the form
\[
	\Bigg(\begin{bmatrix}
		- & 0 & * & * & *\\
		- & - & * & * & *\\
		- & - & - & * & *\\
		- & - & - & - & *\\
		- & - & - & - & -\\	
	\end{bmatrix},
	\begin{bmatrix}
		- & 0 & 0 & * & *\\
		- & - & * & * & *\\
		- & - & - & * & *\\
		- & - & - & - & *\\
		- & - & - & - & -\\	
	\end{bmatrix},
	\begin{bmatrix}
		- & 0 & 0 & 0 & *\\
		- & - & 0 & * & *\\
		- & - & - & * & *\\
		- & - & - & - & *\\
		- & - & - & - & -\\	
	\end{bmatrix},
	\begin{bmatrix}
		- & 0 & 0 & 0 & 0\\
		- & - & 0 & 0 & *\\
		- & - & - & * & *\\
		- & - & - & - & *\\
		- & - & - & - & -\\	
	\end{bmatrix}\Bigg)
\]
defining an $S_5$-quintic field.
\end{proof}

\subsection{Computing density functions of quintic rings}
To make our formulas shorter, we use the following notation in this section. Define:
\[
	f(p) \coloneqq 1+\sum_{1 \leq i < j \leq 5, 1 \leq k \leq 4}\max\{0,-1/2 + q_i + q_j - p_k\}
\]
\[
	g(p) \coloneqq  \sum_{1 \leq i < j \leq 4}(p_j-p_i) + \sum_{1 \leq i < j \leq 5}(q_j-q_i)
\]

\begin{lemma}
\label{davenportquintic}
For any $C,X$ and $p \in \RR^9$ satisfying Equation \eqref{basicineqquintic}, we have
\[
	\{V_{\ZZ} \cap CB(p,X) \} \asymp_C \exp[X,f(p)].
\]
\end{lemma}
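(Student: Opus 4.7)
The plan is to adapt the strategy of \cref{davenportquartic} almost verbatim, separating the coordinates of $\mathcal{A}$ into those that ``grow'' with $X$ and those that are ``small'', applying Davenport's lemma to the former, and closing with a single algebraic identity.

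First I would reindex. Set $I = \{(i,j,k) : 1 \leq i < j \leq 5,\ 1 \leq k \leq 4\}$ and write, for brevity, $e(i,j,k) \coloneqq 1/2 + p_{5-k} - q_{6-i} - q_{6-j}$, so that $B(p,X)$ is the box $\{|a^k_{ij}| \leq \exp[X, e(i,j,k)]\}$. Split $I = I^+ \sqcup I^-$ where $I^+ = \{(i,j,k) \in I : e(i,j,k) > 0\}$ and $I^- = I \setminus I^+$. For any lattice point $\mathcal{A} \in CB(p,X)$ and any $(i,j,k) \in I^-$, the coordinate $a^k_{ij}$ satisfies $|a^k_{ij}| \leq C \exp[X, e(i,j,k)] \leq C$ (using $X \geq 1$), so $a^k_{ij}$ takes at most $2C+1$ integer values.

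Next I would project. Let $\pi \colon V_{\RR} \to \RR^{|I^+|}$ be the projection onto the coordinates indexed by $I^+$. Then
\[
\#\{\pi(B(p,X)) \cap \ZZ^{|I^+|}\} \;\leq\; \#\{CB(p,X) \cap V_{\ZZ}\} \;\leq\; (2C+1)^{|I^-|}\,\#\{\pi(CB(p,X)) \cap \ZZ^{|I^+|}\}.
\]
Applying \cref{davenport} to the box $\pi(CB(p,X)) \subseteq \RR^{|I^+|}$ yields
\[
\#\{\pi(CB(p,X)) \cap \ZZ^{|I^+|}\} = \Vol(\pi(CB(p,X))) + O_C(Y^{-1} \Vol(\pi(CB(p,X)))),
\]
where $Y \coloneqq \min_{(i,j,k) \in I^+} \exp[X, e(i,j,k)]$. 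Since every exponent in $I^+$ is strictly positive, the error is lower order as $X \to \infty$; an identical application to $\pi(B(p,X))$ furnishes the matching lower bound. The main term is a product of bounds:
\[
\Vol(\pi(CB(p,X))) \;\asymp_C\; \exp\!\Big[X,\ \textstyle\sum_{(i,j,k)\in I^+} e(i,j,k)\Big].
\]

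The final step is the algebraic identity
\[
\sum_{(i,j,k) \in I} e(i,j,k) \;=\; \binom{5}{2}\cdot 4 \cdot \tfrac{1}{2} \;+\; \binom{5}{2}\!\sum_{k=1}^{4} p_k \;-\; 4 \cdot (5-1)\!\sum_{i=1}^{5} q_i \;=\; 20 + 5 - 24 = 1,
\]
using $\sum p_k = 1/2$ and $\sum q_i = 3/2$. Consequently,
\[
\sum_{I^+} e(i,j,k) \;=\; 1 - \sum_{I^-} e(i,j,k) \;=\; 1 + \sum_{I^-} \bigl(-1/2 + q_{6-i} + q_{6-j} - p_{5-k}\bigr),
\]
and reindexing $(i,j,k) \mapsto (6-i, 6-j, 5-k)$ identifies the right-hand side with $f(p)$. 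Combining these yields $\#\{V_{\ZZ} \cap CB(p,X)\} \asymp_C \exp[X, f(p)]$, as desired.

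The main obstacle is purely bookkeeping: the indices in the definition of $B(p,X)$ are shifted relative to those in $f(p)$, so one has to be careful that the sum collapses correctly. A secondary subtlety is ensuring the Davenport error is uniformly lower order as $p$ varies over points where some $e(i,j,k)$ in $I^+$ is close to $0$; but for any fixed $p$ in the region defined by \eqref{basicineqquintic}, the quantity $Y$ grows as a positive power of $X$, so the error is absorbed into the $\asymp_C$ for $X$ sufficiently large, exactly as in the quartic case.
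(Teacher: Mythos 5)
Your proposal is correct and follows essentially the same approach as the paper, which simply refers to the quartic analogue (Lemma \ref{davenportquartic}) and declares the argument identical. You have carried out that adaptation faithfully: splitting coordinates into growing and bounded ones, projecting, applying Davenport's lemma, and verifying the exponent identity $\sum_I e(i,j,k) = 1$ (the reindexing $(i,j,k)\mapsto(6-i,6-j,5-k)$ correctly converts $\sum_{I^-}(-e)$ into $\sum \max\{0,-1/2+q_i+q_j-p_k\}$, using symmetry in $i,j$). The one place where the paper is slightly more explicit than your write-up is the asymmetry in the implied constants, which in the quartic statement are $\ll_p$ for the lower bound and $\ll$ uniform in $p$ for the upper; you note this in your final paragraph, so nothing is missing.
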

\begin{proof}
The argument is identical to that of \cref{davenportquartic}.
\end{proof}

\begin{lemma}
\label{upperboundquintic}
For any $\eps$, $X$, and $p \in \poly_5$, we have
\[
	\#\cF_5(p,\eps,X) \ll_{\eps} \exp[X,f(p)-g(p)].
\]
\end{lemma}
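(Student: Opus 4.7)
The plan is to mimic exactly the structure of the proofs of \cref{upperboundcubic} and \cref{upperboundquartic}, using the reduction theory developed earlier in the section as the quintic analogues of the cubic/quartic inputs. The three ingredients are: (i) the fact that every class in $\cF_5(p,\eps,X)$ is represented by some integral tensor arising from Minkowski bases, which by \cref{ublemmaquintic} forces that tensor into a dilate of the box $B(p,X)$; (ii) the fact that such a tensor has many $G_{\ZZ}$-conjugates lying in a slightly larger box, by \cref{ublemma3}; and (iii) the lattice point count for the box, by \cref{davenportquintic}.

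Concretely, let $S$ denote the set of tensors $\mathcal{A} \in V_{\ZZ}$ obtained by picking, for each isomorphism class $(R,S) \in \cF_5(p,\eps,X)$, one pair of Minkowski bases of $R$ and $S$ and writing the sextic resolvent map in the dual bases. By \cref{ublemmaquintic} there is an absolute constant $C$ such that $S \subseteq Ce^{3\eps}B(p,X)$. By \cref{ublemma3} there is an absolute constant $D \geq 1$ such that for every $\mathcal{A} \in S$ the set
\[
\{\mathcal{A}' \in DCe^{3\eps}B(p,X) \cap V_{\ZZ} \mid (R',S') \simeq (R,S)\}
\]
has cardinality at least $\exp[X,g(p)]$. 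Since distinct classes in $\cF_5(p,\eps,X)$ give disjoint fibers under the map $\mathcal{A} \mapsto (R,S)$, we conclude
\[
\#\cF_5(p,\eps,X) \;\leq\; \frac{\#\{DCe^{3\eps}B(p,X) \cap V_{\ZZ}\}}{\exp[X,g(p)]}.
\]

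To finish, apply \cref{davenportquintic} to the numerator: the lattice point count is $\ll_{\eps} \exp[X,f(p)]$, where the implicit constant depends on $\eps$ only through a power of $DCe^{3\eps}$ (which is why we obtain $\ll_{\eps}$ and not a uniform bound). Dividing by $\exp[X,g(p)]$ yields the desired estimate $\#\cF_5(p,\eps,X) \ll_{\eps} \exp[X,f(p)-g(p)]$. There is no genuine obstacle here beyond bookkeeping; the only subtlety is making sure that the bound from \cref{davenportquintic} applies to the dilated box $DCe^{3\eps}B(p,X)$ rather than $B(p,X)$ itself, which is immediate since \cref{davenportquintic} is stated with a free scaling constant $C$ and the constants $C, D$ are absolute.
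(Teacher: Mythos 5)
Your proposal is correct and follows exactly the approach the paper takes; the paper's own proof of this lemma is simply the sentence ``The proof is again completely identical to that of \cref{upperboundquartic},'' and you have spelled out precisely that argument, substituting \cref{ublemmaquintic}, \cref{ublemma3}, and \cref{davenportquintic} for their quartic counterparts.
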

\begin{proof}
The proof is again completely identical to that of \cref{upperboundquartic}.
\end{proof}

\begin{lemma}
\label{lowerboundquintic}
There is a constant $C$ such that for all $\eps > C$ we have:
\begin{enumerate}
\item if $p \in \poly_5$, then 
\begin{align*}
\#\cF_4(p,\eps,X) \gg_{p,\eps} \exp[X,f(p)-g(p)];
\end{align*}
\item if $p \in \poly_5(S_5)$, then 
\begin{align*}
\#\cF(p,\eps,S_5,X) \gg_{p,\eps} \exp[X,f(p)-g(p)];
\end{align*}
\end{enumerate}
\end{lemma}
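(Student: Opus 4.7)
The plan is to mirror the proof of \cref{lowerboundquartic} almost verbatim, replacing the pair of ternary quadratic forms $(A,B)\in V_{\ZZ}$ of the quartic case with a quadruple of alternating $5\times 5$ forms $\mathcal{A}\in V_{\ZZ}$, and substituting the quartic reduction lemmas with the quintic analogues that were just proved in the previous subsection.

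For part (1), fix $p\in\poly_5$. By \cref{lambdaboundlemmaquintic} there is a constant $C$ so that for every $\eps>C$, any tensor $\mathcal{A}\in B(p,X)\cap V_{\ZZ}$ with $X/2\leq\Delta\leq X$ gives rise to a pair $(R,S)$ that is $(\eps,X)$-close to $p$. Let $T$ denote this set of tensors. Then
\[
\#\cF_5(p,\eps,X)\;\geq\;\frac{\#T}{\max_{\mathcal{A}\in T}\#\{\mathcal{A}'\in B(p,X)\cap V_{\ZZ}\mid(R',S')\simeq(R,S)\}}.
\]
Because the inequalities defining $B(p,X)$ are calibrated so that $\Disc$ is generically of order $X$ on this box, a positive proportion of integral tensors in $B(p,X)$ have $\lvert\Disc\rvert\in[X/2,X]$, giving $\#T\asymp_p\#\{V_{\ZZ}\cap B(p,X)\}$. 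Then \cref{davenportquintic} lower bounds the numerator by $\gg_p\exp[X,f(p)]$, and \cref{quinticlblemma} upper bounds the denominator by $\ll_p\exp[X,g(p)]$. Dividing yields (1).

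For part (2), I further restrict $T$ to tensors whose associated pair $(R,S)$ has $\Gal(R\otimes\QQ)\simeq S_5$. By \cref{s5polytopetheorem} together with \cref{quinticstructuretheorem}, every $p\in\poly_5(S_5)$ is realized by some flag of an $S_5$-quintic field, so I can exhibit an explicit rational tensor $\mathcal{A}_0\in V_{\QQ}$ defining an $S_5$-quintic field and lying (after rescaling) in $B(p,X)$. I would then impose congruence conditions at a few fixed auxiliary primes --- for example, that $R$ be irreducible modulo one prime and that $\Disc$ be squarefree at a second prime --- which together force $\Gal(R\otimes\QQ)\simeq S_5$. The existence of $\mathcal{A}_0$ shows these conditions are compatible with the box, so they cut out a positive proportion of $T$ (alternatively, one can cite quantitative Hilbert irreducibility as in the $D_4$-case of \cref{lowerboundquartic}). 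The main obstacle lies in producing $\mathcal{A}_0$ for points $p$ lying on the boundary facets of $\poly_5(S_5)$, where the required rational tensor must have many vanishing coefficients (dictated by the active facets of $P_{\cF,\cG}$) while still generating an $S_5$-quintic field; this is precisely the content of the surjectivity direction of \cref{quinticstructuretheorem} applied to the $S_5$-flags guaranteed by \cref{s5polytopetheorem}, so the construction goes through without further difficulty.
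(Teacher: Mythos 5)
Your proposal mirrors the paper's argument quite closely. Part (1) is essentially the same as the paper's treatment: use \cref{lambdaboundlemmaquintic} to certify that lattice points in the box with $\Delta\asymp X$ are $(\eps,X)$-close to $p$, then estimate the count via \cref{davenportquintic} over the overcounting bound from \cref{quinticlblemma}. This is correct. (The ``positive proportion of tensors in $B(p,X)$ have $\Delta\in[X/2,X]$'' step is asserted rather than proved, but the paper does exactly the same thing, so that is not a complaint specific to you.)

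In part (2) the strategy is the right one, but there is a step you elide. You claim that the existence of a rational tensor $\mathcal{A}_0\in V_{\QQ}$ defining an $S_5$-quintic field and fitting the box's vanishing pattern ``shows these conditions are compatible with the box.'' That does not follow: knowing one $\mathcal{A}_0$ with $S_5$-Galois group says nothing about whether your chosen fixed auxiliary congruence conditions (irreducibility of $R$ modulo one prime, squarefreeness of $\Disc$ at another) can be satisfied simultaneously with the vanishing pattern forced by the active facets of $\poly_5(S_5)$. One must exhibit a single \emph{integral} tensor with the prescribed vanishing pattern that also satisfies the congruence conditions; the paper does this via an explicit example checked by computation (for a pattern realizing the most degenerate facet configuration, so it works uniformly for all $p$). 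You could repair your argument by letting the auxiliary primes depend on $\mathcal{A}_0$ rather than fixing them in advance, but that needs to be said. Your fallback of invoking quantitative Hilbert irreducibility on the sublattice cut out by the vanishing pattern (as the paper does for the $D_4$-case of \cref{lowerboundquartic}) is a sound alternative and sidesteps this issue entirely, so the overall plan is fine --- but as written, the first route has a gap.
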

\begin{proof}
The proof of $(1)$ is completely analogous to that of $(1)$ in \cref{lowerboundquartic}. The argument in \cref{lowerboundquartic} also proves $(2)$, with the additional input that there exists $D,M \in \RR_{\geq 1}$ such that a positive proportion, as $X \rightarrow \infty$, of the lattice points in $DB(p,X) \cap V_{\ZZ}$ have Galois group $S_5$ and $\Delta \geq X/M$; it suffices to drop the latter condition and simply prove that  there exists $D \in \RR_{\geq 1}$ such that a positive proportion of the lattice points in $DB(p,X) \cap V_{\ZZ}$ have Galois group $S_5$ as $X \rightarrow \infty$. Consider the subset of lattice points of $DB(p,X)$ such that $\Delta \equiv 5 \Mod{25}$, the ring $R$ is irreducible modulo $7$, and the ring $S$ is irreducible modulo $11$; every element of this subset has Galois group $S_5$. If this set is nonempty for large enough $X$, then it forms a positive proportion of $DB(p,X)$. The nonemptiness follows from the fact that there exists $\mathcal{A}$ of the form
\[
	\Bigg(\begin{bmatrix}
		- & 0 & * & * & *\\
		- & - & * & * & *\\
		- & - & - & * & *\\
		- & - & - & - & *\\
		- & - & - & - & -\\	
	\end{bmatrix},
	\begin{bmatrix}
		- & 0 & 0 & * & *\\
		- & - & * & * & *\\
		- & - & - & * & *\\
		- & - & - & - & *\\
		- & - & - & - & -\\	
	\end{bmatrix},
	\begin{bmatrix}
		- & 0 & 0 & 0 & *\\
		- & - & 0 & * & *\\
		- & - & - & * & *\\
		- & - & - & - & *\\
		- & - & - & - & -\\	
	\end{bmatrix},
	\begin{bmatrix}
		- & 0 & 0 & 0 & 0\\
		- & - & 0 & 0 & *\\
		- & - & - & * & *\\
		- & - & - & - & *\\
		- & - & - & - & -\\	
	\end{bmatrix}\Bigg)
\]
such that $\Delta = 5 \Mod{25}$, the ring $R$ is irreducible modulo $7$, and the ring $S$ is irreducible modulo $11$. Choosing $D$ to be large enough such that $\mathcal{A} \in DB(p,X)$ completes the proof of this case. 
\end{proof}

\begin{proof}[Proof of \cref{quintictotaldensity}]
This follows from \cref{lowerboundquintic} and \cref{upperboundquintic}.
\end{proof}

\begin{proof}[Proof of \cref{s5densitythm}]
This follows from \cref{lowerboundquintic} and \cref{upperboundquintic}.
\end{proof}

\subsection{Special families of quintic rings}

\subsubsection{Binary quintic forms}
Let $f(x,y) = f_0x^5 + f_1x^4y + f_2x^3y^2 + f_3x^2y^3 + f_4xy^4 + f_5y^5 \in \Sym^5 \ZZ^2$ be a binary quintic form. In \cite{fess}, Fess defines a map $\Sym^5 \ZZ^2 \rightarrow \ZZ^4 \otimes \wedge^2\ZZ^5$; we modify this map slightly and send a form $f$ to the matrices\footnote{Our map and the map of Fess send a form $f$ to isomorphic quintic rings and sextic resolvent rings. The only difference is a choice of basis of the quintic ring and sextic resolvent ring.}:
\[
	\begin{bmatrix}
		- & 0 & 0 & 0 & 0\\
		- & - & -1 & 0 & 0\\
		- & - & - & 0 & 0\\
		- & - & - & - & -f_5\\
		- & - & - & - & -\\	
	\end{bmatrix},
	\begin{bmatrix}
		- & 0 & 1 & 0 & 0\\
		- & - & 0 & 1 & 0\\
		- & - & - & 0 & -f_3\\
		- & - & - & - & -f_4\\
		- & - & - & - & -\\	
	\end{bmatrix},
	\begin{bmatrix}
		- & 0 & 0 & -1 & 0\\
		- & - & 0 & 0 & -1\\
		- & - & - & -f_1 & -f_2\\
		- & - & - & - & 0\\
		- & - & - & - & -\\	
	\end{bmatrix},
	\begin{bmatrix}
		- & 0 & 0 & 0 & 1\\
		- & - & 0 & 0 & 0\\
		- & - & - & -f_0 & 0\\
		- & - & - & - & 0\\
		- & - & - & - & -\\	
	\end{bmatrix}
\]

Let $p = (1/8,1/8,1/8,1/8,1/4,1/4,1/4,3/8,3/8)$. Call the map above $\psi$. Observe that for any form $f \in \Sym^5 \ZZ^2$, we have $H_c(f) \leq X^{1/8}$ if and only if $\psi(f) \in B(p,X)$. 

\begin{proof}[Proof of \cref{binaryquintictheorem}]
The proof is almost identical to that of \cref{binaryquartictheorem}. Let $C \in \RR_{\geq 1}$. Let $f \in \cF_5^{\bin}$ and let $(R,S)$ be the corresponding rings. By \cref{lambdaboundlemmaquintic}, if $\Delta \geq X/C$ and $\psi(f) \in B(p,X)$, then
\begin{align*}
\lambda_i(R) &\asymp_C \Delta^{p_i} \\
\lambda_i(S) &\asymp_C \Delta^{q_i}.
\end{align*}
To prove \eqref{correspondquintic1}, it suffices to show that
\[
\lim_{C \rightarrow \infty} \liminf_{X \rightarrow \infty} \frac{\#\{f \in \cF_5^{\bin} \; \mid \; H_c(f) \leq X \text{ and } \Delta \geq X^8/C\}}{\#\{f \in \cF_5^{\bin} \; \mid \; H_c(f) \leq X\}} = 1.
\]
The denominator is clearly $X^6 + o(X^6)$. By Davenport's lemma, we have:
\begin{align*}
&\#\{f \in \cF_5^{\bin}  \; \mid \; H_c(f) \leq X \text{ and } \Delta > X^8/C\} \\
&= \#\{(f_0,\dots,f_5) \in \ZZ^6 \; \mid \; \max\{\lv f_i \rv\}\leq X \text{ and } \lv \disc(f_0,\dots,f_5) \rv > X^8/C\} \\
&= \Vol\{(f_0,\dots,f_5) \in \RR^6 \; \mid \; \max\{\lv f_i \rv\}\leq X \text{ and } \lv \disc(f_0,\dots,f_5) \rv > X^8/C\} + O(X^{5}) \\
&= X^6\Vol\{(f_0,\dots,f_5) \in \RR^5  \; \mid \; \max\{\lv f_i \rv \}\leq 1 \text{ and } \lv \disc(f_0,\dots,f_5) \rv > 1/C\} + O(X^{5})
\end{align*}

We now prove \eqref{correspondquintic2}. Observe that by \cref{upperboundquintic}, there exists a constant $C$ such that for all $\eps > C$, there are $\ll_{\eps} X^{3/4}$ isomorphism classes of tuples $(R,S)$ such that $\Delta \leq X$ and $(R,S)$ is $(\eps,X)$-close to $p$. \cref{evertse} shows that there are at most $O(1)$ $\GL_2(\ZZ)$-equivalence classes of binary quintic forms giving rise to the same quintic ring. Therefore, it suffices to show that there are $\gg X^{3/4}$ $\GL_2(\ZZ)$-equivalence classes of binary quintics such that $\Delta \leq X$ and the corresponding rings $(R,S)$ are $(\eps,X)$-close to $p$; this follows from \cref{binthm} when $r = (1/40,1/40)$.
\end{proof}

\subsubsection{Monic quintic forms}
Let $\cF_5^{\mon}$ denote the set of monic quintic forms with integral coefficients and nonzero discriminant. Let
\[
	p = (1/20,2/20,3/20,4/20,4/20,5/20,6/20,7/20,8/20).
\]
Observe that for any monic integral quintic form $f$, we have $H_r(f) \leq X^{1/20}$ if and only if $\psi(f) \in B(p,X)$. 

\begin{proof}[Proof of \cref{monogenicquintictheorem}]
Again, the proof is almost identical to that of \cref{monogenicquartictheorem}. Let $C \in \RR_{\geq 1}$.  For any $f \in \cF_5^{\mon}$ let $(R,S)$ be the corresponding rings. By \cref{lambdaboundlemmaquintic}, if $\Delta \geq X/C$ and $\psi(f) \in B(p,X)$, then
\begin{align*}
\lambda_i(R) &\asymp_C \Delta^{p_i} \\
\lambda_i(S) &\asymp_C \Delta^{q_i}.
\end{align*}
To prove \eqref{correspondquintic1}, it suffices to show that
\[
\lim_{C \rightarrow \infty} \liminf_{X \rightarrow \infty} \frac{\#\{f \in \cF_5^{\mon} \; \mid \; H_r(f) \leq X \text{ and } \Delta \geq X^{20}/C\}}{\#\{f \in \cF_5^{\mon} \; \mid \; H_r(f) \leq X\}} = 1.
\]
The denominator is clearly $X^{15} + o(X^{15})$. By Davenport's lemma, we have:
\begin{align*}
&\#\{f \in \cF_5^{\mon}  \; \mid \; H_r(f) \leq X \text{ and } \Delta > X^{20}/C\} \\
&= \#\{(f_1,\dots,f_5) \in \ZZ^5 \; \mid \; \max\{\lv f_i \rv^{1/i}\}\leq X \text{ and } \lv \disc(1,f_1,\dots,f_5) \rv > X^{20}/C\} \\
&= \Vol\{(f_1,\dots,f_5) \in \RR^5 \; \mid \; \max\{\lv f_i \rv^{1/i}\}\leq X \text{ and } \lv \disc(1,f_1,\dots,f_5) \rv > X^{20}/C\} + O(X^{14}) \\
&= X^{15}\Vol\{(f_1,\dots,f_5) \in \RR^5  \; \mid \; \max\{\lv f_i \rv \}\leq 1 \text{ and } \lv \disc(1,f_1,\dots,f_5) \rv > 1/C\} + O(X^{14})
\end{align*}

We now prove \eqref{correspondquintic2}. Observe that by \cref{upperboundquintic}, there exists a constant $M$ such that for all $\eps > M$, there are $\ll_{\eps} X^{7/10}$ isomorphism classes of tuples $(R,S)$ such that $\Delta \leq X$ and $(R,S)$ is $(\eps,X)$-close to $p$. \cref{evertse} shows that there are at most $O(1)$ $\GL_2(\ZZ)$-equivalence classes of binary quintic forms giving rise to the same quintic ring. Therefore, it suffices to show that there are $\gg X^{7/10}$ $\GL_2(\ZZ)$-equivalence classes of binary quintics such that $\Delta \leq X$, the binary quintic represents $1$, and the corresponding rings $(R,S)$ are $(\eps,X)$-close to $p$; this follows from \cref{binthm} when we let $r = (0,1/20)$.
\end{proof}

\subsubsection{The generic point of $S_5$}
We now prove \cref{generics5thm} by appealing to upcoming work of H. Given a nondegenerate rank $n$ ring $R$, we consider the rank $(n-1)$ lattice $\{x \in \ZZ + nR \; \mid \; \Tr(x) = 0\}$, up to isometry. It is naturally seen as an element of the double coset space
\[
	\mathcal{S}_{n-1} \coloneqq \GL_{n-1}(\ZZ)\backslash \GL_{n-1}(\RR)/\GO_{n-1}(\RR).
\]
The space $\mathcal{S}_{n-1}$ has a natural measure $\mu_{n-1}$, obtained from the Haar measure on $\GL_{n-1}(\RR)$ and $\GO_{n-1}(\RR)$. To a nondegenerate pair $(R,S)$, we can associate an element $(\cL_R,\cL_S) \in \cS_{4} \times \cS_5$; we call this the \emph{shape} of the pair $(R,S)$. 

\begin{theorem}[H]
\label{equidistribofquintic}
When ordered by discriminant, the shapes of pairs $(R,S)$, where $R$ is an $S_5$-ring, are equidistributed with respect to $\mu_4\times \mu_5$.
\end{theorem}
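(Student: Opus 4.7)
The plan is to adapt the averaging technique of Bhargava and H \cite{BhaHa16} to the setting of pairs $(R,S)$, parametrized via Bhargava's correspondence between $G(\ZZ) = \GL_4(\ZZ) \times \GL_5(\ZZ)$ orbits on $V_{\ZZ} = \ZZ^4 \otimes \wedge^2 \ZZ^5$ and isomorphism classes of pairs of a based quintic ring equipped with a based sextic resolvent. First I would construct a fundamental domain $\mathcal{F}$ for the action of $G(\ZZ)$ on the set of $\mathcal{A} \in V_{\RR}$ with $\lv \Disc(\mathcal{A})\rv \leq X$, using Iwasawa-type decompositions $\GL_n(\RR) = N_n A_n K_n$ for $n = 4, 5$. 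The shape of $R$ is naturally recorded in the double coset $\cS_4 = \GL_4(\ZZ) \backslash \GL_4(\RR) / \GO_4(\RR)$ traced by the $\GL_4$-component of the reduction, and similarly for $S$ and $\cS_5$; the key observation is that these shape coordinates are exactly the $K_4 \times K_5$ factors of the Iwasawa decomposition, modulo the compact orthogonal subgroups.

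Next, I would carry out the standard Bhargava-style averaging: integrate the point-counting over $g \cdot \mathcal{F}$ as $g$ varies over a compact neighborhood in $K_4 \times K_5$, which replaces a single fundamental domain with an averaged one whose boundary is well-behaved. In this averaged region, counting $V_{\ZZ}$-lattice points decomposes via the geometry of numbers into a main term given by a volume computation and a cuspidal error term. Because the Haar measure on $G(\RR)$ factors as $dn \, da \, dk$ and the $dk$-factor on $K_4 \times K_5$ projects to $\mu_4 \times \mu_5$ after quotienting by $\GO_4 \times \GO_5$, the main term takes the form
\[
c \cdot X \cdot (\mu_4 \times \mu_5)(U_4 \times U_5)
\]
for a Borel subset $U_4 \times U_5 \subset \cS_4 \times \cS_5$; this already exhibits the claimed product structure and equidistribution at the level of the main term.

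The hard part will be controlling the cuspidal contribution uniformly as the shape region $U_4 \times U_5$ varies, since one must rule out lattice points deep in the cusp conspiring to produce an asymmetric joint distribution. The cusp here is significantly richer than in the quartic case because one must simultaneously cut off in both the $\GL_4$ and $\GL_5$ directions, and the alternating tensor structure of $V_{\ZZ}$ offers fewer coordinate vanishings to exploit than the symmetric structure in the quartic case. The strategy would be to combine the irreducibility-of-section-type vanishing arguments of Bhargava (for instance the refinements of \cref{quinticzerolemma}) with the explicit polytope structure given by \cref{s5polytopetheorem} and the cusp bound \cref{upperboundquintic}, which together show that the number of $V_{\ZZ}$-points deep in the cusp with bounded discriminant is $o(X)$ regardless of the specified shape region.

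Finally, since \cref{s5polytopetheorem} and the polytope method show that all but $o(X)$ of the nondegenerate pairs $(R,S)$ with $\Delta \leq X$ in fact have $R$ an $S_5$-ring, no separate sieve is needed to pass from nondegenerate pairs to $S_5$-pairs; a standard squarefree-discriminant sieve of the type used to prove that the number of $S_5$-quintic fields of discriminant at most $X$ is $\asymp X$ handles any remaining local obstructions. Combining the main term, the cusp estimate, and this sieve yields the joint equidistribution of $(\cL_R, \cL_S) \in \cS_4 \times \cS_5$ with respect to $\mu_4 \times \mu_5$.
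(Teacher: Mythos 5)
The paper does not actually prove \cref{equidistribofquintic}; it is cited as a result from ``upcoming work of H,'' just as \cref{equidistribofquartic} is attributed to H and Vincent. So there is no internal proof against which to compare your proposal. That said, since you have sketched a proof from scratch, a few comments on its plausibility are worth making.

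Your high-level blueprint is sensible and is, as far as one can tell, the natural extension of the Bhargava--H method \cite{BhaHa16}: build a fundamental domain for $G(\ZZ)$ acting on $V_{\RR} = \RR^4\otimes\wedge^2\RR^5$ via Iwasawa decompositions in both $\GL_4$ and $\GL_5$, average over a compact set to tame the boundary, read off the product $\mu_4\times\mu_5$ from the factorization of Haar measure, and control the cusp. You correctly identify that the genuinely hard and new step is the cusp estimate in the bi-Iwasawa setting: the cusp of $\RR^4\otimes\wedge^2\RR^5$ for the joint $\GL_4\times\GL_5$ reduction is much larger and more intricate than in the single-shape analysis, and one has to show the cuspidal count is $o(X)$ \emph{uniformly} as the target shape window $U_4\times U_5$ shrinks. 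Your suggestion to feed in \cref{quinticzerolemma}, \cref{s5polytopetheorem}, and \cref{upperboundquintic} as inputs to a cusp bound is reasonable in spirit, but those results control counts of rings with prescribed successive-minima profiles rather than lattice-point counts in Iwasawa-coordinate sieve boxes; bridging that gap would take real work and is where any actual proof would concentrate.

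Two smaller points. First, the statement that ``no separate sieve is needed to pass from nondegenerate pairs to $S_5$-pairs'' is a bit loose: what is true is that $100\%$ of nondegenerate orbits give $S_5$-rings (from Bhargava's quintic counting \cite{Bhquintic} and a Hilbert-irreducibility argument), but you still need the multiplicity-one/maximality bookkeeping from the Bhargava--H method, and your own last sentence concedes a sieve is needed after all. Second, the identification of ``the shape coordinates are exactly the $K_4\times K_5$ factors of the Iwasawa decomposition'' is not quite right as stated: the shape lives in $\GL_{n-1}(\ZZ)\backslash\GL_{n-1}(\RR)/\GO_{n-1}(\RR)$, which after dividing by the determinant is a double-coset of the $NA$ part relative to $K$ (and integrality), not the $K$ part itself; the measure that survives on the shape space is the pushforward of the $da$-part of $dn\,da\,dk$, and getting the normalization of $\mu_4\times\mu_5$ right requires carefully tracking this. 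These are correctable slips, but a complete proof would have to be precise here.
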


\begin{proof}[Proof of \cref{generics5thm}]
Let $p(S_5) = (1/8,1/8,1/8,1/8,3/10,3/10,3/10,3/10,3/10)$. We will first prove \eqref{correspondquintic1}. For an integer $n\geq 2$, an element $\cL \in \cS_{n}$, and an integer $0 \leq i < n$, define $\lambda_i(\cL)$ to be the $i$-th successive minimum of a lattice representing $\cL$ with determinant $1$. For a positive real number $\eps$ and an integer $n \geq 2$, define
\[
	\cS_{n,\eps} \coloneqq \{\cL \in \cS_n \mid \max_{1 \leq i \leq n}\{\log\lambda_i(\cL)\} \leq \eps\}.
\]
Then:
\begin{align*}
&\lim_{\eps \rightarrow \infty} \liminf_{X \rightarrow \infty}\frac{\#\{(R,S) \; \mid \; \Delta \leq X \text{ and $R$ is a $S_5$-ring that is $(\eps,X)$-close to $p(S_5)$}\}}{\#\{(R,S) \; \mid \; \Delta \leq X \text{ and $R$ is a $S_5$-ring}\}} \\
&\geq \lim_{\eps \rightarrow \infty} \liminf_{X \rightarrow \infty}\frac{\#\{(R,S) \; \mid \Delta \leq X \text{ and $R$ is a $S_5$-ring and } (\cL_R,\cL_S) \in \cS_{4,\eps} \times \cS_{5,\eps}\}}{\#\{(R,S) \; \mid \; \Delta \leq X \text{ and $R$ is a $S_5$-ring}\}} \\
&= \lim_{\eps \rightarrow \infty}\frac{(\mu_4\times\mu_5)(\cS_{4,\eps} \times \cS_{5,\eps})}{(\mu_4\times\mu_5)(\cS_{4} \times \cS_5)} \\
&= 1
\end{align*}
We prove \eqref{correspondquintic1} with the following observation: \cref{upperboundquintic} and \cref{lowerboundquintic} combine to prove that there exists a positive real number $C$ such that for all $\eps > C$, a positive proportion of rings in $\cF(p(S_5),\eps,X)$ as $X \rightarrow \infty$ are $S_5$-rings.
\end{proof}

\section{Appendix}
We will need the following lemma on directed graphs.
\begin{lemma}
\label{cyclelemma}
Let $G$ be any directed graph on $\geq 2$ vertices. If every vertex of $G$ has an outgoing edge, then $G$ has a cycle.
\end{lemma}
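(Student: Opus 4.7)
The plan is to argue by the pigeonhole principle: since every vertex has an outgoing edge, we can construct an infinite walk, and in a finite graph any infinite walk must repeat a vertex, producing a cycle.

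More precisely, I would proceed as follows. Let $n \geq 2$ denote the number of vertices of $G$. Pick an arbitrary starting vertex $v_0$ of $G$. Inductively, having chosen $v_0, v_1, \dots, v_k$, use the hypothesis that $v_k$ has at least one outgoing edge to choose some $v_{k+1}$ with $v_k \to v_{k+1}$ an edge of $G$. This produces an infinite sequence $v_0, v_1, v_2, \dots$ of vertices such that $v_i \to v_{i+1}$ is an edge for every $i \geq 0$.

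Now I would apply the pigeonhole principle to the first $n+1$ terms $v_0, v_1, \dots, v_n$ of this sequence. Since $G$ has only $n$ vertices, there exist indices $0 \leq i < j \leq n$ with $v_i = v_j$. Then the walk $v_i \to v_{i+1} \to \cdots \to v_j = v_i$ is a closed directed walk of positive length in $G$, and hence contains a directed cycle (for instance, by taking the shortest closed subwalk among such, or by taking the minimal $j - i$ with $v_i = v_j$, which forces the intermediate vertices $v_i, v_{i+1}, \dots, v_{j-1}$ to be pairwise distinct and thus form a cycle).

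There is no real obstacle here; the only minor point worth being careful about is extracting an honest directed cycle (with no repeated vertices) from the closed walk, which is handled by the minimality argument above.
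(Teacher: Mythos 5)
Your proof is correct and takes essentially the same approach as the paper's: build a walk by repeatedly following outgoing edges and invoke finiteness of the vertex set to force a repeated vertex, yielding a cycle. Your version is slightly more careful in making the pigeonhole bound and the extraction of a simple cycle (via minimal $j-i$) explicit, but it is the same argument.
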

\begin{proof}
Start at a vertex and travel along an outgoing edge to another vertex. Continuing this way, because the graph has finitely many vertices, we will eventually return to a vertex that we have been at previously. Thus the graph contains a cycle.
\end{proof}

\begin{lemma}
\label{generalboundlemma}
Let $n \in \ZZ_{\geq 1}$, let $a_1,\dots,a_{n},C$ be any positive real numbers, let $X \in \RR_{\geq 1}$, and let $p=(p_1,\dots,p_n) \in \RR^n$. If
\begin{equation}
\label{maineqngen}
a_i^2 \leq C\max_{1\leq j\leq n}\{X^{2p_i-p_j} a_j \}
\end{equation}
for all $1 \leq i \leq n$, then $a_i \leq CX^{p_i}$ for all $1 \leq i \leq n$.
\end{lemma}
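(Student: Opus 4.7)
The plan is to reduce the inequality to a homogeneous one-variable statement by a rescaling trick. Introduce $b_i \coloneqq a_i X^{-p_i}$ for $1 \leq i \leq n$, so the desired conclusion becomes $b_i \leq C$ for all $i$. Substituting $a_j = b_j X^{p_j}$ into the hypothesis \eqref{maineqngen} turns the right-hand side into
\[
C \max_{1 \leq j \leq n}\{X^{2p_i - p_j}\cdot b_j X^{p_j}\} = C X^{2p_i}\max_{1 \leq j \leq n} b_j,
\]
while the left-hand side is $b_i^2 X^{2p_i}$. Dividing through by $X^{2p_i}$, the hypothesis becomes the scale-free statement
\[
b_i^2 \leq C \max_{1 \leq j \leq n} b_j \quad \text{for all } 1 \leq i \leq n.
\]

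Now set $M \coloneqq \max_{1 \leq k \leq n} b_k$ and let $i$ be an index where this maximum is attained. Applying the rescaled inequality at this $i$ yields $M^2 = b_i^2 \leq CM$. Since the $a_k$ are positive, $M > 0$, so we may divide by $M$ to conclude $M \leq C$. Hence $b_i \leq C$ for every $i$, which is precisely $a_i \leq C X^{p_i}$.

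There is no serious obstacle here; the only step requiring any thought is noticing that the exponents in \eqref{maineqngen} are arranged exactly so that multiplying $a_j$ by $X^{-p_j}$ makes the $X$-dependence on the right-hand side identical to that on the left-hand side, collapsing the statement to a trivial inequality about the maximum of a finite set of positive reals.
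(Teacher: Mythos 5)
Your proof is correct, and it takes a genuinely different (and much slicker) route than the paper. The paper proceeds by induction on $n$, combined with a graph-theoretic argument: assuming for contradiction that $a_i > CX^{p_i}$ for all $i$, it builds a directed graph where $i \to j$ whenever the max in \eqref{maineqngen} is attained at $j$, invokes \cref{cyclelemma} to find a cycle, multiplies the inequalities around the cycle to reach a contradiction, and then runs an induction on the complement of the set where the bound holds. Your observation — that the substitution $b_i = a_i X^{-p_i}$ makes the exponents cancel exactly, collapsing \eqref{maineqngen} to $b_i^2 \leq C\max_j b_j$, which by evaluating at a maximizing index immediately gives $\max_j b_j \leq C$ — sidesteps all of that machinery. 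The one thing you should state explicitly (and do implicitly use) is that $X^{2p_i}>0$, so it can be pulled out of and divided through the max; this is fine since $X \geq 1$. Your approach is shorter, avoids the induction entirely, and makes transparent why the exponent pattern $2p_i - p_j$ is the right one, which the paper's graph argument somewhat obscures.
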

\begin{proof}
We proceed by induction on $n$. If $n = 1$ then $a_1^2 \leq CX^{p_1}a_1$ so $a_1 \leq CX^{p_1}$.

Now fix $n\geq 2$ and suppose the lemma is true for all $1 \leq m < n$. Let 
\[
	S = \{1 \leq i \leq n \mid a_i \leq CX^{p_i}\}.
\]
Assume for the sake of contradiction that $S = \emptyset$; equivalently, assume that
\begin{equation}
\label{hereasump}
a_i > CX^{p_i}
\end{equation}
for all $1 \leq i \leq n$. Define the directed graph $G$ on the set of vertices $\{1,2,\dots,n\}$ as follows: draw an edge from $i$ to $j$ if
\begin{equation}
\label{edgeeqn}
a_i^2 \leq CX^{2p_i-p_j}a_j.
\end{equation} 

\eqref{maineqngen} and \eqref{hereasump} together imply that every vertex of $G$ has an outgoing edge. \cref{cyclelemma} implies that $G$ contains a cycle; choose any cycle of $G$ and let $T$ denote the set of vertices of that cycle. Multiplying \eqref{edgeeqn} together for every edge in the cycle and dividing both sides by $\prod_{i \in T}a_i$, we obtain
\[
	\prod_{i \in T}a_i \leq C^{\#T}\prod_{i \in T}X^{p_i}.
\]
However, \eqref{hereasump} implies that
\[
	C^{\#T}\prod_{i \in T}X^{p_i} < \prod_{i \in T}a_i,
\]
and hence we have arrived at a contradiction. Thus, $S \neq \emptyset$. 

For all $i \notin S$ and for all $j \in S$, we have
\begin{align*}
a_i^2 &> C^2X^{2p_i} &&\text{because $i \notin S$} \\
&\geq CX^{2p_i-p_j}a_j. && \text{because $j \in S$}
\end{align*}
Therefore for all $i \notin S$, we may rewrite \eqref{maineqngen} as
\[
	a_i^2 \leq C\max_{k\notin S}\{X^{2p_i-p_k} a_k \}.
\]
Because $S \neq \emptyset$ the induction assumption implies that $a_i \leq CX^{p_i}$  for all $i \notin S$.

\end{proof}

\begin{bibdiv}
\begin{biblist}

\bib{BhargavaHCL3}{article}{
  author={Bhargava, M.},
  title={Higher composition laws III: The parametrization of quartic rings},
  journal={Annals of Mathematics},
  pages={1329--1360},
  volume={159},
  year={2004},
  issue={3},
}

\bib{BhargavaHCL4}{article}{
  author={Bhargava, M.},
  title={Higher composition laws IV: The parametrization of quintic rings},
  journal={Annals of Mathematics},
  pages={53--94},
  volume={167},
  year={2008},
  issue={1},
}

\bib{BhDH}{article}{
  author={Bhargava, M.},
  author={Shankar, A.},
  author={Tsimerman, J.},
  title={On the Davenport--Heilbronn theorems and second order terms},
  journal={Inventiones mathematicae},
  pages={439--499},
  year={2013},
  volume={193},
}

\bib{monogenizations}{article}{
  author={Bhargava, M.},
  title={On the number of monogenizations of a quartic order},
  year={2022},
  note = {preprint, \url{https://arxiv.org/abs/2111.04215}},
}

\bib{BhaHa16}{article}{
  title={The equidistribution of lattice shapes of rings of integers in cubic, quartic, and quintic number fields},
  author={Bhargava, M.},
  author={H, P.},
  journal={Compositio Mathematica},
  volume={152},
  year={2016},
  issue={6},
  pages={1111--1120},
}

\bib{Bhquintic}{article}{
  author={Bhargava, M.},
  title={The Density of Discriminants of Quintic Rings and Fields},
  journal={Annals of Mathematics},
  volume={172},
  pages={1559--1591},
  year={2010},
  number={3},
  issue={2}
}

\bib{birch}{article}{
  author={Birch, B.J.},
  author={Merriman, J.R.},
  title={Finiteness theorems for binary forms with given discriminant},
  year={1971},
  pages={385--394},
  journal={Proceedings of the London Mathematical Society},
  issue={3},
  month={April},
  volume={24},
}

\bib{hannah}{article}{
  author = {Canning, Samir},
  author = {Larson, Hannah},
  title = {The Chow rings of the moduli spaces of curves of genus 7, 8, and 9},
  journal = {Journal of Algebraic Geometry},
  year = {2023},
  volume={33},
  pages = {55--116},
  issue ={3},
  month = {May},
}

\bib{casnati}{article}{
  author={Casnati, G.},
  author={Ekedahl, T.},
  title={Covers of algebraic varieties I. A general structure theorem, covers of degree $3,4$ and Enriques' surfaces},
  journal={Journal of Algebraic Geometry},
  pages={439--460},
  year={1996},
  volume={5},
  number={3},
}

\bib{casnati2}{article}{
  author={Casnati, G.},
  title={Covers of algebraic varieties II. Covers of degree $5$ and constructions of surfaces},
  journal={Journal of Algebraic Geometry},
  year={1996},
  pages={461--477},
  volume={5},
  issue={3},
}

\bib{casnati3}{article}{
  author={Casnati, G.},
  title={Covers of algebraic varieties III. The discriminant of a cover of degree $4$ and the trigonal construction},
  journal={Transactions of the American Mathematical Society},
  year={1998},
  pages={1359--1378},
  volume={350},
  number={4},
}

\bib{wouter}{article}{
  author={Castryck, W.},
  author={Vermeulen, G.},
  author={Zhao, Y.},
  title={Scrollar invariants, syzygies and representations of the symmetric group},
  year={2023},
  journal={Journal f{\"u}r die reine und angewandte Mathematik (Crelles Journal)},
  number={796},
  pages={117--159},
}

\bib{chiche}{thesis}{
  author={Chiche-lapierre, V.},
  title={Length of elements in a Minkowski basis for an order in a number field},
  organization={University of Toronto},
  date={2019},
  note = {thesis, \url{http://blog.math.toronto.edu/GraduateBlog/files/2019/06/val_chichelapierre_thesis.pdf}},
}

\bib{cohen}{article}{
  author={Cohen, S. D.},
  title={The Distribution of Galois Groups and Hilbert's Irreducibility Theorem},
  journal={Proceedings of the London Mathematical Society},
  volume={s3-43},
  number={2},
  pages={227--250},
  year={1981},
  month={09},
  issn={0024-6115},
}

\bib{dfour}{article}{
  author={Cohen, H.},
  author={Diaz y Diaz, F.},
  author={Olivier, M.},
  title={Enumerating quartic dihedral extensions of $\mathbb {Q}$},
  journal={Compositio Mathematica},
  pages={65--93},
  year={2002},
  volume={133},
}

\bib{davenport}{article}{
  author={Davenport, H.},
  title={On a principle of Lipschitz},
  journal={Journal of the London Mathematical Society},
  pages={179--183},
  year={1951},
  volume={26},
  issue={3}
}

\bib{delone}{book}{
  author={Delone, B. N.},
  author={Faddeev, D.K.},
  title={The theory of irrationalities of the third degree},
  publisher={Translations of Mathematical Monographs},
  volume={10},
  year={1964},
}

\bib{anands}{article}{
  author={Deopurkar, A.},
  author={Patel, A.},
  title={The Picard rank conjecture for the Hurwitz spaces of degree up to five},
  volume={9},
  journal={Algebra \& Number Theory},
  number={2},
  pages={459 -- 492},
  year={2015},
}

\bib{evertse}{book}{
  author={Evertse, J.H.},
  author={Gy\H {o}ry, K.},
  year={2016},
  month={11},
  title={Discriminant Equations in Diophantine Number Theory},
  publisher={Cambridge University Press},
}

\bib{fess}{thesis}{
  author={Fess, D.},
  title={Binary Forms, Quintic Rings and Sextic Resolvents},
  organization={Princeton University},
  date={2022},
  note = {thesis, \url{http://arks.princeton.edu/ark:/88435/dsp01xd07gw83d}},
}

\bib{silverman}{book}{
  author={Hindry, M.},
  author={Silverman, J.},
  title={Diophantine Geometry},
  publisher={Springer},
  year={2000},
}

\bib{holmes}{article}{
  author={Holmes, E.},
  title={On the shapes of pure prime degree number fields},
  year={2022},
  month={September},
  note = {preprint, \url{https://arxiv.org/abs/2209.10638}}
}

\bib{maroni}{article}{
  author={Maroni, A.},
  title={Le serie lineari speciali sulle curve trigonali},
  journal={Annali di Matematica Pura ed Applicata},
  year={1946},
  volume={25},
  number={1},
  pages={341--354},
}

\bib{naka}{article}{
  author={Nakagawa, J.},
  title={Binary forms and orders of algebraic number fields},
  journal={Inventiones Math},
  year={1989},
  number={2},
  pages={219--234},
}

\bib{obuchi}{article}{
  author={Ohbuchi, A.},
  title={On some numerical relations of $d$-gonal linear systems},
  journal={Journal of Math, Tokushima University},
  year={1997},
  volume={31},
  pages={7--10},
}

\bib{anand}{thesis}{
  author={Patel, A.},
  title={The geometry of the Hurwitz space},
  year={2013},
  organization={Harvard University},
  note = {preprint, \url{https://dash.harvard.edu/bitstream/handle/1/11124835/Patel_gsas.harvard_0084L_10901.pdf}}
}

\bib{recillas}{article}{
  author={Recillas, S.},
  title={Jacobians of curves with a $g_4^1$ are Prym varieties of trigonal curves},
  year={1974},
  pages={9--13},
  journal={Bolet{\'i}n de la Sociedad Matem{\'a}tica Mexicana},
  issue={19},
}

\bib{siegel}{book}{
  author={Siegel, C. L.},
  title={Lectures on the Geometry of Numbers},
  publisher={Springer-Verlag},
  year={1989},
  city={Berlin},
}

\bib{Terr97}{thesis}{
  title={The distribution of shapes of cubic orders},
  author={Terr, D.},
  organization={University of California, Berkeley},
  date={1997},
  note = {thesis, \url{https://www.proquest.com/docview/304343539}}
}

\bib{me}{thesis}{
  title={Successive Minima of Orders in Number Fields},
  author={Vemulapalli, S.},
  organization={Princeton University},
  date={2023},
}

\bib{woodquartic}{article}{
  author={Wood, M. M.},
  title={Quartic Rings Associated to Binary Quartic Forms},
  journal={International Mathematics Research Notices},
  volume={2012},
  number={6},
  pages={1300--1320},
  year={2011},
  month={04},
  issn={1073--7928},
}

\bib{yuval}{article}{
	author={Yifrach, Y.},
  title={The equidistribution of grids of rings of integers in number fields of degree $3$ and $4$},
  year={2022},
  note = {preprint, \url{https://arxiv.org/pdf/2201.10942}},
}

\bib{zhao}{thesis}{
  title={On sieve methods for varieties over finite fields},
  author={Zhao, Y.},
  institution={University of Wisconsin, Madison},
  year={2013},
  note = {thesis, \url{https://www.proquest.com/docview/1440406524}}
}

\end{biblist}
\end{bibdiv}

\end{document}